\def\l@subsection{\@tocline{2}{0pt}{2.5pc}{5pc}{}}
\renewcommand\tocchapter[3]{%
  \indentlabel{\@ifnotempty{#2}{\ignorespaces#2.\quad}}#3%
}
\newcommand\@dotsep{4.5}
\def\@tocline#1#2#3#4#5#6#7{\relax
  \ifnum #1>\c@tocdepth 
  \else
    \par \addpenalty\@secpenalty\addvspace{#2}%
    \begingroup \hyphenpenalty\@M
    \@ifempty{#4}{%
      \@tempdima\csname r@tocindent\number#1\endcsname\relax
    }{%
      \@tempdima#4\relax
    }%
    \parindent\z@ \leftskip#3\relax \advance\leftskip\@tempdima\relax
    \rightskip\@pnumwidth plus1em \parfillskip-\@pnumwidth
    #5\leavevmode\hskip-\@tempdima{#6}\nobreak
    \leaders\hbox{$\m@th\mkern \@dotsep mu\hbox{.}\mkern \@dotsep mu$}\hfill
    \nobreak
    \hbox to\@pnumwidth{\@tocpagenum{#7}}\par
    \nobreak
    \endgroup
  \fi}
\renewcommand\csname r@tocindent0\endcsname{0pt}
\def\l@subsection{\@tocline{2}{0pt}{2.5pc}{5pc}{}}
\newtheorem{thm}{Theorem}[section]
\newtheorem{lemma}[thm]{Lemma}
\newtheorem{proposition}[thm]{Proposition}
\newtheorem{definition}[thm]{Definition}
\newtheorem{corollary}[thm]{Corollary}
\newtheorem*{problem1}{Problem 1}
\newtheorem*{problem2}{Problem 2}
\newtheorem*{problem3}{Problem 3}
\newtheorem*{problem4}{Problem 4}
\newtheorem*{maintheorem*}{Main Theorem}
\newtheorem*{theorem*}{Theorem}
\newtheorem*{corollary*}{Corollary}
\newcommand{\p}{\mathbb{P}}
\newcommand{\q}{\mathbb{Q}}
\newcommand{\dom}{\mathrm{dom}}
\newcommand{\ran}{\mathrm{ran}}
\newcommand{\add}{\textrm{Add}}
\newcommand{\h}{\mathrm{ht}}
\newcommand{\col}{\mathrm{Col}}
\newcommand{\res}{\upharpoonright}
\newcommand{\ka}{\kappa}
\title{Forcing Over a Free Suslin Tree}
\author{John Krueger}
\address{Department of Mathematics, University of North Texas, 1155 Union Circle \#311430, Denton, TX 76203, USA}
\email{john.krueger@unt.edu}
\author{\v{S}\'{a}rka Stejskalov\'{a}}
\address{Department of Logic, Faculty of Arts, Charles University, Celetná 20, 116 42 Prague 1, Czech Republic}
\email{sarka.stejskalova@ff.cuni.cz}
\date{January 31, 2024; revised July 20, 2026}
\subjclass{03E05, 03E35, 03E40}
\keywords{Free Suslin tree, separation, consistency, Key Property, 
almost Kurepa Suslin tree}
\begin{document}

\begin{abstract}
We introduce a forcing for adding almost disjoint 
automorphisms of a normal infinitely splitting 
$\omega_1$-tree $T$ with countable approximations. 
Assuming that $T$ is a free Suslin tree, this forcing is totally proper, 
preserves the Suslinness of $T$, and does not add new cofinal branches 
of $\omega_1$-trees existing in intermediate extensions. 
If $\ka$ is an inaccessible cardinal, then the product of the automorphism forcing of length $\ka$ with the 
L\'{e}vy collapse of $\ka$ to become $\omega_2$ forces that there exists an 
almost Kurepa Suslin tree and there does not exist a Kurepa tree. 
This model solves open problems due to Bilaniuk, Jin, Shelah, and Moore.
\end{abstract}

\maketitle

\tableofcontents

\section{Introduction} \label{Introduction}

A classical and fundamental result in mathematics due to Cantor states that any non-empty 
linearly ordered set without endpoints which is dense, complete, and separable 
is isomorphic to the real number line. 
Suslin asked whether the same conclusion follows if the assumption of 
separability is replaced by the countable chain condition, which means that every 
pairwise disjoint family of open intervals is countable (\cite{suslin}). 
An equivalent question is whether every linearly ordered set 
with the countable chain condition is separable. 
A counter-example to this statement is called a \emph{Suslin line}. 
\emph{Suslin's hypothesis} (\textsf{SH}) 
is the statement that there does not exist a Suslin line. 
A number of authors independently discovered that \textsf{SH} can be 
characterized in terms of trees (\cite{kurepa1936, miller, sierpinski}). 
\textsf{SH} is equivalent to the non-existence of a \emph{Suslin tree}, which is 
an uncountable tree which has no uncountable chain or uncountable antichain.

The first systematic study of trees appeared in the 
dissertation of Kurepa (\cite{kurepa1935}). 
It includes a construction of an \emph{Aronszajn tree}, 
which is an $\omega_1$-tree with no uncountable chains, 
whose existence had been proved by Aronszajn in 1934 
(by an $\omega_1$-tree, we mean 
a tree of height $\omega_1$ with countable levels). 
Shortly after, Kurepa proved the existence of a 
\emph{special} Aronszajn tree 
(a tree is special if it is a union of countably many antichains) (\cite{kurepa1937}). 
Later, Kurepa investigated the question of how many cofinal branches 
exist in $\omega_1$-trees (\cite{kurepa1942}). 
The statement that there exists an $\omega_1$-tree with more than $\omega_1$-many cofinal 
branches became known as \emph{Kurepa's Hypothesis} (\textsf{KH}), and such a tree 
is called a \emph{Kurepa tree}.

The resolution of \textsf{SH} and \textsf{KH} came with the advent of modern 
methods of set theory, namely, constructibility, forcing, and large cardinals 
(\cite{godel, cohen}). 
The consistency of $\neg \textsf{SH}$ 
was established independently by 
Jech and Tennenbaum, who 
defined forcings which add a Suslin tree (\cite{jech67, tennenbaum}). 
The consistency of \textsf{KH} 
was observed to follow from an inaccessible cardinal 
by Bukovsk\'{y} and Rowbottom (\cite{bukovsky, rowbottom}). 
Namely, the L\'{e}vy collapse of an inaccessible cardinal to become $\omega_1$ forces 
the existence of a Kurepa tree.   
Later, \textsf{KH} was shown to be consistent without large cardinals 
by Stewart using a direct forcing construction (\cite{stewart}). 
It was also shown that in the constructible universe $L$, there exists a Suslin tree 
and there exists a Kurepa tree. 
Namely, Jensen proved that 
$\Diamond$ implies the existence of a Suslin tree and 
$\Diamond^+$ implies 
the existence of a Kurepa tree, and these diamond principles hold in $L$.

For the other direction, Silver proved the consistency of $\neg \textsf{KH}$ 
by showing that after forcing with the L\'{e}vy collapse to turn 
an inaccessible cardinal into $\omega_2$, 
there does not exist a Kurepa tree (\cite{silver}). 
Solovay proved that Silver's use of an inaccessible cardinal is necessary, because 
if $\omega_2$ is not an inaccessible cardinal in $L$ then there exists a Kurepa tree. 
Solovay and Tennenbaum proved the consistency of \textsf{SH} 
using their newly developed technique of 
finite support iterations of c.c.c.\ forcings to construct a model of 
Martin's Axiom (\textsf{MA}) together with the negation of the Continuum Hypothesis (\textsf{CH}) 
(\cite{solovaytennenbaum}; also see \cite{martin}). 
Baumgartner isolated a statement about trees which implies \textsf{SH} and follows 
from $\textsf{MA} + \neg \textsf{CH}$, 
namely, that all Aronszajn trees are special 
(\cite{baumgartnerdiss, baumgartnermalitz}). 
The consistency of \textsf{SH} together with \textsf{CH} 
was proved by Jensen (\cite{devlinj}). 
Jensen's proof motivated Shelah's invention of proper forcing, and Shelah 
gave an alternative proof of the consistency of 
$\textsf{SH} + \textsf{CH}$ as an application of his general technique for 
iterating proper forcing while not adding reals (\cite{shelahbook}). 
Both Jensen's and Shelah's models satisfy the stronger statement that all 
Aronszajn trees are special.

Among the earliest topics studied about Suslin trees after their existence 
was shown to be consistent are rigidity and homogeneity (\cite{devlinj, jech72}). 
Jensen proved that $\Diamond$ implies the existence of both a rigid Suslin tree and 
a homogeneous Suslin tree with exactly $\omega_1$-many automorphisms. 
And Jensen proved that $\Diamond^+$ implies the existence of a homogeneous Suslin tree 
with at least $\omega_2$-many automorphisms. 
Reviewing the construction of this last tree from $\Diamond^+$, 
it is easy to verify that it is an example of an 
\emph{almost Kurepa Suslin tree}, which is a Suslin tree which becomes a Kurepa tree 
after forcing with it. 
Motivated by Jensen's results, 
Jech proved that if \textsf{CH} holds and $\ka$ is a cardinal such that 
$2^\omega \le \ka \le 2^{\omega_1}$ and $\ka^\omega = \ka$, 
then there exists a forcing which adds a Suslin tree with 
exactly $\ka$-many automorphisms (\cite{jech72}). 
In the case that $\ka \ge \omega_2$, Jech's forcing gives another example of an 
almost Kurepa Suslin tree.

Jensen's constructions of a rigid Suslin tree and a homogeneous Suslin tree from $\Diamond$ 
identified two important types of trees: 
free Suslin trees and uniformly coherent Suslin trees. 
For any positive $n < \omega$, a Suslin tree $T$ is \emph{$n$-free} 
if for any distinct elements $x_0,\ldots,x_{n-1}$ of the same level of $T$, the product 
tree $T_{x_0} \otimes \cdots \otimes T_{x_{n-1}}$ 
(called a \emph{derived tree with dimension $n$}) is Suslin.\footnote{In contrast, 
Kurepa \cite{kurepa1935} showed that for any Suslin tree $T$, $T \otimes T$ 
is not Suslin as witnessed by an antichain $\{ (x_{\alpha,0},x_{\alpha,1}) : \alpha < \omega_1 \}$ 
such that the meet of $x_{\alpha,0}$ and $x_{\alpha,1}$ is strictly increasing with $\alpha$.} 
And $T$ is \emph{free} if it is $n$-free for all positive $n < \omega$.\footnote{The concept 
of a free Suslin tree goes by different names in the literature. 
Free Suslin trees were originally introduced by Jensen 
as \emph{full Suslin trees} (\cite{jensennotes}). 
Abraham and Shelah refer to free Suslin trees as 
\emph{Suslin trees all of whose derived trees are Suslin} (\cite{AS, AS2}). 
The phrase \emph{free Suslin tree} was used by 
Larson and Shelah-Zapletal (\cite{larson1, shelahzap}).}  
The idea of a free Suslin tree is due to Jensen, and the 
rigid Suslin tree he had constructed earlier from $\Diamond$ is free. 
The homogeneous Suslin tree constructed by Jensen from $\Diamond$ is an example of a 
\emph{uniformly coherent Suslin tree}, 
which means a Suslin tree consisting of countable sequences of 
natural numbers, downwards closed and closed under finite modifications, 
such that any two elements of the tree disagree on at most finitely many 
elements of their domain. 
Any uniformly coherent Suslin tree is homogeneous. 
Uniformly coherent Suslin trees have proven useful in a variety of contexts, including 
in $\p_{\text{max}}$-style constructions involving a Suslin tree, consistency results, 
and in forcing axioms (\cite{larson1, shelahzap, woodin, larsontodor, PFAS}).

Free Suslin trees satisfy some remarkable properties. 
Freeness is the strongest known form of rigidity for Suslin trees. 
Free Suslin trees have the \emph{unique branch property}, which 
means that forcing with a free Suslin tree 
introduces exactly one cofinal branch to it (\cite{fuchshamkins}). 
A free Suslin tree is \emph{forcing minimal} in the sense that after forcing with it, 
there are no intermediate models strictly between the ground model 
and the generic extension (\cite[Lemma 1.5.7]{gido-diss}). 
Any strictly increasing and level preserving map from a free Suslin tree into 
any Aronszajn tree is injective on a club of levels. 
A Suslin tree is free if and only if it satisfies a property which 
is essentially a translation of the definition of an entangled set of reals 
into the context of trees (\cite{jk37, ASentangled}). 
Another noteworthy fact about free Suslin trees is their ubiquitousness. 
The generic Suslin trees of both Jech and Tennenbaum are free. 
Larson proved that if there exists a uniformly coherent Suslin tree, 
then there exists a free Suslin tree (\cite{larson1}). 
Since forcing a Cohen real adds a uniformly coherent Suslin tree, 
it also adds a free Suslin tree (\cite{todorpartition}). 
In fact, it is an open problem due to Shelah and Zapletal whether the existence 
of a Suslin tree implies the existence of a free Suslin tree (\cite{shelahzap}). 
In other words, it could be the case that 
\textsf{SH} is actually equivalent to the non-existence of a free Suslin tree.

In order to motivate the problems which this article addresses, 
we review some of the early forcings for adding $\omega_1$-trees with different properties. 
Jech's forcing for adding a Suslin tree consists of conditions which 
are countable infinitely splitting downwards closed 
normal subtrees of the tree $({}^{<\omega_1} \omega,\subset)$, 
ordered by end-extension (\cite{jech67}). 
Jech's forcing is countably closed, $\omega_2$-c.c.\ assuming \textsf{CH}, 
and adds a free Suslin tree. 
Tennenbaum's forcing for adding a Suslin tree consists of 
conditions which are finite trees whose elements are in $\omega_1$ and whose tree ordering  
is consistent with the ordinal ordering, ordered by end-extension (\cite{tennenbaum}). 
Tennenbaum's forcing is c.c.c.\ and adds a free Suslin tree.

Both Jech's and Tennenbaum's forcings serve as a foundation on which other forcing posets 
for adding $\omega_1$-trees are based. 
A variation of Jech's poset, in which any two 
elements of a condition differ on a finite set and conditions are closed under 
finite modifications, adds a uniformly coherent Suslin tree. 
Another variation adds a Suslin tree together with 
any number of automorphisms of it (\cite{jech72}). 
Stewart's forcing for adding a Kurepa tree consists of conditions of 
the form $(T,f)$, where $T$ is a condition in Jech's forcing 
with successor height and $f$ is an injective function from a countable subset of $\omega_2$ 
into the top level of $T$. 
Conditions are ordered by letting 
$(U,g) \le (T,f)$ if $U$ end-extends $T$, $\dom(f) \subseteq \dom(g)$, 
and for all $\alpha \in \dom(f)$, $f(\alpha) \le_U g(\alpha)$. 
Stewart's forcing is countably closed, and assuming \textsf{CH}, is $\omega_2$-c.c.\ 
and adds an $\omega_1$-tree together with $\omega_2$-many cofinal branches of it.

In light of Stewart's forcing for adding a Kurepa tree based on Jech's forcing, 
a natural question is whether 
there exists a c.c.c.\ forcing for adding a Kurepa tree based on Tennenbaum's forcing. 
Jensen introduced the \emph{generic Kurepa hypothesis} 
(\textsf{GKH}), which states that there exists a Kurepa tree in some c.c.c.\ forcing 
extension (\cite{jensenccc}). 
Jensen and Schlechta proved that \textsf{GKH} is not a theorem 
of \textsf{ZFC}: if $\ka$ is a Mahlo cardinal, then after forcing with the 
L\'{e}vy collapse to turn $\ka$ into $\omega_2$, 
any c.c.c.\ forcing fails to add a Kurepa tree (\cite{jensenschlechta}). 
On the other hand, Jensen proved that $\Box_{\omega_1}$ implies 
the existence of a c.c.c.\ forcing for adding a Kurepa tree (\cite{jensenccc}). 
Since the failure of $\Box_{\omega_1}$ is equiconsistent with a Mahlo cardinal, 
so is the statement $\neg \textsf{GKH}$. 
Later, Veli\v{c}kovi\'{c} defined a c.c.c.\ forcing for adding a 
Kurepa tree which is simpler than Jensen's forcing; 
it is based on Tennenbaum's forcing and uses the function $\rho$ 
of Todor\v{c}evi\'{c} derived from a $\Box_{\omega_1}$-sequence (\cite{boban}).

The forcings of Stewart and Veli\v{c}kovi\'{c} for adding a Kurepa tree 
have size at least $\omega_2$, 
due to the fact that the conditions in these forcing posets  
approximate both an $\omega_1$-tree and 
a sequence of $\omega_2$-many cofinal branches of the tree. 
Jin and Shelah asked whether it is possible to force 
the existence of a Kurepa tree using a forcing of size at most $\omega_1$, especially 
in the context of \textsf{CH} (\cite{jinshelah}). 
This question was motivated in part by the fact that there exists a forcing of size $\omega$ 
which adds a Suslin tree, namely, the forcing for adding one Cohen real (\cite{shelahcohen}). 
The main result of Jin and Shelah \cite{jinshelah} is that 
assuming the existence of an inaccessible cardinal $\ka$, 
there exists a forcing which preserves $\omega_1$, collapses $\ka$ to become $\omega_2$, 
forces that there does not exist a Kurepa tree, and introduces a countably distributive 
Aronszajn tree which when you force with it produces a Kurepa tree. 
Jin and Shelah asked whether it is possible to obtain such a model where the Aronszajn 
tree is replaced by some c.c.c.\ forcing of size at most $\omega_1$.

\begin{problem1}[Jin and Shelah \cite{jinshelah}]
	Is it consistent that \textsf{CH} holds, there does not exist a Kurepa tree, 
	and there exists a c.c.c.\ forcing of size at most $\omega_1$ 
	which forces the existence of a Kurepa tree?
\end{problem1}

As previously mentioned, Jensen proved that $\Diamond^+$ implies the existence of 
a Kurepa tree and a Suslin tree with at least $\omega_2$-many automorphisms. 
In his dissertation written under the supervision of Baumgartner, 
Bilaniuk proved that if $\Diamond$ holds and there exists a Kurepa tree, 
then there exists a Suslin tree with at least $\omega_2$-many automorphisms \cite{bilaniuk}.

\begin{problem2}[Bilaniuk {\cite{bilaniuk}}]
	Is it consistent that $\Diamond$ holds, there does not exist a Kurepa tree, 
	and there exists a Suslin tree with at least $\omega_2$-many 
	automorphisms?
\end{problem2}

In the Jin-Shelah model, forcing with the Aronszajn tree introduces \emph{another} 
tree which is a Kurepa tree. 
On the other hand, an 
\emph{almost Kurepa Suslin tree} is a Suslin tree which \emph{itself} 
becomes a Kurepa tree after forcing with it. 
The following problem is closely related to both Problems 1 and 2 and has been 
worked on by a number of set theorists since Bilaniuk's dissertation.

\begin{problem3}[Folklore]
	Is it consistent that there exists an almost Kurepa Suslin tree and 
	there does not exist a Kurepa tree?\footnote{Concerning the relationship 
	between Problems 2 and 3, 
	start with a model with a Mahlo cardinal $\ka$ and a uniformly coherent Suslin tree $T$. 
	After forcing with $\col(\omega_1, < \! \ka) * \add(\omega,\omega_2)$, 
	$T$ is a Suslin tree with $\omega_2$-many automorphisms. 
	But $T$ is not an almost Kurepa Suslin tree, 
	since by the result of Jensen and Schlechta, no c.c.c.\ forcing can introduce 
	a Kurepa tree in this model.} 
\end{problem3}

Baumgartner introduced the idea of a \emph{subtree base} for an 
$\omega_1$-tree $T$, which is a collection $\mathcal B$ of uncountable downwards closed 
subtrees of $T$ such that every uncountable downwards closed subtree of $T$ contains 
some member of $\mathcal B$ (\cite{baumgartnerbase}). 
He proved that after forcing with 
the L\'{e}vy collapse $\col(\omega_1,< \! \ka)$, where $\ka$ is an inaccessible cardinal, 
every Aronszajn tree has a base of cardinality $\omega_1$. 
A related idea called \emph{Aronszajn tree saturation} 
was introduced by K\"onig, Larson, Moore, 
and Veli\v{c}kovi\'{c} in the context of attempting to 
reduce the large cardinal assumption used to produce a model with a five element 
basis for the class of uncountable linear orders (\cite{moorebounding, moorefive}). 
An Aronszajn tree $T$ is \emph{saturated} if 
every almost disjoint family of uncountable downwards closed subtrees of $T$ 
has cardinality at most $\omega_1$.

Note that if $T$ has a subtree base of size $\omega_1$, then $T$ is saturated. 
So after forcing with the L\'{e}vy collapse $\col(\omega_1,< \! \ka)$, where 
$\ka$ is an inaccessible cardinal, 
every Aronszajn tree is saturated, and by Silver's result, 
there does not exist a Kurepa tree. 
On the other hand, Baumgartner and 
Todor\v{c}evi\'{c} proved that if there exists 
a Kurepa tree, then there exists a special Aronszajn tree 
which is not saturated (\cite{baumgartnerbase}). 
These facts lead to the following natural question of Moore.
\begin{problem4}[Moore \cite{moorestructural}]
	Is it consistent that there exists a non-saturated Aronszajn tree and 
	there does not exist a Kurepa tree?\footnote{According to Moore, this 
question is implicit in \cite{moorestructural} (see the comments after Question 9.2). 
It also appears as problem \#28 in \cite{problembook}.}
\end{problem4}

In this article, we provide solutions to Problems 1, 2, 3, and 4. 
Our main result is as follows:

\begin{maintheorem*}
	Suppose that there exists an inaccessible cardinal $\ka$ 
	and there exists 
	an infinitely splitting normal free Suslin tree $T$. 
	Then there exists a forcing poset $\p$ satisfying that the product forcing 
	$\col(\omega_1,< \! \kappa) \times \p$ forces:
	\begin{enumerate}
	\item $\ka = \omega_2$;
	\item \textsf{GCH} holds;
	\item $T$ is a Suslin tree;
	\item there exists an almost disjoint 
	family $\{ f_\tau : \tau < \omega_2 \}$ of automorphisms of $T$;
	\item there does not exist a Kurepa tree.
	\end{enumerate}
\end{maintheorem*}

If $b$ is a generic branch obtained by forcing with 
the Suslin tree $T$ over a generic extension by 
$\col(\omega_1,< \! \kappa) \times \p$, then 
$\{ f_\tau[b] : \tau < \omega_2 \}$ is a family of $\omega_2$-many cofinal branches of $T$. 
Thus, in this generic extension $T$ is a c.c.c.\ forcing of size $\omega_1$ 
which forces the existence of a Kurepa tree. 
Starting with a model with an inaccessible cardinal and forcing the 
existence of an infinitely splitting normal 
free Suslin tree (for example, by Jech's forcing), 
we get the following corollary which solves   
Problems 1 and 3.

\begin{corollary*}
	Assume that there exists an inaccessible cardinal $\ka$. 
	Then there exists a generic extension in which $\ka$ equals $\omega_2$, \textsf{CH} holds, 
	there exists an almost Kurepa Suslin tree, and there does not exist a Kurepa tree.
\end{corollary*}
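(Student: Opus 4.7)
The plan is to reduce the corollary to the Main Theorem by first arranging its hypotheses to hold in a suitable intermediate model. Let $V$ be a ground model containing an inaccessible cardinal $\ka$.

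First I would force \textsf{CH} to hold, if it does not already, by a small preliminary forcing of size less than $\ka$, for instance $\col(\omega_1, 2^{\aleph_0})$; since $\ka$ is a strong limit, such a forcing preserves its inaccessibility. Next, force with Jech's poset $\mathbb{J}$ over this intermediate model: under \textsf{CH}, $\mathbb{J}$ is countably closed, $\omega_2$-c.c., of size $\omega_2 < \ka$, and its generic adds an infinitely splitting normal free Suslin tree $T$. Let $V_1$ denote the resulting extension. Then $V_1$ satisfies \textsf{CH}, $\ka$ remains inaccessible (since $|\mathbb{J}| < \ka$), and $V_1$ contains an infinitely splitting normal free Suslin tree, so the hypotheses of the Main Theorem are met in $V_1$.

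Apply the Main Theorem in $V_1$ to obtain a forcing poset $\p$, and let $V_2$ be a generic extension of $V_1$ by $\col(\omega_1, <\! \ka) \times \p$. Conclusions (1), (2), (3), and (5) give that in $V_2$ we have $\ka = \omega_2$, \textsf{GCH} (so in particular \textsf{CH}), $T$ is still a Suslin tree, and there is no Kurepa tree. Conclusion (4) provides an almost disjoint family $\{ f_\tau : \tau < \omega_2 \}$ of automorphisms of $T$ in $V_2$.

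It remains to check that $T$ is almost Kurepa in $V_2$. Let $b$ be $T$-generic over $V_2$. Each $f_\tau[b]$ is a cofinal branch of $T$ in $V_2[b]$, and a standard genericity argument using almost disjointness of the $f_\tau$ shows that distinct indices yield distinct branches. Thus $T$ has at least $\omega_2$-many cofinal branches in $V_2[b]$, so $T$ is Kurepa there, i.e.\ $T$ is almost Kurepa in $V_2$. The genuine content of the corollary lies entirely in the Main Theorem; the only technical care needed at this stage is verifying that the preliminary forcings preserve the inaccessibility of $\ka$, which is routine since those forcings have size less than $\ka$.
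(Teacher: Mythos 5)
Your proposal is correct and takes essentially the same route as the paper, which observes that one may start with the inaccessible $\ka$, force (e.g.\ with Jech's poset) to add an infinitely splitting normal free Suslin tree via a small forcing that preserves the inaccessibility of $\ka$, and then apply the Main Theorem together with the standard fact that an almost disjoint family of $\omega_2$-many automorphisms of a Suslin tree witnesses that it is almost Kurepa. One small slip: under \textsf{CH}, Jech's poset has size $\omega_1$, not $\omega_2$, but either way it is below $\ka$, and in fact the preliminary \textsf{CH} forcing is not strictly needed since any $\omega_1$-closed forcing of size $<\ka$ (which Jech's poset is, $\ka$ being a strong limit) suffices to keep $\ka$ inaccessible while producing a free Suslin tree.
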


Concerning Problem 2, it suffices to find a generic extension as described in the 
Main Theorem which satisfies $\Diamond$. 
Start with a model $V$ in which there exists an inaccessible cardinal $\ka$ 
and $\Diamond$ holds. 
Let $\q$ be Jech's forcing in $V$ for adding a Suslin tree. 
Let $\dot \p$ be a $\q$-name for the forcing described in the Main Theorem 
using the generic Suslin tree. 
Since $\q$ is $\omega_1$-closed, the forcings 
$\q * (\col(\omega_1, < \! \ka)^{V^\q} \times \dot \p)$ and 
$(\q * \dot \p) \times \col(\omega_1,< \! \ka)$ are forcing equivalent. 
We show in Section \ref{The Main Result} that the two-step iteration $\q * \dot \p$ is forcing 
equivalent to some $\omega_1$-closed forcing, and consequently so is 
$(\q * \dot \p) \times \col(\omega_1, < \! \ka)$. 
But $\omega_1$-closed forcings preserve $\Diamond$, so $\Diamond$ holds in the generic 
extension of $V^\q$ described in the Main Theorem. 
Since we can force $\Diamond$, we have the following corollary.

\begin{corollary*}
	Assume that there exists an inaccessible cardinal $\ka$. 
	Then there exists a generic extension in which $\ka$ equals $\omega_2$, $\Diamond$ holds, 
	there exists a normal Suslin tree with $\omega_2$-many automorphisms, 
	and there does not exist a Kurepa tree.
\end{corollary*}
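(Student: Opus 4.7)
The strategy is to apply the Main Theorem over a ground model in which $\Diamond$ already holds, and then verify that $\Diamond$ is preserved by the overall forcing. Start with a model $V$ containing an inaccessible cardinal $\ka$ in which $\Diamond$ holds (for instance, $V = L$ restricted beneath $\ka$). Force with Jech's poset $\q$ to add an infinitely splitting normal free Suslin tree $T$; since $\q$ is $\omega_1$-closed and (under \textsf{CH}) has cardinality $\omega_1$, it preserves both $\Diamond$ and the inaccessibility of $\ka$. Hence $V^\q$ satisfies the hypotheses of the Main Theorem.

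In $V^\q$, let $\dot\p$ be the $\q$-name for the forcing provided by the Main Theorem applied to the generic tree $T$. The intended generic extension is given by $\q * (\dot\col(\omega_1, < \! \ka) \times \dot\p)$, which already satisfies clauses (1)--(5) of the Main Theorem as applied in $V^\q$; what remains is to check that $\Diamond$ survives. Since $\q \in V$ is $\omega_1$-closed and $\col(\omega_1,<\!\ka) \in V$, there is a natural rearrangement
\[
\q * \bigl(\dot\col(\omega_1,<\!\ka) \times \dot\p\bigr) \ \cong \ (\q * \dot\p) \times \col(\omega_1,<\!\ka),
\]
which isolates the L\'{e}vy collapse as a $V$-side product factor.

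The main obstacle is then to establish that the two-step iteration $\q * \dot\p$ is itself forcing equivalent to an $\omega_1$-closed poset; this is the substantive claim that will be proved in Section 6 using the Key Property framework developed earlier in the paper. Granting this, the right-hand side above becomes a product of two $\omega_1$-closed forcings, hence $\omega_1$-closed. Since $\omega_1$-closed forcings preserve $\Diamond$, the final extension is a model of $\Diamond$ which simultaneously inherits the conclusions of the Main Theorem, yielding a normal Suslin tree $T$ with $\omega_2$-many automorphisms and no Kurepa tree, as desired.
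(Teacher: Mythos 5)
Your argument is correct and matches the paper's proof essentially line for line: start with $\Diamond$ and an inaccessible $\ka$, force the free Suslin tree with Jech's $\omega_1$-closed poset $\q$, rearrange $\q * (\dot\col(\omega_1,<\!\ka) \times \dot\p)$ as $(\q*\dot\p) \times \col(\omega_1,<\!\ka)$, and invoke the Section~6 fact that $\q*\dot\p$ is forcing equivalent to an $\omega_1$-closed poset, so the whole product is $\omega_1$-closed and preserves $\Diamond$. There is no meaningful difference from the paper's argument.
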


Concerning Problem 4, working in the generic extension by 
$\col(\omega_1, < \! \ka) \times \p$, for all $\tau < \ka$ let 
$U_\tau = \{ (x,f_\tau(x)) : x \in T \}$. 
Then each $U_\tau$ is an uncountable downwards closed subtree of the 
Aronszajn tree $T \otimes T$, and any two such subtrees have countable 
intersection.\footnote{More generally, 
Moore has pointed out to the authors that if $T$ is a normal almost Kurepa Suslin tree, 
then the Aronszajn tree $T \otimes T$ is non-saturated. 
For suppose that 
$\langle \dot b_\tau : \tau < \omega_2 \rangle$ is a sequence of $T$-names 
for distinct cofinal branches of $T$. 
For each $\tau < \omega_2$, let $U_\tau$ be the downward closure 
of the set of $(x,y) \in T \otimes T$ such that $x \Vdash_T y \in \dot b_\tau$. 
Using the Suslinness of $T$, one can show that each $U_\tau$ is uncountable and 
any two such subtrees have countable intersection.  
So the family $\{ U_\tau : \tau < \omega_2 \}$ witnesses that $T \otimes T$ 
is not saturated.} 
We thus get the following corollary which answers Problem 4.

\begin{corollary*}
	Assume that there exists an inaccessible cardinal $\ka$. 
	Then there exists a generic extension in which $\ka$ equals $\omega_2$, 
	there exists a non-saturated Aronszajn tree, and there does not exist a Kurepa tree.
\end{corollary*}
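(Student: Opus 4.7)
The plan is to combine the Main Theorem with the observation sketched in the paragraph preceding the corollary statement. Starting from a model in which there exists an inaccessible cardinal $\ka$ (arranging \textsf{CH} first if necessary by a preliminary forcing that preserves inaccessibility), first force with Jech's forcing to add an infinitely splitting normal free Suslin tree $T$. Since this forcing is countably closed and $\omega_2$-c.c., it preserves cardinals and the inaccessibility of $\ka$. Working in this intermediate extension, apply the Main Theorem to obtain a forcing poset $\p$ such that the product $\col(\omega_1, <\! \ka) \times \p$ forces clauses (1)--(5) of the Main Theorem. The rest of the argument takes place in this final generic extension.

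Consider the product tree $T \otimes T$. Since $T$ has countable levels and is Suslin, the levels of $T \otimes T$ are countable, and any chain in $T \otimes T$ projects via either coordinate to a countable chain in $T$; so $T \otimes T$ is an Aronszajn tree. For each $\tau < \omega_2$, define
\[
U_\tau := \{ (x, f_\tau(x)) : x \in T \}.
\]
Each $U_\tau$ is uncountable because $f_\tau$ is a level-preserving bijection of $T$, and it is downwards closed in $T \otimes T$: if $(y,z) \le_{T \otimes T} (x, f_\tau(x))$, then $y \le_T x$, and $z$ must equal $f_\tau(y)$ since $f_\tau(y)$ is the unique predecessor of $f_\tau(x)$ on the level of $y$. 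For almost disjointness of the family, note that $U_\sigma \cap U_\tau$ is in bijection with $\{ x \in T : f_\sigma(x) = f_\tau(x) \}$, which is countable by the almost disjointness asserted in clause (4). Consequently $\{ U_\tau : \tau < \omega_2 \}$ is an almost disjoint family of $\omega_2$-many uncountable downwards closed subtrees of $T \otimes T$, witnessing that $T \otimes T$ is not saturated.

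Combined with clauses (1) and (5) of the Main Theorem, this yields the corollary: in the final generic extension, $\ka = \omega_2$, there is a non-saturated Aronszajn tree, and there is no Kurepa tree. The only content beyond the Main Theorem itself is the short unpacking of the definition of saturation just given; the main obstacle in the entire argument is absorbed into the construction of the forcing $\p$ of the Main Theorem, which is the business of the body of the paper.
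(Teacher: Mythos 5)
Your proposal is correct and follows exactly the route the paper sketches in the remarks preceding this corollary: apply the Main Theorem, form the graph subtrees $U_\tau = \{(x,f_\tau(x)) : x \in T\}$ of $T \otimes T$, and verify they form an antichain of $\omega_2$-many uncountable downwards closed subtrees. The explicit verifications you supply (that $T\otimes T$ is Aronszajn because $T$ is Suslin, that $U_\tau$ is downwards closed via level-preservation and uniqueness of tree predecessors, and that almost disjointness of the $f_\tau$ transfers to the $U_\tau$) are precisely the details the paper leaves implicit, and they are all right.
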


For the entirety of the article, we fix a normal infinitely splitting $\omega_1$-tree $T$. 
Our main goal is to introduce and develop a forcing which adds almost disjoint 
automorphisms of $T$ using countable approximations. 
While many of the ideas we develop apply to any tree $T$ as above, 
in order to prove the strongest properties of the automorphism forcing, such as being totally proper, 
preserving the Suslinness of $T$, and not adding new cofinal branches of $\omega_1$-trees, 
we assume in addition that $T$ is a free Suslin tree. 
A feature of this work is that the concepts and arguments we use are quite general 
and can be developed in an abstract way for adding different types of structure 
to a free Suslin tree; this direction is explored in the sequel \cite{KSfree2}.

A natural approach for proving the main consistency result of the article would be to start with a Suslin tree $T$ and an 
inaccessible cardinal $\ka$, and iterate with countable support of length $\ka$ alternating between 
adding automorphisms of $T$ and collapsing $\omega_2$. 
When the forcing is arranged in this manner, by standard forcing iteration preservation results 
(\cite{miyamoto, SHELAHBOOK2, AS2}) 
it suffices to show that each stage of the iteration preserves the Suslinness of $T$ 
and does not add new cofinal branches of $\omega_1$-trees. 
Unfortunately, we are unable to prove either of these statements unless $T$ 
is a free Suslin tree, and adding a single automorphism kills the freeness of $T$. 
In addition, the requirement that the automorphism being added is almost disjoint from those previously added 
presents additional challenges. 
For these reasons, we adopt the less standard approach given in this article.

The main ideas on which our work is based are consistency, separation, and the Key Property. 
These properties are introduced and proven to hold for the automorphism forcing in Sections 
\ref{Consistency and Separation for Automorphisms} and \ref{Constructing and Extending Automorphisms} 
and their main impact is described in Section \ref{More Properties of the Automorphism Forcing}. 
Once this work is done, the verification of the general properties of the forcing, such as being totally proper and 
preserving the Suslinness of $T$, follow by more routine bookkeeping constructions. 
The second half of the article beginning with 
Section \ref{More About Constructing and Extending Automorphisms} 
analyzes quotients of the forcing and proves that quotients 
do not add new cofinal branches of $\omega_1$-trees over intermediate models. 
This material is significantly more difficult than the first half, 
with the most novel and complex arguments of the article appearing in Section \ref{Existence of Nice Conditions} and the beginning of Section \ref{The Automorphism Forcing Adds No New Cofinal Branches}.

\bigskip

\emph{Background and preliminaries:} 
An \emph{$\omega_1$-tree} is a tree 
with height $\omega_1$ whose levels are countable. 
Let $T$ be an $\omega_1$-tree. 
For any $x \in T$, we let $\h_T(x)$ denote the height of $x$ in $T$. 
For each $\alpha < \omega_1$, $T_\alpha = \{ x \in T : \h_T(x) = \alpha \}$ 
is \emph{level $\alpha$ of $T$}, and 
$T \res \alpha = \{ x \in T : \h_T(x) < \alpha \}$. 
For all $x \in T$ and $\alpha \le \h_T(x)$, $x \res \alpha$ denotes 
the unique $y \le_T x$ with height $\alpha$. 
If $X \subseteq T_\beta$ and $\alpha < \beta$, 
$X \res \alpha$ denotes the set $\{ x \res \alpha : x \in X \}$. 
For $\alpha < \beta < \omega_1$ and $X \subseteq T_\beta$, we say that $X$ has 
\emph{unique drop-downs to $\alpha$} if the function $x \mapsto x \res \alpha$ is 
injective on $X$; similar language is used for finite tuples of elements of $T_\beta$.

A \emph{branch} of $T$ is a maximal chain, and a branch is \emph{cofinal} if 
it meets every level of the tree. 
If $b$ is a branch and $\alpha$ is an ordinal 
less than its order type, we write $b(\alpha)$ for the 
unique element of $b$ of height $\alpha$. 
An \emph{antichain} of $T$ is a set of incomparable elements of $T$. 
A \emph{subtree} of $T$ is any subset of $T$ considered as a tree 
with the order inherited from $T$. 
The tree $T$ is \emph{infinitely splitting} 
if every element of $T$ has infinitely many 
immediate successors. 
The tree $T$ is \emph{normal} if it has a root, every element of $T$ has at least two 
immediate successors, every element of $T$ has some element above it at any higher level, 
and any two distinct elements of the same limit height do not have same set of elements 
below them. 

An \emph{Aronszajn tree} is an $\omega_1$-tree with no cofinal branch. 
A tree $T$ of height $\omega_1$ is \emph{special} if it is a union of countably 
many antichains, or equivalently, there exists a 
\emph{specializing function} $f : T \to \q$, which means that 
$x <_T y$ implies that $f(x) < f(y)$. 
A \emph{Kurepa tree} is an $\omega_1$-tree with at least $\omega_2$-many cofinal branches. 
A \emph{Suslin tree} is an uncountable tree with no uncountable chain or uncountable antichain. 
Suslin trees are $\omega_1$-trees. 
A normal $\omega_1$-tree is a Suslin tree if and only if it has no uncountable antichain.

Any $\omega_1$-tree $T$ can be considered as a forcing poset, where we let $y$ be stronger 
than $x$ in the forcing if $x \le_T y$, that is, with the order reversed. 
A normal $\omega_1$-tree $T$ is Suslin if and only if the forcing poset $T$ is c.c.c. 
When we use forcing language such as ``dense'' and ``open'' when talking about 
an $\omega_1$-tree $T$, we mean with regards to $T$ considered 
as a forcing poset as just discussed. 
We highlight the following important fact because we use it 
almost every time we invoke the Suslin property: 
\emph{A normal $\omega_1$-tree $T$ is Suslin if and only if whenever $D$ is a dense open 
subset of $T$, there exists some $\gamma < \omega_1$ such that $T_\gamma \subseteq D$.} 
Note that since $D$ is open, $T_\gamma \subseteq D$ implies that $T_\xi \subseteq D$ for all 
$\gamma \le \xi < \omega_1$.

Given finitely many $\omega_1$-trees $T_0,\ldots,T_{n-1}$, the product 
$T_0 \otimes \cdots \otimes T_{n-1}$ is the partial order, ordered componentwise, 
consisting of all tuples $(a_0,\ldots,a_{n-1})$ such that for some $\alpha < \omega_1$, 
$a_k \in (T_k)_\alpha$ for all $k < n$. 
This product is a tree, and if each factor is normal, then 
so is the product. 
Let $T$ be an $\omega_1$-tree. 
For any positive $n < \omega$, 
we write $T^n$ for the product of $n$-many copies of $T$. 
If $\vec a = (a_0,\ldots,a_{n-1})$ and $\vec b = (b_0,\ldots,b_{n-1})$ are in $T^n$, 
we write $\vec a < \vec b$ to mean that $a_i <_T b_i$ for all $i < n$, and similarly 
for $\vec a \le \vec b$.

For every $a \in T$, define $T_a$ as the subtree $\{ b \in T : a \le_T b \}$. 
For any positive $n < \omega$ and $n$-tuple 
$\vec a = (a_0,\ldots,a_{n-1})$ consisting of distinct elements of $T$ of the same height, 
define $T_{\vec a}$ as the product 
$T_{a_0} \otimes \cdots \otimes T_{a_{n-1}}$, which is called a 
\emph{derived tree of $T$ with dimension $n$}. 
The tree $T$ is said to be \emph{$n$-free} 
if all of its derived trees with dimension $n$ are Suslin, 
and is \emph{free} if it is $n$-free for all positive $n < \omega$. 
Note that by the fact we highlighted in the previous paragraph, if $T$ is $n$-free 
and $T_{\vec a}$ is a derived tree of $T$ with dimension $n$, 
then for any dense open subset 
$D$ of $T_{\vec a}$, there exists some $\gamma < \omega_1$ such that every member of 
$T_{\vec a}$ whose elements have height at least $\gamma$ is in $D$.

A function $f : T \to U$ between trees 
is \emph{strictly increasing} if $x <_T y$ implies $f(x) <_U f(y)$, 
is an \emph{embedding} if $x <_T y$ iff $f(x) <_U f(y)$, 
is \emph{level preserving} if $\h_T(x) = \h_U(f(x))$ for all $x \in T$, 
is an \emph{isomorphism} if it is a bijective embedding, and 
is an \emph{automorphism} if it is an isomorphism and $T = U$.
We use the basic fact that a strictly increasing and level preserving map 
$f : T \to U$ is an embedding if and only if it is injective, and therefore is an isomorphism 
if and only if it is a bijection. 
If $f$ is an automorphism of $T$, we write $f^1$ for $f$ and $f^{-1}$ for the 
inverse of $f$. 
An $\omega_1$-tree $T$ is \emph{rigid} if there does not exist any automorphism 
of $T$ other than the identity function, and is \emph{homogeneous} if for all 
$a$ and $b$ in $T$ with the same height, there exists an automorphism 
$f : T \to T$ such that $f(a) = b$. 
For an $\omega_1$-tree $T$, $\sigma(T)$ denotes the cardinality of the set of all 
automorphisms of $T$.

When we say that a family of sets (or sequence of sets) 
is \emph{almost disjoint}, we mean that the intersection of any two 
sets in the family (or in the sequence) is countable. 
An \emph{almost Kurepa Suslin tree} is a Suslin tree such that when you force with it, 
it becomes a Kurepa tree. 
A sufficient condition for a Suslin tree $T$ 
to be an almost Kurepa Suslin tree is that there 
exists an almost disjoint family $\{ f_\tau : \tau < \omega_2 \}$ 
of automorphisms of $T$. 
For in that case, if $b$ is a cofinal branch of $T$, then 
$\{ f_\tau[b] : \tau < \omega_2 \}$ is a family of $\omega_2$-many cofinal branches of $T$. 
An \emph{antichain of subtrees} of an Aronszajn tree $T$ is an almost disjoint 
family of uncountable downwards closed subtrees of $T$. 
An Aronszajn tree $T$ is \emph{saturated} if every antichain of subtrees of $T$ 
has size at most $\omega_1$, and otherwise is \emph{non-saturated}.

When we say that a regular cardinal $\lambda$ is \emph{large enough}, we mean that it is 
large enough so that all of the sets under discussion are members of $H(\lambda)$. 
For a forcing poset $\p$ and a countable elementary substructure $N \prec H(\lambda)$ 
with $\p \in N$, a condition $q \in \p$ is a \emph{total master condition over $N$} if 
for every dense open subset $D$ of $\p$ which is a member of $N$, 
there exists some $s \in D \cap N$ such that $q \le s$. 
A forcing poset $\p$ is \emph{totally proper} if for all large enough regular cardinals 
$\lambda$ and for any countable elementary substructure $N \prec H(\lambda)$, for all 
$p \in N \cap \p$ there exists some $q \le p$ such that $q$ is a total master 
condition over $N$. 
Clearly, totally proper forcings are proper and countably distributive. 
A separative forcing is totally proper if and only if it is proper and does not add reals. 
The L\'{e}vy collapse of an inaccessible cardinal $\ka$ to become $\omega_2$, 
denoted by $\col(\omega_1, < \! \ka)$, is the forcing poset consisting of all 
countable partial functions $p$ from $\ka \times \omega_1$ into $\ka$ such that for all 
$(\alpha,\xi) \in \dom(p)$, $p(\alpha,\xi) < \alpha$, ordered by reverse inclusion. 
The L\'{e}vy collapse is $\omega_1$-closed and $\ka$-c.c. 
Finally, we note that $\omega_1$-closed forcings do not add new cofinal branches 
of $\omega_1$-trees in the ground model (\cite[Chapter V \S 8]{SHELAHBOOK2}).

\section{Consistency and Separation for Automorphisms} 
\label{Consistency and Separation for Automorphisms}

Assume for the remainder of the article that $T$ is a fixed $\omega_1$-tree 
which is normal and infinitely splitting and $\ka$ is a fixed non-zero ordinal. 
Additional assumptions about $T$ and $\ka$ are made on occasion, including most 
notably that $T$ is a free Suslin tree or that $\ka$ is an inaccessible cardinal.

If $\beta < \omega_1$, $g$ is an automorphism of $T \res (\beta+1)$, and $\alpha < \beta$, 
we write $g \res (\alpha+1)$ for the restriction of $g$ to 
$T \res (\alpha+1)$, which is an automorphism of $T \res (\alpha+1)$. 
If $\mathcal G = \{ g_\tau : \tau \in I \}$ is an indexed family of automorphisms of 
$T \res (\beta+1)$, we write 
$\mathcal G \res (\alpha+1)$ for the indexed family 
$\{ g_\tau \res (\alpha+1) : \tau \in I \}$.\footnote{Despite the set notation, by 
$\{ g_\tau : \tau \in I \}$ we mean the function with domain $I$ which maps 
$\tau \in I$ to $g_\tau$.}

\begin{definition}[Consistency]
	Let $\alpha < \beta < \omega_1$ and let $g$ be an automorphism 
	of $T \res (\beta+1)$. 
	\begin{enumerate}
	\item Let $X \subseteq T_\beta$ be finite with unique drop-downs to $\alpha$. 
	We say that $X \res \alpha$ and $X$ are \emph{$g$-consistent} 
	if for all $x, y \in X$, $g(x \res \alpha) = y \res \alpha$ iff $g(x) = y$.
	\item Let $\vec a = (a_0,\ldots,a_{n-1})$ be an injective tuple 
	consisting of elements of $T_\beta$. 
	We say that $\vec a \res \alpha$ and $\vec a$ are \emph{$g$-consistent} if 
	for all $i, j < n$, $g(a_i \res \alpha) = a_j \res \alpha$ iff 
	$g(a_i) = a_j$.
	\end{enumerate}
\end{definition}

Note that in (1) above, 
the sets $X \res \alpha$ and $X$ are $g$-consistent 
if and only if for all $x, y \in X$, 
$g(x \res \alpha) = y \res \alpha$ implies $g(x) = y$. 
The reverse implication follows from the fact that $g$ is strictly increasing. 
A similar comment applies to (2).

The following lemma is easy to check.

\begin{lemma}[Transitivity] \label{Transitivity 1}
	Let $\alpha < \beta < \gamma < \omega_1$ and let $X \subseteq T_\gamma$ be finite 
	with unique drop-downs to $\alpha$. 
	Let $g$ be an automorphism of $T \res (\gamma+1)$. 
	If $X \res \alpha$ and $X \res \beta$ are $g \res (\beta+1)$-consistent and 
	$X \res \beta$ and $X$ are $g$-consistent, 
	then $X \res \alpha$ and $X$ are $g$-consistent. 
\end{lemma}

\begin{definition}[Separation]
	Let $\alpha < \omega_1$. 
	Suppose that $\mathcal G = \{ g_\tau : \tau \in I \}$ is an indexed family of 
	automorphisms of $T \res (\alpha+1)$ 
	and $\vec a = (a_0,\ldots,a_{n-1})$ consists of distinct elements of $T_\alpha$. 
	We say that $\mathcal G$ is \emph{separated on $\vec a$} 
	if for all $k < n$:
	\begin{enumerate}
		\item for all $\tau \in I$, $g_\tau(a_k) \ne a_k$;
		\item there exists at most one triple $(i,m,\tau)$, where 
		$i < k$, $m \in \{ -1, 1 \}$, and $\tau \in I$, such that 
		$g_\tau^m(a_k) = a_i$.
	\end{enumerate}
\end{definition}

We sometimes refer to an equation of the form 
$g_\tau^m(a_k) = a_i$ as in (2) above as a \emph{relation}. 
So separation means that each member of the tuple has at most one relation 
with previous members of the tuple, and no relation with itself. 
The way in which a tuple is ordered is essential to whether or not separation holds.

\begin{lemma}[Persistence] \label{Persistence 1}
	Let $\alpha < \beta < \omega_1$ and let 
	$\mathcal G = \{ g_\tau : \tau \in I \}$ be 
	an indexed family of automorphisms of $T \res (\beta+1)$. 
	Let $\vec b = (b_0,\ldots,b_{n-1})$ 
	consist of distinct elements of $T_\beta$ 
    with unique drop-downs to $\alpha$. 
	If the indexed family $\mathcal G \res (\alpha+1)$ is separated on 
	$\vec b \res \alpha$, then 
	$\mathcal G$ is separated on $\vec b$.
\end{lemma}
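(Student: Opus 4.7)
The plan is to verify conditions (1) and (2) of Definition 5.3 directly for $\mathcal G$ on $\vec b$, by a contrapositive transfer: any violation of separation at level $\beta$ will restrict to a violation at level $\alpha$. The only fact used throughout is that each $g_\tau$ is a level-preserving automorphism of $T \res (\beta+1)$, so for every $x \in T \res (\beta+1)$ of height at least $\alpha$,
\[
g_\tau(x) \res \alpha \;=\; (g_\tau \res (\alpha+1))(x \res \alpha),
\]
and since restriction commutes with taking inverses, the analogous identity holds with $g_\tau^{-m}$ in place of $g_\tau$ for any $m \in \{ -1, 1 \}$.

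To verify condition (1), I suppose toward a contradiction that $g_\tau(b_k) = b_k$ for some $\tau \in I$ and $k < n$. Restricting both sides to height $\alpha$ yields $(g_\tau \res (\alpha+1))(b_k \res \alpha) = b_k \res \alpha$, which contradicts condition (1) of the assumed separation of $\mathcal G \res (\alpha+1)$ on $\vec b \res \alpha$ (applied at the same index $k$).

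To verify condition (2), I suppose toward a contradiction that for some $k < n$ there are two distinct triples $(i_0,m_0,\tau_0)$ and $(i_1,m_1,\tau_1)$, with $i_j < k$, $m_j \in \{ -1, 1 \}$, and $\tau_j \in I$, satisfying $g_{\tau_j}^{m_j}(b_k) = b_{i_j}$ for each $j < 2$. Restricting both equalities to height $\alpha$ gives $(g_{\tau_j} \res (\alpha+1))^{m_j}(b_k \res \alpha) = b_{i_j} \res \alpha$ for $j < 2$. The two triples remain distinct, and they now witness two relations between $b_k \res \alpha$ and earlier entries of $\vec b \res \alpha$, contradicting condition (2) of the assumed separation of $\mathcal G \res (\alpha+1)$ on $\vec b \res \alpha$.

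There is essentially no obstacle here beyond careful bookkeeping. The only point requiring a brief check is that the hypothesis is well-posed, i.e. that $\vec b \res \alpha$ is itself an injective tuple so that separation at level $\alpha$ is defined; this is built into the wording of the hypothesis, since Definition 5.3 only speaks of separation on tuples of distinct elements. Thus the lemma reduces to the observation that the restriction functor $g \mapsto g \res (\alpha+1)$ preserves each of the relational violations forbidden by separation.
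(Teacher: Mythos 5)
Your proof is correct and takes the same approach as the paper: the paper observes in one line that strict increase of each $g_\tau$ forces $g_\tau(b_i) = b_j$ to restrict to $g_\tau(b_i \res \alpha) = b_j \res \alpha$, so any violation of separation at level $\beta$ pushes down to one at level $\alpha$. You spell out the two clauses of Definition 5.3 separately, but the underlying contrapositive transfer argument is identical.
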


\begin{proof}
	For all $\tau \in I$, the fact that $g_\tau$ is strictly increasing 
	implies that for all $i, j < n$, if $g_\tau(b_i) = b_j$ 
	then $g_\tau(b_i \res \alpha) = b_j \res \alpha$. 
	So any violation of separation of $\mathcal G$ on $\vec b$ would imply a 
	violation of separation of $\mathcal G \res (\alpha+1)$ on $\vec b \res \alpha$.
\end{proof}

\begin{lemma} \label{nice ordering}
	Let $\alpha < \omega_1$. 
	Let $\vec a = (a_0,\ldots,a_{n-1})$ consist of 
	distinct elements of $T_\alpha$, and let 
	$\mathcal G = \{ g_\tau : \tau \in A \}$ be a finite 
	indexed family of automorphisms of $T \res (\alpha+1)$. 
	Then for all $\bar{n} < n$, there exists a sequence 
	$$
	\langle i_0,(i_1,m_1,\tau_1),\ldots,(i_{l-1},m_{l-1},\tau_{l-1}) \rangle, 
	$$
	for some $l \le \bar{n}+1$, such that:
	\begin{enumerate}
	\item $\bar{n} = i_0 > i_1 > \cdots > i_{l-1} \ge 0$;
	\item for all $0 < k < l$, $m_k \in \{ -1, 1 \}$, 
	$\tau_k \in A$, and 
	$g_{\tau_k}^{m_k}(a_{i_{k-1}}) = a_{i_{k}}$; 
	\item there does not exist a triple $(i,m,\tau)$ such that 
	$i < i_{l-1}$, $m \in \{ -1, 1 \}$, 
	$\tau \in A$, and $g_\tau^m(a_{i_{l-1}}) = a_i$.
	\end{enumerate}
	Moreover, if $\mathcal G$ is separated on $\vec a$, then the above sequence 
	is unique.
\end{lemma}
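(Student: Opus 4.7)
The plan is to build the sequence greedily by recursion, where at each step we ask whether the current element admits any "backward relation" to an earlier element via some $g_\tau^{\pm 1}$, and then invoke the separation hypothesis to argue that the choice at each step is forced.

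First I would set $i_0 = \bar n$ and proceed inductively. Suppose at stage $k \ge 1$ we have constructed $i_0 > i_1 > \cdots > i_{k-1}$ together with triples $(i_j,m_j,\tau_j)$ for $0 < j < k$ satisfying condition (2). I then ask whether there exists a triple $(i,m,\tau)$ with $i < i_{k-1}$, $m \in \{-1,1\}$, $\tau \in A$, and $g_\tau^m(a_{i_{k-1}}) = a_i$. If such a triple exists, pick one and declare $(i_k,m_k,\tau_k)$ to be this triple; if no such triple exists, terminate the recursion and set $l = k$, at which point conclusion (3) holds by definition. Since the $i_k$ are strictly decreasing non-negative integers bounded above by $\bar n$, this process terminates after at most $\bar n + 1$ steps, which gives $l \le \bar n + 1$ and conclusion (1). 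Conclusion (2) holds by construction.

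For the moreover clause, I would argue uniqueness by induction on $k$. Under the assumption that $\mathcal G$ is separated on $\vec a$, clause (2) of Definition 5.3 applied to the index $i_{k-1} < n$ asserts that there is at most one triple $(i,m,\tau)$ with $i < i_{k-1}$, $m \in \{-1,1\}$, $\tau \in A$, and $g_\tau^m(a_{i_{k-1}}) = a_i$. Hence at every step of the recursion the choice of $(i_k,m_k,\tau_k)$, when it exists, is forced, and whether the recursion terminates at a given stage is an objective condition on $a_{i_{k-1}}$. It follows that both the length $l$ and the sequence itself are uniquely determined.

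There is no serious obstacle here; the whole content is the observation that the greedy procedure terminates (by strict descent of the $i_k$'s in $\{0,\ldots,\bar n\}$) and that separation means the backward relation out of each $a_{i_{k-1}}$, if any, is unique. I note that clause (1) of separation (that no $g_\tau$ fixes any $a_k$) is not used explicitly in this argument, since the triples under consideration require $i < k$; it is simply part of what it means for $\mathcal G$ to be separated on $\vec a$.
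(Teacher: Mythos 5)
Your proof is correct and follows essentially the same greedy recursion as the paper: set $i_0 = \bar n$, at each stage either stop (if no backward relation exists) or append the unique such triple, and observe that strict descent in $\{0,\ldots,\bar n\}$ bounds the length by $\bar n + 1$. The uniqueness argument via clause (2) of the separation definition also matches the paper's parenthetical remark.
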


\begin{proof}
	We construct the sequence by induction. 
	Let $i_{0} = \bar{n}$. 
	Now let $k \ge 0$ and assume that we have defined 
	$\langle i_0,(i_1,m_1,\tau_1),\ldots,(i_{k},m_{k},\tau_{k}) \rangle$ 
	as described in (1) and (2). 
	If there does not exist a triple $(i,m,\tau)$ such that $i < i_k$, $m \in \{ -1, 1 \}$, 
	$\tau \in A$, and $g_\tau^m(a_{i_k}) = a_i$, 
	then let $l = k+1$ and we are done. 
	Otherwise fix such a triple $(i,m,\tau)$ (which is unique in the case that 
	$\mathcal G$ is separated on $\vec a$), and let 
	$i_{k+1} = i$, $m_{k+1} = m$, and $\tau_{k+1} = \tau$. 
	This completes the construction. 
	Note that (1) implies that $l \le \bar{n}+1$.
\end{proof}

While separation in the context of automorphisms 
depends on the way in which a tuple is ordered, 
the following notion of 
separation for sets is useful when we do not need to be explicit about what that order is.

\begin{definition}[Separation for Sets] \label{Separation for Sets 1}
	Let $\alpha < \omega_1$. 
	Suppose that $\mathcal G = \{ g_\tau : \tau \in I \}$ 
	is an indexed family of automorphisms of $T \res (\alpha+1)$ 
	and $X$ is a finite subset of $T_\alpha$. 
	We say that $\mathcal G$ is \emph{separated on $X$} if there exists some injective 
	tuple $\vec a$ which lists the elements of $X$ 
	such that $\mathcal G$ is separated on $\vec a$.
\end{definition}

\begin{lemma}[Persistence for Sets] \label{Persistence for Sets 1}
	Let $\alpha < \beta < \omega_1$ and let $\mathcal G = \{ g_\tau : \tau \in I \}$ 
	be an indexed family of automorphisms of $T \res (\beta+1)$. 
	Let $X \subseteq T_\beta$ be a finite set with unique drop-downs to $\alpha$. 
	If the indexed family $\mathcal G \res (\alpha+1)$ is separated on $X \res \alpha$, 
	then $\mathcal G$ is separated on $X$.
\end{lemma}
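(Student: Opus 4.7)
The plan is to reduce Persistence for Sets to the tuple-version Persistence (Lemma 5.4) already established. The key observation is that the unique drop-downs hypothesis provides a canonical bijection between $X$ and $X \res \alpha$ given by $x \mapsto x \res \alpha$, so any enumeration of $X \res \alpha$ lifts unambiguously to an enumeration of $X$ whose drop-down to $\alpha$ recovers the original tuple.

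More concretely, first I would unfold the hypothesis using Definition 5.6: since $\mathcal G \res (\alpha+1)$ is separated on $X \res \alpha$, fix some injective tuple $\vec a = (a_0, \ldots, a_{n-1})$ that enumerates $X \res \alpha$ and witnesses separation. Next, using unique drop-downs, for each $k < n$ let $b_k$ be the unique element of $X$ with $b_k \res \alpha = a_k$; set $\vec b = (b_0, \ldots, b_{n-1})$. The map $x \mapsto x \res \alpha$ from $X$ to $X \res \alpha$ is a bijection by unique drop-downs, so $\vec b$ is injective and enumerates all of $X$. Moreover $\vec b \res \alpha = \vec a$ by construction.

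Now apply Lemma 5.4 (Persistence) to $\vec b$: since $\mathcal G \res (\alpha+1)$ is separated on $\vec b \res \alpha = \vec a$, the conclusion of Lemma 5.4 gives that $\mathcal G$ is separated on $\vec b$. By Definition 5.6, this witnesses that $\mathcal G$ is separated on the set $X$, completing the proof.

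There is no real obstacle here; the whole content is that the set-version of separation is defined via the existence of \emph{some} witnessing enumeration, and unique drop-downs ensures that a witness for $X \res \alpha$ transports to one for $X$. The only thing to be careful about is verifying that $\vec b$ is genuinely injective and genuinely exhausts $X$, both of which are immediate from the bijectivity of $x \mapsto x \res \alpha$ on $X$.
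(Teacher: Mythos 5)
Your proof is correct and matches the paper's argument: both reduce to Lemma 5.4 (Persistence) by choosing an enumeration of $X$ whose drop-down to $\alpha$ witnesses separation of $\mathcal G \res (\alpha+1)$. The paper picks the tuple on $X$ directly, whereas you pick it on $X \res \alpha$ and lift via the drop-down bijection, but these are the same step in light of unique drop-downs.
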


\begin{proof}
	Let $\vec a$ be an injective tuple which lists the elements of $X$ so that 
	$\mathcal G \res (\alpha+1)$ is separated on $\vec a \res \alpha$. 
	Now apply Lemma \ref{Persistence 1} (Persistence).
\end{proof}

The proof of the following lemma is easy.

\begin{lemma} \label{composition}
	Let $\alpha < \omega_1$ and let $X$ be a finite subset of $T_\alpha$. 
	Suppose that $\{ f_i : i \in I \}$ and $\{ g_j : j \in J \}$ 
	are indexed families of automorphisms of $T \res (\alpha+1)$, 
	$h : I \to J$ is a bijection, and for all $i \in I$, $f_i = g_{h(i)}$. 
	If $\{ g_j : j \in J \}$ is separated on $X$, then 
	$\{ f_i : i \in I \}$ is separated on $X$.
\end{lemma}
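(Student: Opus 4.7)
The plan is to unfold the definitions of separation for sets (Definition 5.6) and separation on a tuple (Definition 5.3), and observe that the entire notion is indifferent to the index set: only the collection of automorphisms matters, and that collection is literally the same for the two families since $f_i = g_{h(i)}$ and $h$ is a bijection. So the proof will be essentially a translation of relations back and forth through $h$.

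More concretely, I would start by using the hypothesis that $\{g_j : j \in J\}$ is separated on $X$ to fix, via Definition 5.6, an injective tuple $\vec{a} = (a_0, \ldots, a_{n-1})$ listing the elements of $X$ such that $\{g_j : j \in J\}$ is separated on $\vec a$. I then claim that the same tuple $\vec a$ witnesses that $\{f_i : i \in I\}$ is separated on $X$. For clause (1) of Definition 5.3, given $k < n$ and $i \in I$, note that $f_i(a_k) = g_{h(i)}(a_k) \neq a_k$ by clause (1) applied to the $g$-family. For clause (2), suppose that both $(i_1, m_1, \iota_1)$ and $(i_2, m_2, \iota_2)$ are triples with $i_s < k$, $m_s \in \{-1, 1\}$, $\iota_s \in I$, and $f_{\iota_s}^{m_s}(a_k) = a_{i_s}$ for $s \in \{1,2\}$. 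Translating via $h$, $g_{h(\iota_s)}^{m_s}(a_k) = a_{i_s}$, so clause (2) applied to the $g$-family forces $(i_1, m_1, h(\iota_1)) = (i_2, m_2, h(\iota_2))$; since $h$ is a bijection we recover $\iota_1 = \iota_2$, and hence the two triples are identical.

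There is essentially no obstacle here beyond correctly interpreting the definitions and making sure that the injectivity of $h$ is used precisely where the uniqueness clause is invoked; the transfer of the non-fixed-point clause (1) uses only that $h$ has range in $J$, while the uniqueness clause (2) uses that $h$ is injective.
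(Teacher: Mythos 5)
Your proposal is correct and is exactly the straightforward unfolding of Definitions 5.3 and 5.6 that the paper has in mind when it simply remarks ``the proof of the following lemma is easy'' and gives no explicit argument: fix the tuple $\vec a$ witnessing separation for the $g$-family, check that the same tuple works for the $f$-family, and note that injectivity of $h$ is what makes the uniqueness-of-relations clause transfer. Your closing observation that surjectivity of $h$ is never used (so in fact separation passes down to any injectively reindexed subfamily) is also accurate and is consistent with the paper's later Lemma~5.9.
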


\begin{lemma} \label{subset}
	Let $\alpha < \omega_1$. 
	Suppose that $\mathcal G = \{ g_\tau : \tau \in I \}$ 
	is an indexed family of automorphisms of $T \res (\alpha+1)$ 
	and $X$ is a finite subset of $T_\alpha$. 
	If $\mathcal G$ is separated on $X$, then for any 
	$J \subseteq I$ and $Y \subseteq X$, $\{ g_\tau : \tau \in J \}$ is 
	separated on $Y$.
\end{lemma}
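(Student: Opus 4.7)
The plan is to unpack the two definitions (5.3 and 5.6) and verify the conclusion directly from the defining clauses, using the same tuple that witnesses the hypothesis.

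First, by Definition 5.6, since $\mathcal G$ is separated on $X$, I fix an injective tuple $\vec a = (a_0, \ldots, a_{n-1})$ enumerating $X$ that witnesses this via Definition 5.3. Next I let $\vec b = (b_0, \ldots, b_{l-1})$ be the subtuple of $\vec a$ obtained by listing exactly the elements of $Y$ in the order inherited from $\vec a$. Explicitly, there is a strictly increasing map $\phi : l \to n$ such that $b_k = a_{\phi(k)}$ for each $k < l$. Since $Y \subseteq X$, $\vec b$ enumerates $Y$ injectively.

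It then suffices to show that $\{ g_\tau : \tau \in J \}$ is separated on $\vec b$ in the sense of Definition 5.3, since this will give separation on $Y$ by Definition 5.6. For clause (1), I note that for any $\tau \in J$ and $k < l$, $g_\tau(b_k) = g_\tau(a_{\phi(k)}) \ne a_{\phi(k)} = b_k$, since $J \subseteq I$ and clause (1) already holds for $\vec a$ with the full family. For clause (2), suppose $(i, m, \tau)$ and $(i', m', \tau')$ are two triples as in Definition 5.3(2) witnessing relations $g_\tau^m(b_k) = b_i$ and $g_{\tau'}^{m'}(b_k) = b_{i'}$, with $i, i' < k$ and $\tau, \tau' \in J$. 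Translating via $\phi$ yields $g_\tau^m(a_{\phi(k)}) = a_{\phi(i)}$ and $g_{\tau'}^{m'}(a_{\phi(k)}) = a_{\phi(i')}$, where $\phi(i), \phi(i') < \phi(k)$ because $\phi$ is strictly increasing, and $\tau, \tau' \in I$. The uniqueness clause for the original tuple $\vec a$ forces $(\phi(i), m, \tau) = (\phi(i'), m', \tau')$; injectivity of $\phi$ then gives $i = i'$, so the two triples for $\vec b$ coincide.

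There is no real obstacle here beyond correctly tracking the indices through the restriction; the argument is essentially bookkeeping, and the only subtlety is the monotonicity of $\phi$, which is needed precisely so that the constraint $i < k$ translates to $\phi(i) < \phi(k)$ when invoking the hypothesis.
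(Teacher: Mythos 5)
Your proof is correct and follows the same approach as the paper: take the witnessing tuple for $X$, restrict it to $Y$ preserving the order, and observe that any failure of separation for the restricted family on the subtuple would contradict separation of $\mathcal G$ on the original tuple. You just spell out the index-tracking via the map $\phi$ more explicitly than the paper does.
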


\begin{proof}
	Let $\vec a$ be an injective tuple which lists the elements 
	of $X$ such that $\mathcal G$ is separated on $\vec a$. 
	Let $\vec b$ be an injective tuple which lists the elements 
	of $Y$ in the same order in which they appear in $\vec a$. 
	Now any counter-example to the indexed family 
	$\{ g_\tau : \tau \in J \}$ being separated on $\vec b$ would yield a 
	counter-example to $\mathcal G$ being separated on $\vec a$.
\end{proof}

\begin{definition}
	Let $\alpha < \omega_1$. 
	Suppose that $\mathcal G = \{ g_\tau : \tau \in I \}$ is an indexed 
	family of automorphisms of $T \res (\alpha+1)$. 
	We say that $\mathcal G$ is \emph{separated} if for any finite set 
	$X \subseteq T_\alpha$, $\{ g_\tau : \tau \in I \}$ 
	is separated on $X$.
\end{definition}

\section{The Key Property for Automorphisms} \label{The Key Property for Automorphisms}

In this section, we prove two results which imply that the automorphism forcing we introduce in 
Section \ref{The Automorphism Forcing and Its Basic Properties} below 
satisfies the Key Property and the $1$-Key Property. 
These two properties are used to verify properness and the preservation of Suslinness respectively.

\begin{proposition}[Key Property] \label{Key Property 1}
	Let $\alpha < \beta < \omega_1$. 
	Suppose that $a_0,\ldots,a_{n-1}$ are distinct elements 
	of $T_\alpha$ and $\mathcal G = \{ g_\tau : \tau \in A \}$ is a finite 
	indexed set of automorphisms of $T \res (\beta+1)$ such that 
	$\mathcal G \res (\alpha+1)$ is separated on $(a_0,\ldots,a_{n-1})$. 
	Let $t \subseteq T_\beta$ be finite. 
	Then there exist $b_0,\ldots,b_{n-1}$ in $T_\beta \setminus t$ 
	such that $a_i <_T b_i$ for all $i < n$, and 
	for all $\tau \in A$, 
	$(a_0,\ldots,a_{n-1})$ and $(b_0,\ldots,b_{n-1})$ are $g_\tau$-consistent.
\end{proposition}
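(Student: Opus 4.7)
The plan is to exploit the fact that separation imposes a forest structure on the relations among the $a_i$'s, so that the $b_i$'s can be chosen in a dependent manner from a small number of ``free'' choices at the roots.

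First I form the undirected graph $G$ on vertex set $\{0,\ldots,n-1\}$ whose edges are the pairs $\{i,j\}$ with $g_\tau^m(a_i) = a_j$ for some $m \in \{-1,1\}$ and $\tau \in A$. Clause (2) of the separation hypothesis applied at each vertex shows that every vertex has at most one edge to a smaller-indexed vertex, so $G$ is a forest whose components are rooted at their smallest-indexed vertices; clause (1) rules out self-loops; and no edge can carry two distinct labels $(m,\tau)$, since that would yield two backward triples at its larger endpoint. Denote the components by $C_0,\ldots,C_{s-1}$ with roots $r_0,\ldots,r_{s-1}$, and for each $k$ let $\phi_k$ be the composition of $g_\tau^{\pm 1}$'s along the unique path from $k$'s root to $k$, so that $\phi_k(a_r) = a_k$ where $r$ is the root of $k$'s component; each $\phi_k$ is a level-preserving automorphism of $T \res (\beta+1)$.

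The construction I then carry out is: pick $b_r$ above $a_r$ in $T_\beta$ for each root $r$, and set $b_k := \phi_k(b_r)$ for every $k$ in the component with root $r$. Then $b_k$ lies above $a_k$ at level $\beta$ automatically, and the forward implication of $g_\tau$-consistency follows because every relation $g_\tau(a_i) = a_j$ among the $a$'s corresponds to an edge in some component tree, whose label propagates through the construction. What requires genuine work is the converse: avoiding spurious relations $g_\tau(b_i) = b_j$ when $g_\tau(a_i) \neq a_j$, while simultaneously keeping each $b_k \notin t$.

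The hard part is organizing the remaining constraints, and the key observation is that spurious intra-component relations are handled automatically. If $i,j$ lie in the same component $C$ with root $r$ and $g_\tau(a_i) \neq a_j$, then $\psi := \phi_j^{-1} \circ g_\tau \circ \phi_i$ satisfies $\psi(a_r) = \phi_j^{-1}(g_\tau(a_i)) \neq \phi_j^{-1}(a_j) = a_r$, so for any $b_r$ above $a_r$ at level $\beta$ the image $\psi(b_r)$ sits above $\psi(a_r) \neq a_r$, giving $\psi(b_r) \neq b_r$ and hence $g_\tau(b_i) \neq b_j$; together with clause (1) of separation (covering $i=j$), this disposes of all intra-component constraints. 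The remaining cross-component constraints and the $t$-avoidance constraints contribute only finitely many forbidden values per root: processing the roots in an arbitrary order, when choosing $b_{r_C}$ I must avoid $\phi_k^{-1}(x)$ for $x \in t$ and $k \in C$, together with points of the form $\phi_i^{-1}(g_\tau^{-1}(\phi_j(b_{r'})))$ for $\tau \in A$, previously processed roots $r'$, $i \in C$, and $j$ in $r'$'s component with $g_\tau(a_i) \neq a_j$. Since $T$ is normal and infinitely splitting, there are infinitely many elements of $T_\beta$ above $a_{r_C}$, so a legal choice always exists, completing the construction.
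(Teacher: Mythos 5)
Your proof is correct, and it proceeds by a genuinely different decomposition than the paper's. You make the forest structure induced by separation explicit: the paper's proof instead runs a single induction on $k$ from $0$ to $n-1$, at each ``root'' (a vertex with no relation to a smaller-indexed one) picking $b_k$ above $a_k$ in $T_\beta \setminus t^*$, where $t^*$ is the closure of $t$ under compositions of length at most $n$ of the $g_\tau^{\pm 1}$, and at each non-root setting $b_k$ to be the forced image of the already-chosen predecessor. The final claim $b_{\bar n} \notin t$ is then verified by tracing $\bar n$ back to its root via Lemma~5.5 and observing that $b_{\text{root}} \notin t^*$. Your version front-loads the structural analysis and makes the root choices per component while explicitly listing the forbidden preimages $\phi_k^{-1}(x)$ of points of $t$; the paper's $t^*$ closure is a slicker way of packaging the same finiteness. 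Lemma~5.5 in the paper is exactly the implicit path-to-root structure that you formalize as the rooted forest.

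However, roughly half of your proof is unnecessary. The ``converse'' you identify as the hard part --- avoiding spurious relations $g_\tau(b_i) = b_j$ when $g_\tau(a_i) \neq a_j$ --- is automatic and requires no work at all. As the paper observes immediately after Definition~5.1, because each $g_\tau$ is a strictly increasing level-preserving map, $g_\tau(b_i) = b_j$ forces $g_\tau(b_i \res \alpha) = b_j \res \alpha$, i.e.\ $g_\tau(a_i) = a_j$. So only the forward implication of consistency ever needs to be checked. Consequently your $\psi$-argument for intra-component pairs and your cross-component forbidden values $\phi_i^{-1}(g_\tau^{-1}(\phi_j(b_{r'})))$ are both vacuous: those points are being avoided, but even hitting one of them would not create a spurious relation, since a spurious relation can never occur. (If you did need that apparatus, you would also have to forbid the symmetric points $\phi_i^{-1}(g_\tau(\phi_j(b_{r'})))$ arising from the pair $(j,i)$, which you omitted; but since the whole machinery is redundant this slip costs nothing.) Stripping out the converse leaves your proof as: build the rooted forest, define $\phi_k$, and for each component choose the root above $a_r$ avoiding the finitely many points $\phi_k^{-1}(x)$ for $k$ in the component and $x \in t$ --- which is essentially the paper's argument reorganized by component rather than by index.
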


\begin{proof}
	Let $t^*$ be the set of all $y \in T_\beta$ such that either $y \in t$, or 
	$y = g_{\tau_{l-1}}^{m_{l-1}}(\cdots(g_{\tau_1}^{m_1}(x))$, 
	for some $x \in t$, $l \le n+1$, $\tau_1,\ldots,\tau_{l-1} \in A$, and 
	$m_1,\ldots,m_{l-1} \in \{ -1, 1 \}$. 
	Note that $t^*$ is finite.

	By induction on $k < n$, we choose $b_k \in T_\beta$ above $a_k$, 
	maintaining that for all $k < n$:
	\begin{itemize}
	\item[(a)] for all $\tau \in A$, 
	$(a_0,\ldots,a_{k})$ and $(b_0,\ldots,b_{k})$ are $g_\tau$-consistent;
	\item[(b)] if there does not exist a triple $(j,m,\tau)$, where $j < k$, 
	$m \in \{ -1, 1 \}$, and $\tau \in A$, 
	such that $g_\tau^m(a_{k}) = a_j$, then $b_{k} \notin t^*$. 
	\end{itemize}

	For the base case, let $b_0$ be an arbitrary member of $T_\beta \setminus t^*$ 
	above $a_0$, which is possible 
	since $T$ is infinitely splitting. 
	Consider any $\tau \in A$. 
	Since $\mathcal G$ is separated on $(a_0,\ldots,a_{n-1})$, 
	$g_\tau(a_0) \ne a_0$, which implies that 
	$g_\tau(b_0) \ne b_0$. 
	So $(a_0)$ and $(b_0)$ are $g_\tau$-consistent.

	Now let $0 < k < n$ be given, 
	and assume that we have chosen $b_i$ for all 
	$i < k$ so that 
	the tuple $(b_0,\ldots,b_{k-1})$ satisfies 
	the inductive hypotheses.

	Case 1: There does not exist a triple $(j,m,\tau)$, where 
	$j < k$, $m \in \{ -1, 1 \}$, and $\tau \in A$, 
	such that $g_\tau^m(a_{k}) = a_j$. 
	In this case, let $b_{k}$ be an arbitrary member of 
	$T_\beta \setminus t^*$ above $a_{k}$, 
	which is possible since $T$ is infinitely splitting. 
	Since there are no relations between $a_{k}$ and members of the 
	tuple $(a_0,\ldots,a_{k-1})$, 
	the inductive hypothesis together with the argument 
	we gave for the base case easily imply that 
	for all $\tau \in A$, 
	$(a_0,\ldots,a_{k})$ and $(b_0,\ldots,b_{k})$ are $g_\tau$-consistent. 
	So inductive hypothesis (a) holds, and (b) is immediate 
	from the choice of $b_{k}$.

	Case 2:	There exists a triple $(j,m,\sigma)$, where 
	$j < k$, $m \in \{ -1, 1 \}$, and $\sigma \in A$, 
	such that $g_\sigma^m(a_{k}) = a_j$. 
	Hence, $a_k = g_{\sigma}^{-m}(a_j)$. 
	Since $\mathcal G \res (\alpha+1)$ is separated on $(a_0,\ldots,a_{n-1})$, 
	this triple is unique. 
	Let $b_{k} = g_\sigma^{-m}(b_{j})$. 
	By the uniqueness of the triple $(j,m,\sigma)$, the inductive hypotheses, 
	and the argument we gave in the base case, 
	it easily follows that for all $\tau \in A$, 
	$(a_0,\ldots,a_{k})$ and $(b_0,\ldots,b_{k})$ are $g_\tau$-consistent. 
	So inductive hypothesis (a) holds, and (b) holds vacuously.

	It remains to show that for all $\bar{n} < n$, $b_{\bar{n}} \notin t$. 
	Suppose for a contradiction that there exists some 
	$\bar{n} < n$ such that $b_{\bar{n}} \in t$. 
	Applying Lemma \ref{nice ordering}, fix a sequence 
	$$
	\langle i_0,(i_1,m_1,\tau_1),\ldots,(i_{l-1},m_{l-1},\tau_{l-1}) \rangle,
	$$
	for some $l \le \bar{n}+1$, such that:
	\begin{enumerate}
	\item $\bar{n} = i_0 > i_1 > \cdots > i_{l-1} \ge 0$;
	\item for all $0 < k < l$, $m_k \in \{ -1, 1 \}$, 
	$\tau_k \in A$, and $g_{\tau_k}^{m_k}(a_{i_{k-1}}) = a_{i_{k}}$; 
	\item there does not exist a triple $(i,m,\tau)$ such that 
	$i < i_{l-1}$, $m \in \{ -1, 1 \}$, 
	$\tau \in I$, and $g_\tau^m(a_{i_{l-1}}) = a_i$.
	\end{enumerate}
	By (3) and inductive hypothesis (b), $b_{i_{l-1}} \notin t^*$. 
	By (2) and Case 2, we have that 
	$$
	b_{i_{l-1}} = g_{\tau_{l-1}}^{m_{l-1}}(\cdots(g_{\tau_1}^{m_1}(b_{i_0}))).
	$$
	So $b_{i_{l-1}} \in t^*$, which is a contradiction.
\end{proof}

\begin{proposition}[$1$-Key Property] \label{1-Key Property 1}
	Let $\alpha < \beta < \omega_1$. 
	Suppose that $a_0,\ldots,a_{n-1}$ are distinct elements 
	of $T_\alpha$ and $\mathcal G = \{ g_\tau : \tau \in A \}$ is a finite 
	indexed set of automorphisms of $T \res (\beta+1)$ such that 
	$\mathcal G \res (\alpha+1)$ is separated on $(a_0,\ldots,a_{n-1})$. 
	Let $\bar{n} < n$. 
	Fix $b \in T_\beta$ such that $a_{\bar{n}} <_T b$. 
	Then there exist $b_0,\ldots,b_{n-1}$ in $T_\beta$ 
	such that $a_i <_T b_i$ for all $i < n$, $b_{\bar{n}} = b$, 
	and for all $\tau \in A$, 
	$(a_0,\ldots,a_{n-1})$ and $(b_0,\ldots,b_{n-1})$ are $g_\tau$-consistent.
\end{proposition}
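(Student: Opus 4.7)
The plan is to adapt the proof of Proposition 5.11 by using the chain of earlier relations emanating from $a_{\bar{n}}$ to propagate the prescribed value $b$ to the other chain elements. First I would apply Lemma 5.5 to $\bar{n}$ to obtain a sequence $\langle i_0, (i_1, m_1, \tau_1), \ldots, (i_{l-1}, m_{l-1}, \tau_{l-1}) \rangle$ with $i_0 = \bar{n}$, where $g_{\tau_k}^{m_k}(a_{i_{k-1}}) = a_{i_k}$ for $0 < k < l$ and $a_{i_{l-1}}$ has no earlier relation in $(a_0, \ldots, a_{n-1})$. I would then set $b_{i_0} := b$ and inductively define $b_{i_k} := g_{\tau_k}^{m_k}(b_{i_{k-1}})$ for $0 < k < l$. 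Since automorphisms of $T \res (\beta+1)$ are level-preserving and strictly increasing, applying $g_{\tau_k}^{m_k}$ to $a_{i_{k-1}} <_T b_{i_{k-1}}$ and using $g_{\tau_k}^{m_k}(a_{i_{k-1}}) = a_{i_k}$ gives $a_{i_k} <_T b_{i_k}$ inside $T_\beta$.

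Next I would process the remaining indices $k \in \{0, \ldots, n-1\} \setminus \{i_0, \ldots, i_{l-1}\}$ in increasing order, following the strategy of Proposition 5.11. If $a_k$ has an earlier relation, that is, a triple $(j, m, \sigma)$ with $j < k$, $m \in \{-1, 1\}$, $\sigma \in A$, and $g_\sigma^m(a_k) = a_j$, which is unique by separation, I would set $b_k := g_\sigma^{-m}(b_j)$; the value $b_j$ is already defined either as a chain value or from an earlier iteration, and $a_k <_T b_k$ follows from applying $g_\sigma^{-m}$ to $a_j <_T b_j$. Otherwise I would pick $b_k$ arbitrarily in $T_\beta$ above $a_k$, which is possible because $T$ is infinitely splitting.

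To verify $g_\tau$-consistency for each $\tau \in A$ I would use two observations. The reverse direction is automatic from level preservation: if $g_\tau(b_i) = b_j$, then $g_\tau(a_i) = g_\tau(b_i \res \alpha) = g_\tau(b_i) \res \alpha = b_j \res \alpha = a_j$. For the forward direction, any equation $g_\tau(a_i) = a_j$ is, by separation, the unique earlier relation of the larger of $i$ and $j$, and the construction of that index's $b$-value was designed precisely so that this relation lifts to $b$'s. The case $i = j$ is ruled out by clause (1) of separation together with level preservation applied to the reverse direction.

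The main obstacle will be verifying that the chain-based definition and the earlier-relation-based processing assign consistent values at the chain indices $i_{k-1}$ for $0 < k < l$, since $a_{i_{k-1}}$ has an earlier relation in the original tuple (namely with $a_{i_k}$, given that $i_k < i_{k-1}$, realized by $g_{\tau_k}^{m_k}$). Inverting the chain equation $b_{i_k} = g_{\tau_k}^{m_k}(b_{i_{k-1}})$ yields $b_{i_{k-1}} = g_{\tau_k}^{-m_k}(b_{i_k})$, which is exactly the formula the processing step would apply from the earlier-relation triple $(i_k, m_k, \tau_k)$, so no conflict arises; at $k = l-1$ the definition of the chain guarantees there is no earlier relation to reconcile. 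Combined with $b_{\bar{n}} = b_{i_0} = b$, the construction yields the required tuple.
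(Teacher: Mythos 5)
Your proposal is correct and takes essentially the same route as the paper's proof: both apply Lemma 5.5 to extract the chain of earlier relations emanating from $a_{\bar n}$, push the prescribed value $b$ along that chain using the $g_{\tau_k}^{m_k}$, handle each remaining index either by copying its unique earlier relation or by an arbitrary choice, and invoke separation (together with level-preservation for the reverse implication) to verify consistency. The only bookkeeping difference is that you pre-assign the chain values directly and process only the non-chain indices, whereas the paper processes all indices $0,\ldots,n-1$ in order and verifies at each chain index that the value dictated by the earlier relation matches the pre-computed chain value; your final paragraph is precisely the reconciliation the paper carries out inside its inductive hypothesis. As a minor note, the paper writes $g_\sigma^{1-m}$ in Case 2 where $g_\sigma^{-m}$ is meant, so your formula $b_k := g_\sigma^{-m}(b_j)$ is the correct one.
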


\begin{proof}
	Apply Lemma \ref{nice ordering} to find some $l \le \bar{n}+1$ and a sequence 
	$$
	\langle i_0,(i_1,m_1,\tau_1),\ldots,(i_{l-1},m_{l-1},\tau_{l-1}) \rangle,
	$$
	satisfying (1)-(3) of that lemma. 
	In particular, $i_0 = \bar{n}$. 
	Define $(c_0,\ldots,c_{l-1})$ inductively by letting $c_0 = b$, 
	and for all $0 < k < l$, 
	$c_{k} = g_{\tau_k}^{m_k}(c_{k-1})$. 
	Using the fact that $a_{\bar{n}} <_T b$ and (2) of Lemma \ref{nice ordering}, 
	it is easy to prove by induction that for all $k < l$, $a_{i_k} <_T c_k$.

	By induction on $i < n$ we choose $b_i$ in $T_\beta$ above $a_i$, 
	maintaining that for all $k < n$:
	\begin{enumerate}
	\item[(a)] for all $\tau \in A$, 
	$(a_0,\ldots,a_{k})$ and $(b_0,\ldots,b_{k})$ are $g_\tau$-consistent;
	\item[(b)] for all $m < l$, if $i_m \le k$ then $b_{i_m} = c_m$.
	\end{enumerate} 
	Assuming that we are able to define $(b_0,\ldots,b_{n-1})$ 
	with these properties, then for all $\tau \in A$, 
	$(a_0,\ldots,a_{n-1})$ and $(b_0,\ldots,b_{n-1})$ are $g_\tau$-consistent, 
	and $b_{\bar{n}} = b_{i_0} = c_0 = b$, which completes the proof.

	For the base case, if $0 \in \{ i_0, \ldots, i_{l-1} \}$, 
	then clearly $0 = i_{l-1}$, so in this case we let $b_0 = b_{i_{l-1}} = c_{l-1}$. 
	Otherwise, let $b_0$ be an arbitrary element of $T_\beta$ above $a_0$. 
	Consider any $\tau \in A$. 
	Since $\mathcal G \res (\alpha+1)$ is separated on $(a_0,\ldots,a_{n-1})$, 
	$g_\tau(a_0) \ne a_0$, and hence $g_\tau(b_0) \ne b_0$. 
	So $(a_0)$ and $(b_0)$ are $g_\tau$-consistent. 
	Clearly, the inductive hypotheses are maintained.

	Now let $0 < k < n$ and assume that we have chosen $b_i$ for all 
	$i < k$ so that $(b_0,\ldots,b_{k-1})$ satisfies the inductive hypotheses.

	Case 1:	There does not exist a triple $(j,m,\tau)$ such that 
	$j < k$, $m \in \{ -1, 1 \}$, $\tau \in A$, and 
	$g_{\tau}^m(a_k) = a_j$. 
	If $k \in \{ i_0, \ldots, i_{l-1} \}$, then clearly $k = i_{l-1}$, 
	and we let $b_{k} = b_{i_{l-1}} = c_{l-1}$. 
	So inductive hypothesis (b) holds. 
	Otherwise, choose $b_{k}$ above $a_{k}$ arbitrarily. 
	For all $\tau \in A$, $g_\tau(a_k) \ne a_k$, 
	which implies that $g_\tau(b_k) \ne b_k$. 
	So the inductive hypothesis together with the fact that 
	there are no relations between $a_k$ and members of $(a_0,\ldots,a_{k-1})$ 
	easily imply inductive hypothesis (a).

	Case 2:	There exists a triple $(j,m,\sigma)$ such that 
	$j < k$, $m \in \{ -1, 1 \}$, $\sigma \in A$, and $g_\sigma^m(a_k) = a_j$. 
	Then $a_k = g_\sigma^{-m}(a_j)$. 
	Define $b_{k} = g_\sigma^{-m}(b_j)$. 
	By the inductive hypothesis, 
	the uniqueness of the triple $(j,m,\sigma)$, 
	and the fact that $g_\tau(b_k) \ne b_k$ 
	for all $\tau \in A$, 
	it easily follows that for all $\tau \in A$, 
	$(a_0,\ldots,a_{k})$ and $(b_0,\ldots,b_{k})$ 
	are $g_\tau$-consistent. 
	In the case that $k \in \{ i_0, \ldots, i_{l-1} \}$, 
	by the uniqueness of the triple $(j,m,\sigma)$ and the assumption of Case 2 
	it must be the case that $k = i_{q-1}$ for some $q$ such that 
	$0 < q \le l-1$, $j = i_{q}$, $m = m_{q}$, and $\sigma = \tau_{q}$. 
	By the induction hypothesis, 
	$b_{i_{q}} = c_{q}$, and by definition of $b_{i_{q-1}}$ 
	and $c_{q}$, 
	$b_{i_{q-1}} = g_{\tau_{q}}^{-m_{q}}(b_{i_{q}}) = 
	g_{\tau_{q}}^{-m_{q}}(c_{q}) = c_{q-1}$.
\end{proof}

\section{Constructing and Extending Automorphisms} \label{Constructing and Extending Automorphisms}

We give several bookkeeping constructions for consistently extending automorphisms to higher levels while preserving separation. 
These arguments are fairly routine and can be optionally skipped 
by the reader.

\begin{lemma} \label{extending by one}
	Let $\gamma < \omega_1$ and let $\{ f_\tau : \tau \in I \}$ be a countable family 
	of automorphisms of $T \res (\gamma+1)$. 
	Then there exists a family $\{ g_\tau : \tau \in I \}$ of automorphisms 
	of $T \res (\gamma+2)$ such that:
	\begin{enumerate}
	\item for all $\tau \in I$, 
	$f_\tau \subseteq g_\tau$;
	\item for all distinct $\tau_0$ and $\tau_1$ in $I$ and 
	for all $x \in T_{\gamma+1}$, $g_{\tau_0}(x) \ne g_{\tau_1}(x)$.
	\end{enumerate}
\end{lemma}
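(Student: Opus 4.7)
The plan is to define each $g_\tau$ so that it agrees with $f_\tau$ on $T \res (\gamma+1)$, and then to extend to level $\gamma+1$ by specifying, for each $a \in T_\gamma$, a bijection from the set $\Succ_T(a)$ of immediate successors of $a$ onto $\Succ_T(f_\tau(a))$. Any such choice automatically yields an automorphism of $T \res (\gamma+2)$ extending $f_\tau$, since $T_{\gamma+1}$ is partitioned by $\{ \Succ_T(a) : a \in T_\gamma \}$ and $f_\tau$ already is an automorphism of $T \res (\gamma+1)$. So the only real task is to arrange condition (2).

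The key observation is that for $x \in T_{\gamma+1}$ with $a = x \res \gamma$, the value $g_\tau(x)$ is forced to lie in $\Succ_T(f_\tau(a))$. Hence, whenever $f_{\tau_0}(a) \ne f_{\tau_1}(a)$, the values $g_{\tau_0}(x)$ and $g_{\tau_1}(x)$ already lie in disjoint subsets of $T_{\gamma+1}$, and condition (2) holds for free at $x$. The problem therefore reduces to the following combinatorial task for each fixed $a \in T_\gamma$: partition $I$ by the equivalence relation $\tau \sim \tau'$ iff $f_\tau(a) = f_{\tau'}(a)$, and for each equivalence class $J$ of this partition, with common image $b = f_\tau(a)$, produce a family of bijections $\{ h_\tau : \Succ_T(a) \to \Succ_T(b) : \tau \in J \}$ which pairwise disagree at every point.

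The main step, which is the only substantive part of the argument and is elementary, is this combinatorial construction. Since $T$ is infinitely splitting, both $\Succ_T(a)$ and $\Succ_T(b)$ are countably infinite. I would fix bijections $\Succ_T(a) \cong \mathbb{Z}$ and $\Succ_T(b) \cong \mathbb{Z}$ and an injection $\iota : J \to \mathbb{Z}$, and define $h_\tau$ to act by the translation $n \mapsto n + \iota(\tau)$ through these identifications. Each $h_\tau$ is then a bijection, and for distinct $\tau_0, \tau_1 \in J$ the translations differ by the nonzero constant $\iota(\tau_0) - \iota(\tau_1)$, so $h_{\tau_0}(x) \ne h_{\tau_1}(x)$ for every $x \in \Succ_T(a)$. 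Assembling these bijections over all $a \in T_\gamma$ produces the desired family $\{ g_\tau : \tau \in I \}$, with (1) and (2) holding by construction.
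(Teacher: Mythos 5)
Your proof is correct, and it takes a genuinely different route from the paper's. The paper proves this lemma by a dovetailing (back-and-forth) construction: it fixes a bijection $h : \omega \to T_{\gamma+1} \times I$ and, over $\omega$ stages, assigns $g_\sigma(z)$ and $g_\sigma^{-1}(z)$ one pair at a time, each time choosing values above the required predecessor but outside a growing finite set $X_n$ of previously assigned elements; the avoidance of $X_n$ is what secures both bijectivity of each $g_\tau$ and the pairwise disagreement (2). Your argument instead factors the problem cleanly: you observe that the extension at level $\gamma+1$ decomposes over the parents $a \in T_\gamma$, that disagreement is automatic whenever $f_{\tau_0}(a) \ne f_{\tau_1}(a)$ because the values then land in disjoint blocks of immediate successors, and that the residual problem -- producing, for each equivalence class $J$ with common image $b$, a family of bijections $\Succ_T(a) \to \Succ_T(b)$ that disagree everywhere -- has an explicit solution by translations on $\mathbb{Z}$. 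This buys you a transparent, non-recursive construction with no bookkeeping of partial functions; the only place either argument uses infinite splitting is to have room to choose (in the paper, to dodge $X_n$; in yours, to carry out the $\mathbb{Z}$-identification). Both proofs are essentially elementary, but yours makes the separating mechanism structurally visible rather than hiding it in a diagonalization, which is a genuine improvement in readability.
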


\begin{proof}
	Fix a bijection $h : \omega \to T_{\gamma+1} \times I$. 
	Let $g_\tau \res (\gamma+1) = f_\tau$ for all $\tau \in I$. 
	We define the values of each $g_\tau$ on $T_{\gamma+1}$ 
	in $\omega$-many stages, where at any given stage 
	we have defined only finitely many values of finitely many $g_\tau$'s. 
	We also define a subset-increasing sequence $\langle X_n : n < \omega \rangle$ 
	of finite subsets of $T_{\gamma+1}$.
	
	At stage $0$, we do nothing. 
	Let $X_0 = \emptyset$. 
	Now let $n < \omega$ and suppose that stage $n$ is complete. 
	In particular, the finite set $X_n \subseteq T_{\gamma+1}$ has been defined. 
	Let $h(n) = (z,\sigma)$. 
	Stage $n+1$ consists of two steps. 
	In the first step, if $g_\sigma(z)$ is already defined, then move on to step $2$. 
	Otherwise, define $g_\sigma(z)$ to be some element of $T_{\gamma+1}$ 
	above $f_\sigma(z \res \gamma)$ which is not in $X_n$. 
	In the second step, if $g_\sigma^{-1}(z)$ is already defined, then we are done. 
	If not, then define $g_\sigma^{-1}(z)$ to be some element of $T_{\gamma+1}$ 
	above $f_\sigma^{-1}(z \res \gamma)$ which is not in $X_n$. 
	Now let $X_{n+1} = X_n \cup \{ z, g_\sigma(z), g_\sigma^{-1}(z) \}$. 

	This completes the construction. 
	It is easy to check that for all $\tau \in I$, $g_\tau$ is a strictly increasing 
	function from $T \res (\gamma+2)$ onto $T \res (\gamma+2)$. 
	Suppose for a contradiction that for some $z \in T_{\gamma+1}$, 
	$g_{\tau_0}(z) = g_{\tau_1}(z)$ for distinct $\tau_0$ and $\tau_1$ in $I$. 
	Assume that $g_{\tau_0}(z)$ was defined at stage $n$ and $g_{\tau_1}(z)$ was 
	defined at stage $m$, where $n < m$. 
	At stage $n$, either $h(n) = (\tau_0,z)$ and we defined $g_{\tau_0}(z)$, or 
	for some $y_0$, $h(n) = (\tau_0,y_0)$ and we defined $z = g_{\tau_0}^{-1}(y_0)$. 
	In either case, both $z$ and $g_{\tau_0}(z)$ are in $X_n$. 
	At stage $m$, either 
	$h(n) = (\tau_1,z)$ and we defined $g_{\tau_1}(z)$ which is not in $X_n$, or 
	for some $y_1$, $h(n) = (\tau_1,y_1)$ and we defined $z = g_{\tau_1}^{-1}(y_1)$ 
	which is not in $X_n$. 
	In the first case, $g_{\tau_1}(z)$ cannot equal $g_{\tau_0}(z)$ since the 
	latter element is in $X_n$, and the second case is impossible since $z \in X_n$. 
	So we have a contradiction. 
	A similar argument shows that each $g_\tau$ is injective, and hence is an 
	automorphism of $T \res (\gamma+2)$.
\end{proof}

\begin{lemma} \label{more extending by one}
	Assume the following:
	\begin{itemize}
	\item $\gamma < \omega_1$;
	\item $X \subseteq T_{\gamma+1}$ is finite and has unique drop-downs to $\gamma$;
	\item $\{ f_\tau : \tau \in I \}$ is a 
	countable collection of automorphisms of $T \res (\gamma+1)$;
	\item $A \subseteq I$ is finite.
	\end{itemize}
	Then there exists a family $\{ g_\tau : \tau \in I \}$ of automorphisms 
	of $T \res (\gamma+2)$ satisfying:
	\begin{enumerate}
	\item $f_\tau \subseteq g_\tau$ for all $\tau \in I$;
	\item for all $\tau \in A$, $X \res \gamma$ and $X$ are $g_\tau$-consistent;
	\item if $\{ f_\tau : \tau \in A \}$ is separated on $X \res \gamma$, 
	then $\{ g_\tau : \tau \in I \}$ is separated.
	\end{enumerate}
\end{lemma}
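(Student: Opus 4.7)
The plan is to first commit to the values of $g_\tau$ on $X$ which are forced by consistency, and then extend each $g_\tau$ to a full automorphism by a back-and-forth construction in the spirit of Lemma 5.14, but with more delicate bookkeeping to guarantee separation.

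First I would set $g_\tau \res (T \res (\gamma+1)) = f_\tau$ for each $\tau \in I$. For each $\tau \in A$ and $x \in X$, note that by the unique drop-downs of $X$ there is at most one $y \in X$ with $y \res \gamma = f_\tau(x \res \gamma)$; if such a $y$ exists I set $g_\tau(x) = y$, the value dictated by consistency. The resulting partial assignment is self-consistent: injectivity of $f_\tau$ together with unique drop-downs make the map well-defined and injective on its domain. For those $x \in X$ with $f_\tau(x \res \gamma) \notin X \res \gamma$, any eventual choice of $g_\tau(x)$ among immediate successors of $f_\tau(x \res \gamma)$ will automatically lie outside $X$ (again by unique drop-downs), so consistency on $X$ will hold regardless of how I extend.

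Next I would extend each $g_\tau$ to a bijection of $T_{\gamma+1}$ via a back-and-forth enumeration of tasks $(\sigma, z, \pm) \in I \times T_{\gamma+1} \times \{+, -\}$, each appearing cofinally often. I would view the current state as a directed, $I$-labeled graph on $T_{\gamma+1}$ whose edges $z \to w$ labeled $\sigma$ record the already declared values $g_\sigma(z) = w$. For a forward task $(\sigma, z, +)$ with $g_\sigma(z)$ not yet defined, I would choose $w$ among the infinitely many immediate successors of $f_\sigma(z \res \gamma)$, subject to (i) $w$ is not already in the image of $g_\sigma$ (injectivity), and (ii) $w$ lies in a different connected component from $z$ in the current undirected graph. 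Condition (ii) simultaneously avoids self-loops ($z$ always lies in its own component), multi-edges between $z$ and $w$, and the creation of undirected cycles. Both exclusions rule out only finitely many candidates, so a valid $w$ exists. Backward tasks $(\sigma, z, -)$ are handled symmetrically, and the resulting $g_\tau$'s are automorphisms of $T \res (\gamma+2)$ extending the $f_\tau$'s.

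For the separation conclusion (3), assume $\{f_\tau : \tau \in A\}$ is separated on $X \res \gamma$. I would first observe that the pre-committed portion of the graph on $T_{\gamma+1}$ inherits this good structure: a self-loop, multi-edge, or undirected cycle among the pre-committed edges would project down via $\res \gamma$ to the analogous bad configuration on $X \res \gamma$, contradicting the separation hypothesis. Since the back-and-forth step preserves acyclicity, absence of multi-edges, and absence of self-loops, the final graph on $T_{\gamma+1}$ has these properties globally. For any finite $Y \subseteq T_{\gamma+1}$, the induced subgraph is therefore a simple forest; a breadth-first enumeration of each tree component yields an injective tuple $(a_0, \ldots, a_{n-1})$ listing $Y$ in which every $a_k$ has at most one labeled edge connecting it to $\{a_0, \ldots, a_{k-1}\}$ (its parent in the spanning tree, if any), and no $a_k$ is a fixed point of any $g_\tau$. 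This is exactly the definition of separation on $Y$. The main obstacle is the interaction between the pre-commitment and the extension: the forced values must be compatible with global injectivity and acyclicity from the start, which is precisely what the separation hypothesis on $X \res \gamma$ provides via the projection argument; beyond this, the construction is a routine diagonalization, exploiting that at every intermediate stage the graph is finite while each pool of candidate immediate successors is infinite.
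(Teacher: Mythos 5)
Your proposal is correct, and it takes a genuinely different route from the paper's own proof. The paper's proof of this lemma directly constructs a witnessing enumeration $\langle a_k : k < \omega \rangle$ of $T_{\gamma+1}$ in stages alongside the automorphisms, maintaining as an explicit inductive hypothesis that each $a_k$ has at most one relation $g_\tau^m(a_k) = a_j$ with $j < k$; separation of any finite $Y$ is then read off an initial segment of this single enumeration. You instead organize everything around the labeled-graph characterization of separation, which the paper records in the remarks at the end of Subsection~5.1 but explicitly declines to use (``Although we will not use it\ldots''). Your key invariant is that the labeled graph on $T_{\gamma+1}$ has no self-loops, no multi-edges, and no undirected cycles; the ``choose $w$ in a different connected component'' rule preserves this invariant, and a BFS enumeration of each tree component of any finite induced subgraph recovers a separation witness at the end. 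The two approaches are essentially dual: the paper builds the witness tuple up front, while you build the graph and extract a witness afterward. Your route is somewhat more conceptual and arguably shorter, at the cost of relying on both directions of the graph-theoretic equivalence (the forward direction for the claim that separation of $\{f_\tau : \tau \in A\}$ on $X \res \gamma$ makes the pre-committed portion of the graph acyclic, simple, and loop-free after projecting by $\res\gamma$; the reverse direction for extracting the BFS witness), so a fully self-contained write-up would need to include that equivalence. One small point worth making explicit: your constraint (ii) on the forward step simultaneously subsumes constraint (i) for the purposes of separation but not for injectivity of $g_\sigma$ itself --- you correctly keep both --- and the corresponding backward step must also exclude the (at most one) already-chosen value $g_\sigma(z)$, which is implicit in ``handled symmetrically'' but is worth spelling out. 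None of this affects correctness.
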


\begin{proof}
	Fix a bijection $h : \omega \to T_{\gamma+1} \times I$. 
	For each $\tau \in I$, define $g_\tau \res (\gamma+1) = f_\tau$.

	We define the values of the functions $g_\tau$ on $T_{\gamma+1}$ 
	in $\omega$-many stages. 
	The following describes the construction:
	\begin{itemize}
	\item at any given stage $n$, we have defined only finitely many 
	values of the functions $g_\tau$ for finitely many $\tau \in I$;
	\item we define a sequence $\langle a_k : k < \omega \rangle$ 
	which enumerates $T_{\gamma+1}$, 
	where at any stage $n$ we have defined $\langle a_k : k < l_n \rangle$ for some 
	$l_n < \omega$, and let $X_n = \{ a_k : k < l_n \}$.
	\end{itemize}
	We maintain the following inductive hypotheses:
	\begin{enumerate}
	\item[(i)] for all $n$, if the value $g^m_\tau(a) = b$ is defined at stage $n$, 
	where $\tau \in I$ and $m \in \{ -1, 1 \}$, 
	then $a$ and $b$ are in $X_n$, 
	$f^m_\tau(a \res \gamma) = b \res \gamma$, and 
	if $\{ f_\tau : \tau \in A \}$ is separated on $X \res \gamma$, then $a \ne b$;
	\item[(ii)] for all $n_0 < n_1$, if $a$ and $b$ are in $X_{n_0}$ and 
	$g^m_\tau(a) = b$ has been defined by the end of stage $n_1$, 
	where $\tau \in I$ and $m \in \{ -1, 1 \}$, then 
	$g^m_\tau(a) = b$ has been defined by the end of stage $n_0$;
	\item[(iii)] in the case that 
	$\{ f_\tau : \tau \in A \}$ is separated on $X \res \gamma$, 
	then for all $n$ and $k < l_n$ there exists at most one triple $(j,m,\tau)$, 
	where $j < k$, $m \in \{ -1, 1 \}$, 
	and $\tau \in I$, such that $g_\tau^m(a_k)$ has been defined 
	by stage $n$ and $g^m_\tau(a_k) = a_j$.
	\end{enumerate}

	\underline{Stage $0$:} For each $\tau \in A$ and $x, y \in X$, define 
	$g_\tau(x) = y$ iff $f_\tau(x \res \gamma) = y \res \gamma$. 
	Let $A_0 = A$ and let $l_0 = |X|$. 
	In the case that $\{ f_\tau : \tau \in A \}$ is separated on $X \res \gamma$, 
	fix an injective enumeration $\vec a = (a_0,\ldots,a_{l_0-1})$ of $X$ 
	such that $\{ f_\tau : \tau \in A \}$ is separated on $\vec a \res \gamma$. 
	Otherwise, let $\vec a = (a_0,\ldots,a_{l_0-1})$ be 
	an arbitrary injective enumeration of $X$. 
	It is easy to check that the required properties hold.

	\underline{Stage $n+1$:} Let $n < \omega$ and suppose that stage $n$ is complete. 
	Let $h(n) = (z,\sigma)$. 
	Stage $n+1$ consists of two steps. 
	In the first step, if $g_{\sigma}(z)$ is already defined, 
	then move on to step two. 
	Otherwise, define $g_{\sigma}(z)$ to be some member of $T_{\gamma+1}$ 
	above $f_\sigma(z \res \gamma)$ which is 
	not in $X_n \cup \{ z \}$. 
	This is possible since $T$ is infinitely splitting.  
	In the second step, if $g_{\sigma}^{-1}(z)$ is already defined, then we are done. 
	Otherwise, define $g_{\sigma}^{-1}(z)$ to be some member of $T_{\gamma+1}$ 
	above $g_\sigma^{-1}(z \res \gamma)$ 
	which is not in $X_n \cup \{ z, g_{\sigma}(z) \}$. 
	Again, this is possible since $T$ is infinitely splitting. 
	Now define $\langle a_k : k < l_{n+1} \rangle$ by adding at the end of the sequence 
	$\langle a_k : k < l_n \rangle$ the elements among 
	$z$, $g_{\sigma}(z)$, and $g_{\sigma}^{-1}(z)$ which are not already in $X_n$, 
	in the order just listed.

	Let us check that inductive hypotheses (i)-(iii) hold. 
	(i) is clear. 
	For (ii), the only new equations of the form 
	$g^m_\tau(a) = b$ which were introduced at stage $n+1$, where 
	$\tau \in I$, $m \in \{ -1, 1 \}$, and $a, b \in T_{\gamma+1}$, 
	is when $\tau = \sigma$ 
	and at least one of $a$ or $b$ is in $X_{n+1} \setminus X_n$. 
	So (ii) easily follows from the inductive hypothesis.

	Now we prove (iii). 
	Assume that $\{ f_\tau : \tau \in A \}$ is separated on $X \res \gamma$. 
	Consider first the case when $z$ is not in $X_n$. 
	Then by inductive hypothesis (i), neither 
	$g_\sigma(z)$ nor $g_\sigma^{-1}(z)$ were defined at any earlier stage. 
	So by definition, $g_\sigma(z)$ and $g_{\sigma}^{-1}(z)$ are not in $X_n$. 
	Hence, the last three elements of $\langle a_k : k < l_{n+1} \rangle$ 
	are $z$, $g_\sigma(z)$, and $g_{\sigma}^{-1}(z)$. 
	The relations introduced between these three elements at stage $n+1$ 
	yield no counter-example to (iii), and $z$, $g_\sigma(z)$, and $g_\sigma^{-1}(z)$ 
	have no relations to any elements of $\langle a_k : k < l_n \rangle$. 
	(iii) follows from these observations and the inductive hypothesis.
	
	Next, consider the case when $z$ is in $X_n$. 
	Then $z$ already appears on the sequence $\langle a_k : k < l_n \rangle$. 
	At stage $n+1$, no new relations are introduced between elements of 
	$\langle a_k : k < l_n \rangle$. 
	Each new element in $X_{n+1} \setminus X_n$ has exactly one relation between 
	any other member of $X_{n+1}$, namely $z$. 
	(iii) follows from these observations and the inductive hypothesis.

	This completes the construction. 
	It is straightforward to check that 
	each $g_\tau$ is an automorphism of $T \res (\gamma+2)$. 
	By what we did at stage $0$, for all $\tau \in A$, 
	$X \res \gamma$ and $X$ are $g_\tau$-consistent. 
	Now assume that $\{ f_\tau : \tau \in A \}$ is separated on $X \res \gamma$. 
	To show that $\{ g_\tau : \tau \in I \}$ is separated, 
	let $Y \subseteq T_{\gamma+1}$ be finite. 
	Fix a large enough $n$ so that $Y \subseteq X_n$. 
	Then by Lemma \ref{subset}, 
	it suffices to show that $\{ g_\tau : \tau \in I \}$ is separated on $X_n$, 
	as witnessed by the tuple $(a_0,\ldots,a_{l_n-1})$. 
	Suppose that $k < l_n$ and the triple $(j,m,\tau)$ satisfies that 
	$j < k$, $m \in \{ -1, 1 \}$, $\tau \in I$, and 
	$g_{\tau}^{m}(a_k) = a_j$. 
	By inductive hypothesis (ii), 
	the relation $g_\tau^m(a_k) = a_j$ was introduced by the end of stage $n$. 
	By inductive hypothesis (iii), there is at most one such triple.
\end{proof}

\begin{proposition} \label{general extending}
	Assume the following:
	\begin{itemize}
	\item $\alpha < \delta < \omega_1$;
	\item $X \subseteq T_\delta$ is finite 
	and has unique drop-downs to $\alpha$;
	\item $\{ f_\tau : \tau \in I \}$ is a countable collection 
	of automorphisms of $T \res (\alpha+1)$;
	\item $A \subseteq I$ is finite.
	\end{itemize}
	Then there exists a collection $\{ g_\tau : \tau \in I \}$ of automorphisms 
	of $T \res (\delta+1)$ such that:
	\begin{enumerate}
	\item $f_\tau \subseteq g_\tau$ for all $\tau \in I$;
	\item for all $\tau \in A$, $X \res \alpha$ and $X$ are $g_\tau$-consistent;
	\item if $\{ f_\tau : \tau \in A \}$ is separated on $X \res \alpha$, 
	then $\{ g_\tau : \tau \in I \}$ is separated.
	\end{enumerate}
\end{proposition}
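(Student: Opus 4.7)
The plan is to proceed by transfinite induction on $\delta$. The base case $\delta = \alpha+1$ is Lemma~5.15 applied directly to $X \subseteq T_{\alpha+1}$. For the successor case $\delta = \gamma+1$ with $\alpha < \gamma$, I first apply the inductive hypothesis at $\gamma$ with the set $X \res \gamma$, which inherits unique drop-downs to $\alpha$ from $X$, producing automorphisms $\{g'_\tau : \tau \in I\}$ of $T \res (\gamma+1)$ extending the $f_\tau$, making $X \res \alpha$ and $X \res \gamma$ mutually $g'_\tau$-consistent for $\tau \in A$, and separated when the hypothesis applies. Then Lemma~5.15 extends this family to $T \res (\gamma+2) = T \res (\delta+1)$ handling $X$ itself, and Transitivity (Lemma~5.2) delivers consistency between $X \res \alpha$ and $X$.

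Now suppose $\delta$ is a limit ordinal. Fix an increasing sequence $\langle \delta_n : n < \omega \rangle$ cofinal in $\delta$ with $\delta_0 = \alpha$, together with enumerations $\langle z_n : n < \omega \rangle$ of $T_\delta$ and $\langle (\sigma_n, w_n) : n < \omega \rangle$ of $I \times T_\delta$ in which every pair appears infinitely often. I will build in $\omega$-many stages: finite sets $X_0 \subseteq X_1 \subseteq \dots$ with $X_0 = X$, $\bigcup_n X_n = T_\delta$, each $X_n$ having unique drop-downs to $\delta_n$; finite sets $A_0 \subseteq A_1 \subseteq \dots$ with $A_0 = A$ and $\bigcup_n A_n = I$; automorphisms $g^n_\tau$ of $T \res (\delta_n+1)$ for $\tau \in A_n$, with $g^m_\tau \subseteq g^n_\tau$ whenever $m \le n$ and $\tau \in A_m$; and a partial commitment $C_n : I \times T_\delta \rightharpoonup T_\delta$. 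The invariants are: (i) for each $(\tau, z) \in \dom(C_n)$, both $z$ and $C_n(\tau, z)$ lie in $X_n$ and $g^n_\tau(z \res \delta_n) = C_n(\tau, z) \res \delta_n$; (ii) for all $m \le n$ and $\tau \in A_m$, the pair $X_n \res \delta_m$ and $X_n \res \delta_n$ are $g^n_\tau$-consistent; and (iii) in the separated case, $\{g^n_\tau : \tau \in A_n\}$ is separated.

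At stage $n+1$, first adjoin $\sigma_n$ to $A_n$ if new, using the inductive hypothesis at $\delta_n < \delta$ to extend the single function $f_{\sigma_n}$ up to an automorphism of $T \res (\delta_n+1)$ compatible with separation of the enlarged family. Next treat the pair $(\sigma_n, w_n)$ by back-and-forth: unless $w_n \in \dom(C_n(\sigma_n, \cdot))$, pick a fresh element of $T_\delta$ above $g^n_{\sigma_n}(w_n \res \delta_n)$ and commit its image; likewise commit a preimage of $w_n$ by choosing an element above $(g^n_{\sigma_n})^{-1}(w_n \res \delta_n)$. Also adjoin $z_n$ so that the union exhausts $T_\delta$. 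Then choose $\delta_{n+1}$ sufficiently large that the enlarged finite set $X_{n+1}$ has unique drop-downs to $\delta_{n+1}$, and apply the inductive hypothesis at $\delta_{n+1} < \delta$ to the family $\{g^n_\tau : \tau \in A_{n+1}\}$ together with $X_{n+1} \res \delta_{n+1}$ (finite, with unique drop-downs to $\delta_n$), producing extensions $g^{n+1}_\tau$; the consistency guaranteed by the inductive hypothesis is exactly what propagates invariants (i) and (ii) forward.

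For each $\tau \in I$ and $z \in T_\delta$, the bookkeeping ensures $(\tau, z) \in \dom(C_n)$ for some $n$, and I define $g_\tau(z) := C_n(\tau, z)$. Invariant (i) yields $g^m_\tau(z \res \delta_m) = g_\tau(z) \res \delta_m$ for all sufficiently large $m$, so by normality of $T$, $g_\tau$ is a well-defined strictly increasing level-preserving self-map of $T \res (\delta+1)$ extending every $g^n_\tau$. Injectivity descends from injectivity at each $\delta_n$; surjectivity onto $T_\delta$ follows from the back-and-forth clause; consistency of $X \res \alpha$ and $X$ under each $g_\tau$ follows from (ii) together with Transitivity (Lemma~5.2); and in the separated case, separation of $\{g_\tau : \tau \in I\}$ on an arbitrary finite $Y \subseteq T_\delta$ follows from separation at level $\delta_n$ (for large $n$ with $Y$ having unique drop-downs to $\delta_n$) via Persistence (Lemma~5.7). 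The main obstacle is the limit-stage bookkeeping: maintaining invariant (i) forces every stage-$n+1$ application of the inductive hypothesis to include in its input set $X_{n+1} \res \delta_{n+1}$ the level-$\delta_{n+1}$ traces of all previously committed images, and ensuring that this growing but finite set still meets the unique-drop-downs hypothesis is what drives the careful choice of $\delta_{n+1}$ at each stage.
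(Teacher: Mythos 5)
Your base and successor cases (noting the reference should be to Lemma~5.14, not~5.15) and the broad back-and-forth framework at limits are consonant with the paper. However, there is a genuine gap at the point where you bring a new index $\sigma_n$ into the fold. You only construct $g^n_\tau$ for $\tau \in A_n$, so when $\sigma_n$ is new, $f_{\sigma_n}$ is still an automorphism of $T \res (\alpha+1)$ while the existing $g^n_\tau$ live on $T \res (\delta_n+1)$. You then propose ``using the inductive hypothesis \ldots to extend the single function $f_{\sigma_n}$'' so that the enlarged family $\{g^n_\tau : \tau \in A_n \cup \{\sigma_n\}\}$ is separated, but neither the inductive hypothesis nor Lemma~5.14 supports this: both operate on a whole family of automorphisms all given at the same top level, and both extend every member of that family simultaneously. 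There is no control over whether the output agrees with the already-fixed $g^n_\tau$'s, so you cannot use either tool to extend one new function while holding the previously constructed ones fixed and still obtain separation of the combined family. That claim would require a fresh back-and-forth lemma (extend a single automorphism from a lower level to a higher one, relative to an ambient separated family at the higher level), which you neither state nor prove.

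The paper sidesteps this by maintaining the full indexed family $\{f^n_\tau : \tau \in I\}$ at every stage, not just the functions indexed by $A_n$. Since all the $f^n_\tau$ always sit on the same top level $\delta_n$, the inductive hypothesis can be applied to the entire family uniformly whenever the level is raised, and its output clause~(3) then delivers separation of the whole countable family at the new level. The role of $A_n$ in the paper is only to track which indices carry a consistency commitment, not to limit which automorphisms are built. A second, downstream consequence of your leaner bookkeeping is that invariant~(iii), stating separation of $\{g^n_\tau : \tau \in A_n\}$, is too weak for your closing Persistence argument: separation of $\{g_\tau : \tau \in I\}$ on a finite $Y$ is a property quantified over all $\tau \in I$, and the violating index could lie outside $A_n$ for the $n$ you fix. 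The fix is to do what the paper does: carry $g^n_\tau$ for every $\tau \in I$ at every stage and strengthen the invariant to separation of the full family; once you do that, the $\sigma_n$-adjoining step collapses into the uniform extension step and the gap disappears.
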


\begin{proof}
	The proof is by induction on $\delta$, where the base case and the successor case 
	follow easily from Lemma \ref{more extending by one} and the inductive hypothesis. 
	Assume that $\delta$ is a limit ordinal and the statement holds for all $\beta$ 
	with $\alpha < \beta < \delta$. 
	Fix a surjection $h : \omega \to T_\delta \times I$ such that each member 
	of the codomain has an infinite preimage. 
	Fix an increasing sequence 
	$\langle \gamma_n : n < \omega \rangle$ of ordinals cofinal in $\delta$ 
	with $\gamma_0 = \alpha$.

	We define by induction in $\omega$-many stages the following objects 
	satisfying the listed properties:
	\begin{itemize}
	\item a subset-increasing sequence $\langle X_n : n < \omega \rangle$ 
	of finite subsets of $T_\delta$ with union equal to $T_\delta$;
	\item a subset-increasing sequence $\langle A_n : n < \omega \rangle$ of 
	finite subsets of $I$ with union equal to $I$;
	\item a non-decreasing sequence $\langle \delta_n : n < \omega \rangle$ of 
	ordinals cofinal in $\delta$;
	\item for each $n < \omega$ 
	a collection $\{ f^n_\tau : \tau \in I \}$ of automorphisms of 
	$T \res (\delta_n+1)$, where $f^n_\tau \subseteq f^m_\tau$ for all $m > n$;
	\item partial injective functions $h^n_\tau$ from $X_n$ to $X_n$ 
	for all $n < \omega$ and $\tau \in A_n$.
	\end{itemize}
	The following inductive hypotheses are maintained for each $n < \omega$:
	\begin{itemize}
	\item[(a)] $X_n$ has unique drop-downs to $\delta_n$;
	\item[(b)] if $n > 0$, then for all $\tau \in A_{n-1}$, 
	$X_{n-1} \res \delta_{n-1}$ and $X_{n-1} \res \delta_{n}$ are 
	$f^{n}_\tau$-consistent;
	\item[(c)]  if $\{ f_\tau : \tau \in A \}$ is separated on $X \res \alpha$, 
	then $\{ f^n_\tau : \tau \in I \}$ is separated;
	\item[(d)] for all $\tau \in A_n$ and $x, y \in X_n$, 
	$$
	h^n_\tau(x) = y \ \Longleftrightarrow \ f^n_\tau(x \res \delta_n) = y \res \delta_n.
	$$
	\end{itemize}

	\underline{Stage $0$:} Let $X_0 = X$, $A_0 = A$, and $\delta_0 = \alpha + 1$. 
	Apply the inductive hypothesis to find a collection $\{ f^0_\tau : \tau \in I \}$ 
	of automorphisms of $T \res (\alpha+2)$ satisfying:
	\begin{itemize}
	\item $f_\tau \subseteq f^0_\tau$ for all $\tau \in I$;
	\item for all $\tau \in A$, $X \res \alpha$ and $X \res (\alpha+1)$ 
	are $f_\tau^0$-consistent;
	\item if $\{ f_\tau : \tau \in A \}$ is separated on $X \res \alpha$, 
	then $\{ f_\tau^0 : \tau \in I \}$ is separated.
	\end{itemize}
	For all $x, y \in X$ and $\tau \in A$, define $h^0_\tau(x) = y$ iff 
	$f^0_\tau(x \res \delta_0) = y \res \delta_0$.
	
	\underline{Stage $n > 0$:} Let $0 < n < \omega$ and assume that we have 
	completed stage $n-1$. 
	In particular, we have defined $X_{n-1}$, $A_{n-1}$, $\delta_{n-1}$, 
	$\{ f^{n-1}_\tau : \tau \in I \}$, and $\{ h^{n-1}_\tau : \tau \in I \}$ 
	satisfying the required properties. 
	Let $h(n-1) = (z,\sigma)$.

	If $z \in X_{n-1}$, $\sigma \in A_{n-1}$, $z \in \dom(h^{n-1}_\sigma)$, 
	and $z \in \ran(h^{n-1}_\sigma)$, then there is nothing for us to do at stage $n$. 
	So let $X_{n} = X_{n-1}$, $A_n = A_{n-1}$, $\delta_n = \delta_{n-1}$, 
	$f^n_\tau = f^{n-1}_\tau$ and $h^n_\tau = h^{n-1}_\tau$ for all $\tau \in I$. 
	The required properties are immediate. 
	If not, then exactly one of the following cases holds.

	\underline{Case 1:} Either $z \notin X_{n-1}$ or $\sigma \notin A_{n-1}$. 
	Fix $\delta_{n} < \delta$ larger than $\delta_{n-1}$ and $\gamma_{n}$ 
	such that $X_{n-1} \cup \{ z \}$ has unique drop-downs to $\delta_{n}$. 
	Apply the inductive hypothesis to 
	find a collection $\{ f_\tau^{n} : \tau \in I \}$ of automorphisms 
	of $T \res (\delta_{n}+1)$ satisfying:
	\begin{enumerate}
	\item $f_\tau^{n-1} \subseteq f_\tau^{n}$ for all $\tau \in I$;
	\item for all $\tau \in A_{n-1}$, $X_{n-1} \res \delta_{n-1}$ 
	and $X_{n-1} \res \delta_{n}$ 
	are $f_\tau^{n}$-consistent;
	\item if $\{ f_\tau^{n-1} : \tau \in A_{n-1} \}$ is separated on 
	$X_{n-1} \res \delta_{n-1}$, then $\{ f_\tau^{n} : \tau \in I \}$ is separated.
	\end{enumerate}
	Define $X_{n} = X_{n-1} \cup \{ z \}$ and $A_n = A_{n-1} \cup \{ \sigma \}$.

	\underline{Case 2:} $z \in X_{n-1}$, $\sigma \in A_{n-1}$, and 
	$z \notin \dom(h^{n-1}_\sigma)$. 
	Define $\delta_n = \delta_{n-1}$, $A_n = A_{n-1}$, and 
	$f^n_\tau = f^{n-1}_\tau$ for all $\tau \in I$. 
	We claim that $f^n_\sigma(z \res \delta_n)$ is not in $X_{n-1} \res \delta_n$. 
	For otherwise let $x$ be the unique element of $X_{n-1}$ such that 
	$x \res \delta_n = f^n_\sigma(z \res \delta_n)$, which exists by unique drop-downs. 
	By inductive hypothesis (d) applied to $n-1$, 
	$h^{n-1}_\sigma(z) = x$, which is a contradiction. 
	Choose some $c \in T_\delta$ which is above $f^n_\sigma(z \res \delta_n)$. 
	Define $X_n = X_{n-1} \cup \{ c \}$, and note that by the claim just proved, 
	$X_n$ has unique drop-downs to $\delta_n$. 

	\underline{Case 3:} $z \in X_{n-1}$, $\sigma \in A_{n-1}$, 
	$z \in \dom(h^{n-1}_\sigma)$, and $z \notin \ran(h^{n-1}_\sigma)$. 
	Define $\delta_n = \delta_{n-1}$, $A_n = A_{n-1}$, and 
	$f^n_\tau = f^{n-1}_\tau$ for all $\tau \in I$. 
	By the same argument as in Case 2, 
	$(f^n_\sigma)^{-1}(z \res \delta_n)$ is not in $X_{n-1} \res \delta_{n-1}$. 
	Let $d$ be some element of $T_\delta$ which is above 
	$(f^n_\sigma)^{-1}(z \res \delta_n)$. 
	Define $X_n = X_{n-1} \cup \{ d \}$, 
	and note that $X_n$ has unique drop-downs to $\delta_n$.
	
	Now in any case, define for all $\tau \in A_n$ and $x, y \in X_n$, 
	$$
	h^n_\tau(x) = y \ \Longleftrightarrow \ f^n_\tau(x \res \delta_n) = y \res \delta_n.
	$$
	Note that in Case 2, $h^n_\sigma(z) = c$, so $z \in \dom(h^n_\sigma)$, and in Case 3, 
	$h^n_\sigma(d) = z$, so $z \in \ran(h^n_\sigma)$.

	Inductive hypotheses (a), (b), and (d) are clear. 
	Let us verify inductive hypothesis (c). 
	Suppose that $\{ f_\tau : \tau \in A \}$ is separated on $X \res \alpha$. 
	By (c) for $n-1$, $\{ f_\tau^{n-1} : \tau \in I \}$ is separated. 
	In Cases 2 and 3 we are done. 
	For Case 1, we clearly have that $\{ f_\tau^{n-1} : \tau \in A_{n-1} \}$ 
	is separated on $X_{n-1} \res \delta_{n-1}$. 
	So by statement (3) of Case 1, $\{ f_\tau^{n} : \tau \in I \}$ is separated.
		
	This completes the construction. 
	We claim that for all $0 < n < \omega$ and $\tau \in A_{n-1}$, 
	$h^n_\tau \cap X_{n-1}^2 = h^{n-1}_\tau$. 
	Let $x, y \in X_{n-1}$. 
	Then $h^n_\tau(x) = y$ iff $f^n_\tau(x \res \delta_n) = y \res \delta_n$ 
	(by (d) for $n$) iff $f^{n-1}_\tau(x \res \delta_{n-1}) = y \res \delta_{n-1}$ 
	(by (b) for $n$) iff $h^{n-1}_\tau(x) = y$ (by (d) for $n-1$). 

	Fix $\tau \in I$. 
	By our bookkeeping, it is clear that 
	$h_\tau = \bigcup \{ h^n_\tau : n < \omega, \ \tau \in A_n \}$ is 
	a bijection from $T_\delta$ to $T_\delta$. 
	And by (d), it is straightforward to show that 
	$g_\tau = \bigcup \{ f^n_\tau : n < \omega \} \cup h_\tau$ 
	is an automorphism of $T \res (\delta+1)$ satisfying that 
	$f_\tau \subseteq g_\tau$. 
	Let $\tau \in A = A_0$. 
	For any $x, y \in X = X_0$, by (d) we have that  
	$g_\tau(x) = y$ iff $h_\tau^0(x) = y$ iff $f_\tau(x \res \alpha) = y \res \alpha$ 
	iff $g_\tau(x \res \alpha) = y \res \alpha$. 
	Hence, $X \res \alpha$ and $X$ are $g_\tau$-consistent.

	Now assume that $\{ f_\tau : \tau \in A \}$ is separated on $X \res \alpha$. 
	To prove that $\{ g_\tau : \tau \in I \}$ is separated, 
	let $Y \subseteq T_\delta$ be finite. 
	Fix $n$ large enough so that $Y \subseteq X_n$. 
	Then by inductive hypothesis (a), $Y$ has unique drop-downs to $\delta_n$. 
	By inductive hypothesis (c), 
	$\{ f^n_\tau : \tau \in I \}$ is separated on $X_n \res \delta_n$. 
	By Lemma \ref{Persistence for Sets 1} (Persistence for Sets), it follows that 
	$\{ g_\tau : \tau \in B \}$ is separated on $Y$.
\end{proof}

\section{The Automorphism Forcing and Its Basic Properties} \label{The Automorphism Forcing and Its Basic Properties}

We now introduce the main forcing of the article.

\begin{definition}
	Let $\q$ be the forcing poset whose conditions are all automorphisms 
	$f : T \res (\alpha+1) \to T \res (\alpha+1)$, for some $\alpha < \omega_1$, 
	ordered by $g \le f$ if $f \subseteq g$. 
	If $f \in \q$ is an automorphism of $T \res (\alpha+1)$, then $\alpha$ is the 
	\emph{top level of $f$}.
\end{definition}

\begin{definition}[Automorphism Forcing] \label{poset definition}
	Let $\p$ be the forcing poset whose conditions are all functions $p$ satisfying:
	\begin{enumerate}
		\item the domain of $p$ is a countable subset of $\ka$;
		\item there exists an ordinal $\alpha < \omega_1$, which we call 
		the \emph{top level of $p$}, such that 
		for all $\tau \in \dom(p)$, $p(\tau)$ is an automorphism of $T \res (\alpha+1)$. 
	\end{enumerate} 
	Let $q \le p$ if $\dom(p) \subseteq \dom(q)$ and for all $\tau \in \dom(p)$, 
	$p(\tau) \subseteq q(\tau)$.
\end{definition}

\begin{definition}[Consistency]
	Let $\alpha < \beta < \omega_1$ and let $q \in \q$. 
	\begin{enumerate}
	\item Let $X \subseteq T_\beta$ be finite with unique drop-downs to $\alpha$. 
	We say that $X \res \alpha$ and $X$ are \emph{$q$-consistent} 
	if for all $x, y \in X$, $q(x \res \alpha) = y \res \alpha$ iff $q(x) = y$.
	\item Let $\vec a = (a_0,\ldots,a_{n-1})$ consist of distinct elements of $T_\beta$. 
	We say that $\vec a \res \alpha$ and $\vec a$ are \emph{$q$-consistent} if 
	for all $i, j < n$, $q(a_i \res \alpha) = a_j \res \alpha$ iff 
	$q(a_i) = a_j$.
	\end{enumerate}
\end{definition}

\begin{definition}[Separation]
	Let $\alpha < \omega_1$. 
	Suppose that $p \in \p$ has top level $\alpha$, 
	$A \subseteq \dom(p)$, and 
	$\vec a = (a_0,\ldots,a_{n-1})$ is an injective tuple whose members are in $T_\alpha$. 
	We say that $\{ p(\tau) : \tau \in A \}$ is \emph{separated on $\vec a$} if 
	for all $k < n$:
	\begin{enumerate}
	\item for all $\tau \in A$, $p(\tau)(a_k) \ne a_k$;
	\item there exists at most one triple $(j,m,\tau)$, where 
	$j < k$, $m \in \{ -1, 1 \}$, and $\tau \in A$, such that 
	$p(\tau)^m(a_k) = a_j$.
	\end{enumerate}
\end{definition}

\begin{definition}[Separation for Sets]
	Let $\alpha < \omega_1$. 
	Suppose that $p \in \p$ has top level $\alpha$, 
	$A \subseteq \dom(p)$, and $X \subseteq T_\alpha$ is finite. 
	We say that $\{ p(\tau) : \tau \in A \}$ is \emph{separated on $X$} 
	if there exists some injective tuple $\vec a$ which lists the elements of $X$ 
	such that $\{ p(\tau) : \tau \in A \}$ is separated on $\vec a$.
\end{definition}

The next two lemmas follow immediately by Lemmas \ref{Transitivity 1} and \ref{Persistence 1}.

\begin{lemma}[Transitivity] \label{Transitivity 2}
	Suppose that $\alpha < \beta < \gamma < \omega_1$, $r \le q$, and the top levels of $q$ 
	and $r$ are $\alpha$ and $\beta$ respectively. 
	Let $\vec b$ be an injective tuple whose members are in $T_\gamma$ 
	with unique drop-downs to $\alpha$. 
	If $\vec b \res \alpha$ and $\vec b \res \beta$ are $q$-consistent and 
	$\vec b \res \beta$ and $\vec b$ are $r$-consistent, then 
	$\vec b \res \alpha$ and $\vec b$ are $r$-consistent.	
\end{lemma}

\begin{lemma}[Persistence] \label{Persistence 2}
	Let $\alpha < \beta < \omega_1$. 
	Suppose that $p \in \p$ has top level $\alpha$, 
	$A \subseteq \dom(p)$, 
	$\vec a$ is an injective tuple whose members are in $T_\alpha$, 
	and $\{ p(\tau) : \tau \in A \}$ is separated on $\vec a$. 
	Then for any $q \le p$ with top level $\beta$ and any tuple 
	$\vec b$ above $\vec a$ whose members are in $T_\beta$, 
	$\{ q(\tau) : \tau \in B \}$ is separated on $\vec b$.
\end{lemma}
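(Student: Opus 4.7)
The plan is to reduce this lemma to the analogous statement already established for indexed families of automorphisms, namely Lemma 5.4 (Persistence). The key observation is that if $q \le p$, then for each $\tau \in A$ we have $p(\tau) \subseteq q(\tau)$, and since $p(\tau)$ and $q(\tau) \res (\alpha+1)$ are both automorphisms of $T \res (\alpha+1)$ agreeing on a function that is already a total automorphism, they coincide: $p(\tau) = q(\tau) \res (\alpha+1)$. Consequently, if we set $\mathcal{G} = \{q(\tau) : \tau \in A\}$ as an indexed family, then $\mathcal{G} \res (\alpha+1) = \{p(\tau) : \tau \in A\}$.

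Now I would unpack the hypothesis that $\{p(\tau) : \tau \in A\}$ is separated on $\vec a$ according to Definition 5.20, which is exactly the condition that the indexed family $\mathcal{G} \res (\alpha+1)$ is separated on $\vec a$ in the sense of Definition 5.3. Since $\vec b$ is above $\vec a$ (and in particular $\vec b \res \alpha = \vec a$, noting that $\vec b$ is injective because $\vec a$ is), Lemma 5.4 applies directly and yields that $\mathcal{G} = \{q(\tau) : \tau \in A\}$ is separated on $\vec b$, which is the desired conclusion. There is no substantive obstacle here; the only care required is the bookkeeping to confirm that the set-theoretic notation $\{p(\tau) : \tau \in A\}$ used in Definitions 5.20 and 5.21 corresponds to the indexed-family notion in Definition 5.3 when the map $\tau \mapsto p(\tau)$ is used as the indexing, so that Lemma 5.4 may be invoked verbatim.
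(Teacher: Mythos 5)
Your proof is correct and takes exactly the same route as the paper: the paper's proof reads simply ``Immediate by Lemma 5.4 (Persistence),'' and you have filled in precisely the bookkeeping needed to make that reduction explicit, namely that $q \le p$ gives $p(\tau) = q(\tau) \res (\alpha+1)$ for each $\tau \in A$, so that the indexed family $\{q(\tau) : \tau \in A\}$ restricted to level $\alpha$ is $\{p(\tau) : \tau \in A\}$ and Lemma 5.4 applies. (Note also the typo in the statement: the conclusion should read $\tau \in A$ rather than $\tau \in B$, as you correctly assumed.)
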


\begin{lemma}[Persistence for Sets] \label{Persistence for Sets 2}
	Let $\alpha < \beta < \omega_1$. 
	Assume that $p$ is a condition with top level $\alpha$, 
	$q \le p$, $q$ has top level $\beta$, $X \subseteq T_\beta$ is finite and 
	has unique drop-downs to $\alpha$, 
	and $A \subseteq \dom(p)$ is finite. 
	If $\{ p(\tau) : \tau \in A \}$ is separated on $X \res \alpha$, then 
	$\{ q(\tau) : \tau \in A \}$ is separated on $X$.	 
\end{lemma}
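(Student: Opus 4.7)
The plan is to derive this lemma directly from Lemma 5.7 (Persistence for Sets), which already handles exactly this kind of "pulling separation up a level" for abstract indexed families of automorphisms of $T$. The bridge between the forcing formulation and the abstract formulation is the observation that, since $q \le p$ and $p$ has top level $\alpha$, we have $q(\tau) \res (\alpha+1) = p(\tau)$ for every $\tau \in A$. Consequently, if we set $\mathcal G = \{ q(\tau) : \tau \in A \}$, then $\mathcal G \res (\alpha+1)$ is literally the indexed family $\{ p(\tau) : \tau \in A \}$.

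With this identification in place, the argument is essentially a single invocation. The hypothesis that $\{ p(\tau) : \tau \in A \}$ is separated on $X \res \alpha$ becomes the statement that $\mathcal G \res (\alpha+1)$ is separated on $X \res \alpha$. Combined with the assumption that $X \subseteq T_\beta$ is finite and has unique drop-downs to $\alpha$, this matches the hypotheses of Lemma 5.7 exactly, and that lemma delivers the conclusion that $\mathcal G$ is separated on $X$, which is precisely what we needed.

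If one prefers to avoid passing through the set-version Lemma 5.7, the same proof can be carried out at the tuple level using Lemma 5.21 (Persistence) directly: unfold the definition of separation on a set to obtain an injective enumeration $\vec a$ of $X \res \alpha$ witnessing separation for $\{ p(\tau) : \tau \in A \}$, let $\vec b$ be the unique injective enumeration of $X$ with $\vec b \res \alpha = \vec a$ (which exists because $X$ has unique drop-downs to $\alpha$), and then apply Lemma 5.21 to transfer separation from $\vec a$ to $\vec b$. Either route is completely mechanical; there is no genuine obstacle, since all of the combinatorial work concerning how relations $g_\tau^m(a) = b$ lift from level $\alpha$ to level $\beta$ has already been done inside Lemma 5.4 and its consequences.
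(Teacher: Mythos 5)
Your proof is correct, and it is essentially the paper's. The paper's actual proof is precisely your second ("tuple-level") route: pick an injective enumeration $\vec a$ of $X$ whose drop-down $\vec a \res \alpha$ witnesses separation of $\{p(\tau) : \tau \in A\}$, then invoke Lemma 5.21 (Persistence). Your primary route through Lemma 5.7 is only superficially different, since Lemma 5.7 and Lemma 5.21 are both one-line reductions to Lemma 5.4 and the bridging observation $q(\tau) \res (\alpha+1) = p(\tau)$ is exactly what makes the forcing-level and abstract-family-level definitions of separation coincide; either phrasing is fine.
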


\begin{proof}
	Let $\vec a$ be an injective tuple which lists the elements of $X$ in such a 
	way that $\{ p(\tau) : \tau \in A \}$ is separated on $\vec a \res \alpha$. 
	Then $\{ q(\tau) : \tau \in A \}$ is separated on $\vec a$ by 
	Lemma \ref{Persistence 2} (Persistence).
\end{proof}

\begin{definition}
	A condition $p \in \p$ with top level $\alpha < \omega_1$ 
	is \emph{separated} if for any finite set $X \subseteq T_\alpha$, 
	$\{ p(\tau) : \tau \in \dom(p) \}$ is separated on $X$.
\end{definition}

The next three results follow immediately by 
Propositions \ref{general extending}, 
\ref{Key Property 1}, and \ref{1-Key Property 1}.

\begin{proposition}[Extension] \label{Extension 2}
	Let $\alpha < \beta < \omega_1$ and let $X \subseteq T_\beta$ be finite with 
	unique drop-downs to $\alpha$. 
	Suppose that $p \in \p$ has top level $\alpha$ and $A \subseteq \dom(p)$ is finite. 
	Then there exists some $q \le p$ with top level $\beta$ and 
	with the same domain as $p$ such that 
	for all $\tau \in A$, $X \res \alpha$ and $X$ are $q(\tau)$-consistent. 
	Moreover, if $\{ p(\tau) : \tau \in A \}$ is separated on $X \res \alpha$, then 
	we can find such a condition $q$ which is separated.
\end{proposition}

\begin{proposition}[Key Property] \label{Key Property 2}
	Let $\alpha < \beta < \omega_1$. 
	Suppose that $a_0,\ldots,a_{n-1}$ are distinct elements 
	of $T_\alpha$, $p \in \p$ has top level $\alpha$, 
	$A \subseteq \dom(p)$ is finite, and 
	$\{ p(\tau) : \tau \in A \}$ is separated on $(a_0,\ldots,a_{n-1})$. 
	Then for any $q \le p$ with top level $\beta$ and any finite set $t \subseteq T_\beta$, 
	there exist $b_0,\ldots,b_{n-1}$ in $T_\beta \setminus t$ 
	such that $a_i <_T b_i$ for all $i < n$, and 
	for all $\tau \in A$, 
	$(a_0,\ldots,a_{n-1})$ and $(b_0,\ldots,b_{n-1})$ are $q(\tau)$-consistent.
\end{proposition}

\begin{proposition}[$1$-Key Property] \label{1-Key Property 2}
	Let $\alpha < \beta < \omega_1$. 
	Suppose that $a_0,\ldots,a_{n-1}$ are distinct elements 
	of $T_\alpha$, $p \in \p$ has top level $\alpha$, $A \subseteq \dom(p)$ is finite, 
	and $\{ p(\tau) : \tau \in A \}$ is separated on $(a_0,\ldots,a_{n-1})$. 
	Fix $\bar{n} < n$ and $b \in T_\beta$ with $a_{\bar{n}} <_T b$. 
	Then for any $q \le p$ with top level $\beta$, 
	there exist $b_0,\ldots,b_{n-1}$ in $T_\beta$ 
	such that $a_i <_T b_i$ for all $i < n$, $b_{\bar{n}} = b$, 
	and for all $\tau \in A$, 
	$(a_0,\ldots,a_{n-1})$ and $(b_0,\ldots,b_{n-1})$ are $q(\tau)$-consistent.
\end{proposition}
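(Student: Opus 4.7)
The plan is to derive this proposition directly from Proposition 5.12, the abstract $1$-Key Property for indexed families of automorphisms. The translation is essentially a matter of extracting the right indexed family of automorphisms from the condition $q$ and checking that the hypotheses of Proposition 5.12 are met.

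First I would set $\mathcal{G} = \{q(\tau) : \tau \in A\}$, viewed as a finite indexed family of automorphisms of $T \res (\beta+1)$. The crucial observation is that $\mathcal{G} \res (\alpha+1)$ coincides with $\{p(\tau) : \tau \in A\}$. Indeed, since $q \le_\p p$ we have $p(\tau) \subseteq q(\tau)$ for each $\tau \in A$; because $p(\tau)$ is already a bijection of $T \res (\alpha+1)$ and $q(\tau)$ is a strictly increasing extension, no element of $T \res (\alpha+1)$ can receive a new image, so $q(\tau) \res (\alpha+1) = p(\tau)$. The hypothesis that $\{p(\tau) : \tau \in A\}$ is separated on $(a_0,\ldots,a_{n-1})$ therefore says exactly that $\mathcal{G} \res (\alpha+1)$ is separated on $(a_0,\ldots,a_{n-1})$, which is the input required by Proposition 5.12.

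Then I would apply Proposition 5.12 with this $\mathcal{G}$, the index $\bar n < n$, and the chosen element $b \in T_\beta$ above $a_{\bar n}$. The conclusion produces elements $b_0,\ldots,b_{n-1}$ in $T_\beta$ with $a_i <_T b_i$ for all $i < n$, with $b_{\bar n} = b$, and with $(a_0,\ldots,a_{n-1})$ and $(b_0,\ldots,b_{n-1})$ being $q(\tau)$-consistent for every $\tau \in A$. This is precisely the desired conclusion.

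There is essentially no obstacle here: all of the substantive combinatorics, namely the propagation of the prescribed value $b_{\bar n} = b$ along the uniquely determined chain of relations given by Lemma 5.5 together with the verification that no conflicts arise, has already been carried out in the proof of Proposition 5.12. The present proposition is just the reformulation of that abstract statement in the language of the automorphism forcing $\p$, and the only bookkeeping needed is the identification $q(\tau) \res (\alpha+1) = p(\tau)$ noted above.
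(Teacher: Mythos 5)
Your proof is correct and follows exactly the paper's approach: the paper's proof of this proposition is simply ``Immediate from Proposition 5.12 letting $\mathcal G = \{ q(\tau) : \tau \in A \}$.'' You have spelled out the same reduction, including the (correctly identified) bookkeeping point that $q(\tau) \res (\alpha+1) = p(\tau)$, which the paper leaves implicit.
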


\begin{lemma} \label{extending domain}
	For any $\tau < \ka$ and $\rho < \omega_1$, the set of conditions $q \in \p$ 
	such that $\tau \in \dom(q)$ and the top level of $q$ is at least $\rho$ 
	is dense open.
\end{lemma}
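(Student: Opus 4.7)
The plan is to prove density and openness separately, with density being the substantive part.

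For openness, suppose $q \le p$ and $p$ belongs to the set, i.e.\ $\tau \in \dom(p)$ and $p$ has top level at least $\rho$. By the definition of the order on $\p$, $\dom(p) \subseteq \dom(q)$, so $\tau \in \dom(q)$. By property (A) of Section~2, the top level of $q$ is at least that of $p$, hence at least $\rho$. So $q$ belongs to the set.

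For density, let $p \in \p$ with top level $\alpha$ be arbitrary. I would handle the two requirements ($\tau \in \dom(\cdot)$ and top level $\ge \rho$) in two steps. First, if $\tau \notin \dom(p)$, define an intermediate condition $p^*$ with $\dom(p^*) = \dom(p) \cup \{\tau\}$ by setting $p^* \res \dom(p) = p$ and $p^*(\tau) = \mathrm{id}_{T \res (\alpha+1)}$. The identity is trivially an automorphism of $T \res (\alpha+1)$, so $p^* \in \p$ is a condition of top level $\alpha$ with $p^* \le p$. If $\tau \in \dom(p)$ already, simply take $p^* = p$.

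Second, choose any ordinal $\delta < \omega_1$ with $\delta > \alpha$ and $\delta \ge \rho$. Apply Proposition~5.15 (the limit-case existence result for extending families of automorphisms) with $I = \dom(p^*)$, $f_\sigma = p^*(\sigma)$ for each $\sigma \in I$, and with $X = \emptyset$ and $A = \emptyset$ (so the consistency and separation conclusions are vacuous). This yields a family $\{g_\sigma : \sigma \in I\}$ of automorphisms of $T \res (\delta+1)$ such that $p^*(\sigma) \subseteq g_\sigma$ for every $\sigma \in I$. Define $q$ with $\dom(q) = \dom(p^*)$ and $q(\sigma) = g_\sigma$. Then $q \in \p$ has top level $\delta \ge \rho$ and $q \le p^* \le p$, with $\tau \in \dom(q)$.

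There is no real obstacle here — the work has been packaged into Proposition~5.15, whose proof (by induction on $\delta$, using Lemma~5.14 at successor and limit stages) already handles the genuine content of constructing automorphisms of $T \res (\delta+1)$ extending a given countable family. The only point to be careful about is that Proposition~5.15 requires $\alpha < \delta$, which is why I explicitly pick $\delta > \alpha$ with $\delta \ge \rho$; in the degenerate case $\alpha \ge \rho$ we still extend to some $\delta > \alpha$ just to invoke the proposition, which is harmless.
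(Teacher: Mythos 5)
Your proof is correct and takes essentially the same route as the paper: add $\tau$ to the domain via the identity automorphism, then extend the top level using the automorphism-extension machinery (the paper cites Lemma~5.24, which is itself just a thin wrapper around Proposition~5.15, the result you invoke directly).
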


\begin{proof}
	Given any condition $p \in \p$, we can easily add $\tau$ to the domain of $p$, 
	for example, by attaching to it the identity automorphism. 
	The second part of the statement follows from Proposition \ref{Extension 2} (Extension).
\end{proof}

\begin{lemma} \label{another dense set} 
	For any $\tau_0 < \tau_1 < \ka$, the set of conditions $q \in \p$ 
	with some top level $\alpha$ 
	satisfying that $\tau_0, \tau_1 \in \dom(q)$ and for all $x \in T_\alpha$, 
	$q(\tau_0)(x) \ne q(\tau_1)(x)$, is dense open.
\end{lemma}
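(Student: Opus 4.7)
The plan is to prove openness directly and then reduce density to two previously established lemmas (5.29 and 5.13).

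For openness, suppose $q$ is in the set with top level $\alpha$, and let $r \le q$ with top level $\beta \ge \alpha$. Then $\tau_0, \tau_1 \in \dom(q) \subseteq \dom(r)$. For any $x \in T_\beta$, since $r(\tau_i)$ is an automorphism of $T \res (\beta+1)$ extending $q(\tau_i)$ and is level preserving, we have $r(\tau_i)(x) \res \alpha = r(\tau_i)(x \res \alpha) = q(\tau_i)(x \res \alpha)$. By assumption on $q$, $q(\tau_0)(x \res \alpha) \ne q(\tau_1)(x \res \alpha)$, and hence $r(\tau_0)(x) \ne r(\tau_1)(x)$. So $r$ is in the set.

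For density, let $p \in \p$ be arbitrary. First apply Lemma 5.29 (twice) to obtain some $p^* \le p$ with $\tau_0, \tau_1 \in \dom(p^*)$; let $\alpha$ denote the top level of $p^*$. Now apply Lemma 5.13 to the countable family of automorphisms $\{p^*(\tau) : \tau \in \dom(p^*)\}$ of $T \res (\alpha+1)$ to obtain a family $\{g_\tau : \tau \in \dom(p^*)\}$ of automorphisms of $T \res (\alpha+2)$ such that $p^*(\tau) \subseteq g_\tau$ for all $\tau \in \dom(p^*)$, and for any distinct $\sigma, \sigma' \in \dom(p^*)$ and any $x \in T_{\alpha+1}$, $g_\sigma(x) \ne g_{\sigma'}(x)$. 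Define $q$ with $\dom(q) = \dom(p^*)$ and $q(\tau) = g_\tau$ for all $\tau \in \dom(q)$. Then $q \in \p$ has top level $\alpha+1$, $q \le p^* \le p$, and by the special case $\sigma = \tau_0$, $\sigma' = \tau_1$, for all $x \in T_{\alpha+1}$ we have $q(\tau_0)(x) \ne q(\tau_1)(x)$. So $q$ is in the desired set.

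There is no real obstacle here: the work has already been done in Lemma 5.13, whose construction freely adjoins top-level values of the automorphisms while forcing pairwise distinctness, and in Lemma 5.29, which supplies both coordinates in the domain.
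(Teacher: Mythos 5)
Your proof is correct in structure and takes essentially the same route as the paper, which likewise combines the domain-adding lemma with Lemma~5.13 for density and uses the strictly increasing, level-preserving property of automorphisms for openness. However, you mis-cite the lemma used to get $\tau_0,\tau_1$ into the domain: the lemma that does this is Lemma~5.27 (adding an ordinal $\tau$ to the domain while raising the top level), not Lemma~5.29 (Separated Conditions are Dense), which extends a condition to a separated one but says nothing about enlarging the domain. Applying Lemma~5.29 ``twice'' would not produce $\tau_0,\tau_1\in\dom(p^*)$. With Lemma~5.27 substituted for Lemma~5.29, your argument matches the paper's.
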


\begin{proof}
	The set of such conditions is dense by Lemmas \ref{extending by one} 
	and \ref{extending domain}. 
	It is easy to check that 
	if $\alpha < \beta$, $f$ and $g$ are automorphisms of $T \res (\beta+1)$, 
	$x \in T_\alpha$, and $f(x) \ne g(x)$, then for any $y \in T_\beta$ above 
	$x$, $f(y) \ne f(y)$. 
	This fact easily implies that the set of such conditions in open.
\end{proof}

The next lemma follows immediately from Lemma \ref{more extending by one}.

\begin{lemma}[Separated Conditions are Dense] \label{Separated Conditions are Dense}
	The set of separated conditions is dense in $\p$. 
	In fact, suppose that $p$ is a condition with top level $\gamma < \omega_1$, 
	$X \subseteq T_{\gamma+1}$ is finite with 
	unique drop-downs to $\gamma$, 
	$A \subseteq \dom(p)$ is finite, 
	and $\{ p(\tau) : \tau \in A \}$ is separated on $X \res \gamma$. 
	Then there exists $q \le p$ with top level $\gamma+1$ such that 
	$q$ is separated on $T_{\gamma+1}$ and for all 
	$\tau \in A$, $X \res \gamma$ and $X$ are $q(\tau)$-consistent.
\end{lemma}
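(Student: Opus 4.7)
The plan is to derive this lemma as a direct application of Lemma 5.14, which already packages the delicate bookkeeping required to simultaneously extend countably many automorphisms by one level while preserving both consistency and separation. The first assertion (density of separated conditions) will follow from the second by taking $X = \emptyset$ and $A = \emptyset$, so the main content is the second assertion.

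Given $p$ with top level $\gamma$, I would set $I = \dom(p)$, which is countable, and let $f_\tau = p(\tau)$ for each $\tau \in I$. The hypothesis that $\{p(\tau) : \tau \in A\}$ is separated on $X \res \gamma$ is exactly the separation hypothesis of Lemma 5.14. Invoking that lemma produces a family $\{g_\tau : \tau \in I\}$ of automorphisms of $T \res (\gamma+2)$ such that $f_\tau \subseteq g_\tau$ for each $\tau \in I$, for every $\tau \in A$ the sets $X \res \gamma$ and $X$ are $g_\tau$-consistent, and the whole family $\{g_\tau : \tau \in I\}$ is separated (in the sense of Definition 5.9, that is, separated on every finite subset of $T_{\gamma+1}$).

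I then define $q$ by setting $\dom(q) = \dom(p)$ and $q(\tau) = g_\tau$ for each $\tau \in \dom(p)$. The inclusion $g_\tau \supseteq p(\tau)$ witnesses $q \le p$, the top level of $q$ is $\gamma+1$, and the consistency conclusion of Lemma 5.14 delivers the required $q(\tau)$-consistency of $X \res \gamma$ and $X$ for $\tau \in A$. The separation conclusion of Lemma 5.14, translated through Definition 5.23, says precisely that $q$ is a separated condition. There is no serious obstacle here; all of the combinatorial difficulty has already been absorbed into the proof of Lemma 5.14, and the present lemma is a clean packaging of that construction in the language of the forcing $\p$.
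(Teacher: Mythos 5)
Your proposal is correct and matches the paper's proof, which simply states that the lemma is immediate from Lemma 5.14; you have merely spelled out the routine translation (taking $I = \dom(p)$, $f_\tau = p(\tau)$, and reading off the three conclusions of Lemma 5.14 to form $q$).
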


\begin{lemma} \label{separated iff}
	A condition $p \in \p$ with top level $\alpha$ 
	is separated if and only if for any finite set $A \subseteq \dom(p)$ 
	and finite set $X \subseteq T_\alpha$, 
	$\{ p(\tau) : \tau \in A \}$ is separated on $X$.
\end{lemma}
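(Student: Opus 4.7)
The plan is to handle the two directions separately; neither is deep once the right lemmas are invoked.

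For the forward direction, fix finite $A \subseteq \dom(p)$ and finite $X \subseteq T_\alpha$. Since $p$ is separated, Definition 5.23 gives that the full indexed family $\{p(\tau) : \tau \in \dom(p)\}$ is separated on $X$. Lemma 5.9, which states that separation on a set is preserved under passing to a sub-indexing and to a subset, then immediately yields that $\{p(\tau) : \tau \in A\}$ is separated on $X$.

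The backward direction is the substantive one. Fix a finite $X \subseteq T_\alpha$; the task is to produce an injective tuple enumerating $X$ which witnesses that the full family $\{p(\tau) : \tau \in \dom(p)\}$ is separated on $X$. The strategy is to show that only finitely many $\tau \in \dom(p)$ are \emph{relevant to $X$}, and then to apply the hypothesis to that finite set. First, for each $a \in X$ and $\tau \in \dom(p)$, applying the hypothesis to $A = \{\tau\}$ and $X_0 = \{a\}$ (using the singleton tuple $(a)$) forces $p(\tau)(a) \ne a$. Second, for any two distinct $a, b \in X$, applying the hypothesis to any pair $A = \{\tau_1, \tau_2\} \subseteq \dom(p)$ and $X_0 = \{a,b\}$, and examining the only two possible orderings of $\{a,b\}$, shows that at most one $\tau \in \dom(p)$ satisfies $p(\tau)(a) = b$; otherwise both choices of ordering would produce two triples of the same form at position $k = 1$, violating clause (2) of Definition 5.3.

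Consequently the set $A_0 = \{\tau \in \dom(p) : p(\tau)(a) = b \text{ for some distinct } a, b \in X\}$ is finite, of size at most $|X|(|X|-1)$. By hypothesis applied to this $A_0$ and to $X$, there is an injective enumeration $\vec a = (a_0,\ldots,a_{n-1})$ of $X$ witnessing that $\{p(\tau) : \tau \in A_0\}$ is separated on $\vec a$. I claim $\vec a$ also witnesses separation of the full family $\{p(\tau) : \tau \in \dom(p)\}$ on $\vec a$. Clause (1) of Definition 5.3 follows from the fixed-point observation above. For clause (2), any triple $(j,m,\tau)$ with $j < k$ and $p(\tau)^m(a_k) = a_j$ forces $\tau \in A_0$, since $a_j \ne a_k$; hence the set of such triples for $\dom(p)$ coincides with the set for $A_0$, and the required uniqueness is inherited. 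There is no serious obstacle --- the only observation needed is that the hypothesis, applied to very small finite subsets of $\dom(p)$, already forces the ``relation graph'' on $X$ to have finitely many involved indices.
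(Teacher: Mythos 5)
Your proof is correct, and it takes a genuinely different route from the paper's. The paper proves the converse by contradiction: assume the full family $\{p(\tau) : \tau \in \dom(p)\}$ is not separated on $X$, then for each of the finitely many (at most $|X|!$) enumerations of $X$ pick one pair of violating triples, collect the finitely many indices $\tau$ occurring in these triples into a set $A$, and check that $\{p(\tau) : \tau \in A\}$ is still not separated on $X$ --- contradicting the hypothesis. Your argument instead runs forward: you first use the hypothesis on singletons and pairs to show that no $p(\tau)$ has a fixed point in $X$ and that each ordered pair $(a,b)$ of distinct elements of $X$ admits at most one $\tau$ with $p(\tau)(a)=b$, hence the set $A_0$ of indices with any relation on $X$ is finite (of size at most $|X|(|X|-1)$, a sharper bound than the paper's $2\cdot|X|!$). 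Then a single application of the hypothesis to $A_0$ produces a witnessing enumeration, which you observe must also witness separation for the full family because any violating triple has $\tau \in A_0$. What you gain is a direct construction of the witness and a cleaner quantitative picture of the relation structure on $X$; what the paper's argument gains is avoiding the need to establish the two preliminary structural claims (no fixed points, at-most-one-relation-per-pair), instead letting the contradiction do the pruning. Both reductions ultimately rest on the same idea --- the relations among elements of $X$ involve only finitely many indices --- so the underlying insight is shared even though the logical shape differs.
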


\begin{proof}
	The forward direction of the equivalence is immediate. 
	For the converse, assume the second statement. 
	It easily follows that for any $\tau \in A$, $p(\tau)$ has no fixed-points in $T_\alpha$. 
	Let $X \subseteq T_\alpha$ be finite with size $n < \omega$ 
	and suppose for a contradiction that 
	$\{ p(\tau) : \tau \in \dom(p) \}$ is not separated on $X$. 	
	For any injective tuple $\vec a = (a_0,\ldots,a_{n-1})$ 
	which lists the elements of $X$, 
	we can find some $k < n$ and distinct triples 
	$(j_0,m_0,\tau_0)$ and $(j_1,m_1,\tau_1)$ such that for $i < 2$, 
	$j_i < k$, $m_i \in \{ -1, 1 \}$, $\tau_i \in \dom(p)$, and 
	$p(\tau_i)^{m_i}(a_k) = a_{j_i}$. 
	There are only finitely many such enumerations, so we can find a finite 
	set $A \subseteq \dom(p)$ such that for any such enumeration, 
	the fixed triples $(j_0,m_0,\tau_0)$ and $(j_1,m_1,\tau_1)$ described above 
	satisfy that $\tau_0$ and $\tau_1$ are in $A$. 
	Now it is easy to check that $\{ p(\tau) : \tau \in A \}$ is not separated on $X$, 
	which is a contradiction.
\end{proof}

\begin{lemma}[Generalized Key Property] \label{Generalized Key Property}
	Let $\alpha < \xi < \omega_1$. 
	Suppose that 
	$p \in \p$ has top level $\alpha$, $\vec b$ is a finite 
	tuple with height $\alpha$, $A \subseteq \dom(p)$ is finite, and 
	$\{ p(\tau) : \tau \in A \}$ is separated on $\vec b$. 
	Assume that $r_0, \ldots, r_{n-1} \le p$ are conditions with top level $\xi$, 
	$\{ h_\tau : \tau \in B \}$ is a finite family of automorphisms of $T \res (\xi+1)$, 
	and $t \subseteq T_\xi$ is finite. 
	Then there exist tuples 
	$\vec a^0, \ldots, \vec a^{n-1}$ above $\vec b$ with height $\xi$ 
	such that:
	\begin{enumerate}
	\item for all $\tau \in A$ and $j < n$, $\vec b$ and $\vec a^j$ are 
	$r_j(\tau)$-consistent;
	\item $\vec a^0 \res (\alpha+1), \ldots, \vec a^{n-1} \res (\alpha+1)$ 
	and $t \res (\alpha+1)$ are pairwise disjoint;
	\item for all $x \in t$, $\tau \in B$, and $m \in \{ -1, 1 \}$, 
	$h_\tau^m(x)$ is not in any of the tuples $\vec a^0,\ldots,\vec a^{n-1}$;
	\item for all $\tau \in B$, $m \in \{ -1 , 1 \}$, 
	and distinct $i, j < n$, if $x$ is in the 
	tuple $\vec a^i$ then $h_\tau^m(x)$ is not in the tuple $\vec a^j$.
	\end{enumerate} 
\end{lemma}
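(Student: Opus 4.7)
The plan is to build the tuples $\vec a^0,\ldots,\vec a^{n-1}$ in two passes: first choose their drop-downs at level $\alpha+1$ so as to secure condition~(2), then extend each to level $\xi$ so as to secure conditions~(1), (3), and~(4). The reason for splitting into two passes is that condition~(2) pins behavior at level $\alpha+1$ while conditions~(3) and~(4) depend only on behavior at level $\xi$; the Key Property (Proposition~5.26) lets us realize each pass independently.

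First I would construct $\vec c^0,\ldots,\vec c^{n-1}$ of height $\alpha+1$ one at a time. At stage $j$, set $q_j := r_j \res (\alpha+2)$, which has top level $\alpha+1$ and refines $p$. Apply the Key Property with condition $p$, refinement $q_j$, tuple $\vec b$, index set $A$, and forbidden set
\[
t_j \;:=\; \bigl(t \res (\alpha+1)\bigr) \;\cup\; \bigcup_{i<j} \{\, c : c \text{ is an entry of } \vec c^i \,\},
\]
obtaining $\vec c^j$ above $\vec b$ with all entries in $T_{\alpha+1}\setminus t_j$ such that $\vec b$ and $\vec c^j$ are $r_j(\tau)$-consistent for every $\tau\in A$. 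By construction, the entry sets of $\vec c^0,\ldots,\vec c^{n-1}$ together with $t\res(\alpha+1)$ are pairwise disjoint, which will secure~(2) once $\vec a^j\res(\alpha+1)=\vec c^j$ is arranged in the next pass.

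Next I would extend each $\vec c^j$ to the desired $\vec a^j$ of height $\xi$, again inductively in $j$. By Persistence (Lemma~5.21), the separation of $\{p(\tau):\tau\in A\}$ on $\vec b$ lifts to separation of $\{q_j(\tau):\tau\in A\}$ on $\vec c^j$. Since $r_j\le q_j$ has top level $\xi$, the Key Property applies once more, now to $q_j$ with refinement $r_j$, tuple $\vec c^j$, index set $A$, and forbidden set
\[
F_j \;:=\; t \;\cup\; \bigcup_{i<j} \vec a^i \;\cup\; \bigl\{\, h_\tau^m(x) : x \in t \cup \textstyle\bigcup_{i<j}\vec a^i,\ \tau\in B,\ m\in\{-1,1\} \,\bigr\},
\]
producing $\vec a^j$ above $\vec c^j$ with all entries in $T_\xi\setminus F_j$ such that $\vec c^j$ and $\vec a^j$ are $r_j(\tau)$-consistent for every $\tau\in A$. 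Transitivity (Lemma~5.2) applied to the three heights $\alpha<\alpha+1<\xi$ then yields that $\vec b$ and $\vec a^j$ are $r_j(\tau)$-consistent, establishing~(1).

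Finally I would verify~(3) and~(4). Condition~(3) is immediate because $\vec a^j\cap F_j=\emptyset$ in particular excludes all elements of $h_\tau^m[t]$. For~(4), fix distinct $i,j<n$, $\tau\in B$, and $m\in\{-1,1\}$. If $i<j$, then $F_j$ excludes $h_\tau^m[\vec a^i]$, so $h_\tau^m[\vec a^i]\cap \vec a^j=\emptyset$; if $i>j$, the same reasoning applied at stage $i$ gives $\vec a^i\cap h_\tau^{-m}[\vec a^j]=\emptyset$, which rearranges to $h_\tau^m[\vec a^i]\cap \vec a^j=\emptyset$. The only real point of care is to include both signs $m=\pm 1$ in each $F_j$, so that a single stage handles both orderings of the unordered pair $\{i,j\}$ in~(4); beyond that bookkeeping, every step is a routine invocation of the Key Property or Transitivity.
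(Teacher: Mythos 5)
Your proof is correct, and it is essentially the same argument as the paper's, rearranged. The paper proceeds by induction on $n$: at each step it drops the new condition $r_{n-1}$ down to level $\alpha+1$, applies the Key Property once at that level with a forbidden set $Z\subseteq T_{\alpha+1}$ that collects the $(\alpha+1)$-drop-downs of all previously built tuples, of $t$, \emph{and} of all relevant $h_\tau^m$-images, and then applies the Key Property a second time (with no forbidden set) to lift that intermediate tuple up to height $\xi$. You do the same two Key Property applications per index, but you split them into two global passes, and you defer the $h$-image avoidance to the second pass by placing $F_j\subseteq T_\xi$ at the top level rather than encoding it through drop-downs at level $\alpha+1$. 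Both routes work; the paper's version has the minor elegance that the second application needs no forbidden set, while your version keeps the ``disjoint at $\alpha+1$'' and ``avoid $h$-orbits'' constraints visibly separate, which makes the verification of (3) and (4) a bit more transparent. (Two small wording points, neither a real gap: in Pass~1 the consistency you get is with respect to $q_j(\tau)$, not $r_j(\tau)$ — harmless since they agree up to level $\alpha+1$, but worth stating correctly since that is exactly what Transitivity later consumes; and including $t$ and $\bigcup_{i<j}\vec a^i$ themselves in $F_j$ is redundant, since the $\alpha+1$-level disjointness already forces the tuples to avoid $t$ and each other at height $\xi$.)
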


\begin{proof}
	The proof is by induction on $n > 0$. 
	Let $n > 0$ be given, and assume that the statement holds for $n-1$ 
	(in the case that $n > 1$). 
	Assume that $r_0,\ldots,r_{n-1} \le p$ are conditions with top level $\xi$, 
	$\{ h_\tau : \tau \in B \}$ is a finite family of automorphisms of $T \res (\xi+1)$, 
	and $t \subseteq T_\xi$ is finite. 
	Applying the inductive hypothesis in the case that $n > 1$, 
	fix $\vec a^0, \ldots, \vec a^{n-2}$ above $\vec b$ with height $\xi$ 
	satisfying (1)-(4).

	Define $r_{n-1}^*$ to be the condition 
	with the same domain as $r_{n-1}$ so that for all 
	$\tau \in \dom(r_{n-1})$, $r_{n-1}^*(\tau) = r_{n-1}(\tau) \res (\alpha+1)$. 
	Then $r_{n-1} \le r_{n-1}^* \le p$ and $r_{n-1}^*$ has top level $\alpha+1$. 
	Let $Z$ be the finite set of elements of $T_{\alpha+1}$ which are in 
	one of $\vec a^0 \res (\alpha+1), \ldots, \vec a^{n-1} \res (\alpha+1)$ or 
	$t \res (\alpha+1)$, 
	or else of the form $h_\tau^m(x) \res (\alpha+1)$, where $\tau \in B$, 
	$m \in \{ -1, 1 \}$, and $x$ is in one of 
	$\vec a^0,\ldots,\vec a^{n-1}$ or in $t$. 	
	By Proposition \ref{Key Property 2} (Key Property), find a tuple 
	$\vec b^{n-1} > \vec b$ with height $\alpha+1$ 
	such that for all $\tau \in A$, $\vec b$ and $\vec b^{n-1}$ are 
	$r_{n-1}^*(\tau)$-consistent, and $\vec b^{n-1}$ is disjoint from $Z$. 
	By Lemma \ref{Persistence 2} (Persistence), 
	$\{ r_{n-1}^*(\tau) : \tau \in A \}$ is separated on $\vec b^{n-1}$. 
	Apply Proposition \ref{Key Property 2} (Key Property) again to find 
	a tuple $\vec a^n$ with height $\xi$ above $\vec b^{n-1}$ 
	such that for all $\tau \in A$, 
	$\vec b^{n-1}$ and $\vec a^{n-1}$ are $r_{n-1}(\tau)$-consistent.
\end{proof}

\section{More Properties of the Automorphism Forcing} \label{More Properties of the Automorphism Forcing}

In this section, we prove two propositions which we use later 
to derive some of the strongest properties of the automorphism forcing. 
In these results, the assumption of the freeness of $T$ together with our analysis of 
the notions of consistency, separation, and the Key Property are of critical importance.

\begin{lemma} \label{pre consistent extensions into dense sets}
	Suppose that $D$ is a dense open subset of $\p$ 
	and $p \in \p$ has top level $\xi$. 
	Let $A \subseteq \dom(p)$ be finite. 
	Consider a tuple $\vec a$ of height $\xi$ and assume that 
	$\{ p(\tau) : \tau \in A \}$ is separated on $\vec a$. 
	Let $\mathcal X$ be the set of all $\vec c$ in the derived tree $T_{\vec a}$ for which 
	there exists some $q \le_\p p$ in $D$ whose top level equals the height of 
	$\vec c$ such that for all $\tau \in A$, 
	$\vec a$ and $\vec c$ are $q(\tau)$-consistent. 
	Then $\mathcal X$ is a dense open subset of $T_{\vec a}$.
\end{lemma}

\begin{proof}
	For openness, consider a tuple $\vec b \in \mathcal X$ as witnessed by 
	a condition $q \le_\p p$. 
	Let $\vec c$ be a tuple such that 
	$\vec b < \vec c$ and $\vec c$ has height $\zeta$. 
	By Proposition \ref{Extension 2} (Extension), 
	find $r \le_\p q$ with top level $\zeta$ such that 
	for all $\tau \in A$, $\vec b$ and $\vec c$ are $r(\tau)$-consistent. 
	Since $D$ is open, $r \in D$. 
	By Lemma \ref{Transitivity 2} (Transitivity), 
	for all $\tau \in A$, $\vec a$ and $\vec c$ are $r(\tau)$-consistent.
	So $r$ witnesses that $\vec c \in \mathcal X$.
	
	To show that $\mathcal X$ is dense, consider $\vec b \in T_{\vec a}$ 
	with height $\delta > \xi$. 
	By Proposition \ref{Extension 2} (Extension), 
	fix $q \le_\p p$ with top level $\delta$ such that 
	for all $\tau \in A$, 
	$\vec a$ and $\vec b$ are $q(\tau)$-consistent. 
	By Lemma \ref{Persistence 2} (Persistence), 
	$\{ q(\tau) : \tau \in A \}$ is separated on $\vec b$. 
	Let $E$ be the set of condition $s \in \p$ such that $s$ has top level greater than $\delta$. 
	By Proposition \ref{Extension 2} (Extension), $E$ is dense, and clearly $E$ is open. 
	So $D \cap E$ is dense open. 	
	Fix $r \le_\p q$ in $D \cap E$ and let $\rho$ be the top level of $r$. 
	Then $\rho > \delta$. 
	Since $r \le q$ and $\{ q(\tau) : \tau \in A \}$ is separated on $\vec b$, 
	by Proposition \ref{Key Property 2} (Key Property) 
	we can find some $\vec c$ with height $\rho$ 
	such that $\vec b < \vec c$ 
	and for all $\tau \in A$, $\vec b$ and $\vec c$ 
	are $r(\tau)$-consistent. 
	By Lemma \ref{Transitivity 2} (Transitivity), 
	for all $\tau \in A$, $\vec a$ and $\vec c$ are $r(\tau)$-consistent. 
	So $r$ is a witness that $\vec c \in \mathcal X$.
	\end{proof}

\begin{proposition}[Consistent Extensions Into Dense Sets] \label{Consistent Extensions Into Dense Sets}
	Suppose that $T$ is a free Suslin tree. 
	Let $\lambda$ be a large enough regular cardinal and 
	let $N$ be a countable 
	elementary substructure of $H(\lambda)$ containing 
	as members $T$, $\ka$, and $\p$. 
	Let $\delta = N \cap \omega_1$. 
	Assume that $D \in N$ is a dense open subset of $\p$, 
	$p \in N \cap \p$ has top level $\beta$, and $A \subseteq \dom(p)$ is finite. 
	Let $\vec a$ have height $\delta$ with unique drop-downs to $\beta$ 
	such that $\{ p(\tau) : \tau \in A \}$ is separated on $\vec a \res \beta$. 
	Then there exists some $q \le_\p p$ in $D \cap N$ whose top level 
	is some ordinal $\gamma < \delta$ 
	such that for all $\tau \in A$, $\vec a \res \beta$ 
	and $\vec a \res \gamma$ are $q(\tau)$-consistent.
\end{proposition}

\begin{proof}
	Let $\mathcal X$ be the set of all tuples 
	$\vec b$ in the derived tree $T_{\vec a \res \beta}$ satisfying 
	that for some $q \le_\p p$ in $D$, for all $\tau \in A$, $\vec a \res \beta$ 
	and $\vec b$ are $q(\tau)$-consistent. 
	Note that $\mathcal X \in N$ by elementarity, 
	and $\mathcal X$ is dense open in $T_{\vec a \res \beta}$ 
	by Lemma \ref{pre consistent extensions into dense sets}. 
	Since $\mathcal X$ is dense open and $T$ is free, 
    fix $\gamma > \beta$ 
	such that any member of $T_{\vec a \res \beta}$ 
	whose elements have height at least $\gamma$ is in $\mathcal X$. 
	By elementarity, we can choose $\gamma \in N \cap \omega_1 = \delta$. 
	Then $\vec a \res \gamma \in \mathcal X$, 
	which by elementarity can be witnessed by some $q \in D \cap N$. 
	Clearly, $q$ is as required.	
\end{proof}

\begin{proposition}[Consistent Extensions for Sealing] \label{Consistent Extensions for Sealing}
	Suppose that $T$ is a free Suslin tree. 
	Let $\lambda$ be a large enough regular cardinal and let $N$ be a countable 
	elementary substructure of $H(\lambda)$ containing $T$, $\ka$, and $\p$. 
	Let $\delta = N \cap \omega_1$. 
	Assume that $\dot E \in N$ is a $\p$-name for a dense open subset of $T$.

	Let $p \in N \cap \p$ have top level $\gamma$. 
	Suppose that $\vec a = (a_0,\ldots,a_{l-1})$ has height $\delta$, $\vec a$ 
	has unique drop-downs to $\gamma$, and $j < l$. 
	Assume that $A \subseteq \dom(p)$ is finite and 
	$\{ p(\tau) : \tau \in A \}$ is separated on $\vec a \res \gamma$. 
	Then there exists some $r \le_\p p$ in $N$ with some top level $\xi$  
	such that $r \Vdash_{\p} a_{j} \in \dot E$ 
	and for all $\tau \in A$, $\vec a \res \gamma$ and $\vec a \res \xi$ 
	are $r(\tau)$-consistent.
\end{proposition}

\begin{proof}
	Let $\mathcal{X}$ be the set of all 
	$\vec d = (d_0,\ldots,d_{l-1})$ in the derived tree 
	$T_{\vec a \res \gamma}$ satisfying 
	that for some $r \le_\p p$ with top level equal to the height of $\vec d$, 
	$r \Vdash_\p d_{j} \in \dot E$ and for all $\tau \in A$, 
	$\vec a \res \gamma$ and $\vec d$ are $r(\tau)$-consistent. 
	Note that $\mathcal{X} \in N$ by elementarity. 
	We claim that $\mathcal{X}$ is dense open in $T_{\vec a \res \gamma}$. 
	To show that $\mathcal{X}$ is dense, 
	consider $\vec b = (b_0,\ldots,b_{l-1})$ in $T_{\vec a \res \gamma}$ 
	whose elements have height $\zeta > \gamma$. 
	By Proposition \ref{Extension 2} (Extension), 
	find $q \le_\p p$ with top level $\zeta$ 
	such that for all $\tau \in A$, 
	$\vec a \res \gamma$ and $\vec b$ are $q(\tau)$-consistent. 
	As $\{ p(\tau) : \tau \in A \}$ is separated on $\vec a \res \gamma$, 
	Lemma \ref{Persistence 2} (Persistence) implies that 
	$\{ q(\tau) : \tau \in A \}$ is separated on $\vec b$.

	Since $\dot E$ is forced to be dense open in $T$, fix 
	$r \le_\p q$ with some top level $\xi$ and fix $c$ 
	above $b_j$ with some height $\rho$ 
	such that $r \Vdash c \in \dot E$. 
	By extending further if necessary using Lemma \ref{Extension 2} (Extension) 
	and using the fact that $\dot E$ is forced 
	to be open, we may assume without loss of generality that $\xi = \rho > \zeta$. 
	By Proposition \ref{1-Key Property 2} ($1$-Key Property), 
	we can find $\vec d = (d_0,\ldots,d_{l-1})$ above $\vec b$ 
	such that $d_{j} = c$ and for all $\tau \in A$, 
	$\vec b$ and $\vec d$ are $r(\tau)$-consistent. 
	By Lemma \ref{Transitivity 2} (Transitivity), 
	for all $\tau \in A$, $\vec a \res \gamma$ and $\vec d$ are $r(\tau)$-consistent. 
	So $\vec b < \vec d \in \mathcal{X}$.

	To show that $\mathcal{X}$ is open, 
	suppose that $\vec d = (d_0,\ldots,d_{l-1})$ is in 
	$\mathcal{X}$ as witnessed by $r$, 
	and let $\vec e = (e_0,\ldots,e_{l-1})$ be above $\vec d$ of height $\xi$. 
	We show that $\vec e \in \mathcal X$. 
	By Proposition \ref{Extension 2} (Extension), find $s \le_\p r$ with top level $\xi$ such that 
	for all $\tau \in A$, $\vec d$ and $\vec e$ are $s(\tau)$-consistent. 
	Since $\dot E$ is forced to be open and 
	$r \Vdash_\p d_j \in \dot E$, it follows that  
	$s \Vdash_\p e_j \in \dot E$. 
	By Lemma \ref{Transitivity 2} (Transitivity), 
	for all $\tau \in A$, $\vec a \res \gamma$ and $\vec e$ are $s(\tau)$-consistent. 
	So $\vec e \in \mathcal{X}$.

	Since $\mathcal{X}$ is dense open and $T$ is free, 
	we can fix some $\xi > \gamma$ such that 
	any member of $T_{\vec a \res \gamma}$ whose elements have 
	height at least $\xi$ is in $\mathcal{X}$. 
	By elementarity, we can choose $\xi$ in $N \cap \omega_1 = \delta$. 
	So $\vec a \res \xi \in \mathcal{X}$, 
	which by elementarity can be witnessed by some $r \in N$. 
	So $r \Vdash_\p a_j \res \xi \in \dot E$ and 
	for all $\tau \in A$, $\vec a \res \gamma$ and $\vec a \res \xi$ 
	are $r(\tau)$-consistent. 
	Since $\dot E$ is forced to be open, 
	$r \Vdash_{\p} a_j \in \dot E$. 
\end{proof}

\section{Preserving Cardinals and Suslinness} \label{Preserving Cardinals and Suslinness}

We are now ready to prove that if $T$ is a free Suslin tree, 
then the forcing poset $\p$ is totally proper, 
preserves the fact that $T$ is Suslin, 
and adds a sequence of length $\ka$ of almost disjoint automorphisms of $T$. 
Also, assuming $\textsf{CH}$, $\p$ is $\omega_2$-c.c. 
In particular, if $T$ is a free Suslin tree, $\textsf{CH}$ holds, and $\ka \ge \omega_2$, 
then $\p$ forces that $T$ is an almost 
Kurepa Suslin tree.

Many of the proofs in the rest of the article 
involve constructing total master conditions over countable elementary substructures. 
The next two lemmas provide tools for such constructions.

\begin{lemma}[Constructing Total Master Conditions] \label{Constructing Total Master Conditions}
	Let $\lambda$ be a large enough regular cardinal and assume that $N$ is a countable 
	elementary substructure of $H(\lambda)$ which contains as elements 
	$T$, $\ka$, $\q$, and $\p$. 
	Let $\delta = N \cap \omega_1$.

	Assume the following:
	\begin{enumerate}
	\item $\langle \delta_n : n < \omega \rangle$ is a non-decreasing sequence 
	of ordinals cofinal in $\delta$;
	\item $\langle p_n : n < \omega \rangle$ is a decreasing sequence 
	of conditions in $N \cap \p$, where each $p_n$ has top level $\delta_n$;
	\item $\langle A_n : n < \omega \rangle$ is a subset-increasing sequence of 
	finite subsets of $N \cap \ka$ with union equal to $N \cap \ka$;
	\item $\langle X_n : n < \omega \rangle$ is a subset-increasing sequence 
	of finite subsets of $T_\delta$ with union equal to $T_\delta$, where 
	each $X_n$ has unique drop-downs to $\delta_n$;
	\item $\{ h_{n,\tau} : n < \omega, \ \tau \in A_n \}$ is a family 
	of functions, where each $h_{n,\tau}$ is an injective partial function 
	from $X_n$ to $X_n$;
	\item for all $z \in T_\delta$ and $\tau \in N \cap \ka$, 
	there exists some $n < \omega$ 
	such that $z$ is in the domain and in the range of $h_{n,\tau}$;
	\item for all $n < \omega$ and $\tau \in A_n$:
	\begin{enumerate}
	\item $\tau \in \dom(p_n)$; 
	\item $X_n \res \delta_n$ 
	and $X_n \res \delta_{n+1}$ are $p_{n+1}(\tau)$-consistent;
	\item for all $x, y \in X_n$, 
	$h_{n,\tau}(x) = y$ iff $p_n(\tau)(x \res \delta_n) = y \res \delta_n$.
	\end{enumerate}
	\end{enumerate}

	Let $q$ be the function 
	with domain $N \cap \ka$ such that for all $\tau \in N \cap \ka$, 
	$$
	q(\tau) = \bigcup \{ p_n(\tau) : n < \omega, \ \tau \in A_n \} \cup 
	\bigcup \{ h_{n,\tau} : n < \omega, \ \tau \in A_n \}.
	$$
	Then $q \in \p$, $q$ has top level $\delta$, 
	and $q \le p_n$ for all $n < \omega$. 
	For all $n < \omega$ and for all $\tau \in A_n$, 
	$X_n \res \delta_n$ and $X_n$ are $q(\tau)$-consistent. 
	Moreover, if for all dense open sets $D \in N$ there exists some $n$ 
	such that $p_n \in D$, then $q$ is a total master condition over $N$.
\end{lemma}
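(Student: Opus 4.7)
The plan is to verify step by step that the function $q$ is a well-defined condition satisfying all the required conclusions, with the substantive work being the proof that each $q(\tau)$ is an automorphism of $T \res (\delta+1)$. Fix $\tau \in N \cap \ka$ and set $I_\tau = \{ n < \omega : \tau \in A_n \}$, which is cofinal in $\omega$ by hypothesis (3). Since $\langle p_n : n < \omega \rangle$ is decreasing, the family $\{ p_n(\tau) : n \in I_\tau \}$ forms a $\subseteq$-increasing chain of automorphisms of $T \res (\delta_n+1)$ whose union is naturally an automorphism of $T \res \delta$. Write $h_\tau^{*} = \bigcup \{ h_{n,\tau} : n \in I_\tau \}$, which I will show is a bijection from $T_\delta$ to $T_\delta$ coherent with the lower levels.

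The main claim is a compatibility statement: for $n \le m$ in $I_\tau$ and $x, y \in X_n$, one has $h_{n,\tau}(x) = y$ if and only if $h_{m,\tau}(x) = y$. To prove it, iterate hypothesis (7b) from index $n$ up to $m-1$; using that $p_{k+1}(\tau) \supseteq p_{n+1}(\tau)$ for $k \ge n$, each consistency between consecutive levels promotes to $p_m(\tau)$-consistency, and Lemma 5.2 (Transitivity) then yields that $X_n \res \delta_n$ and $X_n \res \delta_m$ are $p_m(\tau)$-consistent. Since $p_m(\tau) \supseteq p_n(\tau)$, hypothesis (7c) for $n$ gives $p_m(\tau)(x \res \delta_n) = y \res \delta_n$, which by the established consistency is equivalent to $p_m(\tau)(x \res \delta_m) = y \res \delta_m$, which by (7c) for $m$ is in turn equivalent to $h_{m,\tau}(x) = y$. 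This proves the claim, which immediately gives that $h_\tau^{*}$ is a well-defined injective function; hypothesis (6) then forces $h_\tau^{*}$ to be a bijection from $T_\delta$ to $T_\delta$.

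To conclude that $q(\tau)$ is an automorphism of $T \res (\delta+1)$, it remains to verify strict increase between levels below $\delta$ and level $\delta$. Given $u \in T \res \delta$ with $u < x$ where $x \in T_\delta$, pick $n \in I_\tau$ large enough that $\delta_n \ge \h_T(u)$ and $x \in \dom(h_{n,\tau})$, which exists by (6). Then using $p_n(\tau) \subseteq q(\tau)$ and hypothesis (7c), one computes $q(\tau)(u) = p_n(\tau)(u) < p_n(\tau)(x \res \delta_n) = h_{n,\tau}(x) \res \delta_n < h_{n,\tau}(x) = q(\tau)(x)$, where the first strict inequality uses that $p_n(\tau)$ is strictly increasing. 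Combined with $h_\tau^{*}$ being a bijection on $T_\delta$, this shows $q(\tau)$ is a strictly increasing bijection from $T \res (\delta+1)$ onto itself, hence an automorphism.

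The remaining conclusions follow readily. For $q \le p_n$, any $\tau \in \dom(p_n)$ lies in $N \cap \ka = \dom(q)$ by elementarity, and $p_n(\tau) \subseteq q(\tau)$ by the definition of $q$. For the $q(\tau)$-consistency of $X_n \res \delta_n$ and $X_n$ when $\tau \in A_n$, take $x, y \in X_n$ and pick $m \ge n$ in $I_\tau$ with $x \in \dom(h_{m,\tau})$; the equivalences $q(\tau)(x \res \delta_n) = y \res \delta_n \Leftrightarrow p_n(\tau)(x \res \delta_n) = y \res \delta_n \Leftrightarrow h_{m,\tau}(x) = y \Leftrightarrow q(\tau)(x) = y$ then follow from the definition of $q$, hypothesis (7c), and the compatibility claim. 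Finally, the master condition property is immediate: if every dense open $D \in N$ contains some $p_n$, then the witness $s = p_n \in D \cap N$ satisfies $q \le s$. The central conceptual obstacle throughout is the compatibility claim for the $h_{n,\tau}$, which in turn rests on the careful chain of applications of Lemma 5.2 (Transitivity) made possible by hypothesis (7b).
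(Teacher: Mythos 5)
Your proof is correct and follows essentially the same approach as the paper's: establish the compatibility of the partial functions $h_{n,\tau}$ across stages (the paper proves it for consecutive indices $h_{n+1,\tau} \cap X_n^2 = h_{n,\tau}$ and leaves the general case implicit, while you prove it for arbitrary $n \le m$ directly via iterated use of (7b) and Lemma 5.2), conclude that the union is a bijection on $T_\delta$, and then verify strict increase between levels below $\delta$ and level $\delta$ using (7c).
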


\begin{proof}
	We claim that for all $n < \omega$ and $\tau \in A_n$, 
	$h_{n+1,\tau} \cap X_n^2 = h_{n,\tau}$. 
	Let $x, y \in X_n$. 
	By (2) and (7), 
	$h_{n,\tau}(x) = y$ iff $p_n(\tau)(x \res \delta_n) = y \res \delta_n$ iff 
	$p_{n+1}(\tau)(x \res \delta_{n+1}) = y \res \delta_{n+1}$ iff 
	$h_{n+1,\tau}(x) = y$. 
	It follows from this fact together with (4), (5), and (6) that for 
	each $\tau \in N \cap \ka$, 
	$\bigcup \{ h_{n,\tau} : n < \omega, \ \tau \in A_n \}$ 
	is a bijection from $T_\delta$ onto $T_\delta$.

	Let $\tau \in N \cap \ka$ and we show that 
	$q(\tau)$ is strictly increasing. 
	It suffices to show that if $x \in T_\delta$ and $\gamma < \delta$, 
	then $q(\tau)(x)$ is above $q(\tau)(x \res \gamma)$. 
	Fix $n < \omega$ large enough so that $x \in X_n$, $\tau \in A_n$, 
	and $\delta_n > \gamma$. 
	By (7), $q(\tau)(x) = h_{n,\tau}(x)$ is above $p_n(\tau)(x \res \delta_n)$. 
	Since $p_n$ is strictly increasing, $p_n(\tau)(x \res \delta_n)$ is above 
	$p_n(\tau)(x \res \gamma) = q(\tau)(x \res \gamma)$. 

	So $q \in \p$ and $q \le p_n$ for all $n < \omega$. 
	The other statements are easy to verify.
\end{proof}

\begin{lemma}[Augmentation] \label{augmentation}
	Let $\lambda$ be a large enough regular cardinal and assume that $N$ is a countable 
	elementary substructure of $H(\lambda)$ which contains as elements $T$, $\ka$, 
	$\q$, and $\p$. 
	Let $\delta = N \cap \omega_1$. 
	Suppose the following:
	\begin{itemize}
	\item $p \in N \cap \p$ has top level $\gamma$;
	\item $B \subseteq \dom(p)$ is finite;
	\item $z \in T_\delta$ and $\sigma \in \dom(p)$;
	\item $X \subseteq T_\delta$ is finite and 
	$X \cup \{ z \}$ has unique drop-downs to $\gamma$;
	\item $\{ p(\tau) : \tau \in B \}$ is separated on $X \res \gamma$.
	\end{itemize}
	Then there exists some $q \le p$ in $N \cap \p$ with top level $\gamma+1$ 
	and there exists a finite set $Y \subseteq T_\delta$ satisfying:
	\begin{enumerate}
	\item for all $\tau \in B$, $X \res \gamma$ 
	and $X \res (\gamma+1)$ are $q(\tau)$-consistent;
	\item $X \cup \{ z \} \subseteq Y$ and $Y$ has unique drop-downs to $\gamma+1$;
	\item $\{ q(\tau) : \tau \in B \cup \{ \sigma \} \}$ 
	is separated on $Y \res (\gamma+1)$;
	\item let $h_\sigma^+$ be the partial injective function 
	from $Y$ to $Y$ defined by letting, 
	for all $x, y \in Y$, $h_\sigma^+(x) = y$ 
	iff $q(\sigma)(x \res (\gamma+1)) = y \res (\gamma+1)$; 
	then $z$ is in the domain and range of $h_\sigma^+$.
	\end{enumerate}
\end{lemma}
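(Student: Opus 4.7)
The plan is to first extend $p$ by one level inside $N$ to a separated condition $q$ realizing the required consistency on $X$, and then to enlarge $X \cup \{z\}$ by adjoining at most two elements of $T_\delta$ whose drop-downs witness $z$ being in both the domain and the range of $h_\sigma^+$.

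First I would apply Lemma 5.29 (Separated Conditions are Dense) inside $N$ with parameters $p$, $\gamma$, $X \res (\gamma+1)$, and $B$. All these parameters lie in $N$; $X \res (\gamma+1)$ has unique drop-downs to $\gamma$ because $X \cup \{z\}$ does; and $\{p(\tau) : \tau \in B\}$ is separated on $X \res \gamma$ by hypothesis. The lemma, together with elementarity, produces $q \le p$ in $N \cap \p$ with top level $\gamma+1$ such that $q$ is separated and, for every $\tau \in B$, $X \res \gamma$ and $X \res (\gamma+1)$ are $q(\tau)$-consistent. Since $\sigma \in \dom(p) \subseteq \dom(q)$, the elements $u^* := q(\sigma)(z \res (\gamma+1))$ and $v^* := q(\sigma)^{-1}(z \res (\gamma+1))$ of $T_{\gamma+1}$ are well-defined, and by the separation of $q$ (which forbids fixed points) neither equals $z \res (\gamma+1)$.

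Next I would construct $Y$. For each $y^* \in \{u^*, v^*\}$: if $y^*$ already appears as the drop-down at $\gamma+1$ of some element of $X \cup \{z\}$, reuse that unique element; otherwise, pick any $y \in T_\delta$ with $y \res (\gamma+1) = y^*$, choosing a single common element when $u^* = v^*$ and neither is already present. Let $u, v$ denote the resulting witnesses (possibly coinciding with each other or with members of $X$) and set $Y := X \cup \{z, u, v\}$. Because $X \cup \{z\}$ already has unique drop-downs to $\gamma$, hence to $\gamma+1$, and each newly added element has a drop-down at $\gamma+1$ not shared by any other member of $Y$, the set $Y$ has unique drop-downs to $\gamma+1$.

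Conclusion (1) is delivered directly by Lemma 5.29; (2) holds by the construction of $Y$; (3) follows because $q$ is separated, so by Lemma 5.9 the subfamily $\{q(\tau) : \tau \in B \cup \{\sigma\}\}$ is separated on the finite set $Y \res (\gamma+1)$; and (4) holds because $u \res (\gamma+1) = u^* = q(\sigma)(z \res (\gamma+1))$ forces $h_\sigma^+(z) = u$, while $v \res (\gamma+1) = v^* = q(\sigma)^{-1}(z \res (\gamma+1))$ forces $h_\sigma^+(v) = z$. There is no serious technical obstacle here; the only delicate point is the bookkeeping around unique drop-downs when $u^*$ or $v^*$ coincides with an existing drop-down or with one another, and these cases are dispatched by the case-split just described. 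In essence, the lemma packages a single application of Lemma 5.29 together with the choice of one or two elements of $T_\delta$ needed to close $z$ under the desired automorphism relations.
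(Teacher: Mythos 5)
Your proof is correct and follows essentially the same route as the paper: a single application of Lemma 5.29 inside $N$ to obtain a separated $q \le p$ satisfying the consistency condition on $X$, followed by adjoining to $X \cup \{z\}$ at most two elements of $T_\delta$ lifting $q(\sigma)^{\pm 1}(z \res (\gamma+1))$. The paper packages the bookkeeping around coinciding drop-downs via a set $W$ (so duplicates collapse automatically) rather than your explicit case-split, but the construction and verification are the same.
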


\begin{proof}
	Apply Lemma \ref{Separated Conditions are Dense} (Separated Conditions are Dense) 
	and elementarity 
	to find a separated condition $q \le p$ in $N \cap \p$ 
	with top level $\gamma+1$ satisfying that for all $\tau \in B$, 
	$X \res \gamma$ and $X \res (\gamma+1)$ are $q(\tau)$-consistent. 
	Define 
	$$
	W = \{ \ z \res (\gamma+1), \ 
	q(\sigma)(z \res (\gamma+1)), \ q(\sigma)^{-1}(z \res (\gamma+1)) \ \} \ 
	\setminus \ (X \res (\gamma+1)).
	$$
	Note that by unique drop-downs of $X \cup \{ z \}$, 
	if $z \notin X$ then $z \res (\gamma+1)$ 
	is not in $X \res (\gamma+1)$. 
	So if $z$ is not in $X$, then $z \res (\gamma+1)$ is in $W$. 
	Choose a set $Y \subseteq T_\delta$ consisting of the elements of 
	$X$ together with exactly one element of $T_\delta$ above each member of $W$, 
	and such that if $z \notin X$ then the element of $Y$ above $z \res (\gamma+1)$ is $z$. 
	Note that $Y$ has unique drop-downs to $\gamma+1$ and 
	$Y \res (\gamma+1) = (X \res (\gamma+1)) \cup W$. 

	Conclusions (1), (2), and (3) are clear. 
	(4): Note that $q(\sigma)(z \res (\gamma+1))$ is either in $X \res (\gamma+1)$ or in $W$. 
	As $Y \res (\gamma+1) = (X \res (\gamma+1)) \cup W$, in either case we can 
	fix $c \in Y$ such that $c \res (\gamma+1) = q(\sigma)(z \res (\gamma+1))$. 
	Then by the definition of $h_\sigma^+$, 
	$q(\sigma)(z \res (\gamma+1)) = c \res (\gamma+1)$ implies that 
	$h_\sigma^+(z) = c$, so $z \in \dom(h_\sigma^+)$. 
	The proof that $z$ is in the range of $h_\sigma^+$ is similar.
\end{proof}

\begin{thm} \label{proper and preserve suslin}
	Suppose that $T$ is a free Suslin tree. 
	Then the forcing poset $\p$ is totally proper and preserves the 
	fact that $T$ is Suslin.
\end{thm}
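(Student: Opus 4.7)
The plan is to prove both conclusions simultaneously by a single fusion-style construction, mirroring the pattern of Theorem 3.18 and Corollary 4.21. Fix a large enough regular $\lambda$, let $N \prec H(\lambda)$ be countable containing $T$, $\ka$, $\q$, $\p$ as elements, and set $\delta = N \cap \omega_1$. Fix $p \in N \cap \p$ and a $\p$-name $\dot E \in N$ for a dense open subset of $T$ (viewed as a $1$-derived tree $T_{(r)}$ where $r$ is the root). I will construct a total master condition $q \le p$ over $N$ with top level $\delta$ such that $q \Vdash_\p T_\delta \subseteq \dot E$. This gives total properness immediately and Suslin preservation via the standard characterization: a normal $\omega_1$-tree is Suslin iff every dense open subset contains a full level.

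The condition $q$ will be assembled by Lemma 5.35 (Constructing Total Master Conditions) from objects built inductively in $\omega$-many stages. At each stage $n$, I maintain a condition $p_n \in N \cap \p$ with top level $\delta_n$, a finite set $X_n \subseteq T_\delta$ with unique drop-downs to $\delta_n$, a finite set $A_n \subseteq N \cap \ka$, and injective partial functions $h_{n,\tau} : X_n \to X_n$ for $\tau \in A_n$, with $\bigcup_n X_n = T_\delta$ and $\bigcup_n A_n = N \cap \ka$, satisfying the hypotheses of Lemma 5.35, most importantly that $\{p_n(\tau) : \tau \in A_n\}$ is separated on $X_n \res \delta_n$ and that consistency propagates from $\delta_n$ to $\delta_{n+1}$. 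Fix bookkeeping enumerations of the dense open sets $D_n \in N$, elements $z_n \in T_\delta$, and pairs $(y_n, \sigma_n) \in T_\delta \times (N \cap \ka)$ with each pair appearing infinitely often.

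Stage $n+1$ proceeds in three sub-steps. First, to meet $D_n$: apply Proposition 2.2 (Consistent Extensions Into Dense Sets) to pass to some $p_n' \le p_n$ in $N \cap D_n$ at a new top level, preserving consistency on $X_n$. Second, to seal $z_n$ into $\dot E$: extend a suitably separated enumeration to contain a drop-down of $z_n$, then apply Proposition 2.4 with $n=1$ (which is available by Proposition 5.26, the $1$-Key Property) to obtain $p_n'' \le p_n'$ in $N$ forcing $z_n \in \dot E$ with consistency preserved on the enlarged finite distinguished set. Third, apply Lemma 5.36 (Augmentation) with $\sigma = \sigma_n$ to obtain $p_{n+1} \le p_n''$ in $N$ and extend $X_n$ to $X_{n+1} \supseteq X_n \cup \{y_n\}$ so that $y_n$ lies in both the domain and range of the induced $h_{n+1,\sigma_n}$. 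Set $A_{n+1} = A_n \cup \{\sigma_n\}$ and read off $h_{n+1,\tau}$ from $p_{n+1}$ for $\tau \in A_{n+1}$. The bookkeeping guarantees that every $(z,\tau) \in T_\delta \times (N \cap \ka)$ eventually enters the domain and range of $h_{n,\tau}$, as required by hypothesis (6) of Lemma 5.35.

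The main obstacle is sustaining separation across all three sub-steps. Unlike the subtree forcing of Section 4, separation for automorphisms is order-dependent and can be destroyed by introducing chains of relations. I handle this by always passing to separated extensions via Lemma 5.29 (Separated Conditions are Dense) before invoking any Key Property argument, and by invoking Lemma 5.22 (Persistence for Sets) to transfer separation to each new top level. Lemma 5.36 is specifically engineered so that the augmentation step, which adds new elements related to $y_n$ under $p(\sigma_n)$, does not break separation. With all inductive hypotheses maintained, Lemma 5.35 produces the desired $q \le p$ with each $q(\tau)$ a genuine automorphism of $T \res (\delta+1)$; $q$ meets every $D_n \in N$ and forces every element of $T_\delta$ into $\dot E$, completing the proof.
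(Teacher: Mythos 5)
Your proposal follows essentially the same approach as the paper's proof of Theorem 5.34: a fusion construction over a single countable elementary submodel that simultaneously meets the dense open sets in $N$, seals level $\delta$ of $T$ into $\dot E$ via the $1$-Key Property (Proposition 5.26) and Consistent Extensions for Sealing (Proposition 2.4), and uses Augmentation (Lemma 5.33 in the paper's numbering) to ensure the bijectivity hypothesis of Constructing Total Master Conditions (Lemma 5.32), with separation maintained via Separated Conditions are Dense and Persistence for Sets. The only difference is cosmetic bookkeeping: you do all three sub-steps in each stage, whereas the paper routes stage $n+1$ into one of three cases via the first coordinate of a surjection onto $3 \times T_\delta \times (N \cap \ka)$; both organizations work, and your lemma references are correct up to a couple of off-by-one slips in numbering (what you call Lemma 5.35 and Lemma 5.36 are the paper's Lemma 5.32 and Lemma 5.33).
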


\begin{proof}
	Let $\lambda$ be a large enough regular cardinal. 
	Let $N$ be a countable elementary substructure of $H(\lambda)$ 
	containing as members $T$, $\ka$, $\q$, and $\p$. 
	Let $\delta = N \cap \omega_1$. 
	Suppose that $p \in N \cap \p$ and $\dot E \in N$ is a $\p$-name 
	for a dense open subset of $T$. 
	We prove that there exists a total master condition $q \le p$ over $N$ such that 
	$q \Vdash T_\delta \subseteq \dot E$. 
	The theorem easily follows.

	So let $N$, $\delta$, $p$, and $\dot E$ be given. 
	Let $\alpha$ be the top level of $p$. 
	Fix an increasing sequence $\langle \gamma_n : n < \omega \rangle$ 
	of ordinals cofinal in $\delta$ with $\gamma_0 = \alpha$ 
	and an enumeration $\langle D_n : n < \omega \rangle$ 
	of all of the dense open subsets of $\p$ which lie in $N$. 
	Fix a surjection $g : \omega \to 3 \times T_{\delta} \times (N \cap \ka)$ 
	such that every element of the codomain has an infinite preimage.

	We define the following objects by induction in $\omega$-many stages:
	\begin{itemize}
	\item a subset-increasing sequence $\langle X_n : n < \omega \rangle$ 
	of finite subsets of $T_\delta$ with union equal to $T_\delta$;
	\item a subset-increasing sequence $\langle A_n : n < \omega \rangle$ 
	of finite subsets of $N \cap \ka$ with union equal to $N \cap \ka$;
	\item a non-decreasing sequence $\langle \delta_n : n < \omega \rangle$ 
	of ordinals cofinal in $\delta$;
	\item a decreasing sequence $\langle p_n : n < \omega \rangle$ 
	of conditions in $N \cap \p$ such that $p_0 = p$ and for all $n$, 
	$\delta_n$ is the top level of $p_n$;
	\item for each $n < \omega$ and $\sigma \in N \cap \ka$, an injective 
	partial function $h_{n,\sigma}$ from $X_n$ to $X_n$.
	\end{itemize}
	In addition to the properties listed above, 
	we maintain the following inductive hypotheses for all $n < \omega$:
	\begin{enumerate}
	\item $X_n$ has unique drop-downs to $\delta_n$;
	\item $A_n \subseteq \dom(p_n)$ and 
	$\{ p_n(\tau) : \tau \in A_n \}$ is separated on $X_n \res \delta_n$;
	\item for all $\tau \in A_n$, $X_n \res \delta_n$ and $X_n \res \delta_{n+1}$ 
	are $p_{n+1}(\tau)$-consistent;
	\item for all $\tau \in A_n$ and $x, y \in X_n$, 
	$$
	h_{n,\tau}(x) = y \ \Longleftrightarrow \ 
	p_n(\tau)(x \res \delta_n) = y \res \delta_n.
	$$
	\end{enumerate}
	
	\underline{Stage $0$:} Let $X_0 = \emptyset$, $A_0 = \emptyset$, 
	$\delta_0 = \alpha$, and $p_0 = p$.

	\underline{Stage $n+1$:} Let $n < \omega$ and assume that we have completed stage $n$. 
	In particular, we have defined $X_n$, $A_n$, $\delta_n$, $p_n$, and 
	$h_{n,\tau}$ for all $\tau \in A_n$ satisfying the required properties. 
	Let $g(n) = (n_0,z,\sigma)$.

	\underline{Case $a$:} $n_0 = 0$. 
	Apply Proposition 
	\ref{Consistent Extensions Into Dense Sets} (Consistent Extensions Into Dense Sets) and 
	Lemma \ref{extending domain} to find a condition $p_{n+1} \le p_n$ in 
	$N \cap \bigcap_{k < n} D_k$ with $\sigma \in \dom(p_{n+1})$ 
	and top level some ordinal $\delta_{n+1}$ 
	greater than $\gamma_{n+1}$ such that for all $\tau \in A_n$, 
	$X_n \res \delta_n$ and $X_n \res \delta_{n+1}$ 
	are $p_{n+1}(\tau)$-consistent. 
	Define $X_{n+1} = X_n$ and $A_{n+1} = A_n$. 
	Define for all $\tau \in A_{n+1}$ a partial function 
	$h_{n+1,\tau}$ as described in inductive hypothesis (4). 
	It is routine to check that the required properties are satisfied.

	\underline{Case $b$:} $n_0 = 1$. 
	If $z \notin X_n$, then let $X_{n+1} = X_n$, 
	$A_{n+1} = A_n$, $\delta_{n+1} = \delta_n$, $p_{n+1} = p_n$, 
	and $h_{n+1,\tau} = h_{n,\tau}$ for all $\tau \in A_n$. 
	Suppose that $z \in X_n$. 
	Fix an injective tuple $\vec a = (a_0,\ldots,a_{l-1})$ 
	which lists the elements of $X_n$, and 
	fix $j < l$ such that $z = a_j$. 
	Applying 
	Proposition \ref{Consistent Extensions for Sealing} (Consistent Extensions for Sealing), 
	fix $p_{n+1} \le p$ in $N \cap \p$ with some top level $\delta_{n+1}$ 
	such that $p_{n+1} \Vdash_{\p} a_j \in \dot E$ and 
	for all $\tau \in A_n$, $\vec a \res \delta_n$ and $\vec a \res \delta_{n+1}$ 
	are $p_{n+1}(\tau)$-consistent. 
	Then $p_{n+1} \Vdash_\p z \in \dot E$ and 
	for all $\tau \in A_n$, $X_n \res \delta_n$ and $X_n \res \delta_{n+1}$ 
	are $p_{n+1}(\tau)$-consistent. 
	Let $X_{n+1} = X_n$ and $A_{n+1} = A_n$. 
	Define for all $\tau \in A_{n+1}$ a partial function 
	$h_{n+1,\tau}$ as described in inductive hypothesis (4). 
	The required properties are clearly satisfied.

	\underline{Case $c$:} $n_0 = 2$. 
	If $\sigma \notin \dom(p_n)$, then let $X_{n+1} = X_n$, 
	$A_{n+1} = A_n$, $\delta_{n+1} = \delta_n$, $p_{n+1} = p_n$, 
	and $h_{n+1,\tau} = h_{n,\tau}$ for all $\tau \in A_n$. 
	Now suppose that $\sigma \in \dom(p_n)$. 
	Fix $\gamma < \delta$ larger than $\delta_n$ and $\gamma_{n+1}$ 
	such that $X_n \cup \{ z \}$ has unique drop-downs to $\gamma$. 
	Apply Proposition \ref{Extension 2} (Extension) to find $p_n' \le p_n$ with top level $\gamma$ 
	such that for all $\tau \in A_n$, $X_n \res \delta_n$ and $X_n \res \gamma$ 
	are $p_n'(\tau)$-consistent.

	Apply Lemma \ref{augmentation} (Augmentation) to find $p_{n+1} \le p_n'$ with top level 
	$\gamma+1$ and a finite set $Y \subseteq T_\delta$ satisfying:
	\begin{enumerate}
	\item for all $\tau \in A_n$, $X_n \res \gamma$ 
	and $X_n \res (\gamma+1)$ are $p_{n+1}(\tau)$-consistent;
	\item $X_n \cup \{ z \} \subseteq Y$ 
	and $Y$ has unique drop-downs to $\gamma+1$;
	\item $\{ p_{n+1}(\tau) : \tau \in A_n \cup \{ \sigma \} \}$ is separated 
	on $Y \res (\gamma+1)$;
	\item let $h_{n,\sigma}^+$ be the partial injective function 
	from $Y$ to $Y$ defined by letting, 
	for all $x, y \in Y$, $h_\sigma^+(x) = y$ 
	iff $p_{n+1}(\sigma)(x \res (\gamma+1)) = y \res (\gamma+1)$; 
	then $z$ is in the domain and range of $h_{n,\sigma}^+$.
	\end{enumerate}

	Let $X_{n+1} = Y$, $A_{n+1} = A_n \cup \{ \sigma \}$, and $\delta_{n+1} = \gamma+1$. 
	For each $\tau \in A_{n+1}$, 
	define a partial injective function $h_{n+1,\tau}$ from $X_{n+1}$ 
	to $X_{n+1}$ as described in inductive hypothesis (4). 
	Note that $h_{n+1,\sigma} = h_{n,\sigma}^+$, so $z$ is in the 
	domain and range of $h_{n+1,\sigma}$.
	The inductive hypotheses are clearly satisfied.
	
	This completes the construction. 
	By our bookkeeping, it is routine to check that the assumptions of 
	Lemma \ref{Constructing Total Master Conditions} (Constructing Total Master Conditions) hold. 
	Fix a total master condition $q$ over $N$ such that $q \le p_n$ for all $n$. 
	Reviewing Case (b) and our bookkeeping, it is easy to show $q$ forces 
	that $T_\delta \subseteq \dot E$.
\end{proof}

\begin{lemma} \label{master implies separated}
	Let $\lambda$ be a large enough regular cardinal and let $N$ be a countable 
	elementary substructure of $H(\lambda)$ which contains as members 
	$T$, $\ka$, $\q$, and $\p$. 
	If $q$ is a total master condition over $N$ such that 
	$\dom(q) = N \cap \ka$, then $q$ is separated.
\end{lemma}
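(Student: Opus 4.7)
The plan is to show, for every finite $B \subseteq \dom(q) = N \cap \ka$, that $\{q(\tau) : \tau \in B\}$ is separated on every finite subset of $T_\gamma$, where $\gamma$ is the top level of $q$; by Lemma 5.31 this suffices to conclude $q$ is separated. Since every finite $B \subseteq N \cap \ka$ belongs to $N$, it is natural to approach this via genericity: let $D_B$ denote the set of conditions $p \in \p$ satisfying $B \subseteq \dom(p)$ and $\{p(\tau) : \tau \in B\}$ is separated (on every finite subset of the top level of $p$). I will verify that $D_B$ is dense open, whence the total master condition property forces $q \in D_B$.

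Density of $D_B$ is straightforward: given $p_0 \in \p$, apply Lemma 5.27 to obtain $p_1 \le p_0$ with $B \subseteq \dom(p_1)$, then apply Lemma 5.29 (Separated Conditions are Dense) to obtain a separated $p_2 \le p_1$, which lies in $D_B$ by Lemma 5.31.

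The main obstacle is verifying openness of $D_B$. Let $p \in D_B$ have top level $\alpha$ and let $r \le p$ have top level $\beta$; I must show $\{r(\tau) : \tau \in B\}$ is separated on every finite $Y \subseteq T_\beta$. Persistence for Sets (Lemma 5.22) handles this directly when $Y$ has unique drop-downs to $\alpha$, so the subtle case is when several elements of $Y$ share a common drop-down to level $\alpha$. For this, I would fix an enumeration $(y'_0, \ldots, y'_{k-1})$ of $Y \res \alpha$ witnessing separation of $\{p(\tau) : \tau \in B\}$ on $Y \res \alpha$, enumerate each fiber $Y_{y'_i} = \{y \in Y : y \res \alpha = y'_i\}$ as $(y^i_0, \ldots, y^i_{n_i-1})$, and concatenate these enumerations in the order $i = 0, 1, \ldots, k-1$ to produce an enumeration $\vec{y}$ of $Y$. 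For any relation $r(\tau)^m(y^i_j) = y^{i'}_{j'}$ with $y^{i'}_{j'}$ earlier in $\vec{y}$, projecting to level $\alpha$ yields $p(\tau)^m(y'_i) = y'_{i'}$. If $i' = i$ (a sibling relation), this forces $p(\tau)(y'_i) = y'_i$, contradicting the no-fixed-point clause of separation for $p$ on $B$; if $i' < i$, separation of $\{p(\tau) : \tau \in B\}$ on the enumerated $(y'_0, \ldots, y'_{k-1})$ permits at most one such triple $(i', m, \tau)$ at the $\alpha$-level, and since $r(\tau)^m(y^i_j)$ is a uniquely determined element of $T_\beta$, the fiber index $j'$ is then uniquely determined. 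Hence $\vec{y}$ witnesses separation for $r$, and $r \in D_B$.

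Finally, since $B \in N$, the dense open set $D_B$ belongs to $N$. The master condition property produces $p \in D_B \cap N$ with $q \le p$, and openness yields $q \in D_B$, so $\{q(\tau) : \tau \in B\}$ is separated. As $B$ was an arbitrary finite subset of $\dom(q)$, Lemma 5.31 concludes that $q$ is separated.
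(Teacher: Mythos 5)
Your proof is correct, but it takes a meaningfully different route from the paper's, so let me compare the two. The paper fixes the finite pair $A \subseteq \dom(q)$ and $X \subseteq T_\delta$ first, chooses $\xi < \delta$ large enough that $X$ has unique drop-downs to $\xi$, and then works with the dense set $D$ of separated conditions whose top level is at least $\xi$; once the master-condition property produces $s \in D \cap N$ with $q \le s$, the unique-drop-down hypothesis is automatically satisfied and Lemma 5.22 (Persistence for Sets) applies directly. You instead define a set $D_B$ parametrized only by $B$, at the cost of having to prove that $D_B$ is open. Your openness argument is the genuine extra content: it amounts to a strengthened persistence lemma that dispenses with the unique-drop-down hypothesis, provided the stronger premise that $\{p(\tau):\tau\in B\}$ is separated on \emph{every} finite subset of the top level. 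The fiber-decomposition argument is correct --- projecting a relation within a fiber yields a fixed point at level $\alpha$, which separation forbids, and across fibers the $\alpha$-level triple $(i',m,\tau)$ is unique and then determines $j'$ since automorphisms are functions. So your proof is valid; the paper's is shorter because the $\xi$-parametrization sidesteps the need for openness (a merely dense set in $N$ already suffices for the master-condition step, and the unique-drop-down hypothesis comes for free). One small citation error: what you invoke at the beginning and end as ``Lemma 5.31'' (the Generalized Key Property) should be Lemma 5.30, the characterization of separated conditions via finite $A$ and $X$.
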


\begin{proof}
	Let $\delta = N \cap \omega_1$. 
	By Lemma \ref{separated iff}, to show that $q$ is separated it suffices to show that 
	whenever $A \subseteq \dom(q)$ and $X \subseteq T_\delta$ are finite sets, 
	then $\{ q(\tau) : \tau \in A \}$ is separated on $X$. 
	Fix $\xi < \delta$ large enough so that $X$ has unique drop-downs to $\xi$. 
	Let $D$ be the set of conditions $s$ with top level at least $\xi$ 
	such that $A \subseteq \dom(s)$ and $s$ is separated. 
	By Lemmas \ref{extending domain} and 
	\ref{Separated Conditions are Dense} (Separated Conditions are Dense), 
	$D$ is dense and $D \in N$ by elementarity. 
	Since $q$ is a total master condition, we can find 
	some $s \in N \cap D$ such that $q \le s$. 
	Let $\rho$ be the top level of $s$. 
	As $\rho \ge \xi$, $X$ has unique drop-downs to $\rho$. 
	Since $s$ is separated, 
	$\{ s(\tau) : \tau \in A \}$ is separated on $X \res \rho$. 
	By Lemma \ref{Persistence for Sets 2} (Persistence for Sets), $\{ q(\tau) : \tau \in A \}$ is separated on $X$.
\end{proof}

\begin{proposition} \label{omega2 cc}
	Assuming $\textsf{CH}$, the forcing poset $\p$ is $\omega_2$-c.c.
\end{proposition}

This follows by a standard application of the $\Delta$-system lemma, 
assuming \textsf{CH}, to an $\omega_2$-sized collection of countable sets.

Let us say that two automorphisms of $T$ are \emph{almost disjoint} if they agree 
on only countably many elements of $T$, or in other words, their graphs have 
countable intersection.
 
\begin{proposition}
	Suppose that $T$ is a free Suslin tree. 
	Then $\p$ forces that there exists an almost disjoint sequence of length $\ka$ 
	of automorphisms of $T$.
\end{proposition}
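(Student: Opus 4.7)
The plan is to exhibit the almost disjoint sequence directly from the generic filter using only the density facts already established. For each $\tau < \ka$, let $\dot f_\tau$ be a $\p$-name for the set
$\bigcup \{ p(\tau) : p \in \dot G, \ \tau \in \dom(p) \}$.
The first step is to argue that $\dot f_\tau$ is forced to be an automorphism of $T$. Lemma 5.27 supplies, for each $\rho < \omega_1$, a dense open set of conditions $q$ with $\tau \in \dom(q)$ and top level at least $\rho$, so by genericity the domain of $\dot f_\tau$ is forced to be all of $T$. The compatibility clause in the definition of $\p$ ensures the union is coherent, and since each $p(\tau)$ is an automorphism of $T \res (\alpha+1)$ (strictly increasing and bijective on each initial segment), the union $\dot f_\tau$ is a strictly increasing bijection on $T$, i.e.\ an automorphism. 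Here we also invoke Theorem 5.34, which gives that $\p$ is totally proper, so $\omega_1$ is preserved and $T$ remains an $\omega_1$-tree in the extension.

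For the almost disjointness claim, fix $\tau_0 < \tau_1 < \ka$. By Lemma 5.28, the set $D_{\tau_0,\tau_1}$ of conditions $q$ with some top level $\alpha$ satisfying $\tau_0, \tau_1 \in \dom(q)$ and $q(\tau_0)(x) \ne q(\tau_1)(x)$ for every $x \in T_\alpha$ is dense open. By genericity, some $q \in \dot G \cap D_{\tau_0,\tau_1}$ exists, with some top level $\alpha_{\tau_0,\tau_1} < \omega_1$. Now consider any $y \in T$ with $\h_T(y) \ge \alpha_{\tau_0,\tau_1}$, and let $x = y \res \alpha_{\tau_0,\tau_1}$. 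Since $q(\tau_0)(x) \ne q(\tau_1)(x)$ and automorphisms of $T$ are strictly increasing and level-preserving, their values at $y$ lie above $q(\tau_0)(x)$ and $q(\tau_1)(x)$ respectively, hence are distinct. Thus $\dot f_{\tau_0}(y) \ne \dot f_{\tau_1}(y)$ for all $y \in T$ of height at least $\alpha_{\tau_0,\tau_1}$, so the set on which $\dot f_{\tau_0}$ and $\dot f_{\tau_1}$ agree is contained in $T \res \alpha_{\tau_0,\tau_1}$, which is countable.

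Combining these two steps, $\p$ forces that $\langle \dot f_\tau : \tau < \ka \rangle$ is a $\ka$-sequence of automorphisms of $T$ any two of which agree on only countably many points, which is the desired almost disjoint sequence. There is no genuine obstacle beyond routine bookkeeping: all the nontrivial work has been packaged into the density lemmas 5.27 and 5.28 and into Theorem 5.34, and the almost disjointness is immediate from the order-preserving character of automorphisms, which propagates pointwise inequality upward in the tree.
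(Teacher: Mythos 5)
Your proof is correct and follows essentially the same route as the paper's: define $f_\tau$ as the union of $p(\tau)$ over the generic filter, use Lemma 5.27 and genericity to see each $f_\tau$ is an automorphism of $T$, then use Lemma 5.28 to find a level $\alpha_{\tau_0,\tau_1}$ beyond which $f_{\tau_0}$ and $f_{\tau_1}$ disagree at every node, and propagate disagreement upward to conclude the agreement set lies in the countable set $T \res \alpha_{\tau_0,\tau_1}$. The only extra step is your explicit appeal to Theorem~5.34 for $\omega_1$-preservation, which the paper leaves implicit.
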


\begin{proof}
	Let $G$ be a generic filter on $\p$. 
	For each $\tau < \ka$, let 
	$f_\tau := \bigcup \{ p(\tau) : p \in G, \ \tau \in \dom(p) \}$. 
	By Lemma \ref{extending domain} and a density argument, it is easy to check that 
	each $f_\tau$ is an automorphism of $T$. 
	Consider $\tau_0 < \tau_1 < \ka$. 
	By Lemma \ref{another dense set}, we can find 
	$\alpha = \alpha_{\tau_0,\tau_1} < \omega_1$ such that for all $x \in T_{\alpha}$, 
	$f_{\tau_0}(x) \ne f_{\tau_1}(x)$.
	But if $x <_T y$ and $f_{\tau_0}(x) \ne f_{\tau_1}(x)$, then 
	$f_{\tau_0}(y) \ne f_{\tau_1}(y)$. 
	So $\{ x \in T : f_{\tau_0}(x) = f_{\tau_1}(x) \} \subseteq T \res \alpha_{\tau_0,\tau_1}$.
\end{proof}

\begin{proposition}
	Suppose that $T$ is a free Suslin tree, $\ka \ge \omega_2$, and $\textsf{CH}$ holds. 
	Then $\p$ forces that $T$ is an almost Kurepa Suslin tree.
\end{proposition}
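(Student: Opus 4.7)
My plan is to combine Theorem 5.31 (total properness and preservation of Suslinness of $T$) with Proposition 5.37 (existence of $\ka$-many almost disjoint automorphisms of $T$) to transport a single $T$-generic branch into $\ka$-many distinct cofinal branches of $T$. Let $G$ be $\p$-generic over $V$. By Theorem 5.31, in $V[G]$ the tree $T$ is still an $\omega_1$-tree (since $\p$ is proper, so $\omega_1$ is preserved) and is still Suslin. By Proposition 5.37, in $V[G]$ there is a sequence $\langle f_\tau : \tau < \ka \rangle$ of automorphisms of $T$ satisfying that for all $\tau < \sigma < \ka$ there is a countable ordinal $\alpha_{\tau,\sigma}$ such that $\{ x \in T : f_\tau(x) = f_\sigma(x) \} \subseteq T \res \alpha_{\tau,\sigma}$. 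Under the $\textsf{CH}$ hypothesis implicit in this section (see the introduction to Section 5), Proposition 5.36 gives that $\p$ is $\omega_2$-c.c., so $\omega_2^V = \omega_2^{V[G]}$ and hence $\ka \ge \omega_2^{V[G]}$.

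Next I would force with $T$ over $V[G]$ to produce a cofinal branch $b$. Since $T$ is Suslin in $V[G]$, this forcing is c.c.c.\ and therefore preserves all cardinals; in particular $\omega_2^{V[G][b]} = \omega_2^{V[G]}$, $T$ remains an $\omega_1$-tree in $V[G][b]$, and each $\alpha_{\tau,\sigma}$ remains a countable ordinal there. For each $\tau < \ka$, define $b_\tau = \{ f_\tau(x) : x \in b \}$; since $f_\tau$ is a level-preserving automorphism of $T$ and $b$ is a cofinal branch, each $b_\tau$ is a cofinal branch of $T$ in $V[G][b]$.

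The key remaining step is injectivity of the map $\tau \mapsto b_\tau$. Suppose for contradiction that $b_\tau = b_\sigma$ for some $\tau < \sigma < \ka$. For each $x \in b$, the elements $f_\tau(x)$ and $f_\sigma(x)$ both lie in the common cofinal branch $b_\tau = b_\sigma$ and share the height $\h_T(x)$; since a cofinal branch contains at most one element of each level, we get $f_\tau(x) = f_\sigma(x)$. Hence $b \subseteq T \res \alpha_{\tau,\sigma}$, contradicting that $b$ is cofinal. Therefore $\{ b_\tau : \tau < \ka \}$ is a family of $\ka \ge \omega_2^{V[G][b]}$-many distinct cofinal branches of $T$, so $T$ is a Kurepa tree in $V[G][b]$. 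Consequently, in $V[G]$ the Suslin tree $T$ becomes a Kurepa tree when forced with, that is, $T$ is an almost Kurepa Suslin tree in $V[G]$. The only real obstacle is ensuring that $\ka$ retains cardinality at least $\omega_2$ throughout both forcings, which is handled by the $\omega_2$-c.c.\ of $\p$ under $\textsf{CH}$ together with the c.c.c.\ of the generic Suslin tree $T$.
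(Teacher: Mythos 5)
Your proof is correct and follows the paper's route exactly: use Theorem 5.34 to get that $T$ remains Suslin in $V[G]$, Proposition 5.37 to get the almost disjoint family $\{f_\tau : \tau < \ka\}$ of automorphisms, then push a $T$-generic branch $b$ through each $f_\tau$ and verify pairwise distinctness via the (bounded) agreement sets, which is precisely what the paper compresses into ``it is easy to conclude.'' You are slightly more careful than the paper in explicitly invoking \textsf{CH} and $\omega_2$-c.c.\ to ensure $\ka \geq \omega_2^{V[G][b]}$; the proposition as stated does not list \textsf{CH} among its hypotheses, but the surrounding discussion (and the Main Theorem, where this is applied) does assume it, so this is a defensible clarification rather than a deviation.
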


\begin{proof}
	Let $G$ be a generic filter on $\p$. 
	By Theorem \ref{proper and preserve suslin}, $T$ is Suslin in $V[G]$ and by 
    Proposition \ref{omega2 cc}, $\omega_2$ is preserved. 
	In $V[G]$, let $\{ f_\tau : \tau < \ka \}$ 
	be an almost disjoint family of automorphisms of $T$. 
	Force with the c.c.c.\ forcing $T$ over $V[G]$ to get a generic branch $b$ of $T$. 
	In $V[G][b]$, define $b_\tau = f_\tau[b]$ 
	for all $\tau < \ka$. 
	For any $\tau_0 < \tau_1 < \ka$, since $f_{\tau_0}$ and $f_{\tau_1}$ 
	are almost disjoint, it is easy to conclude that $b_{\tau_0} = f_{\tau_0}[b]$ and 
	$b_{\tau_1} = f_{\tau_1}[b]$ have countable intersection. 
	So $\langle b_{\tau} : \tau < \ka \rangle$ is a sequence of $\ka$-many 
	distinct cofinal branches of $T$. 
	Hence, $T$ is a Kurepa tree in $V[G][b]$.
\end{proof}

\section{More About Constructing and Extending Automorphisms} \label{More About Constructing and Extending Automorphisms}

We now turn toward proving that the automorphism forcing does not add new 
cofinal branches of $\omega_1$-trees appearing in certain intermediate extensions. 
In this section, we prove three technical lemmas about extending countable families 
of automorphisms one level higher in order to achieve some desirable properties. 
These lemmas anticipate configurations which appear in proofs occurring in later sections. 
Specifically, 
Lemmas \ref{elaborate extending by one} 
and \ref{elaborate extending by one 2} are 
used in the proof of Lemma \ref{very technical lemma}, and 
Lemma \ref{elaborate general extension} is 
used in the proof of Lemma \ref{Generalized Separated Conditions are Dense}. 
The proofs of Lemmas \ref{elaborate extending by one} and 
\ref{elaborate extending by one 2} are fairly simple and almost identical. 
Lemma \ref{elaborate general extension} is essentially an expansion of 
Lemma \ref{more extending by one} to a more elaborate context.

\begin{lemma} \label{elaborate extending by one}
	Let $\gamma < \omega_1$ and let $\{ f_\tau : \tau \in I \}$ be a countable family 
	of automorphisms of $T \res (\gamma+1)$. 
	Let $A \subseteq B \subseteq I$ be finite. 
	Let $Y, Z \subseteq T_{\gamma+1}$ be finite sets 
	each with unique drop-downs to $\gamma$ 
	such that $Y \cap Z = \emptyset$. 
	Then there exists a family $\{ g_\tau : \tau \in I \}$ 
	of automorphisms of $T \res (\gamma+2)$ such that:
	\begin{enumerate}
	\item for all $\tau \in I$, $f_\tau \subseteq g_\tau$;
	\item for all $\tau \in B$, $Y \res \gamma$ and $Y$ are $g_\tau$-consistent;
	\item for all $\tau \in A$, $Z \res \gamma$ and $Z$ are $g_\tau$-consistent;
	\item for all $\tau \in B \setminus A$ and for all $x \in Z$, 
	$g_\tau(x)$ and $g_\tau^{-1}(x)$ are not in $Y \cup Z$.
	\end{enumerate}
\end{lemma}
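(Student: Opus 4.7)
\emph{Plan.} The plan is to extend each $f_\tau$ to an automorphism $g_\tau$ of $T \res (\gamma+2)$ by a back-and-forth construction in the spirit of Lemmas 5.13 and 5.14, except that the initial ``priority'' stage has to simultaneously enforce the three distinct constraints (2), (3), and (4), after which a standard bookkeeping phase fills in the remaining values.

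First I would set $g_\tau \res (\gamma+1) = f_\tau$ for every $\tau \in I$. Then, in the priority stage, I would commit to the values that (2), (3), and (4) force. For each $\tau \in B$ and each $x \in Y$: if there is a $y \in Y$ with $f_\tau(x \res \gamma) = y \res \gamma$ (necessarily unique, since $Y$ has unique drop-downs to $\gamma$), set $g_\tau(x) = y$; otherwise, pick $g_\tau(x)$ to be some element of $T_{\gamma+1}$ above $f_\tau(x \res \gamma)$ that lies outside $Y \cup Z$ and outside the (finite) set of values already committed. Handle $g_\tau^{-1}$ at elements of $Y$ analogously. For each $\tau \in A$ and each $x \in Z$, do the corresponding thing with $Z$ in place of $Y$. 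Finally, for each $\tau \in B \setminus A$ and each $x \in Z$, pick $g_\tau(x)$ and $g_\tau^{-1}(x)$ above $f_\tau^{\pm 1}(x \res \gamma)$ so that both lie outside $Y \cup Z$ and outside previously committed values; this is the step that secures (4). All of these choices are possible because $T$ is infinitely splitting and only finitely many commitments have been made.

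After the priority stage, I would use the bookkeeping device from the proof of Lemma 5.13 --- an enumeration of $T_{\gamma+1} \times I$ --- to fill in the remaining values of each $g_\tau$ and $g_\tau^{-1}$, at each stage choosing a successor of the appropriate $f_\tau$-image that is distinct from all previously committed values. The resulting $g_\tau$ is a bijection of $T_{\gamma+1}$, and since every $g_\tau(x)$ lies above $f_\tau(x \res \gamma)$, $g_\tau$ is an automorphism of $T \res (\gamma+2)$ extending $f_\tau$, yielding (1). Properties (2) and (3) then follow from the priority-stage choices via the iff form of $g_\tau$-consistency: the forward implications are secured at the priority stage, and the reverse implications follow from the fact that each $g_\tau$ is level-preserving and extends $f_\tau$. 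Property (4) is built in directly.

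The main care needed is to verify that the priority-stage assignments do not produce injectivity conflicts; this is where I expect the only real bookkeeping. The potential worry is two sources of a value $v$: one forced by $Y$-consistency (so $v \in Y$) and one forced by $Z$-consistency (so $v \in Z$). But $Y \cap Z = \emptyset$, so these forced targets are automatically distinct. Combined with the unique drop-downs of $Y$ and $Z$ and the injectivity of each $f_\tau$, this ensures that the forced assignments are consistent, and all non-forced assignments are explicitly chosen to be fresh.
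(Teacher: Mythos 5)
Your proof is correct and follows the paper's approach: commit the consistency-forced assignments in a priority stage, then complete each automorphism by the bookkeeping device of Lemma 5.13, keeping all free choices outside $Y \cup Z$ and outside the previously committed values so that both injectivity and property (4) hold. The only difference from the paper's stage $0$ is that you eagerly assign all of $g_\tau$ on $Y$ (for $\tau \in B$) and on $Z$ (for $\tau \in B$) right away, rather than only recording the forced pairs and letting the later bookkeeping with $X_0 = Y \cup Z$ handle the rest; this works, but requires a little extra care that the fresh values you pick also avoid previously committed \emph{domain} elements (so that, e.g., a fresh choice of $g_\tau^{-1}(y)$ does not collide with some $x$ for which $g_\tau(x)$ has already been set).
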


\begin{proof}
	Fix a bijection $h : \omega \to T_{\gamma+1} \times I$. 
	Let $g_\tau \res (\gamma+1) = f_\tau$ for all $\tau \in I$. 
	We define the values of the functions $g_\tau$ on $T_{\gamma+1}$ 
	in $\omega$-many stages, where at any given stage 
	we have defined only finitely many values for finitely many $g_\tau$'s. 
	We also define a subset-increasing sequence $\langle X_n : n < \omega \rangle$ 
	of finite subsets of $T_{\gamma+1}$.

	At stage $0$, for all $\tau \in B$ and $x, y \in Y$, let $g_\tau(x) = y$ 
	iff $f_\tau(x \res \gamma) = y \res \gamma$. 
	And for all $\tau \in A$ and $x, y \in Z$, let $g_\tau(x) = y$ 
	iff $f_\tau(x \res \gamma) = y \res \gamma$. 
	Let $X_0 = Y \cup Z$.
	
	Now let $n < \omega$ and assume that we have completed stage $n$. 
	In particular, we have defined the finite set $X_n \subseteq T_{\gamma+1}$. 
	Consider $h(n) = (z,\sigma)$. 
	Stage $n+1$ consists of two steps. 
	For the first step, if $g_\sigma(z)$ is already defined, then move on to step $2$. 
	Otherwise, define $g_\sigma(z)$ to be some element of $T_{\gamma+1}$ 
	above $f_\sigma(z \res \gamma)$ which is not in $X_n \cup \{ z \}$. 
	This is possible since $T$ is infinitely splitting. 
	For the second step, if $g_\sigma^{-1}(z)$ is already defined, then we are done. 
	Otherwise, define $g_\sigma^{-1}(z)$ to be some element of $T_{\gamma+1}$ 
	which is above $f_{\sigma}^{-1}(z \res \gamma)$ and is not in 
	$X_n \cup \{ z, g_\sigma(z) \}$. 
	Again, this is possible since $T$ is infinitely splitting. 
	Define $X_{n+1} = X_n \cup \{ z, g_\sigma(z), g_\sigma^{-1}(z) \}$.
	
	This completes the construction. 
	It is routine to check that this works, using what we did at stage $0$ to show 
	(2) and (3), and using the sets $X_n$ to show injectivity and (4).
\end{proof}

\begin{lemma} \label{elaborate extending by one 2}
	Let $\gamma < \omega_1$ and let 
	$\{ f_\tau : \tau \in I \}$ be a family of 
	automorphisms of $T \res (\gamma+1)$. 
	Let $A \subseteq B \subseteq I$ be finite sets. 
	Let $b_0,\ldots,b_{n-1}$ be distinct elements of $T_\gamma$, and let 
    $c_0,\ldots,c_{n-1}$ and $d_0,\ldots,d_{n-1}$ be distinct elements of 
	$T_{\gamma+1}$ such that for all $k < n$, $b_k <_T c_k$ and 
	$b_k <_T d_k$. 
	Define $C = \{ c_k : k < n \}$ and $D = \{ d_k : k < n \}$. 
	Let $Y \subseteq T_{\gamma+1}$ be finite 
	and assume that $(C \cup D) \cap Y = \emptyset$.

	Then there exists a family $\{ g_\tau : \tau \in I \}$ of automorphisms 
	of $T \res (\gamma+2)$ such that:
	\begin{enumerate}
	\item for all $\tau \in I$, $f_\tau \subseteq g_\tau$;
	\item for all $\tau \in A$, the tuples $(b_0,\ldots,b_{n-1})$ and $(c_0,\ldots,c_{n-1})$ 
	are $g_\tau$-consistent and the tuples 
	$(b_0,\ldots,b_{n-1})$ and $(d_0,\ldots,d_{n-1})$ 
	are $g_\tau$-consistent;
	\item for all $x \in C$: if $\tau \in A$, then $g_\tau(x)$ and $g_\tau^{-1}(x)$ 
	are not in $Y \cup D$, and if $\tau \in B \setminus A$, then 
	$g_\tau(x)$ and $g_\tau^{-1}(x)$ are not in $C \cup D \cup Y$;
	\item for all $x \in D$: if $\tau \in A$, then $g_\tau(x)$ and $g_\tau^{-1}(x)$ 
	are not in $Y \cup C$, and if $\tau \in B \setminus A$, 
	then $g_\tau(x)$ and $g_\tau^{-1}(x)$ are not in $C \cup D \cup Y$;
	\item for all $x \in Y$ and $\tau \in B$, 
	$g_\tau(x)$ and $g_\tau^{-1}(x)$ are 
	not in $C \cup D \cup Y$.
	\end{enumerate}
\end{lemma}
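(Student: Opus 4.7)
The plan is to closely follow the proof of Lemma 5.39, which constructs each $g_\tau$ in $\omega$-many stages via a bookkeeping surjection. The only real differences are that we must also satisfy the consistency requirements in (2), which dictate specific values of $g_\tau(c_k)$ and $g_\tau(d_k)$ for $\tau \in A$, and we must respect the more elaborate list of forbidden positions in (3)--(5). Begin by defining $g_\tau \res (\gamma+1) = f_\tau$ for each $\tau \in I$.

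For the stage $0$ commitments, I would set, for every $\tau \in A$ and every pair $k, j < n$ such that $f_\tau(b_k) = b_j$, the values $g_\tau(c_k) := c_j$ and $g_\tau(d_k) := d_j$. These assignments are well-defined and injective because $f_\tau$ is injective and the $c_k$'s (resp.\ $d_k$'s) are distinct, and they immediately force conclusion (2), since $g_\tau(c_i) = c_j$ and $g_\tau(d_i) = d_j$ happen exactly when $f_\tau(b_i) = b_j$ (the reverse implication is automatic because $g_\tau$ is strictly increasing and the $c_k$'s (resp.\ $d_k$'s) sit above distinct $b_k$'s). Note that all values prescribed here lie in $C$ (resp.\ $D$), which is \emph{not} among the forbidden sets $Y \cup D$ (resp.\ $Y \cup C$) that (3) and (4) impose for $\tau \in A$, so no conflict with (3)--(5) arises. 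Let $X_0 := C \cup D \cup Y$ together with all values committed above.

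Then I would extend by bookkeeping as in Lemma 5.39. Fix a bijection $h \colon \omega \to T_{\gamma+1} \times I$, and at stage $n+1$ with $h(n) = (z, \sigma)$, define $g_\sigma(z)$ (if not already defined) to be some element of $T_{\gamma+1}$ above $f_\sigma(z \res \gamma)$ avoiding $X_n \cup \{z\}$ together with a forbidden set $F(\sigma, z)$ read directly off (3)--(5): e.g., if $z \in C$ and $\sigma \in A$ then $F(\sigma, z) := Y \cup D$, if $z \in C$ and $\sigma \in B \setminus A$ then $F(\sigma, z) := C \cup D \cup Y$, if $z \in Y$ and $\sigma \in B$ then $F(\sigma, z) := C \cup D \cup Y$, and so on; if $(z, \sigma)$ is not constrained by (3)--(5), then $F(\sigma, z) := \emptyset$. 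Since $Y$, $C$, $D$ are disjoint, at most one of the cases applies. Because $T$ is infinitely splitting and $X_n \cup F(\sigma, z)$ is finite, such a value exists. Do the same for $g_\sigma^{-1}(z)$ (avoiding also the just-chosen $g_\sigma(z)$), then set $X_{n+1} := X_n \cup \{z, g_\sigma(z), g_\sigma^{-1}(z)\}$.

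The main obstacle will be verifying that the stage $0$ commitments are compatible with the forbidden-set constraints at later stages, and that later stages never overwrite an already-committed value. The first is handled by the observation above that stage $0$ only ever lands values of $g_\tau$, for $\tau \in A$, on inputs in $C$ (resp.\ $D$) inside $C$ (resp.\ $D$), which is permitted. The second follows because at every later stage, the newly chosen value is picked outside $X_n$, which contains all previously committed outputs. Injectivity of each $g_\tau$ on $T_{\gamma+1}$ and the claim that $g_\tau$ is an automorphism of $T \res (\gamma+2)$ are then verified exactly as in Lemma 5.39, and properties (1)--(5) follow from the stage $0$ assignments together with the forbidden-set bookkeeping.
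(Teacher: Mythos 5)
Your proposal is correct and follows essentially the same construction as the paper's proof of this lemma: the identical stage-$0$ assignments for $\tau \in A$ on $C$ and $D$, the same initialization $X_0 = C \cup D \cup Y$, and the same stage-by-stage bookkeeping that picks new values outside $X_n$. The only cosmetic difference is that you additionally exclude a forbidden set $F(\sigma,z)$ at each later stage, but since $F(\sigma,z) \subseteq C \cup D \cup Y \subseteq X_0 \subseteq X_n$, avoiding $X_n$ already handles this, which is exactly the observation the paper relies on implicitly.
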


\begin{proof}
	Fix a bijection $h : \omega \to T_{\gamma+1} \times I$. 
	Let $g_\tau \res (\gamma+1) = f_\tau$ for all $\tau \in I$. 
	We define the values of the functions $g_\tau$ on $T_{\gamma+1}$ 
	in $\omega$-many stages, where at any given stage 
	we have defined only finitely many values for finitely many $g_\tau$'s. 
	We also define a subset-increasing sequence $\langle X_n : n < \omega \rangle$ 
	of finite subsets of $T_{\gamma+1}$.

	At stage $0$, for all $\tau \in A$ and $i,j < n$, 
	define $g_\tau(c_i) = c_j$ iff $f_\tau(b_i) = b_j$ and 
	$g_\tau(d_i) = d_j$ iff $f_\tau(b_i) = b_j$. 
	Let $X_0 = Y \cup C \cup D$.
	
	Now let $n < \omega$ and assume that we have completed stage $n$. 
	In particular, the finite set $X_n \subseteq T_{\gamma+1}$ has been defined. 
	Consider $h(n) = (z,\sigma)$. 
	Stage $n+1$ consists of two steps. 
	For the first step, if $g_\sigma(z)$ is already defined, then move on to step $2$. 
	Otherwise define $g_\sigma(z)$ to be some element of $T_{\gamma+1}$ 
	above $f_\sigma(z \res \gamma)$ which is not in $X_n \cup \{ z \}$. 
	This is possible since $T$ is infinitely splitting. 
	For the second step, if $g_\sigma^{-1}(z)$ is already defined, then we are done. 
	Otherwise define $g_\sigma^{-1}(z)$ to be some element of $T_{\gamma+1}$ 
	which is above $f_{\sigma}^{-1}(z)$ and not in $X_n \cup \{ z, g_\sigma(z) \}$. 
	Again, this is possible since $T$ is infinitely splitting. 
	Define $X_{n+1} = X_n \cup \{ z, g_\sigma(z), g_\sigma^{-1}(z) \}$.
	
	This completes the construction. 
	It is routine to check that this works.
\end{proof}

\begin{lemma} \label{elaborate general extension}
	Assume the following:
	\begin{itemize}
	\item $\gamma < \omega_1$ and $n < \omega$;
	\item $X \subseteq T_{\gamma+1}$ is finite and has unique drop-downs to $\gamma$;
	\item $\{ f_\tau : \tau \in I \}$ is a 
	countable collection of automorphisms of $T \res (\gamma+1)$;
	\item $\{ I_0 \} \cup \{ J_k : k < n \}$ is a partition of $I$;
	\item $\{ A \} \cup \{ A_k : k < n \}$ 
	is a family of finite sets, where 
	$A \subseteq I_0$ and $A_k \subseteq J_k$ 
	for each $k < n$;
	\item for all $k < n$, $\{ f_\tau : \tau \in A \cup A_k \}$ is separated 
	on $X \res \gamma$.
	\end{itemize}
	Then there exists a family $\{ g_\tau : \tau \in I \}$ of automorphisms 
	of $T \res (\gamma+2)$ satisfying:
	\begin{enumerate}
	\item $f_\tau \subseteq g_\tau$ for all $\tau \in I$;
	\item for all $\tau \in A \cup \bigcup_{k < n} A_k$, 
	$X \res \gamma$ and $X$ are $g_\tau$-consistent;
	\item for all $k < n$, $\{ g_\tau : \tau \in I_0 \cup J_k \}$ is separated.
	\end{enumerate}
\end{lemma}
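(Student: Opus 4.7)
The plan is to adapt the construction from the proof of Lemma 5.14 in essentially a direct manner, exploiting the fact that a relation labeled by an index $\sigma \notin I_0 \cup J_k$ is invisible to the separation requirement for the slice $I_0 \cup J_k$. First I would fix a bijection $h : \omega \to T_{\gamma+1} \times I$, set $g_\tau \res (\gamma+1) = f_\tau$ for all $\tau \in I$, and then define values of the $g_\tau$'s on $T_{\gamma+1}$ in $\omega$-many stages, maintaining a subset-increasing sequence $\langle X_m : m < \omega \rangle$ of finite subsets of $T_{\gamma+1}$ with union equal to $T_{\gamma+1}$.

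At stage $0$, for each $\tau \in A \cup \bigcup_{k < n} A_k$ and each pair $x, y \in X$, I would declare $g_\tau(x) = y$ iff $f_\tau(x \res \gamma) = y \res \gamma$. This is well-defined because $X$ has unique drop-downs to $\gamma$ and each $f_\tau$ is a function, and it forces $X \res \gamma$ and $X$ to be $g_\tau$-consistent for each such $\tau$. Setting $X_0 = X$, for each $k < n$ I would then fix an injective enumeration $\vec a^{k,0}$ of $X$ witnessing that $\{ f_\tau : \tau \in A \cup A_k \}$ is separated on $X \res \gamma$; these enumerations exist by hypothesis and are chosen independently for the different slices.

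At stage $m+1$, letting $h(m) = (z, \sigma)$, I would proceed as in Lemma 5.14: if $g_\sigma(z)$ has not yet been defined, let it be any element of $T_{\gamma+1}$ above $f_\sigma(z \res \gamma)$ avoiding $X_m \cup \{ z \}$, and analogously for $g_\sigma^{-1}(z)$ avoiding $X_m \cup \{ z, g_\sigma(z) \}$; this is possible since $T$ is infinitely splitting. Set $X_{m+1} = X_m \cup \{ z, g_\sigma(z), g_\sigma^{-1}(z) \}$, and for each $k < n$ extend $\vec a^{k,m}$ to $\vec a^{k,m+1}$ by appending, in the order listed, whichever of $z, g_\sigma(z), g_\sigma^{-1}(z)$ are not yet listed. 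The inductive hypotheses to maintain are analogs of (i) and (ii) from the proof of Lemma 5.14, together with a version of (iii) relativized to each slice: in the enumeration $\vec a^{k,m} = (a^{k,m}_0, \ldots, a^{k,m}_{l^k_m - 1})$, for each $i < l^k_m$ there is at most one triple $(j, \epsilon, \tau)$ with $j < i$, $\epsilon \in \{ -1, 1 \}$, $\tau \in I_0 \cup J_k$, such that $g_\tau^\epsilon(a^{k,m}_i) = a^{k,m}_j$ has been defined by stage $m$.

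The main point I expect to require verification is the slice-wise inductive hypothesis (iii), and the key observation that makes it work is that the only new relations introduced at stage $m+1$ are $g_\sigma(z)$ and $g_\sigma^{-1}(z)$ being the $\sigma$-image and $\sigma$-preimage of $z$, both labeled by the single index $\sigma$. For a fixed $k < n$, if $\sigma \notin I_0 \cup J_k$, then $\sigma \in J_{k'}$ for some $k' \ne k$ and the new relations are outside the slice $I_0 \cup J_k$, so (iii) for that slice is unaffected; if $\sigma \in I_0 \cup J_k$, then the two newly-appended elements each carry exactly one new relation with the previously-enumerated element $z$, preserving (iii). After $\omega$-many stages, a routine argument using the sets $X_m$ shows that each $g_\tau$ is an automorphism of $T \res (\gamma+2)$. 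Conclusion (2) holds by stage $0$, and conclusion (3) follows by fixing $k < n$, noting that every finite subset of $T_{\gamma+1}$ lies in some $X_m$, and using the enumeration $\vec a^{k,m}$ together with Lemma 5.9 to transfer separation to any desired finite subset.
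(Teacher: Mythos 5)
Your proposal is correct and follows essentially the same construction as the paper's proof: the same bookkeeping bijection $h$, the same two-step extension at each stage (defining $g_\sigma(z)$ and $g_\sigma^{-1}(z)$ above the right drop-downs while avoiding the finite set built so far), the same per-slice enumerations $\vec a^{k,m}$ refreshed at stage $0$ from the separation hypothesis, and the same slice-relativized version of inductive hypothesis (iii). Your key observation about why (iii) survives each stage --- that the only new relations are the ones labeled by the single index $\sigma$, which are invisible to a slice not containing $\sigma$ and contribute exactly one relation per new element otherwise --- is precisely the argument the paper makes. The final step of invoking Lemma 5.9 to transfer separation from $X_m$ to arbitrary finite subsets also matches.
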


\begin{proof}
	Fix a bijection $h : \omega \to T_{\gamma+1} \times I$. 
	For each $\tau \in I$, define $g_\tau \res (\gamma+1) = f_\tau$.

	We define the values of the functions $g_\tau$ on $T_{\gamma+1}$ 
	in $\omega$-many stages. 
	In addition, for each $k < n$ we define 
	an injective sequence $\langle a_l^k : l < \omega \rangle$ 
	which enumerates $T_{\gamma+1}$. 
	At any given stage $p < \omega$, we have defined a set $X_p$ 
	of some finite size $l_p$, and also defined, for each $k < n$, 
	an injective enumeration 
	$\langle a_l^k : l < l_p \rangle$ of $X_p$ which is an initial segment 
	of the sequence $\langle a_l^k : l < \omega \rangle$.

	We maintain the following inductive hypotheses:
	\begin{enumerate}
	\item[(i)] for all $p < \omega$, if the value $g^m_\tau(a) = b$ is defined at stage $p$, 
	where $\tau \in I$ and $m \in \{ -1, 1 \}$, 
	then $a$ and $b$ are in $X_p$, $a \ne b$, and 
	$f^m_\tau(a \res \gamma) = b \res \gamma$;
	\item[(ii)] for all $p_0 < p_1 < \omega$, if $a$ and $b$ are in $X_{p_0}$ and 
	$g^m_\tau(a) = b$ has been defined by the end of stage $p_1$, 
	where $\tau \in I$ and $m \in \{ -1, 1 \}$, then 
	$g^m_\tau(a) = b$ has been defined by the end of stage $p_0$;
	\item[(iii)] for all $k < n$, $p < \omega$, and $l < l_p$, 
	there exists at most one triple $(j,m,\tau)$, 
	where $j < l$, $m \in \{ -1, 1 \}$, 
	and $\tau \in I_0 \cup J_k$, 
	such that $g_\tau^m(a_l^k)$ has been defined 
	by the end of stage $p$ and $g^m_\tau(a_l^k) = a_j^k$.
	\end{enumerate}

	\underline{Stage $0$:} For each $\tau \in A \cup \bigcup_{k < n} A_k$ 
	and $x, y \in X$, define 
	$g_\tau(x) = y$ iff $f_\tau(x \res \gamma) = y \res \gamma$. 
	Define $X_0 = X$ and $l_0 = |X|$. 
	For each $k < n$, since 
	$\{ f_\tau : \tau \in A \cup A_k \}$ is separated on $X \res \gamma$, 
	we can fix an injective sequence $\langle a_0^k,\ldots,a_{l_0-1}^k \rangle$ 
	which lists the elements of $X$ so that 
	$\{ f_\tau : \tau \in A \cup A_k \}$ 
	is separated on $\vec a^k \res \gamma$, where $\vec a^k = (a_0^k,\ldots,a_{l_0-1}^k)$.

	Let us check that the inductive hypotheses hold. 
	(i) Suppose $g^m_\tau(a) = b$ is defined at stage $0$, where $\tau \in I$ 
	and $m \in \{ -1, 1 \}$. 
	By flipping $a$ and $b$ if necessary, we may assume without 
	loss of generality that $m = 1$. 
	Then by construction, $a$ and $b$ are in $X = X_0$, 
	$\tau \in A \cup \bigcup_{k < n} A_k$, 
	and $f_\tau(a \res \gamma) = b \res \gamma$. 
	Fix $k < n$ such that $\tau \in A \cup A_k$. 
	Since $\{ f_\tau : \tau \in A \cup A_k \}$ is separated on $X \res \gamma$, 
	$a \res \gamma \ne b \res \gamma$, and hence $a \ne b$.
	
	(ii) is vacuously true. 
	(iii) Fix $k < n$ and $l < l_0$. 
	Suppose that there exists a triple $(j,m,\tau)$, 
	where $j < l$, $m \in \{ -1, 1 \}$, 
	and $\tau \in I_0 \cup J_k$, 
	such that $g_\tau^m(a_l^k) = a_j^k$ has been defined 
	by the end of stage $0$. 
	Then by what we did at stage $0$, $\tau \in A \cup A_k$. 
	Since $\{ f_\tau : \tau \in A \cup A_k \}$ is separated on 
	$\vec a^k \res \gamma$, the triple $(j,m,\tau)$ must be unique.

	\underline{Stage $p+1$:} Let $p < \omega$ and suppose that stage $p$ is complete. 
	In particular, we have defined $X_p$ and for each $k < n$ we have defined 
	an injective sequence $\langle a^k_l : l < l_p \rangle$ 
	which lists the elements of $X_p$ satisfying the 
	required properties. 
	Let $h(p) = (z,\sigma)$. 

	We define $g_\sigma(z)$ and $g_\sigma^{-1}(z)$ in two steps, 
	where at each step we use the fact that $T$ is infinitely splitting. 
	In the first step, if $g_{\sigma}(z)$ is already defined, 
	then move on to step two. 
	Otherwise, define $g_{\sigma}(z)$ to be some member of $T_{\gamma+1}$ 
	which is above $f_\sigma(z \res \gamma)$ and is not in 
	$X_p \cup \{ z \}$. 
	In the second step, if $g_{\sigma}^{-1}(z)$ is already defined, then we are done. 
	Otherwise, define $g_{\sigma}^{-1}(z)$ to be some member of $T_{\gamma+1}$ 
	which is above $g_\sigma^{-1}(z \res \gamma)$ and 
	is not in $X_p \cup \{ z, g_{\sigma}(z) \}$.

	Let $X_{p+1} = X_p \cup \{ z, g_\sigma(z), g_\sigma^{-1}(z) \}$ and 
	$l_{p+1} = |X_{p+1}|$. 
	For each $k < n$, define 
	$\langle a_l^k : l < l_{p+1} \rangle$ by adding at the end of the sequence 
	$\langle a_l^k : l < l_p \rangle$ those elements among 
	$z$, $g_{\sigma}(z)$, and $g_{\sigma}^{-1}(z)$ which are not already in $X_p$, 
	in the order just listed.
	
	Let us check that inductive hypotheses (i)-(iii) hold for $p+1$. 
	(i) is clear. 
	For (ii), the only new equations of the form 
	$g^m_\tau(a) = b$ which were introduced at stage $p+1$, where 
	$\tau \in I$, $m \in \{ -1, 1 \}$, and $a, b \in T_{\gamma+1}$, is when 
	at least one of $a$ or $b$ is in $X_{n+1} \setminus X_n$. 
	So (ii) easily follows from the inductive hypothesis.

	Now we prove (iii). 
	Fix $k < n$. 
	Consider first the case when $z$ is not in $X_p$. 
	Then by inductive hypothesis (i), neither 
	$g_\sigma(z)$ nor $g_\sigma^{-1}(z)$ were defined at any stage 
	earlier than $p+1$. 
	So by how we defined $g_\sigma(z)$ and $g_\sigma^{-1}(z)$ at stage $p+1$, 
	$g_\sigma(z)$ and $g_{\sigma}^{-1}(z)$ are not in $X_p$. 
	Hence, the last three elements of $\langle a_l^k : l < l_{p+1} \rangle$ 
	are $z$, $g_\sigma(z)$, and $g_{\sigma}^{-1}(z)$. 
	The relations introduced between these three elements at stage $p+1$ 
	yield no counter-example to (iii), and $z$, $g_\sigma(z)$, and $g_\sigma^{-1}(z)$ 
	have no relations to any elements of $\langle a_l^k : l < l_p \rangle$. 
	So (iii) follows by the inductive hypothesis.
	
	Next, consider the case when $z$ is in $X_p$. 
	Then $z$ already appears on the sequence $\langle a_l^k : l < l_p \rangle$. 
	At stage $p+1$, no new relations are introduced between elements of 
	$\langle a_l^k : l < l_p \rangle$. 
	Each new element in $X_{p+1} \setminus X_p$ has exactly one relation with 
	elements of $X_{p+1}$, namely with $z$. 
	So (iii) follows by the inductive hypothesis.

	This completes the construction. 
	It is routine to check that 
	each $g_\tau$ is an automorphism of $T \res (\gamma+2)$ extending $f_\tau$. 
	By what we did at stage $0$, for all 
	$\tau \in A \cup \bigcup_n A_n$, 
	$X \res \gamma$ and $X$ are $g_\tau$-consistent.

	Now consider $k < n$, and we show that 
	$\{ g_\tau : \tau \in I_0 \cup J_k \}$ is separated. 
	By inductive hypothesis (i), $g_\tau$ has no fixed points in $T_{\gamma+1}$ 
	for any $\tau \in I_0 \cup J_k$. 
	Let $Y \subseteq T_{\gamma+1}$ be finite. 
	Fix a large enough $p < \omega$ so that $Y \subseteq X_p$. 
	Then by Lemma \ref{subset}, 
	it suffices to show that $\{ g_\tau : \tau \in I_0 \cup J_k \}$ is separated on $X_p$ 
	as witnessed by the tuple $(a_0^k,\ldots,a_{l_p-1}^k)$. 
	Suppose that $l < l_p$ and $(j,m,\tau)$ satisfies that 
	$j < l$, $m \in \{ -1, 1 \}$, $\tau \in I_0 \cup J_k$, and 
	$g_{\tau}^{m}(a_l^k) = a_j^k$. 
	By inductive hypothesis (ii), 
	the relation $g_\tau^m(a_l^k) = a_j^k$ was introduced by the end of stage $p$. 
	By inductive hypothesis (iii), there is at most one such triple.
\end{proof}

\section{Regular Suborders and Generalized Properties} \label{Regular Suborders and Generalized Properties}

In this section, we generalize many of the main properties of $\p$ to the 
context of regular suborders of $\p$. 

\begin{definition}
	For any set $X \subseteq \ka$, let $\p_X$ denote the suborder of $\p$ consisting 
	of all $p \in \p$ such that $\dom(p) \subseteq X$.
\end{definition}

The proof of the following is routine.

\begin{proposition}
	For any set $X \subseteq \ka$, $\p_X$ is a regular suborder of $\p$.
\end{proposition}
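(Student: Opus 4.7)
The plan is to verify the standard characterization of regular suborder: (a) the order of $\p_X$ is the restriction of the order of $\p$; (b) any two conditions in $\p_X$ compatible in $\p$ are compatible in $\p_X$; and (c) each $p \in \p$ admits a reduct to $\p_X$, meaning a condition $p \res X \in \p_X$ such that every extension of $p \res X$ in $\p_X$ is compatible with $p$ in $\p$. Item (a) and the preservation of incompatibility from $\p_X$ to $\p$ are immediate from Definition 5.17, since the condition relation on $\p$ and $\p_X$ is defined by the same formula.

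The central construction for both (b) and (c) is the restriction map. Given $p \in \p$ with top level $\alpha$, define $p \res X$ to have domain $\dom(p) \cap X$, top level $\alpha$, and $(p \res X)(\tau) = p(\tau)$ for all $\tau \in \dom(p) \cap X$. Then $p \res X \in \p_X$ and $p \le p \res X$ in $\p$. For (b), suppose $p, q \in \p_X$ and $r \in \p$ is a common extension; since $\dom(p), \dom(q) \subseteq X$, the condition $r \res X \in \p_X$ is still a common extension of $p$ and $q$, now in $\p_X$.

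For (c), I claim $p \res X$ is a reduct of $p$. Let $r \in \p_X$ with $r \le p \res X$, say with top level $\beta \ge \alpha$. The task is to build a common extension $s \in \p$ of $r$ and $p$. For the coordinates in $\dom(p) \setminus X$, I apply Proposition 5.15 (ignoring the separation clause) to the countable collection $\{p(\tau) : \tau \in \dom(p) \setminus X\}$ of automorphisms of $T \res (\alpha+1)$, obtaining a collection $\{g_\tau : \tau \in \dom(p) \setminus X\}$ of automorphisms of $T \res (\beta+1)$ with $p(\tau) \subseteq g_\tau$. Define $s$ with domain $\dom(r) \cup (\dom(p) \setminus X)$ and top level $\beta$ by $s(\tau) = r(\tau)$ for $\tau \in \dom(r)$ and $s(\tau) = g_\tau$ for $\tau \in \dom(p) \setminus X$. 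These two pieces are disjoint, since $\dom(r) \subseteq X$. Then $s \in \p$, $s \le r$ (same values on $\dom(r)$), and $s \le p$: for $\tau \in \dom(p) \cap X$ we have $p(\tau) = (p \res X)(\tau) \subseteq r(\tau) = s(\tau)$, and for $\tau \in \dom(p) \setminus X$ we have $p(\tau) \subseteq g_\tau = s(\tau)$.

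The only non-trivial ingredient is Proposition 5.15, whose conclusion requires no separation hypothesis when we merely want automorphism extensions, so it applies unconditionally here. I expect no obstacle: the argument mirrors the standard product-style reduction, where the restriction to the sub-index-set acts as the projection and extendability of automorphisms by one countable stage supplies the glue.
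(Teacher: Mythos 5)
Your proof is correct, and it is precisely the routine argument the paper alludes to without writing out: the paper simply states ``The proof of the following is routine'' and gives no details. Your verification of the three clauses of the regular-suborder characterization is sound; in particular the reduct construction for clause~(c) is exactly the one used implicitly throughout Subsection~5.7 (compare Lemma~5.44, Simple Generalized Extension, which does the same splicing of a $\p_\theta$-extension with an extension of $p\res[\theta,\ka)$). One cosmetic remark: Lemma~5.24 (Extension), applied to the condition $p\res(\ka\setminus X)$, gets you the extensions $g_\tau$ a bit more directly than Proposition~5.15, and also handles the degenerate case $\beta=\alpha$ (where Proposition~5.15's hypothesis $\alpha<\delta$ fails) without a separate remark; but this is a triviality and your argument is complete as written once that corner case is noted.
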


Of particular interest for us is $\p_\theta$, where $\theta < \ka$. 
The main goal of the remainder of the article is to show that 
whenever $\theta < \ka$ and $\dot U$ is a $\p_\theta$-name for an $\omega_1$-tree, 
then $\p$ forces that any cofinal branch of $\dot U$ in $V^\p$ is in $V^{\p_\theta}$. 
When $\theta < \ka$ is fixed, we write $\le_\theta$ for the ordering on $\p_\theta$, 
mainly to emphasize that the conditions we are relating are in $\p_\theta$. 
On the other hand, when we write $\le$ we mean the ordering on $\p$.

Many previously discussed properties of $\p$ were of the form that some condition can be 
extended to a higher level satisfying some additional information. 
Now we have finitely many conditions, all with the same restriction to $\p_\theta$, 
and we simultaneously extend those conditions so that the extended conditions 
also have the same restriction to $\p_\theta$. 
We refer to this type of result as \emph{generalized} versions of the earlier results.

\begin{lemma}[Simple Generalized Extension] \label{Simple Generalized Extension}
	Let $\theta < \ka$. 
	Assume the following:
	\begin{itemize}
	\item $\gamma \le \xi < \omega_1$;
	\item $p \in \p$ has top level $\gamma$, 
	$w \in \p_\theta$ has top level $\xi$, 
	and $w \le_\theta p \res \theta$;
	\item $B \subseteq \dom(p)$;
	\item $X$ is a finite subset of $T_\xi$ with unique drop-downs to $\gamma$;
	\item for all $\tau \in B \cap \theta$, 
	$X \res \gamma$ and $X$ are $w(\tau)$-consistent.
	\end{itemize}
	Then there exists a condition $q \le p$ with top level $\xi$ and domain 
	equal to $\dom(w) \cup \dom(p)$ such that 
	$q \res \theta = w$ and for all $\tau \in B$, 
	$X \res \gamma$ and $X$ are $q(\tau)$-consistent.
\end{lemma}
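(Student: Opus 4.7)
The plan is to observe that this lemma is essentially a direct corollary of Proposition 5.15, applied to the coordinates outside $\theta$, with $w$ supplying the values on coordinates inside $\theta$.

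First I would check the compatibility of $w$ and $p$ so that the ``obvious'' amalgamation is well-defined. Since $w \le_\theta p \res \theta$, we have $\dom(p) \cap \theta \subseteq \dom(w) \subseteq \theta$, and for every $\tau \in \dom(p) \cap \theta$, $p(\tau) \subseteq w(\tau)$. So the function $q$ with domain $\dom(w) \cup \dom(p)$ defined by $q(\tau) = w(\tau)$ for $\tau \in \dom(w)$ and $q(\tau) = g_\tau$ for $\tau \in \dom(p) \setminus \theta$ (where the $g_\tau$'s are yet to be constructed) is unambiguous.

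Next, I would apply Proposition 5.15 to the countable collection $\{ p(\tau) : \tau \in \dom(p) \setminus \theta \}$ of automorphisms of $T \res (\gamma+1)$, using the finite set $X \subseteq T_\xi$ with unique drop-downs to $\gamma$ (if $\gamma = \xi$ the proposition is trivial so there is nothing to do at this step) and the finite subset $A = B \setminus \theta$. This produces a family $\{ g_\tau : \tau \in \dom(p) \setminus \theta \}$ of automorphisms of $T \res (\xi+1)$ extending the $p(\tau)$'s such that $X \res \gamma$ and $X$ are $g_\tau$-consistent for every $\tau \in B \setminus \theta$. We do not need the separation clause (3) of Proposition 5.15.

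Finally, I would define $q$ as above and verify the four requirements. Each $q(\tau)$ is an automorphism of $T \res (\xi+1)$, so $q \in \p$ with top level $\xi$ and $\dom(q) = \dom(w) \cup \dom(p)$. Also $q \le p$, since for $\tau \in \dom(p) \cap \theta$ we have $q(\tau) = w(\tau) \supseteq p(\tau)$ (using $w \le_\theta p \res \theta$), and for $\tau \in \dom(p) \setminus \theta$ we have $q(\tau) = g_\tau \supseteq p(\tau)$ by construction. Clearly $q \res \theta = w$. For the consistency conclusion, if $\tau \in B \cap \theta$ then $q(\tau) = w(\tau)$ and the required consistency on $X$ is part of the hypothesis, while if $\tau \in B \setminus \theta$ then $q(\tau) = g_\tau$ and the consistency is by the conclusion of Proposition 5.15. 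There is no real obstacle here; the only subtlety worth double-checking is the bookkeeping about $\dom(p) \cap \theta \subseteq \dom(w)$ so that defining $q(\tau) := w(\tau)$ on all of $\dom(w)$ overrides $p(\tau)$ consistently on $\dom(p) \cap \theta$.
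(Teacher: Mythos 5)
Your proof is correct and takes essentially the same route as the paper: the paper applies Lemma 5.24 (Extension) to $p \res [\theta,\ka)$ and then sets $q = w \cup s$, while you inline this one level by applying Proposition 5.15 directly (which is exactly how Lemma 5.24 is proved). Your extra care in checking that $\dom(p) \cap \theta \subseteq \dom(w)$, and in noting the degenerate case $\gamma = \xi$, matches details the paper leaves implicit.
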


\begin{proof}
	Apply Proposition \ref{Extension 2} (Extension) to the condition $p \res [\theta,\ka)$ 
	to find a condition $s \le p \res [\theta,\ka)$ with top level $\xi$ 
	and with the same domain as $p \res [\theta,\ka)$ such that for all 
	$\tau \in B \setminus \theta$, $X \res \gamma$ and $X$ are $s(\tau)$-consistent. 
	Now let $q = w \cup s$.
\end{proof}

\begin{lemma}[Generalized Extension] \label{Generalized Extension} 
	Let $\theta < \ka$. 
	Assume the following:
	\begin{itemize}
	\item $\gamma \le \xi < \omega_1$;
	\item $\{ p_0,\ldots,p_{n-1} \}$ is a finite set of conditions in 
	$\p$ all with top level $\gamma$;
	\item $v \in \p_\theta$ and for all $k < n$, $p_k \res \theta = v$;
	\item $B \subseteq \bigcap_{k < n} \dom(p_k)$;
	\item $X$ is a finite subset of $T_\xi$ with unique drop-downs to $\gamma$.
	\end{itemize}
	Then there exists a set of conditions $\{ \hat{p}_0,\ldots,\hat{p}_{n-1} \}$ 
	all with top level $\xi$ and there exists some 
	$\hat{v} \in \p_\theta$ such that for all $k < n$:
	\begin{enumerate}
	\item $\hat{p}_k \le p_k$;
	\item $\hat{p}_k \res \theta = \hat{v}$;
	\item for all $\tau \in B$, 
	$X \res \gamma$ and $X$ are $\hat{p}_k(\tau)$-consistent.
	\end{enumerate}
	Moreover, if $w \le_\theta v$ is a fixed condition with top level $\xi$ 
	such that for all $\tau \in B \cap \theta$, 
	$X \res \gamma$ and $X$ are $w(\tau)$-consistent, then we can also arrange 
	that $\hat{v} = w$.
\end{lemma}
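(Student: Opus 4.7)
The plan is to reduce Lemma 5.43 to a single application of Lemma 5.42 (Simple Generalized Extension), after first choosing the common restriction $\hat{v}$ appropriately on the $\p_\theta$ side. The structural point is that the only thing tying the conditions $p_0,\ldots,p_{n-1}$ together is their common restriction $v$ to $\p_\theta$, and the extension on the $[\theta,\ka)$ side can be done independently for each $p_k$ once the common restriction has been fixed.

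First, I would dispose of the ``moreover'' clause by observing that it \emph{is} the main case: if $w$ is not given, then apply Lemma 5.24 (Extension) inside $\p_\theta$ to the condition $v$, using the finite set $B \cap \theta \subseteq \dom(v)$ and the set $X \subseteq T_\xi$ (which has unique drop-downs to $\gamma$ by assumption), to produce a condition $w \le_\theta v$ with top level $\xi$ and the same domain as $v$ such that for every $\tau \in B \cap \theta$, $X \res \gamma$ and $X$ are $w(\tau)$-consistent. This reduces the general case to the situation where such a $w$ is provided.

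So assume $w \le_\theta v$ with top level $\xi$ is fixed and witnesses the listed consistency on $B \cap \theta$. For each $k < n$, I would now invoke Lemma 5.42 (Simple Generalized Extension) with input $p_k$, $w$, $B$, and $X$. The hypotheses of Lemma 5.42 are met: $p_k$ has top level $\gamma$, $w$ has top level $\xi \ge \gamma$, $w \le_\theta v = p_k \res \theta$, $X \subseteq T_\xi$ has unique drop-downs to $\gamma$, and for $\tau \in B \cap \theta$, $X \res \gamma$ and $X$ are $w(\tau)$-consistent. Lemma 5.42 then produces $\hat{p}_k \le p_k$ with top level $\xi$ and domain $\dom(w) \cup \dom(p_k)$ such that $\hat{p}_k \res \theta = w$ and for every $\tau \in B$, $X \res \gamma$ and $X$ are $\hat{p}_k(\tau)$-consistent.

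Setting $\hat{v} := w$, the conditions $\hat{p}_0,\ldots,\hat{p}_{n-1}$ and $\hat{v}$ satisfy properties (1), (2), and (3) by construction, and in the case that $w$ was given at the outset, we have $\hat{v} = w$ as required by the ``moreover'' clause. I do not expect a genuine obstacle here: because separation plays no role in the statement and the extension is applied independently on each $p_k$ above $\theta$, the argument is essentially a packaging of Lemma 5.24 and Lemma 5.42, with the only care needed being to first carry out the $\p_\theta$-side extension once (to commit to a single $\hat v$) before extending each $p_k \res [\theta, \ka)$.
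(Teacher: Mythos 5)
Your proof is correct and takes essentially the same route as the paper: first fix $\hat{v}$ (taking $\hat{v} = w$ in the ``moreover'' case, otherwise obtaining it from Lemma 5.24), then extend each $p_k$ independently above $\theta$. The only cosmetic difference is that you route the second step through Lemma 5.44 (Simple Generalized Extension) as a black box, whereas the paper inlines its proof by applying Lemma 5.24 to $p_k \res [\theta,\ka)$ and then gluing with $\hat{v}$; the content is identical.
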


\begin{proof}
	We apply Proposition \ref{Extension 2} (Extension) several times. 
	In the case of the moreover clause, let $\hat{v} = w$. 
	Otherwise, apply Proposition \ref{Extension 2} (Extension) to 
	find $\hat{v} \le v$ with top level $\xi$ and the same domain as $v$ 
	such that for all $\tau \in B \cap \theta$, $X \res \gamma$ and $X$ 
	are $\hat{v}(\tau)$-consistent. 
	For each $k < n$, apply Proposition \ref{Extension 2} (Extension) to 
	find $s_k \le p_k \res [\theta,\ka)$ with top level $\xi$ 
	and the same domain as $p_k \res [\theta,\ka)$ such that 
	for all $\tau \in B \setminus \theta$, $X \res \gamma$ and $X$ 
	are $p_k(\tau)$-consistent. 
	Now let $\hat{p}_k = \hat{v} \cup s_k$ for all $k < n$.
\end{proof}

\begin{lemma}[Generalized Consistent Extensions Into Dense Sets] \label{Generalized Consistent Extensions Into Dense Sets}
	Suppose that $T$ is a free Suslin tree. 
	Let $\theta < \ka$. 
	Let $\lambda$ be a large enough regular cardinal and let $N$ be a countable 
	elementary substructure of $H(\lambda)$ which contains as members 
	$T$, $\ka$, $\q$, $\p$, and $\theta$. 
	Let $\delta = N \cap \omega_1$. 
	Assume that:
	\begin{itemize}
	\item $D \in N$ is a dense open subset of $\p$;
	\item $\{ p_0,\ldots,p_{n-1} \}$ is a finite set of conditions in $N \cap \p$ 
	all with top level $\xi$;
	\item $v \in N \cap \p_\theta$ and for all $k < n$, 
	$p_{k} \res \theta = v$;
	\item $X \subseteq T_\delta$ is finite and has unique drop-downs to $\xi$;
	\item $B \subseteq \bigcap_{k < n} \dom(p_{k})$ is finite;
	\item for each $k < n$, $\{ p_{k}(\tau) : \tau \in B \}$ 
	is separated on $X \res \xi$.
	\end{itemize}

	Then there exist $q_0,\ldots,q_{n-1}$ in $N \cap D$, 
	$w \in N \cap \p_\theta$, 
	and $\gamma < \delta$ satisfying that for all $k < n$, 
	\begin{enumerate}
		\item $q_{k} \le p_{k}$, 
		$q_{k}$ has top level $\gamma$, and $q_{k} \res \theta = w$;
		\item for all $\tau \in B$, $X \res \xi$ and $X \res \gamma$ 
		are $q_{k}(\tau)$-consistent.
	\end{enumerate}
\end{lemma}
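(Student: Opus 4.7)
The plan is to iterate through the conditions $p_0,\ldots,p_{n-1}$ one at a time, applying Proposition 2.2 (Consistent Extensions Into Dense Sets) inside $N$ to drive each $p_k$ into $D$, while using Lemma 5.44 (Simple Generalized Extension) to pull the previously processed conditions down so that all of them continue to share a common $\p_\theta$-restriction. Concretely, I will recursively construct, for $k = 0,\ldots,n$, an ordinal $\gamma_k \in N \cap \delta$ with $\gamma_0 = \xi$, a condition $w^{(k)} \in N \cap \p_\theta$ at top level $\gamma_k$ with $w^{(0)} = v$, conditions $q_0^{(k)},\ldots,q_{k-1}^{(k)} \in N \cap D$ below $p_0,\ldots,p_{k-1}$, and conditions $\tilde p_k^{(k)},\ldots,\tilde p_{n-1}^{(k)} \in N$ below $p_k,\ldots,p_{n-1}$, all at top level $\gamma_k$, with each of these conditions $r$ satisfying $r\res\theta = w^{(k)}$ and having $X \res \xi$ and $X \res \gamma_k$ being $r(\tau)$-consistent for every $\tau \in B$.

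At stage $0$ I take $\tilde p_j^{(0)} = p_j$ for all $j$; the hypotheses are satisfied vacuously. At stage $k+1$ I first apply Proposition 2.2 in $N$ to $\tilde p_k^{(k)}$, noting that the separation hypothesis carries over from the given separation of $\{p_k(\tau) : \tau \in B\}$ on $X \res \xi$ via Lemma 5.22 (Persistence for Sets). This yields $q_k^{(k+1)} \in N \cap D$ below $\tilde p_k^{(k)}$ at some top level $\gamma_{k+1} \in N \cap \delta$ with $X \res \gamma_k$ and $X \res \gamma_{k+1}$ being $q_k^{(k+1)}(\tau)$-consistent. Set $w^{(k+1)} := q_k^{(k+1)} \res \theta$, so $w^{(k+1)} \le_\theta w^{(k)}$ and $w^{(k+1)}$ already satisfies the consistency requirement for $\tau \in B \cap \theta$. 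I can then apply Lemma 5.44 in $N$ to each of $q_0^{(k)},\ldots,q_{k-1}^{(k)}$ and each of $\tilde p_{k+1}^{(k)},\ldots,\tilde p_{n-1}^{(k)}$, in each case replacing the $\p_\theta$-part by $w^{(k+1)}$ and extending to $\gamma_{k+1}$ while preserving $X \res \gamma_k, X \res \gamma_{k+1}$ consistency. Openness of $D$ guarantees that each updated $q_j^{(k+1)}$ remains in $D$, and elementarity lets us make every choice inside $N$. Property (B) (Transitivity) then upgrades $X \res \xi, X \res \gamma_k$ consistency plus $X \res \gamma_k, X \res \gamma_{k+1}$ consistency to $X \res \xi, X \res \gamma_{k+1}$ consistency, restoring the inductive hypothesis.

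After stage $n$, the ordinal $\gamma := \gamma_n$, the $\p_\theta$-condition $w := w^{(n)}$, and the conditions $q_k := q_k^{(n)}$ for $k < n$ witness conclusions (1) and (2). The only real delicacy, and the step I expect to do the most bookkeeping, is this propagation pass: each new invocation of Proposition 2.2 shrinks the common $\p_\theta$-restriction, so every previously chosen $q_j$ has to be pulled down to match. Lemma 5.44 is tailored precisely to this operation, so the argument reduces to keeping the indices and consistency relations straight while iterating $n$ times — no deeper obstacle arises, since each ingredient (separation, consistency, openness, elementarity) is preserved by the individual steps.
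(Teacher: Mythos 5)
Your proposal is correct and follows essentially the same approach as the paper: both arguments drive the conditions into $D$ one at a time via Proposition 2.2, pull the remaining conditions down to the new shared $\p_\theta$-restriction via Lemma 5.44 (the paper packages the multi-condition pull-down as a single application of Lemma 5.45), propagate separation via Lemma 5.22, and use openness of $D$ together with transitivity to maintain the invariants. The paper organizes this as an induction on $n$ (handling $p_0,\ldots,p_{n-1}$ via the inductive hypothesis and then incorporating $p_n$), while you unroll it as an explicit forward recursion over $k$, but the content is the same.
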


\begin{proof}
	The proof is by induction on $n$. 
	For the base case $n = 1$, the statement follows immediately from 
	Proposition \ref{Consistent Extensions Into Dense Sets} (Consistent Extensions Into Dense Sets). 
	Now assume that $n \ge 1$ and the statement holds for $n$. 
	Consider $D$, $\{ p_0,\ldots,p_{n} \}$, $\xi$, $v$, $X$, and $B$ as above. 
	By the inductive hypothesis applied to the set $\{ p_0,\ldots,p_{n-1} \}$, 
	fix $q_0,\ldots,q_{n-1}$ in $N \cap D$, 
	$w \in N \cap \p_\theta$, 
	and $\gamma < \delta$ satisfying properties (1) and (2). 
	In particular, (2) implies that 
	for all $\tau \in B \cap \theta$, 
	$X \res \xi$ and $X \res \gamma$ are $w(\tau)$-consistent.

	Apply Lemma \ref{Simple Generalized Extension} (Simple Generalized Extension) 
	inside $N$ to find $q_n \le p_n$ with top level $\gamma$ 
	such that $q_n \res \theta = w$ 
	and for all $\tau \in B$, $X \res \xi$ and $X \res \gamma$ 
	are $q_n(\tau)$-consistent. 
	Since $\{ p_{n}(\tau) : \tau \in B \}$ 
	is separated on $X \res \xi$ and $q_n \le p_n$, 
	by Lemma \ref{Persistence for Sets 2} (Persistence for Sets), 
	$\{ q_n(\tau) : \tau \in B \}$ is separated on $X \res \gamma$. 
	So we can apply Proposition \ref{Consistent Extensions Into Dense Sets} (Consistent Extensions Into Dense Sets) 
	to find $\bar{q}_{n} \le q_{n}$ in $N \cap D$ 
	with some top level $\rho < \delta$ and some $\bar{w} \in N \cap \p_\theta$ 
	such that $\bar{q}_n \res \theta = \bar{w}$ and for all $\tau \in B$, 
	$X \res \gamma$ and $X \res \rho$ are $\bar{q}_n(\tau)$-consistent. 
	Now apply Lemma \ref{Generalized Extension} (Generalized Extension) 
	inside $N$ to find a family $\{ \bar{q}_k : k < n \}$ of conditions in $N$ 
	such that for all $k < n$, $\bar{q}_k \le q_{k}$, 
	$\bar{q}_k \res \theta = \bar{w}$, 
	and for all $\tau \in B$, $X \res \gamma$ and $X \res \rho$ are 
	$\bar{q}_k(\tau)$-consistent. 
	Since $D$ is open, each $\bar{q}_k$ is in $D$. 
	So $\bar{q}_0,\ldots,\bar{q}_n$ and $\bar w$ are as required.
\end{proof}

\begin{lemma}[Generalized Separated Conditions are Dense] \label{Generalized Separated Conditions are Dense}
	Let $\theta < \ka$. 
	Assume the following:
	\begin{itemize}
	\item $\{ p_0,\ldots,p_{n-1} \}$ is a finite set of conditions in 
	$\p$ all with top level $\gamma$;
	\item $v \in \p_\theta$ and for all $k < n$, $p_k \res \theta = v$;
	\item $X$ is a finite subset of $T_{\gamma+1}$ with unique drop-downs to $\gamma$;
	\item $B \subseteq \bigcap_{k < n} \dom(p_k)$ is finite and 
	for all $k < n$, $\{ p_k(\tau) : \tau \in B \}$ is separated on $X \res \gamma$.
	\end{itemize}
	Then there exist $q_0,\ldots,q_{n-1}$ in $\p$ and $w \in \p_\theta$, 
	all with top level $\gamma+1$, such that 
	$w \le_\theta v$ and for all $k < n$, $q_k \le p_k$, 
	$q_k \res \theta = w$, $q_k$ is separated, and for all $\tau \in B$, 
	$X \res \gamma$ and $X$ are $q_{k}(\tau)$-consistent.
\end{lemma}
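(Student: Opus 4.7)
The plan is to reduce the statement to a single application of Lemma 5.41, once we relabel indices carefully. The conceptual obstacle is that different conditions $p_k$ may assign different automorphisms to the same $\tau \in \ka \setminus \theta$, so one cannot naively index all the relevant automorphisms by a common subset of $\ka$; I will introduce fresh labels for the ``outside $\theta$'' parts of each $p_k$ to sidestep this.

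First I set $I_0 = \dom(v)$ and, for each $k < n$, $J_k = \{ k \} \times (\dom(p_k) \setminus \theta)$, so that $I = I_0 \cup \bigcup_{k<n} J_k$ is a countable disjoint union. I define a family $\{ f_\sigma : \sigma \in I \}$ of automorphisms of $T \res (\gamma+1)$ by $f_\tau = v(\tau)$ on $I_0$ and $f_{(k,\tau)} = p_k(\tau)$ on $J_k$ (well defined since $p_k \res \theta = v$). Let $A = B \cap \theta \subseteq I_0$ and $A_k = \{ k \} \times (B \setminus \theta) \subseteq J_k$. The obvious bijection from $A \cup A_k$ onto $B$ (identity on $A$, projection on $A_k$) pairs $f_\sigma$ with $p_k(\tau)$, so by Lemma 5.8 the hypothesis on $p_k$ transfers to say that $\{ f_\sigma : \sigma \in A \cup A_k \}$ is separated on $X \res \gamma$. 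These are exactly the hypotheses needed to invoke Lemma 5.41 with the partition $\{ I_0 \} \cup \{ J_k : k < n \}$.

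I then apply Lemma 5.41 to produce a family $\{ g_\sigma : \sigma \in I \}$ of automorphisms of $T \res (\gamma+2)$ extending the $f_\sigma$ such that $X \res \gamma$ and $X$ are $g_\sigma$-consistent for every $\sigma \in A \cup \bigcup_{k<n} A_k$ and such that $\{ g_\sigma : \sigma \in I_0 \cup J_k \}$ is separated for every $k < n$. Finally I define $w \in \p_\theta$ with $\dom(w) = I_0$ by $w(\tau) = g_\tau$, and for each $k < n$ I define $q_k$ with $\dom(q_k) = \dom(p_k)$ by $q_k(\tau) = g_\tau$ on $\dom(v)$ and $q_k(\tau) = g_{(k,\tau)}$ on $\dom(p_k) \setminus \theta$. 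Each of the required conclusions is then straightforward: $w \le_\theta v$ and $q_k \le p_k$ come from conclusion (1) of Lemma 5.41; the equality $q_k \res \theta = w$ is by construction; the consistency clause on $B$ is conclusion (2); and $q_k$ is separated as a condition because, under the bijection $\dom(q_k) \cong I_0 \cup J_k$, conclusion (3) and Lemma 5.8 transport separation back to $\{ q_k(\sigma) : \sigma \in \dom(q_k) \}$. There is no serious technical difficulty beyond the relabeling bookkeeping.
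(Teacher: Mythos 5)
Your proposal is correct and follows essentially the same route as the paper's proof: the same disjoint-union indexing $I_0 \cup \bigcup_k J_k$ with fresh labels for the parts above $\theta$, the same use of Lemma 5.8 to transport separation across the relabeling bijections, and the same single application of Lemma 5.41 followed by reading off $w$ and the $q_k$. Nothing to change.
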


\begin{proof}
	Let $I_0 = \dom(v)$ and $A = B \cap \theta$. 
	For each $k < n$, let $J_k = \{ k \} \times (\dom(p_k) \setminus \theta)$ and 
	$A_k = \{ k \} \times (B \setminus \theta)$. 
	Let $I = I_0 \cup \bigcup_{k < n} J_k$. 
	For all $\tau \in I_0$, let $f_\tau = v(\tau)$, and for all 
	$k < n$ and $\tau \in \dom(p_k) \setminus \theta$, 
	let $f_{(k,\tau)} = p_k(\tau)$.

	We would like to apply Lemma \ref{elaborate general extension} to the above objects. 
	The first five assumptions of this lemma clearly hold. 
	For the last assumption, 
	we need to show that for all $k < n$, $\{ f_i : i \in A \cup A_k \}$ 
	is separated on $X \res \gamma$. 
	Define $h : A \cup A_k \to B$ by $h(\tau) = \tau$ for all $\tau \in A$, 
	and $h((k,\tau)) = \tau$ for all $(k,\tau) \in A_k$. 
	Then $h$ is a bijection and 
	$f_i = p_k(h(i))$ for all $i \in A$. 
	Since $\{ p_k(\tau) : \tau \in B \}$ is separated on $X \res \gamma$, 
	by Lemma \ref{composition} so is $\{ f_i : i \in A \cup A_k \}$.

	Applying Lemma \ref{elaborate general extension} fix a 
	family $\{ g_i : i \in I \}$ of automorphisms 
	of $T \res (\gamma+2)$ satisfying:
	\begin{enumerate}
	\item $f_i \subseteq g_i$ for all $i \in I$;
	\item for all $i \in A \cup \bigcup_{k < n} A_k$, 
	$X \res \gamma$ and $X$ are $g_i$-consistent;
	\item for all $k < n$, $\{ g_i : i \in I_0 \cup J_k \}$ is separated.
	\end{enumerate}

	Define $w$ with the same domain as $v$ so that for all $\tau \in \dom(v)$, 
	$w(\tau) = g_\tau$. 
	For each $k < n$, define $q_k$ with the same domain as $p_k$ so that 
	$q_k \res \theta = w$ and for all 
	$\tau \in \dom(p_k) \setminus \theta$, $q_k(\tau) = g_{(k,\tau)}$. 
	It is easy to check that each $q_k$ is a condition below $p_k$ with 
	top level $\gamma+1$.

	Let $k < n$. 
	We claim that for all $\tau \in B$, 
	$X \res \gamma$ and $X$ are $q_{k}(\tau)$-consistent. 
	If $\tau \in B \cap \theta$, then $q_k(\tau) = w(\tau) = g_\tau$ 
	and $\tau \in A$, and by (2), 
	$X \res \gamma$ and $X$ are $g_\tau$-consistent. 
	Suppose that $\tau \in B \setminus \theta$. 
	Then $(k,\tau) \in A_k$ and $q_k(\tau) = g_{(k,\tau)}$. 
	By (2), $X \res \gamma$ and $X$ are $g_{(k,\tau)}$-consistent.
	
	Finally, we claim that $q_k$ is separated. 
	So let $Y \subseteq T_{\gamma+1}$ be finite, and we show that 
	$\{ q_k(\tau) : \tau \in \dom(q_k) \}$ is separated on $Y$. 
	Define a function $h : \dom(q_k) \to I_0 \cup J_k$ by letting 
	$h(\tau) = \tau$ if $\tau < \theta$, and 
	$h(\tau) = (k,\tau)$ if $\tau \ge \theta$. 
	Then $h$ is a bijection, for all $\tau \in \dom(q_k)$, $q_k(\tau) = g_{h(\tau)}$, 
	and by (3), $\{ g_i : i \in I_0 \cup J_k \}$ is separated on $Y$. 
	So by Lemma \ref{composition}, $\{ q_k(\tau) : \tau \in \dom(q_k) \}$ is separated on $Y$.
\end{proof}

\begin{lemma}[Generalized Augmentation] \label{Generalized Augmentation}
	Let $\lambda$ be a large enough regular cardinal and assume that $N$ is a countable 
	elementary substructure of $H(\lambda)$ which contains as elements $T$, $\ka$, 
	$\q$, and $\p$. 
	Let $\delta = N \cap \omega_1$. 
	Suppose the following:
	\begin{itemize}
	\item $\{ p_0,\ldots,p_{n-1} \}$ is a finite set of conditions in 
	$N \cap \p$ with top level $\gamma$;
	\item $v \in \p_\theta$ and for all $k < n$, $p_k \res \theta = v$;
	\item $B \subseteq \bigcap_{k < n} \dom(p_k)$ is finite;
	\item $z \in T_\delta$ and 
	$\sigma \in \bigcap_{k < n} \dom(p_k)$;
	\item $X \subseteq T_\delta$ is finite and 
	$X \cup \{ z \}$ has unique drop-downs to $\gamma$;
	\item for all $k < n$, 
	$\{ p_k(\tau) : \tau \in B \}$ is separated on $X \res \gamma$;
	\end{itemize}
	Then there exist $q_0,\ldots,q_{n-1}$ in $N \cap \p$ and 
	$w \in N \cap \p_\theta$, all with top level $\gamma+1$, 
	and a finite set $Y \subseteq T_\delta$ such that 
	$X \cup \{ z \} \subseteq Y$ and $Y$ has unique drop-downs to $\gamma+1$, 
	satisfying that for all $k < n$:
	\begin{enumerate}
	\item $q_k \le p_k$ and $q_k \res \theta = w$;
	\item for all $\tau \in B$, 
	$X \res \gamma$ and $X \res (\gamma+1)$ are $q_k(\tau)$-consistent;
	\item $\{ q_k(\tau) : \tau \in B \cup \{ \sigma \} \}$ is separated on $Y \res (\gamma+1)$;
	\item let $h_{k,\sigma}^+$ be the partial injective function 
	from $Y$ to $Y$ defined by letting, 
	for all $x, y \in Y$, $h_{k,\sigma}^+(x) = y$ iff 
	$q_k(\sigma)(x \res (\gamma+1)) = y \res (\gamma+1)$; 
	then $z$ is in the domain and range of $h_{k,\sigma}^+$.
	\end{enumerate}
\end{lemma}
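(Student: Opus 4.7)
The plan is to follow the strategy of Lemma 5.33 (Augmentation), replacing the application of Lemma 5.29 with the generalized density statement Lemma 5.47 (Generalized Separated Conditions are Dense), and then building a single set $Y$ that simultaneously witnesses $z \in \dom(h_{k,\sigma}^+) \cap \ran(h_{k,\sigma}^+)$ for every $k < n$.

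First I would invoke Lemma 5.47 inside $N$ by elementarity. The inputs $\{p_0,\ldots,p_{n-1}\}$, $v$, and $B$ are already in $N$, and although $X$ itself may not be in $N$, the restriction $X \res (\gamma+1)$ is a finite subset of $T \res \delta \subseteq N$ and hence lies in $N$. The hypothesis that $\{p_k(\tau) : \tau \in B\}$ is separated on $(X \res (\gamma+1)) \res \gamma = X \res \gamma$ is given. Applying Lemma 5.47 with $X \res (\gamma+1)$ in place of its $X$ yields $q_0,\ldots,q_{n-1} \in N \cap \p$ and $w \in N \cap \p_\theta$, all with top level $\gamma+1$, such that $q_k \le p_k$, $q_k \res \theta = w$, each $q_k$ is separated, and for all $\tau \in B$ and $k < n$, $X \res \gamma$ and $X \res (\gamma+1)$ are $q_k(\tau)$-consistent. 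Since $\sigma \in \dom(p_k) \subseteq \dom(q_k)$, each $q_k(\sigma)$ is defined. This already secures conclusions (1) and (2).

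Next I construct $Y$. Put
$$W = \bigl(\{z \res (\gamma+1)\} \cup \{q_k(\sigma)^m(z \res (\gamma+1)) : k < n, \ m \in \{-1,1\}\}\bigr) \setminus (X \res (\gamma+1)),$$
a finite subset of $T_{\gamma+1}$. Take $Y \subseteq T_\delta$ to consist of the elements of $X$ together with exactly one element of $T_\delta$ above each member of $W$, arranging (if $z \notin X$) that the element above $z \res (\gamma+1)$ is $z$ itself. Then $X \cup \{z\} \subseteq Y$ and $Y \res (\gamma+1) = (X \res (\gamma+1)) \cup W$. Unique drop-downs of $X \cup \{z\}$ to $\gamma$ give unique drop-downs of $X$ to $\gamma+1$, and combined with $W \cap (X \res (\gamma+1)) = \emptyset$ this yields unique drop-downs of $Y$ to $\gamma+1$.

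For (3), since each $q_k$ is separated, $\{q_k(\tau) : \tau \in \dom(q_k)\}$ is separated on the finite set $Y \res (\gamma+1)$, and restricting to $B \cup \{\sigma\} \subseteq \dom(q_k)$ preserves separation by Lemma 5.9. For (4), by construction $q_k(\sigma)^{\pm 1}(z \res (\gamma+1))$ both belong to $Y \res (\gamma+1)$, so by unique drop-downs there are $c_k, d_k \in Y$ with $c_k \res (\gamma+1) = q_k(\sigma)(z \res (\gamma+1))$ and $d_k \res (\gamma+1) = q_k(\sigma)^{-1}(z \res (\gamma+1))$; unwinding the definition of $h_{k,\sigma}^+$ gives $h_{k,\sigma}^+(z) = c_k$ and $h_{k,\sigma}^+(d_k) = z$. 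The main obstacle is not serious given Lemma 5.47: the only care needed is to pool all $2n$ images $q_k(\sigma)^{\pm 1}(z \res (\gamma+1))$ into the single set $W$ so that one choice of $Y$ serves uniformly for every $k < n$.
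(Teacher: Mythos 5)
Your proposal is correct and follows the paper's own proof essentially step for step: apply Lemma 5.47 (with $X \res (\gamma+1)$ as the input set, which lies in $N$) to get separated conditions $q_k$ with common restriction $w$ to $\p_\theta$, then form $W$ by pooling $z \res (\gamma+1)$ and all $q_k(\sigma)^{\pm1}(z \res (\gamma+1))$, extend $W$ up to $Y \subseteq T_\delta$, and verify (1)–(4). The paper's verification of (4) is phrased slightly differently but amounts to the same unwinding of the definition of $h_{k,\sigma}^+$.
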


\begin{proof}
	Apply 
	Lemma \ref{Generalized Separated Conditions are Dense} (Generalized 
	Separated Conditions are Dense) in $N$ to find 
	$q_0,\ldots,q_{n-1}$ in $N \cap \p$ and 
	$w \le_\theta v$ in $N \cap \p_\theta$, all with top level $\gamma+1$, 
	such that for all $k < n$, $q_k \le p_k$, 
	$q_k \res \theta = w$, $q_k$ is separated, and for all $\tau \in B$, 
	$X \res \gamma$ and $X \res (\gamma+1)$ are $q_{k}(\tau)$-consistent. 

	Define 
	$$
	W = ((\{ \ z \res (\gamma+1) \ \}) \ \cup \ 
	\{ \ q_k(\sigma)^m(z \res (\gamma+1)) \ : \ k < n, \ m \in \{ -1, 1 \} \ \}) \  
	\setminus \ (X \res (\gamma+1)).
	$$
	Note that by unique drop-downs of $X \cup \{ z \}$, 
	if $z \notin X$ then $z \res (\gamma+1)$ 
	is not in $X \res (\gamma+1)$. 
	So if $z$ is not in $X$, then $z \res (\gamma+1)$ is in $W$.

	Let $Y$ consist of the elements of 
	$X$ together with exactly one element of $T_\delta$ above each member of $W$, 
	and such that if $z \notin X$ then the element of $Y$ above $z \res (\gamma+1)$ is $z$. 
	Note that $Y$ has unique drop-downs to $\gamma+1$ and 
	$Y \res (\gamma+1) = (X \res (\gamma+1)) \cup W$. 
	Now define $h_{k,\sigma}^+$ for all $k < n$ as described in (4).

	Conclusions (1), (2), and (3) are clear. 
	(4): Let $k < n$. 
	Note that $q_k(\sigma)(z \res (\gamma+1))$ 
	is either in $X \res (\gamma+1)$ or in $W$. 
	As $Y \res (\gamma+1) = (X \res (\gamma+1)) \cup W$, in either case we can 
	find $c \in Y$ such that $c \res (\gamma+1) = q_k(\sigma)(z \res (\gamma+1))$. 
	Then by the definition of $h_{k,\sigma}^+$, 
	$q(\sigma)(z \res (\gamma+1)) = c \res (\gamma+1)$ implies that 
	$h_{k,\sigma}^+(z) = c$, so $z \in \dom(h_{k,\sigma}^+)$. 
	The proof that $z$ is in the range of $h_{k,\sigma}^+$ is similar.
\end{proof}

\section{Existence of Nice Conditions} \label{Existence of Nice Conditions}

Our goal for this section is to prove the following theorem.

\begin{thm}[No New Cofinal Branches] \label{No New Cofinal Branches}
	Suppose that $T$ is a free Suslin tree and $\textsf{CH}$ holds. 
	Let $\theta < \ka$ and suppose that 
	$\dot U$ is a $\p_\theta$-name for an $\omega_1$-tree. 
	Then $\p$ forces that every branch of $\dot U$ in $V^\p$ is in $V^{\p_\theta}$.
\end{thm}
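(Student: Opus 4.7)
The plan is to mimic the strategy of Subsection 4.5, but with the added complication that the $\omega_1$-tree $\dot U$ lives in the intermediate model $V^{\p_\theta}$, where $T$ is no longer a free Suslin tree. The main tool to bridge this gap is the concept of a \emph{nice condition} advertised in the introduction: a total master condition over a countable elementary substructure for $\p_\theta$ which, after projecting into $V^{\p_\theta}$, behaves universally with respect to the quotient forcing $\p/\dot G_{\p_\theta}$. I will first carry out the definition and existence proof of nice conditions, following the template of Theorem 5.34 and Lemma 5.35 but using the ``Generalized'' tools from Subsection 5.7 (most notably Lemmas 5.44--5.48) so that the construction can be performed uniformly over finitely many conditions sharing the same $\p_\theta$-restriction.

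Once nice conditions are in hand, I would adapt Proposition 4.25 as follows. Suppose toward a contradiction that some $\bar p \in \p$ forces that $\dot b$ is a cofinal branch of $\dot U$ not in $V^{\p_\theta}$. Working in $V$, fix $\bar p$ together with a tuple $\vec x$ on which the relevant $\{\bar p(\tau):\tau \in A\}$ is separated. Define $\mathcal X$ as the set of tuples $\vec b \in T_{\vec x}$ for which there exist conditions $q_0, q_1 \le \bar p$ sharing a common restriction $w \in \p_\theta$ (with top level equal to the height of $\vec b$), such that $\vec x$ and $\vec b$ are consistent with both $q_0(\tau)$ and $q_1(\tau)$ for all $\tau \in A$, and such that $q_0$ and $q_1$ force contradictory values of $\dot b$ at some $\zeta < \gamma$. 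The freeness of $T$ \emph{in $V$} is what we still need: density of $\mathcal X$ in $T_{\vec x}$ follows because otherwise a function $F$ defined on tuples above $\vec b$ of height $\delta_\gamma$ would produce a cofinal branch of $\dot U$ in $V^{\p_\theta}$, using nice conditions to realize $F$ as a $\p_\theta$-name, contradicting that $\dot b$ is new. The uniqueness-of-decision argument (Claim~1 of 4.25) will now use Lemma~5.47 (Generalized Separated Conditions are Dense) and the $\p_\theta$-universality of nice conditions to glue together two witnessing conditions over a common $\p_\theta$-part; the ``disjoint drop-down'' trick from Claim~3 of 4.25 is replaced by Lemma~5.31 (Generalized Key Property), which allows us to steer two extensions away from each other while sharing a common $\p_\theta$-restriction.

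Having established density of $\mathcal X$, the generalization of Lemma 4.26 is an induction on the number of conditions being simultaneously split; at each step we apply the density result inside a suitable $N$, combined with Lemma 5.45 (Generalized Extension), to synchronize the $\p_\theta$-restrictions of all the obtained conditions. Finally, the endgame (analogue of Lemma 4.27) is a binary tree construction as in Subsection 4.5: starting from $\bar p$ inside a countable elementary substructure $N$ with $\delta = N \cap \omega_1$, I build a tree $\langle r^s : s \in {}^{<\omega}2\rangle$ of conditions all sharing a common $\p_\theta$-restriction on each level, with $r^{s^\frown 0}$ and $r^{s^\frown 1}$ forcing opposite decisions about some $\zeta_s$, and close each branch $f \in {}^\omega 2$ off to a nice condition $r_f$ over $N$ with domain $N \cap \ka$, using Lemma 5.32 (Constructing Total Master Conditions) and Lemma 5.48 (Generalized Augmentation). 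Each $r_f$ decides $\dot b \cap \delta$ as some $b_f$, and distinct $f$'s yield distinct $b_f$'s (since they agree with incompatible $r^{s^\frown 0}$, $r^{s^\frown 1}$ at the first splitting index). The existence of an upper bound for each $b_f$ in $\dot U_\delta$ is absolute, producing continuum-many distinct elements of $\dot U_\delta$ in $V^\p$, contradicting that $\dot U$ is an $\omega_1$-tree in $V^{\p_\theta}$ (and hence in $V^\p$, since forcing with $\p/\dot G_{\p_\theta}$ does not add uncountable antichains to existing $\omega_1$-trees by Theorem 5.34 applied in $V^{\p_\theta}$, or more simply, cardinals are preserved).

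The principal obstacle, as anticipated by the introduction, is the construction and exploitation of nice conditions. Two subtleties must be handled carefully: first, the definition of niceness must be strong enough that nice conditions over $N$ can be amalgamated with arbitrary conditions $w \in \p_\theta$ below their $\p_\theta$-part, so that the density argument for $\mathcal X$ actually transfers facts about $\dot b$ from $V^\p$ back to $V^{\p_\theta}$; second, the existence proof for nice conditions must generalize the $\omega$-stage fusion of Theorem 5.34 to ensure that the ``target'' $\p_\theta$-condition we aim for at each stage can itself be chosen generically, so that in $V^{\p_\theta}$ the relevant quotient remains rich enough. All the pieces needed---Generalized Extension, Generalized Consistent Extensions Into Dense Sets, Generalized Separated Conditions are Dense, Generalized Augmentation, and the 1-Key Property---are exactly the reason Subsection 5.7 was set up in that level of abstraction, so the construction, while technically heavy, should proceed by straightforward analogy with the proofs in Subsection 4.5.
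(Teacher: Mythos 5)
Your overall strategy matches the paper's: isolate nice conditions, prove the density statement (analogue of Proposition 4.25) with two witnessing conditions sharing a common $\p_\theta$-restriction, then run a generalized splitting argument and a binary-tree fusion. Two concrete gaps remain. First, you omit the reduction to the case $\ka < \omega_2$ (the paper's Lemma 5.50, which uses \textsf{CH} and the $\omega_2$-c.c.\ of $\p$). This is not cosmetic: the definition of an $N$-nice condition necessarily refers to $N\cap\ka$ (the witnessing total master conditions $r$ must have $N\cap\ka\subseteq\dom(r)$), and the density argument needs niceness to transfer between elementary submodels $M\supseteq N$ having the same intersection with $\omega_1$ (Lemma 5.52, invoked in Claim 6 of Proposition 5.55). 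That transfer requires $N\cap\ka = M\cap\ka$, which can fail if $\ka\ge\omega_2$.

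Second, your plan to define $F$ on tuples above $\vec b$ of height $\delta_\gamma$ and then ``realize $F$ as a $\p_\theta$-name'' is not directly workable: there is no canonical way to extract a $\p_\theta$-name from a function on tuples in $T_\delta$. The paper's substantive shift from Proposition 4.25 to the present setting is to define $F$ on pairs $(v,r)$ where $v\in\p_\theta$ is $N_\gamma$-nice and $r$ is a witness, and to prove $F$ is a function of $v$ alone; then $\dot c_\gamma$ is well-defined as the $\p_\theta$-name for $F(v)$ where $v\in\dot G_\theta$ is $N_\gamma$-nice. You correctly observe that the ``disjoint drop-down'' trick from Claim~3 of Proposition~4.25 uses freeness of $T$ in $V$ and must be replaced, but replacing it by an application of the Generalized Key Property at the moment of proving well-definedness is not enough: the amalgamation required (given two witnesses $q_0,q_1$ for $v$, produce a common witness $r$ with $r\res\theta=v$ linking their decisions) must hold for the fixed $v$, and hence must be built into $v$ during its construction as a clause of the niceness definition (Definition 5.51(3), established in Proposition 5.54 via Lemma 5.53). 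Your sketch does not anticipate that this amalgamation is a property the nice condition itself must carry.
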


We begin by proving that in Theorem \ref{No New Cofinal Branches}, 
it suffices to prove the statement under the assumption that $\ka < \omega_2$. 
So assume that the result holds when $\ka < \omega_2$, and now let 
$\theta < \ka$ be arbitrary. 
Suppose that $\dot U$ is a $\p_\theta$-name for an $\omega_1$-tree 
and $\dot b$ is a $\p_\ka$-name for a branch of $\dot U$. 
Without loss of generality, we may assume that $\dot U$ is forced 
to be a subset of $\omega_1$, and $\dot U$ and $\dot b$ are nice names.

Since $\p$ is $\omega_2$-c.c.\ by Lemma \ref{omega2 cc}, 
conditions have countable domain, 
and $\dot U$ is a nice name, 
we can find a set $X \subseteq \theta$ 
of size at most $\omega_1$ such that $\dot U$ is a $\p_X$-name. 
Similarly, we can find a set $Y \subseteq \ka$ of size at most $\omega_1$ such that 
$X \subseteq Y$ and $\dot b$ is a $\p_Y$-name. 
Then $\dot U$ is a $\p_{Y \cap \theta}$-name. 
	
Consider a generic filter $G$ on $\p$ and let $H = G \cap \p_\theta$. 
Let $U = \dot U^H$ and $b = \dot b^G$. 
We show that $b \in V[H]$. 
By the choice of $Y$ and the names, 
$U \in V[H \cap \p_{Y \cap \theta}]$ and $b \in V[G \cap \p_Y]$. 
Let $\ka_0$ be the order type of $Y$ and let 
$\theta_0$ be the order type of $Y \cap \theta$. 
By standard arguments, there exists an 
isomorphism $\varphi : \p_{Y} \to \p_{\kappa_0}$ such that 
$\varphi \res \p_{Y \cap \theta}$ is an isomorphism of $\p_{Y \cap \theta}$ 
onto $\p_{\theta_0}$. 
Let $\bar G = \varphi[G \cap \p_Y]$ 
and $\bar H = \varphi[H \cap \p_{Y \cap \theta}]$. 
Then $\bar G$ is a generic filter on $\p_{\ka_0}$, 
$\bar H = \bar G \cap \p_{\theta_0}$ is a generic filter on $\p_{\theta_0}$, 
$U \in V[\bar H]$, and $b \in V[\bar G]$. 
Since $Y$ has cardinality at most $\omega_1$, $\theta_0 < \ka_0 < \omega_2$. 
So $b \in V[\bar H]$. 
But $V[\bar H] \subseteq V[H]$.

For the remainder of this section assume that $\theta < \ka < \omega_2$, 
$\dot U$ is a $\p_\theta$-name for an $\omega_1$-tree, and 
$\dot b$ is a $\p$-name for a branch of $\dot U$. 
We prove that $\p$ forces that $\dot b$ is in $V^{\p_\theta}$. 
Without loss of generality assume that the underlying set of $\dot U$ 
is forced to equal $\omega_1$, and in fact that for any $\gamma < \omega_1$, 
the elements of $\dot U_\gamma$ are ordinals in the interval 
$[\omega \cdot \gamma, \omega \cdot (\gamma+1))$.

Fix a large enough regular cardinal $\lambda$ and a well-ordering $\unlhd$ of $H(\lambda)$. 
Define a set $N$ to be a \emph{suitable model} if 
it is a countable elementary substructure of $(H(\lambda),\in,\unlhd)$ which 
contains as members the objects $T$, $\ka$, $\q$, $\p$, $\theta$, $\dot U$, and $\dot b$.

\begin{definition}[Nice Conditions] \label{Nice Conditions}
	Let $N$ be a suitable model, $\delta = N \cap \omega_1$, and $\alpha < \delta$.  
	Suppose that $p \in N \cap \p$ has top level $\alpha$ and 
	$A \subseteq \dom(p)$ is finite. 
	Let $\vec b$ be a tuple consisting of distinct elements of $T_\alpha$ 
	and assume that $\{ p(\tau) : \tau \in A \}$ is separated on $\vec b$. 
	A condition $v \in \p_\theta$ is said to be 
	\emph{$N$-nice for $p$, $\vec b$, and $A$} if the following statements hold:
	\begin{enumerate}
	\item $v \le p \res \theta$, $v$ has top level $\delta$, 
	and $v$ decides $\dot U \res \delta$.
	\item For all $q \in N \cap \p$ such that $q \le p$ and $v \le q \res \theta$, 
	there exists some $r \le q$ with top level $\delta$ such that 
	$r \res \theta = v$, $N \cap \ka \subseteq \dom(r)$, 
	$r$ is separated, and $r$ decides $\dot b \cap \delta$. 
	\item Suppose that $\vec a^0$ and $\vec a^1$ are distinct tuples above $\vec b$ 
	with height $\delta$, and $q_0, q_1 \le p$ have top level $\delta$ and satisfy: 
	$q_0 \res \theta = q_1 \res \theta = v$, 
	$N \cap \ka \subseteq \dom(q_0) \cap \dom(q_1)$, 
	$q_0$ and $q_1$ are separated, $q_0$ and $q_1$ 
	decide $\dot b \cap \delta$, and for all $\tau \in A$ and $j < 2$, 
	$\vec b$ and $\vec a^{j}$ are $q_j(\tau)$-consistent. 
	Then there exist $q_0^*$ and $q_1^*$ satisfying the same properties listed above 
	for $q_0$ and $q_1$, there exists $r \le p$ with top level $\delta$, 
	and there exist disjoint tuples $\vec e^0$ and $\vec e^1$ 
	above $\vec b$ with height $\delta$ satisfying:
	$r \res \theta = v$, $N \cap \ka \subseteq \dom(r)$, 
	$r$ is separated, 
	$r$ decides $\dot b \cap \delta$, 
	and for all $\tau \in A$ and $j < 2$, 
	$\vec b$ and $\vec e^j$ are $q_j^*(\tau)$-consistent 
	and $r(\tau)$-consistent.
	\end{enumerate}
	\end{definition}

\begin{lemma} \label{niceness persistent}
	Let $M$ and $N$ be suitable models, $N \subseteq M$, 
	$\delta = N \cap \omega_1 = M \cap \omega_1$, $\alpha < \delta$, 
	$p \in N \cap \p$ has top level $\alpha$, and $A \subseteq \dom(p)$ is finite. 
	Let $\vec b$ be a tuple consisting of distinct elements of $T_\alpha$ 
	and assume that $\{ p(\tau) : \tau \in A \}$ is separated on $\vec b$. 
	If $v$ is $M$-nice for $p$, $\vec b$, and $A$, then $v$ is 
	$N$-nice for $p$, $\vec b$, and $A$.
\end{lemma}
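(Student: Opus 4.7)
The plan is to verify each of the three clauses of Definition 5.51 for $v$ and $N$, given that they hold for $v$ and $M$. Clause (1) depends only on the common ordinal $\delta = N \cap \omega_1 = M \cap \omega_1$, so it transfers immediately. For clause (2), any $q \in N \cap \p$ also lies in $M \cap \p$ since $N \subseteq M$, and applying clause (2) of $M$-niceness furnishes some $r \le q$ with $M \cap \ka \subseteq \dom(r)$; since $N \cap \ka \subseteq M \cap \ka$, the same $r$ witnesses clause (2) for $N$-niceness.

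The substantive work lies in clause (3). Given tuples $\vec a^0, \vec a^1$ and conditions $q_0, q_1$ satisfying the $N$-hypothesis, my plan is to enlarge each $q_j$ to a condition $\tilde q_j \le q_j$ of top level $\delta$ with $\dom(\tilde q_j) = \dom(q_j) \cup (M \cap \ka)$ and such that $\tilde q_j$ remains separated, thereby upgrading the $N$-hypothesis to the $M$-hypothesis on $\tilde q_0, \tilde q_1$. The remaining conditions in the $M$-hypothesis --- $\tilde q_j \res \theta = v$, $\tilde q_j$ deciding $\dot b \cap \delta$, and $\vec b$ being $\tilde q_j(\tau)$-consistent with $\vec a^j$ for $\tau \in A$ --- only refer to indices already in $\dom(q_j)$ and so are inherited automatically from $q_j$. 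Applying clause (3) of $M$-niceness to $(\vec a^0, \vec a^1, \tilde q_0, \tilde q_1)$ then yields witnesses $\tilde q_0^*, \tilde q_1^*, r, \vec e^0, \vec e^1$ of the $M$-conclusion; setting $q_j^* := \tilde q_j^*$ and invoking $N \cap \ka \subseteq M \cap \ka$ converts these into witnesses of the $N$-conclusion.

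The main obstacle will be constructing the separated extensions $\tilde q_j$: for each of the countably many indices $\tau \in (M \cap \ka) \setminus \dom(q_j)$ one must select an automorphism $\tilde q_j(\tau)$ of the countable tree $T \res (\delta+1)$ in such a way that adjoining these new automorphisms to the separated family $\{q_j(\sigma) : \sigma \in \dom(q_j)\}$ preserves separation. I expect this to be handled by a diagonalization in the style of Lemmas 5.13 and 5.14 carried out globally through the whole tree $T \res (\delta+1)$: enumerate all triples $(\tau, z, m)$ with $\tau$ a new index, $z \in T \res (\delta+1)$, and $m \in \{-1, 1\}$, and at each stage define the next required value $\tilde q_j(\tau)^m(z)$ by picking a node above the target forced by the partial construction at lower levels, while avoiding a growing finite list of forbidden nodes, the latter selected to prevent any fixed point, duplicate edge between a common pair, or loop from being introduced into the relation graph of the enlarged family. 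Infinite splitting of $T$ ensures that a valid choice always exists, and a standard verification then shows that the resulting $\tilde q_j$ is separated as required.
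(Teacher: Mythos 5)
Your verification of clauses (1) and (2) matches the paper, but for clause (3) you have overlooked a key observation that collapses the argument to essentially nothing, and instead embarked on a much longer detour. The paper notes that since $\ka < \omega_2$ and the fixed well-ordering $\unlhd$ of $H(\lambda)$ lies in both suitable structures, the $\unlhd$-least surjection $g : \omega_1 \to \ka$ belongs to $N \cap M$, so by elementarity $N \cap \ka = g[N \cap \omega_1] = g[M \cap \omega_1] = M \cap \ka$. With that equality in hand, the hypothesis ``$N \cap \ka \subseteq \dom(q_0) \cap \dom(q_1)$'' and the conclusion ``$N \cap \ka \subseteq \dom(\cdot)$'' in clause (3) are literally identical to the corresponding requirements in the $M$-version, so clause (3) for $N$ is the same statement as clause (3) for $M$ and follows at once. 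No extension of conditions is needed.

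Your alternative route --- enlarging $q_0, q_1$ to separated conditions $\tilde q_0, \tilde q_1$ with $M \cap \ka$ in their domains and then invoking $M$-niceness --- is plausible in outline, and you correctly note that the new indices all lie in $[\theta,\ka)$ (since $M \cap \theta \subseteq \dom(v) = \dom(q_j) \cap \theta$ by clause (2) for $M$) so that $\tilde q_j \res \theta = v$ is preserved, and that the other requirements transfer automatically to extensions. But the step you wave off as ``a standard verification'' is in fact the hard part: you must build countably many new automorphisms of $T \res (\delta + 1)$ (not of a single new level) so that they cohere into bijective, strictly increasing maps \emph{and} the enlarged family remains separated on $T_\delta$. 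This is analogous to the work done in Lemmas 5.14, 5.15, and 5.32, and is not a one-liner. Moreover, it is entirely wasted effort once you observe $N \cap \ka = M \cap \ka$; the hypothesis $\ka < \omega_2$ (carried forward from the reduction in Lemma 5.50) and the inclusion of $\unlhd$ in the signature of suitable models exist precisely to make this lemma trivial, and your proposal does not use either.
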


\begin{proof}
	We verify that $v$ satisfies properties (1)-(3) of 
	Definition \ref{Nice Conditions} (Nice Conditions) for $N$. 
	(1) is immediate and (2) follows easily from the fact that $N \subseteq M$. 
	(3) is easy to check once we show that $N \cap \ka = M \cap \ka$, 
	which is where we use the assumption that $\ka < \omega_2$. 
	Let $g : \omega_1 \to \ka$ be the $\unlhd$-minimum 
	function in $H(\lambda)$ which is a surjection of $\omega_1$ onto $\ka$. 
	As $\ka \in N \cap M$, it follows that $g \in N \cap M$ by elementarity. 
	Also, by elementarity, $N \cap \ka = g[N \cap \omega_1] = g[M \cap \omega_1] = M \cap \ka$.
\end{proof}

We need the following technical lemma in order to prove the existence of nice conditions. 

\begin{lemma} \label{very technical lemma}
	Assume the following:
	\begin{enumerate}
	\item $\alpha < \rho < \omega_1$;
	\item $p \in \p$ has top level $\alpha$, 
	$v \in \p_\theta$ has top level $\rho$, 
	and $v \le_\theta p \res \theta$;
	\item $A \subseteq \dom(p)$ is finite, 
	$B \subseteq \ka$ is finite, 
	$A \subseteq B$, and $B \cap \theta \subseteq \dom(v)$;
	\item $\vec b = (b_0,\ldots,b_{n-1})$ consists of distinct elements of $T_\alpha$;
	\item $\{ p(\tau) : \tau \in A \}$ is separated on $\vec b$;
	\item $Y$ is a finite subset of $T_{\rho+1}$ with unique drop-downs to $\rho$;
	\item $\vec d^0$ and $\vec d^1$ are tuples above $\vec b$ with height $\rho+1$;
	\item for all $\tau \in A \cap \theta$, 
	$\vec b$ and $\vec d^0 \res \rho$ are $v(\tau)$-consistent and 
	$\vec b$ and $\vec d^1 \res \rho$ are $v(\tau)$-consistent;
	\item $\{ v(\tau) : \tau \in B \cap \theta \}$ is separated on $Y \res \rho$;
	\item letting $D^0$ be the set of elements of $\vec d^0$ and 
	$D^1$ the set of elements of $\vec d^1$, we have that 
	$D^0 \res (\alpha+1)$, $D^1 \res (\alpha+1)$, and $Y \res (\alpha+1)$ 
	are pairwise disjoint;
	\item for all $\tau \in B \cap \theta$ and $m \in \{ -1, 1 \}$:
	\begin{enumerate}
	\item[-] for all $x \in Y \res \rho$, 
	$v(\tau)^m(x) \notin (D^0 \cup D^1) \res \rho$;
	\item[-] for all $x \in D^0 \res \rho$, $v(\tau)^m(x) \notin D^1 \res \rho$;
	\item[-] for all $x \in D^1 \res \rho$, $v(\tau)^m(x) \notin D^0 \res \rho$.
	\end{enumerate}
	\end{enumerate}
	Then there exist $q \in \p$ and $w \in \p_\theta$, both with 
	top level $\rho+1$, satisfying:
	\begin{enumerate}
	\item[(I)] $q \le p$, $w \le_\theta v$, $q \res \theta = w$, and $B \subseteq \dom(q)$;
	\item[(II)] for all $\tau \in A$, $\vec b$ and $\vec d^0$ are $q(\tau)$-consistent 
	and $\vec b$ and $\vec d^1$ are $q(\tau)$-consistent;
	\item[(III)] for all $\tau \in B \cap \theta$, 
	$(Y \cup D^0 \cup D^1) \res \rho$ and $Y \cup D^0 \cup D^1$ are $w(\tau)$-consistent;
	\item[(IV)] $\{ q(\tau) : \tau \in B \}$ is separated on $Y \cup D^0 \cup D^1$.
	\end{enumerate}
\end{lemma}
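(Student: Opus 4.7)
The plan is to construct $q$ and $w$ through a three-stage extension of $p$, traversing the levels $\alpha \to \alpha+1 \to \rho \to \rho+1$. The key difficulty forcing the first extra stage is that $\vec d^0 \res \rho$ and $\vec d^1 \res \rho$ share drop-downs $\vec b$ at level $\alpha$, which blocks any single-shot application of Lemma 5.24 (Extension) using $X = (D^0 \cup D^1) \res \rho$ over $\alpha$. This is bypassed by first using Lemma 5.40 (which has built-in machinery for two tuples above a shared base) to push through level $\alpha+1$, after which hypothesis (10) guarantees that $(Y \cup D^0 \cup D^1) \res \rho$ does have unique drop-downs to $\alpha+1$.

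In Stage~1 I apply Lemma 5.40 to the family $\{p(\tau) : \tau \in \dom(p) \setminus \theta\}$, augmented by identity automorphisms for $\tau \in (B \setminus \theta) \setminus \dom(p)$, with $\gamma = \alpha$, base $\vec b$, the tuples $\vec d^0 \res (\alpha+1)$ and $\vec d^1 \res (\alpha+1)$, $Y_{\mathrm{lem}} = Y \res (\alpha+1)$, $A_{\mathrm{lem}} = A \setminus \theta$, and $B_{\mathrm{lem}} = B \setminus \theta$; the required pairwise disjointness is exactly hypothesis (10). Pairing the output at level $\alpha+1$ with the restriction of $v$ to level $\alpha+1$ on the $\theta$-part (which respects (8) on consistency and (11) on disjointness via the level-preserving nature of $v$) yields $p_1$ at top level $\alpha+1$, with tuple-consistency of $\vec b$ with each $\vec d^j \res (\alpha+1)$ for $\tau \in A \setminus \theta$ and strong anti-relation guarantees for $\tau \in (B \setminus A) \setminus \theta$. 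In Stage~2 I apply Lemma 5.24 (Extension) to $p_1 \res [\theta, \ka)$ using $X := (Y \cup D^0 \cup D^1) \res \rho$, which has unique drop-downs to $\alpha+1$ by (10); setting $p_2 := v \cup \tilde p_2$ gives a condition at top level $\rho$ with $p_2 \res \theta = v$ and, by Lemma 5.2 (Transitivity) combined with Stage~1 and hypothesis (8), the tuple consistencies of $\vec b$ with each $\vec d^j \res \rho$ under $p_2(\tau)$ for $\tau \in A$, together with persistence of the Stage~1 anti-relation guarantees to level $\rho$. In Stage~3 I apply Lemma 5.39 to $\{p_2(\tau)\}$ at $\gamma = \rho$ with $Y_{\mathrm{lem}} = Y$, $Z_{\mathrm{lem}} = D^0 \cup D^1$ (disjoint by (10)), and $A_{\mathrm{lem}} = B_{\mathrm{lem}} = B$, producing the final condition $q$ at top level $\rho+1$; set $w := q \res \theta$.

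Conclusions (I) and (III) follow from the constructions, with the joint consistency on $Y \cup D^0 \cup D^1$ for (III) using hypothesis (11) together with its Stage~1 non-$\theta$ analogue to eliminate cross-group $w(\tau)$-relations. Conclusion (II) combines Stage~2's tuple consistency at level $\rho$ with Stage~3's set consistency via Lemma 5.2 (Transitivity). The hard part will be conclusion (IV), the separation of $\{q(\tau) : \tau \in B\}$ on $Y \cup D^0 \cup D^1$ at level $\rho+1$. I plan to verify this via a witnessing enumeration that lists the elements of $Y$ first (in the order given by hypothesis (9), persisted by Lemma 5.4), followed by $D^0$ and then $D^1$ (each enumerated according to the order of $\vec b$ witnessing hypothesis (5)). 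Cross-group $q(\tau)$-relations for $\tau \in B$ are excluded at level $\rho+1$ by hypothesis (11) for $\tau \in B \cap \theta$ (persisted via Lemma 5.39's consistency) and by the anti-relation conclusions of Lemma 5.40 for $\tau \in B \setminus \theta$ (persisted through Stages~2 and~3); within-$Y$ separation comes directly from (9); within-$D^j$ separation for $\tau \in A$ mirrors (5) through the tuple consistency of $\vec b$ with $\vec d^j$; within-$D^j$ relations for $\tau \in (B \setminus A) \setminus \theta$ are ruled out by Lemma 5.40's anti-relation conclusions. The subtlest case is within-$D^j$ for $\tau \in (B \cap \theta) \setminus A$, which I expect to be controlled by the analogous anti-relation information on $v$ at level $\rho$ inherited from the separatedness of $v$ available in the intended application context together with Lemma 5.22 (Persistence for Sets).
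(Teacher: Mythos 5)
Your three-stage architecture — Lemma~5.40 at level $\alpha$ to separate $D^0 \res(\alpha+1)$ from $D^1 \res(\alpha+1)$ and $Y \res(\alpha+1)$, then Lemma~5.24 (Extension) up to $\rho$, then Lemma~5.39 for the final step $\rho \to \rho+1$ — matches the paper's proof. But Stage~3 contains a genuine gap that you yourself flag.

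You apply Lemma~5.39 with $A_{\mathrm{lem}} = B_{\mathrm{lem}} = B$, which makes conclusion~(4) of that lemma vacuous. The paper instead applies Lemma~5.39 to the family $\{v(\tau) : \tau \in \dom(v)\}$ with $A_{\mathrm{lem}} = A\cap\theta$ and $B_{\mathrm{lem}} = B\cap\theta$, precisely so that conclusion~(4) delivers the anti-relation guarantee: for $\tau \in (B\cap\theta)\setminus A$ and $x \in D^0 \cup D^1$, neither $w(\tau)(x)$ nor $w(\tau)^{-1}(x)$ lies in $Y \cup D^0 \cup D^1$. This is what kills the within-$D^j$ relations for $\tau \in (B\cap\theta)\setminus A$ when verifying~(IV). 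Your parameterization discards this tool, and your proposed substitute does not work: hypotheses (1)--(11) say nothing about $v(\tau)$-relations \emph{within} $D^0 \res \rho$ or within $D^1 \res \rho$ for $\tau \in (B\cap\theta)\setminus A$. Hypothesis~(9) concerns $Y \res \rho$; hypothesis~(11) only controls cross-group relations among $Y$, $D^0$, $D^1$; and hypothesis~(5) only controls $\tau \in A$ (via $\vec b$). If some $x \in D^0$ acquires a relation to another element of $D^0$ via both some $\tau \in A$ and some $\tau' \in (B\cap\theta)\setminus A$, then $x$ has two relations with earlier tuple members and separation fails. Appealing to ``the separatedness of $v$ in the intended application context'' is not available inside the proof of the lemma, which must be established from its stated hypotheses. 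The fix is to revert to $A_{\mathrm{lem}} = A\cap\theta$, $B_{\mathrm{lem}} = B\cap\theta$ in the Stage~3 application of Lemma~5.39, applied only to the $\theta$-coordinates, exactly as the paper does.
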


\begin{proof}
	Let us define a condition 
	$\bar{p} \le p$ with top level $\alpha$ such that 
	$\dom(\bar{p}) = \dom(p) \cup B$. 
	Let $\bar{p} \res \dom(p) = p$. 
	For all $\tau \in B \setminus \dom(p)$, 
	define $\bar{p}(\tau) = v(\tau) \res (\alpha+1)$ 
	if $\tau < \theta$, and let $\bar{p}(\tau)$ be the identity function on 
	$T \res (\alpha+1)$ if $\tau \ge \theta$. 
	Note that $v \le_\theta \bar{p} \res \theta$.

	Define $\bar{D}^0 = D^0 \res (\alpha+1)$, $\bar{D}^1 = D^1 \res (\alpha+1)$, 
	and $\bar{Y} = Y \res (\alpha+1)$. 
	Applying Lemma \ref{elaborate extending by one 2}, fix a family 
	$\{ g_\tau : \tau \in \dom(\bar{p}) \setminus \theta \}$ 
	of automorphisms of $T \res (\alpha+2)$ satisfying:
	\begin{enumerate}
	\item[(a)] for all $\tau \in \dom(\bar{p}) \setminus \theta$, 
	$\bar{p}(\tau) \subseteq g_\tau$;
	\item[(b)] for all $\tau \in A \setminus \theta$, 
	$\vec b$ and $\vec d^0 \res (\alpha+1)$ are $g_\tau$-consistent 
	and $\vec b$ and $\vec d^1 \res (\alpha+1)$ are $g_\tau$-consistent;
	\item[(c)] for all $x \in \bar{D}^0$: if $\tau \in A \setminus \theta$, then 
	$g_\tau(x)$ and $g_\tau^{-1}(x)$ are not in $\bar{Y} \cup \bar{D}^1$, 
	and if $\tau \in (B \setminus \theta) \setminus A$, then 
	$g_\tau(x)$ and $g_\tau^{-1}(x)$ are not in $\bar{D}^0 \cup \bar{D}^1 \cup \bar{Y}$;
	\item[(d)] for all $x \in \bar{D}^1$: if $\tau \in A \setminus \theta$, 
	then $g_\tau(x)$ and $g_\tau^{-1}(x)$ are not in $\bar{Y} \cup \bar{D}^0$, 
	and if $\tau \in (B \setminus \theta) \setminus A$, 
	then $g_\tau(x)$ and $g_\tau^{-1}(x)$ are not in 
	$\bar{D}^0 \cup \bar{D}^1 \cup \bar{Y}$;
	\item[(e)] for all $x \in \bar{Y}$ and $\tau \in B \setminus \theta$, 
	$g_\tau(x)$ and $g_\tau^{-1}(x)$ are not in $\bar{D}^0 \cup \bar{D}^1 \cup \bar{Y}$.
	\end{enumerate}
	Define $s$ with domain equal to $\dom(\bar{p}) \setminus \theta$ so that for all 
	$\tau \in \dom(s)$, $s(\tau) = g_\tau$. 
	Clearly, $s$ is a condition with 
	top level $\alpha+1$ and $s \le \bar{p} \res [\theta,\ka)$.

	By (9), we can fix an injective tuple $\vec y$ which enumerates $Y$ such that 
	$\{ v(\tau) : \tau \in B \cap \theta \}$ is separated on $\vec y \res \rho$. 
	Let $\vec x$ be the tuple of height $\rho+1$ 
	which consists of the concentation of the tuples $\vec y$, $\vec d^0$, and $\vec d^1$, 
	in that order. 
	So $\vec x$ enumerates $Y \cup D^0 \cup D^1$.
	
	We claim that $\{ s(\tau) : \tau \in B \setminus \theta \}$ 
	is separated on $\vec x \res (\alpha+1)$. 
	Since $\vec d^0 \res (\alpha+1)$ and $\vec d^1 \res (\alpha+1)$ 
	are above $\vec b$, $s \le p \res [\theta,\ka)$, and 
	$\{ p(\tau) : \tau \in A \}$ is separated on $\vec b$, it follows 
	by Lemma \ref{Persistence 2} (Persistence) that 
	$\{ s(\tau) : \tau \in A \setminus \theta \}$ is separated on 
	both $\vec d^0 \res (\alpha+1)$ and $\vec d^1 \res (\alpha+1)$. 
	By properties (c), (d), and (e) above, if a relation of the form 
	$s(\tau)^m(x) = y$ holds, where $\tau \in B \setminus \theta$, $m \in \{ -1, 1 \}$, 
	and $x, y \in \bar{D}^0 \cup \bar{D}^1 \cup \bar{Y}$, 
	then it must be the case that $\tau \in A \setminus \theta$ 
	and either $x, y \in \bar{D}^0$ or $x, y \in \bar{D}^1$. 
	Based on this information, it easily follows that 
	$\{ s(\tau) : \tau \in B \setminus \theta \}$ is separated on $\vec x \res (\alpha+1)$.

	Apply Proposition \ref{Extension 2} (Extension) 
	to find a condition $z \le s$ with top level $\rho+1$ 
	with the same domain as $s$ such that for all 
	$\tau \in B \setminus \theta$, $\vec x \res (\alpha+1)$ and $\vec x$ 
	are $z(\tau)$-consistent. 
	By Lemma \ref{Persistence 2} (Persistence), 
	$\{ z(\tau) : \tau \in B \setminus \theta \}$ 
	is separated on $\vec x$. 
	
	Now apply Lemma \ref{elaborate extending by one} to 
	find a family $\{ h_\tau : \tau \in \dom(v) \}$ 
	of automorphisms of $T \res (\rho+2)$ satisfying:
	\begin{enumerate}
	\item[(f)] for all $\tau \in \dom(v)$, 
	$v(\tau) \subseteq h_\tau$;
	\item[(g)] for all $\tau \in B \cap \theta$, 
	$Y \res \rho$ and $Y$ are $h_\tau$-consistent;
	\item[(h)] for all $\tau \in A \cap \theta$, $\vec d^0 \res \rho$ and $\vec d^0$ 
	are $h_\tau$-consistent and $\vec d^1 \res \rho$ and $\vec d^1$ 
	are $h_\tau$-consistent;
	\item[(i)] for all $\tau \in (B \cap \theta) \setminus A$, $m \in \{ -1, 1 \}$, 
	and $x \in D^0 \cup D^1$, $h_\tau^m(x) \notin Y \cup D^0 \cup D^1$.
	\end{enumerate}
	Define $w$ with the same domain as $v$ so that for all $\tau \in \dom(v)$, 
	$w(\tau) = h_\tau$. 
	Clearly, $w \in \p_\theta$, $w \le_\theta v$, and $w$ has top level $\rho+1$. 
	Since $\{ v(\tau) : \tau \in B \cap \theta \}$ is separated on $\vec y \res \rho$, 
	by Lemma \ref{Persistence 2} (Persistence), 
	$\{ w(\tau) : \tau \in B \cap \theta \}$ is separated on $\vec y$. 

	Finally, let $q = w \cup z$. 
	Then $\{ q(\tau) : \tau \in B \cap \theta \}$ is separated on $\vec y$ 
	and $\{ q(\tau) : \tau \in B \setminus \theta \}$ is separated on $\vec x$.

	Let us prove conclusions (I)-(IV). 
	(I) is clear. 
	(II) Let $\tau \in A$. 
	If $\tau \in A \cap \theta$, then (II) holds by (8) and (h). 
	If $\tau \in A \setminus \theta$, then (II) holds by (b) and the choice of $z$. 

	(III) Statement (g) implies that for all $\tau \in B \cap \theta$, 
	$Y \res \rho$ and $Y$ are $w(\tau)$-consistent. 
	Statement (i) implies that for all $\tau \in (B \cap \theta) \setminus A$, 
	if $x,y \in (Y \cup D^0 \cup D^1) \res \rho$ and 
	$w(\tau)(x) = y$, then $x, y \in Y$. 
	By (II), for all $\tau \in A \cap \theta$, 
	$\vec b$ and $\vec d^0$ are $w(\tau)$-consistent 
	and $\vec b$ and $\vec d^1$ are $w(\tau)$-consistent. 
	This implies that for all $\tau \in A \cap \theta$, 
	$D^0 \res \rho$ and $D^0$ are $w(\tau)$-consistent and 
	$D^1 \res \rho$ and $D^1$ are $w(\tau)$-consistent. 
	By (11), if $\tau \in A \cap \theta$, $x, y \in (Y \cup D^0 \cup D^1) \res \rho$, 
	and $w(\tau)(x) = y$, 
	then either $x, y \in Y \res \rho$, $x, y \in D^0 \res \rho$, or $x, y \in D^1 \res \rho$. 
	Altogether this information easily implies (III).
	
	(IV) We claim that $\{ q(\tau) : \tau \in B \}$ is separated on $\vec x$. 
	First, let us show that for all $\tau \in B$, $q(\tau)$ has no fixed points in $\vec x$.
	Since $\{ q(\tau) : \tau \in B \setminus \theta \}$ is separated on $\vec x$, 
	for all $\tau \in B \setminus \theta$, $q(\tau)$ has no fixed points in $\vec x$. 
	So it suffices to consider $\tau \in B \cap \theta$. 
	As $\{ q(\tau) : \tau \in B \cap \theta \}$ is separated on $\vec y$, for all 
	$\tau \in B \cap \theta$, $q(\tau)$ has no fixed points in $\vec y$. 
	Because $\{ p(\tau) : \tau \in A \}$ is separated on $\vec b$, 
	by Lemma \ref{Persistence 2} (Persistence), 
	$\{ q(\tau) : \tau \in A \}$ is separated on $\vec d^0$ 
	and separated on $\vec d^1$. 
	Hence, for all $\tau \in A \cap \theta$, $q(\tau)$ 
	has no fixed points in $\vec d^0$ or in $\vec d^1$. 
	Finally, consider $\tau \in (B \setminus A) \cap \theta$ and we show that 
	$q(\tau)$ has no fixed points in $\vec d^0$ or $\vec d^1$. 
	But this follows from (i).
	
	Consider a relation of the form $q(\tau)^m(x) = y$, where 
	$\tau \in B$, $m \in \{ -1, 1 \}$, $x, y \in Y \cup D^0 \cup D^1$, and 
	$y$ appears earlier in the ordering of $\vec x$ than $x$. 
	First, assume that $\tau < \theta$. 
	By (11), the only possibilities are that 
	$x, y \in Y$, $x, y \in D^0$, or $x, y \in D^1$. 
	By (i), if $\tau \notin A$ then $x, y \in Y$. 
	Secondly, assume that $\tau \ge \theta$. 
	By (c), (d), and (e), $\tau \in A$ and either $x, y \in D^0$ or $x, y \in D^1$. 
	So altogether, if one of $x$ or $y$ is in $Y$, then $\tau < \theta$ and $x$ and $y$ 
	are both in $Y$. 
	Since $\{ q(\tau) : \tau \in B \cap \theta \}$ is separated on $\vec y$, 
	there is at most one such relation which holds in this case. 
	Now suppose that it is not the case that one of $x$ or $y$ is in $Y$. 
	Then $\tau \in A$, and either $x, y \in D^0$ or $x, y \in D^1$. 
	But since $\{ q(\tau) : \tau \in A \}$ is separated on $\vec d^0$ 
	and separated on $\vec d^1$, 
	there is at most one such relation which holds in this case as well.
\end{proof}

\begin{proposition}[Existence of Nice Conditions] \label{Existence of Nice Conditions prop}
	Suppose that $T$ is a free Suslin tree. 
	Let $N$ be a suitable model and let $\delta = N \cap \omega_1$. 
	Suppose that $p \in N \cap \p$ has top level $\alpha < \delta$ and 
	$p$ forces in $\p$ that $\dot b$ is a cofinal branch of $\dot U$ 
	which is not in $V^{\p_\theta}$. 
	Assume that $A \subseteq \dom(p)$ is finite, 
	$\vec b$ consists of distinct elements of $T_\alpha$, 
	and $\{ p(\tau) : \tau \in A \}$ is separated on $\vec b$. 
	Then for any $q \le p$ in $N \cap \p$, there exists some 
	$v \in \p_\theta$ which is $N$-nice for $p$, $\vec b$, and $A$ 
	such that $v \le_\theta q \res \theta$.
\end{proposition}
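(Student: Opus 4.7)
The plan is to build $v$ as a total master condition over $N$ for $\p_\theta$ via an $\omega$-step fusion $\langle v_n : n < \omega \rangle$ in $N \cap \p_\theta$ with $v_0 \le_\theta q \res \theta$ and top levels $\rho_n$ cofinal in $\delta$, assembling $v$ at top level $\delta$ by the obvious $\p_\theta$-analogue of Lemma 5.32 (Constructing Total Master Conditions). Having $v$ be a total master condition over $N$ and $\dot U$ a $\p_\theta$-name will immediately deliver property~(1) of Definition 5.51, and careful bookkeeping during the fusion will deliver properties~(2) and~(3).

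The key preliminary is an analogue of Proposition 4.25 adapted to the present context. Precisely, I would show: for any $\bar p \le p$ in $N \cap \p$, any $\bar v \in \p_\theta$ with $\bar v \le_\theta \bar p \res \theta$, and any injective tuple $\vec c$ above $\vec b$ satisfying the appropriate $\bar v$- and $\bar p$-consistency, the set of tuples $\vec d$ in $T_{\vec c}$ for which there exist $q_0, q_1 \le \bar p$ of equal top level with $q_0 \res \theta = q_1 \res \theta \le_\theta \bar v$, each $q_j$ consistent with $\vec d$ on $A$, and with $q_0 \Vdash_\p \zeta \in \dot b$ and $q_1 \Vdash_\p \zeta \notin \dot b$ for some $\zeta$, is dense open in the derived tree $T_{\vec c}$. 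The argument follows the scheme of Claims~1--5 in the proof of Proposition 4.25: if this set were not dense, one could define in $V^{\p_\theta}$ below $\bar v$ a cofinal branch $c$ of $\dot U$ and show that $p$ forces $c = \dot b$, contradicting $\dot b \notin V^{\p_\theta}$. Freeness of $T$ then lets us seal these dense open subsets of $T_{\vec c}$ below $\delta$ via elementarity, exactly as in the base case of Lemma 4.26.

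With this density lemma in hand, I weave into the fusion two bookkeeping lists: (a)~the conditions $q' \in N \cap \p$ with $q' \le p$, anticipating property~(2); and (b)~pairs $(q_0, q_1)$ of candidate witnesses together with their target tuples $(\vec a^0, \vec a^1)$ above $\vec b$, anticipating property~(3). At stage $n$ I apply Lemmas~5.46 (Generalized Consistent Extensions Into Dense Sets) and~5.48 (Generalized Augmentation) to advance $v_n$ while simultaneously descending through the $n$-th listed object, using the density lemma above to split the $\dot b$-decision and using Lemma 5.53 to arrange the elaborate $\{D^0, D^1, Y\}$-disjointness and separation that will later allow a single $r$ to be consistent with both sides via disjoint tuples $\vec e^0, \vec e^1$. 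Once $v$ is assembled, property~(2) follows for any $q'$ on list~(a) by running a total master condition construction below $q'$ (in the style of Theorem 5.34) whose $\theta$-restriction is pinned to $v$ throughout; property~(3) follows analogously by running two coupled such constructions above preassigned $q_0^*, q_1^*$ and merging them at cofinal height via Lemma 5.53.

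The main obstacle is the rigidity demanded by property~(3): a single $r \le p$ with $r \res \theta = v$ must accommodate two opposite decisions of $\dot b \cap \delta$ along two disjoint tuples above $\vec b$, and the fusion must never commit to $v_n$- or $\p$-side values that obstruct this disjointness in the limit. It is exactly the elaborate disjointness and separation hypotheses (10)--(11) of Lemma 5.53 that are designed to make this possible; threading the corresponding bookkeeping through $\omega$ many stages, while keeping the $\theta$-restrictions of the two coupled master-condition constructions synchronized to $v$ at every step, is the main technical difficulty.
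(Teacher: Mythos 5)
Your overall framework---an $\omega$-stage fusion that builds $v$ together with infinitely many descending $\p$-sequences whose $\theta$-restrictions track $v$, using Lemmas 5.46, 5.48 and 5.53, and assembling via the analogue of Lemma 5.32---is the right shape. But there are two serious gaps.

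The first is a circularity. Your ``density lemma'' is precisely Proposition 5.55 (with a $\bar v$-constraint), and you propose to prove it by following Claims 1--5 of Proposition 4.25. That scheme breaks at the well-definedness step: in Section 4, Claim 1 of Proposition 4.25 derives a contradiction by noting that two conditions deciding $\dot b \cap \delta_\gamma$ differently would witness membership in $\mathcal X$; but in the present setting, membership in your $\mathcal X$ requires the two witnesses $q_0$, $q_1$ to share a common $\theta$-restriction, and there is no reason two arbitrary deciding conditions do. The paper resolves exactly this synchronization problem by running $F$ on pairs $(v,r)$ with $v$ a nice condition and $r \res \theta = v$, and Claim 2 of Proposition 5.55 then invokes property (3) of nice conditions to show well-definedness. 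In other words, Proposition 5.55 needs Proposition 5.54, so you cannot use it as a ``key preliminary'' for Proposition 5.54.

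The second gap is a misreading of what property (3) of a nice condition does. You want to use the density lemma ``to split the $\dot b$-decision'' during the fusion, but property (3) is not about producing a split---on the contrary, its role downstream (Claim 2 of Proposition 5.55) is to show that two conditions with the same $\theta$-restriction $v$ cannot yield different decisions of $\dot b \cap \delta$. The paper's Case d does not split $\dot b$ at all; the dense set $\mathcal X^*$ there records the existence of conditions $t^0, t^1$ with a common $\theta$-restriction that are consistent with the two candidate tuples, and $\mathcal X^*$ is supplemented by its ``complement'' to make $\mathcal X$ trivially dense so that freeness can seal a level of $T_{\vec x}$. There is no $\dot b$-splitting anywhere in the proof of Proposition 5.54. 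Relatedly, your bookkeeping list (b) of ``pairs $(q_0,q_1)$ of candidate witnesses'' cannot exist: those conditions have top level $\delta$ and hence lie outside $N$, so they cannot be enumerated in advance. The paper only enumerates pairs of \emph{tuples} above $\vec b$ and, when verifying (3), uses the externally given $q_0, q_1$ merely to establish that the relevant restriction of the tuple landed in $\mathcal X^*$.

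Finally, the claim that after $v$ is assembled ``property (2) follows \ldots by running a total master condition construction below $q'$ whose $\theta$-restriction is pinned to $v$'' is at best delicate and is not what the paper does. Once $v$ (of top level $\delta$) is fixed, Case a of the paper is not re-runnable: the separated extension $r^{n,0}_{n+1}$ is chosen first and $v_{n+1}$ is \emph{defined} to be its $\theta$-restriction, exploiting freedom that a post-hoc construction no longer has. The simultaneous fusion---with the dynamic eligibility test $v_n \le_\theta s_{n_1} \res \theta$ replacing a static list of all $q' \le p$ in $N$---is essential, not just convenient.
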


\begin{proof}
	Let $q \le p$ in $N \cap \p$. 
	We prove that there exists some 
	$v \in \p_\theta$ which is $N$-nice for $p$, $\vec b$, and $A$ 
	such that $v \le_\theta q \res \theta$. 
	Without loss of generality, assume that the top level of $q$ is greater than $\alpha$.

	To help with the construction of $v$, we fix the following objects: 
	\begin{itemize}
	\item[-] an enumeration 
	$\langle (\vec a^{n,0},\vec a^{n,1}) : n < \omega \rangle$ 
	of all distinct pairs of tuples above $\vec b$ 
	with height $\delta$;
	\item[-] a non-decreasing sequence $\langle \gamma_n : n < \omega \rangle$ of ordinals 
	cofinal in $\delta$ with $\gamma_0 = \gamma_1 = \alpha$;
	\item[-] an enumeration $\langle z_n : n < \omega \rangle$ of $T_\delta$;
	\item[-] an enumeration $\langle s_n : n < \omega \rangle$ of $N \cap \p$; 
	\item[-] a surjection $g : \omega \to 
	4 \times \omega \times (N \cap \ka)$ 
	such that every element of the codomain 
	has an infinite preimage;
	\item[-] an enumeration $\langle D_n : n < \omega \rangle$ of all 
	of the dense open subsets of $\p$ which lie in $N$.
	\end{itemize}

	As in previous proofs, 
	our construction of the nice condition $v$ takes place 
	over $\omega$-many stages, with the function $g$ being used for bookkeeping. 
	In order to satisfy properties (2) and (3) of 
	Definition \ref{Nice Conditions} (Nice Conditions), the construction involves building 
	not only $v$, but also infinitely many total master conditions $r$ with $r \res \theta = v$. 
	At any given stage $n$, we define an approximation $v_n$ of $v$ 
	together with finitely many approximations of the total master conditions. 
	The first coordinate $n_0$ of the value $g(n)$ splits the construction into four cases. 
	When $n_0 = 0$, we handle a case of Definition \ref{Nice Conditions}(2), and when 
	$n_0 = 3$, we handle a case of Definition \ref{Nice Conditions}(3). 
	When $n_0$ is $1$ or $2$, we take the usual steps 
	for building total master conditions, 
	namely, meeting dense sets in the first case 
	and applying augmentation in the second case.

	More specifically, 
	we define by induction the following objects in $\omega$-many steps:

	\begin{itemize}

	\item a subset-increasing sequence $\langle X_n : n < \omega \rangle$ 
	of finite subsets of $T_\delta$ with union equal to $T_\delta$;

	\item a subset-increasing sequence $\langle A_n : n < \omega \rangle$ 
	of finite subsets of $N \cap \ka$ with union equal to $N \cap \ka$;

	\item a non-decreasing sequence $\langle \delta_n : n < \omega \rangle$ 
	of ordinals cofinal in $\delta$, where each 
	$X_n$ has unique drop-downs to $\delta_n$;

	\item a decreasing sequence $\langle v_n : n < \omega \rangle$ of conditions 
	in $N \cap \p_\theta$;

	\item a sequence $\langle l_m : m < \omega \rangle$ of natural numbers;

	\item a family 
	$\{ r^{m,l}_n : m < \omega, \ l < l_m, \ m < n < \omega \}$ 
	of conditions in $N \cap \p$ below $p$ 
	such that for each $m < \omega$ and $l < l_m$, 
	$\langle r^{m,l}_n : m < n < \omega \rangle$ is a descending sequence;
		
	\item for each $m < \omega$ such that $l_m = 3$, 
	a pair $\vec e^{m,0}$ and $\vec e^{m,1}$ of tuples above $\vec b$ with height $\delta$ 
	such that $\vec e^{m,0} \res (\alpha+1)$ and $\vec e^{m,1} \res (\alpha+1)$ 
	are disjoint, and the elements of $\vec e^{m,0}$ and $\vec e^{m,1}$ 
	are in $X_{m+1}$;

	\item for all $m < \omega$, $l < l_m$, $n > m$, and $\tau \in A_n$, 
	an injective partial function $h^{m,l}_{n,\tau}$ from $X_n$ to $X_n$.

	\end{itemize}

	At stage $n$, we define $X_n$, $A_n$, $\delta_n$, $v_n$, and when $n > 0$, 
	$l_{n-1}$, $r^{m,l}_n$ and $h^{m,l}_{n,\tau}$ 
	for all $m < n$, $l < l_m$, and $\tau \in A_n$, and 
	when $l_{n-1} = 3$, $\vec c^{n-1,0}$ and $\vec c^{n-1,1}$.

	In addition to the properties listed above, we maintain the following 
	inductive hypotheses for all $m < n < \omega$ and $l < l_m$:
	\begin{enumerate}

	\item $r^{m,l}_n$ and $v_n$ have top level $\delta_n$ and 
	$r^{m,l}_n \res \theta = v_n$;

	\item for all $\tau \in A_{n}$, $X_{n} \res \delta_{n}$ 
	and $X_{n} \res \delta_{n+1}$ are $r^{m,l}_{n+1}(\tau)$-consistent;

	\item $A_n \subseteq \dom(r^{m,l}_n)$ and 
	$\{ r^{m,l}_{n}(\tau) : \tau \in A_n \}$ is separated 
	on $X_n \res \delta_n$;

	\item suppose that 
	$g(n) = (n_0,n_1,\sigma)$, where $n_0 = 0$, 
	$s_{n_1}$ has top level less than $\delta_n$, 
	$s_{n_1} \le p$, and $v_n \le_\theta s_{n_1} \res \theta$; 
	then $l_n = 1$ and $r^{n,0}_{n+1} \le s_{n_1}$;

	\item if $g(n) = (n_0,n_1,\sigma)$, where $n_0 = 1$, 
	then $r^{m,l}_{n+1} \in \bigcap_{k < n} D_k$ and 
	$\sigma \in \dom(r^{m,l}_{n+1})$;
	
	\item assuming that $g(n) = (n_0,n_1,\sigma)$, where $n_0 = 2$, and 
	$\sigma \in \bigcap \{ \dom(r^{m,l}_{n}) : m < n, \ l < l_m \}$, 	
	then $\sigma \in A_{n+1}$ and for all $m < n$ and $l < l_m$, 
	$z_{n_1}$ is in the domain and range of $h^{m,l}_{n+1,\sigma}$;

	\item for all $\tau \in A_n$ and $x, y \in X_n$, 
	$$
	h^{m,l}_{n,\tau}(x) = y \ \Longleftrightarrow \ 
	r^{m,l}_{n}(\tau)(x \res \delta_n) = y \res \delta_n.
	$$
	\end{enumerate}

	\underline{Stage $0$}: Let $X_0 = \emptyset$ and $A_0 = A$. 
	Let $\delta_{0}$ be the top level of $q$ and let $v_0 = q \res \theta$. 
	Note that by our assumption about $q$, $\delta_0 > \alpha$.
	
	\underline{Stage 1}: Let $l_0 = 1$. 
	Let $X_1 = X_0 = \emptyset$, $A_1 = A_0$, 
	$\delta_1 = \delta_0$, $r^{0,0}_1 = q$, and $v_1 = q \res \theta = v_0$. 
	Define $h^{0,0}_{1,\tau} = \emptyset$ for all $\tau \in A_1$. 
	
	\underline{Stage $n+1$} ($n > 0$): 
	Assume that stage $n$ is complete, where $n < \omega$ is positive. 
	In particular, we have defined the following objects which we assume 
	satisfy all of the required properties: 
	$X_n$, $A_n$, $\delta_n$, $v_n$, $l_m$ for all $m < n$, 
	and $r^{m,l}_n$ and $h^{m,l}_{n,\tau}$ for all $m < n$, $l < l_m$, and $\tau \in A_n$. 
	Let $g(n) = (n_0,n_1,\sigma)$.

	\underline{Case a:} $n_0 = 0$. 
	Consider $s_{n_1}$, which is in $N \cap \p$, and let $\gamma$ be 
	the top level of $s_{n_1}$. 
	We only take action in the case that $\gamma < \delta_n$, 
	$s_{n_1} \le p$, and $v_n \le s_{n_1} \res \theta$. 
	If not, then let $l_n = 0$, 
	$X_{n+1} = X_n$, $A_{n+1} = A_n$, $\delta_{n+1} = \delta_n$, 
	$v_{n+1} = v_n$, $r^{m,l}_{n+1} = r^{m,l}_n$ and 
	$h^{m,l}_{n+1,\tau} = h^{m,l}_{n,\tau}$ for all $m < n$, $l < l_m$, 
	and $\tau \in A_n$. 

	Otherwise, let $l_n = 1$. 
	Without loss of generality, we may assume that $A_n \subseteq \dom(s_{n_1})$, 
	for otherwise we can easily extend $s_{n_1}$ to have this property without 
	increasing its top level or changing the fact that $v \le_\theta s_{n_1} \res \theta$. 
	Apply Lemma \ref{Simple Generalized Extension} (Simple Generalized Extension) 
	to find some $s \le s_{n_1}$ in $N$ 
	with top level $\delta_n$ such that $s \res \theta = v_n$. 
	Applying 
	Lemma \ref{Separated Conditions are Dense} (Separated Conditions Are Dense) and 
	using the fact 
	that $\{ v_n(\tau) : \tau \in A_n \cap \theta \}$ is separated on $X_n \res \delta_n$ 
	(by inductive hypothesis (3) and Lemma \ref{subset}), 
	find a separated condition 
	$r^{n,0}_{n+1} \le s$ with top level $\delta_n+1$ 
	such that for all $\tau \in A_n \cap \theta$, $X_n \res \delta_n$ and 
	$X_n \res (\delta_{n}+1)$ are $r^{n,0}_{n+1}(\tau)$-consistent. 
	Let $v_{n+1} = r^{n,0}_{n+1} \res \theta$. 
	Then in particular, for all $\tau \in A_n \cap \theta$, $X_n \res \delta_n$ 
	and $X_n \res (\delta_n+1)$ are $v_{n+1}(\tau)$-consistent. 
	Now apply 
	Lemma \ref{Generalized Extension} (Generalized Extension) to find for each 
	$m < n$ and $l < l_m$ a condition $r^{m,l}_{n+1} \le r^{m,l}_{n}$ 
	with top level $\delta_{n}+1$ such that $r^{m,l}_{n+1} \res \theta = v_{n+1}$ 
	and for all $\tau \in A_{n}$, $X_n \res \delta_n$ and $X_n \res (\delta_{n}+1)$ 
	are $r^{m,l}_{n+1}(\tau)$-consistent.
	
	Define $X_{n+1} = X_n$, $A_{n+1} = A_n$, and $\delta_{n+1} = \delta_n+1$. 	
	Define $h^{m,l}_{n+1,\tau}$ for all $m \le n$, $l < l_m$, and $\tau \in A_{n+1}$ 
	as described in inductive hypothesis (7). 
	It is routine to check that the inductive hypotheses hold.
	
	\underline{Case b:} $n_0 = 1$. 
	Let $l_n = 0$, $X_{n+1} = X_{n}$, and 
	$A_{n+1} = A_n$. 
	Let $D$ be the set of conditions $r$ in $\bigcap_{k < n} D_k$ satisfying 
	that $\sigma \in \dom(r)$ and the top level of $r$ is at least $\gamma_{n+1}$. 
	Then $D$ is dense open in $\p$, and $D \in N$ by elementarity. 
	Apply 
	Lemma \ref{Generalized Consistent Extensions Into Dense Sets} (Generalized 
	Consistent Extensions Into Dense Sets) 
	to find an ordinal $\delta_{n+1} < \delta$, 
	a condition $v_{n+1} \le_\theta v_n$ in $N$, and for each 
	$m < n$ and $l < l_m$, a condition 
	$r^{m,l}_{n+1} \le r^{m,l}_n$ in $N \cap D$ with top level $\delta_{n+1}$ 
	satisfying that $r^{m,l}_{n+1} \res \theta = v_{n+1}$, 
	and for all $\tau \in A_n$, $X_n \res \delta_n$ and $X_n \res \delta_{n+1}$ 
	are $r^{m,l}_{n+1}(\tau)$-consistent. 
	Define $h^{m,l}_{n+1,\tau}$ for all $\tau \in A_n$ as 
	described in inductive hypothesis (7). 
	It is routine to check that the inductive hypotheses hold.

	\underline{Case c:} $n_0 = 2$. 
	If $\sigma \notin \bigcap \{ \dom(r^{m,l}_n) : m < n, \ l < l_m \}$, 
	then let $X_{n+1} = X_n$, $A_{n+1} = A_n$, $\delta_{n+1} = \delta_n$, 
	$v_{n+1} = v_n$, $l_n = 0$, and $r^{m,l}_{n+1} = r^{m,l}_n$ and 
	$h^{m,l}_{n+1,\tau} = h^{m,l}_{n,\tau}$ for all $m < n$, $l < l_m$, and $\tau \in A_n$.

	Otherwise, fix $\gamma < \delta$ large enough so that $X_n \cup \{ z_{n_1} \}$ 
	has unique drop-downs to $\gamma$. 
	Apply 
	Lemma \ref{Generalized Extension} (Generalized Extension) to 
	find some $\bar{v}_n \in N \cap \p_\theta$ 
	with top level $\gamma$, and for each $m < n$ and $l < l_m$ find a condition 
	$\bar{r}^{m,l}_n \le r^{m,l}_n$ in $N \cap \p$ with top level $\gamma$ such that 
	$\bar{r}^{m,l}_n \res \theta = \bar{v}_n$ and for all $\tau \in A_n$, 
	$X_n \res \delta_n$ and $X_n \res \gamma$ are $\bar{r}^{m,l}_n(\tau)$-consistent. 
	By Lemma \ref{Persistence for Sets 2} (Persistence for Sets), for all $m < n$ and $l < l_m$, 
	$\{ \bar{r}^{m,l}_n(\tau) : \tau \in A_n \}$ is separated on $X \res \gamma$.

	Now apply 
	Lemma \ref{Generalized Augmentation} (Generalized Augmentation) 
	to find for each $m < n$ and $l < l_m$ a 
	condition $r^{m,l}$ in $N \cap \p$ with top level $\gamma+1$, 
	a condition $v_{n+1} \in N \cap \p_\theta$ with top level $\gamma+1$, 
	and a finite set $X_{n+1} \subseteq T_\delta$ such that 
	$X_n \cup \{ z_{n_1} \} \subseteq X_{n+1}$ and 
	$X_{n+1}$ has unique drop-downs to $\gamma+1$, 
	satisfying that for all $m < n$ and $l < l_m$:
	\begin{itemize}
	\item $r^{m,l}_{n+1} \le \bar{r}^{m,l}_{n}$ and 
	$r^{m,l}_{n+1} \res \theta = v_{n+1}$;
	\item for all $\tau \in A_n$, 
	$X_n \res \gamma$ and $X_n \res (\gamma+1)$ are $r^{m,l}_{n+1}(\tau)$-consistent;
	\item $\{ r^{m,l}_{n+1}(\tau) : \tau \in A_n \cup \{ \sigma \} \}$ 
	is separated on $X_{n+1} \res (\gamma+1)$;
	\item let $h_{m,l,\sigma}^+$ be the partial injective function 
	from $X_{n+1}$ to $X_{n+1}$ defined by letting, 
	for all $x, y \in X_{n+1}$, $h_{m,l,\sigma}^+(x) = y$ iff 
	$r^{m,l}_{n+1}(\sigma)(x \res (\gamma+1)) = y \res (\gamma+1)$; 
	then $z_{n_1}$ is in the domain and range of $h_{m,l,\sigma}^+$.
	\end{itemize}

	Define $X_{n+1} = X_n \cup \{ z_{n_1} \}$, $A_{n+1} = A_n \cup \{ \sigma \}$, 
	$\delta_{n+1} = \gamma+1$, and $l_n = 0$. 
	For all $m < n$, $l < l_m$, and $\tau \in A_{n+1}$, 
	define $h^{m,l}_{n+1,\tau}$ as described in inductive hypothesis (7). 
	Note that $h^{m,l}_{n+1,\sigma} = h_{m,l,\sigma}^+$, and therefore $z_{n_1}$ 
	is in the domain and range of $h^{m,l}_{n+1,\sigma}$. 
	The inductive hypotheses are straightforward to check.

	\underline{Case d:} $n_0 = 3$. 
	Let us consider $\vec a^{n_1,0}$ and $\vec a^{n_1,1}$. 
	For simplicity in notation, write $\vec a^{0}$ for $\vec a^{n_1,0}$ 
	and $\vec a^{1}$ for $\vec a^{n_1,1}$. 
	We only take action when the elements of the tuples $\vec a^0$ and $\vec a^1$ 
	are in $X_n$ and for all $\tau \in A \cap \theta$ and $j < 2$, 
	$\vec b$ and $\vec a^{j} \res \delta_n$ are $v_n(\tau)$-consistent. 
	If not, then let $l_n = 0$, $X_{n+1} = X_n$, $A_{n+1} = A_n$, 
	$\delta_{n+1} = \delta_n$, $v_{n+1} = v_n$, 
	$r^{m,l}_{n+1} = r^{m,l}_{n}$ and 
	$h^{m,l}_{n+1,\tau} = h^{m,l}_{n,\tau}$ for all $m < n$, $l < l_m$, and $\tau \in A_n$.
	Otherwise, proceed as follows.

	Applying 
	Lemma \ref{Generalized Separated Conditions are Dense} (Generalized 
	Separated Conditions are Dense) in $N$, 
	find a family $\{ \bar{r}^{m,l}_n : m < n, \ l < l_m \}$ 
	of conditions with top level $\delta_n + 1$ and a condition 
	$\bar{v}_n \le_\theta v_n$ 
	such that for all $m < n$ and $l < l_m$, $\bar{r}^{m,l}_n \le r^{m,l}_n$, 
	$\bar{r}^{m,l}_n \res \theta = \bar{v}_n$, 
	$\bar{r}^{m,l}_n$ is separated, and for all $\tau \in A_n$, 
	$X_n \res \delta_n$ and $X_n \res (\delta_{n}+1)$ is $\bar{r}^{m,l}_n(\tau)$-consistent.

	Fix an injective tuple 
	$\vec z = (z_0,\ldots,z_{\hat{n}-1})$ which enumerates $X_{n}$. 
	Since the elements of $\vec a^0$ and $\vec a^1$ are in $X_n$, 
	we can fix distinct $j_0,\ldots,j_{d-1}$ and distinct 
	$k_0,\ldots,k_{d-1}$ in $\hat{n}$ such that 
	$\vec a^0 = (z_{j_0},\ldots,z_{j_{d-1}})$ and 
	$\vec a^1 = (z_{k_0},\ldots,z_{k_{d-1}})$. 
	Let $x_m = z_m \res (\delta_n + 1)$ for all $m < \hat{n}$, and let 
	$\vec x = (x_0,\ldots,x_{\hat{n}-1})$.

	Define $\mathcal X^*$ as the set of all tuples 
	$\vec y = (y_0,\ldots,y_{\hat{n}-1})$ in the derived tree $T_{\vec x}$ 
	for which there exist conditions $t^0, t^1 \le p$ with top level $\xi$ equal 
	to the height of $\vec y$ and there exists $w \in \p_\theta$ satisfying:
	\begin{itemize}
	\item $t^0 \res \theta = t^1 \res \theta = w \le_\theta \bar{v}_n$;
	\item for all $\tau \in A_n \cap \theta$, 
	$\vec x$ and $\vec y$ are $w(\tau)$-consistent;
	\item for $j < 2$, $A_n \subseteq \dom(t^j)$;
	\item for $j < 2$, 
	for any finite $Y \subseteq T_\xi$ with unique drop-downs to $\delta_n$, 
	$\{ t^j(\tau) : \tau \in A_n \}$ is separated on $Y$;
	\item for all $\tau \in A$, 
	$\vec b$ and $(y_{j_0},\ldots,y_{j_{d-1}})$ are $t^0(\tau)$-consistent, 
	and $\vec b$ and $(y_{k_0},\ldots,y_{k_{d-1}})$ are $t^1(\tau)$-consistent;
	\end{itemize}

	Now let $\mathcal X$ be the set of all $\vec y$ in $T_{\vec x}$ such that 
	either $\vec y \in \mathcal X^*$, or else 
	for all $\vec z \ge \vec y$, $\vec z \notin \mathcal X^*$. 
	Obviously, $\mathcal X$ is dense in $T_{\vec x}$. 
	Using Lemma \ref{Generalized Extension} (Generalized Extension) and 
	Lemma \ref{Persistence for Sets 2} (Persistence for Sets) 
	it is easy to show that $\mathcal X^*$ is open. 
	Also, $\mathcal X \in N$ by elementarity. 
	Since $T$ is a free Suslin tree, $T_{\vec x}$ is Suslin. 
	So we can fix some $\xi < \delta$ greater than $\gamma$ such that every member 
	of $T_{\vec x}$ of height at least $\xi$ is in $\mathcal X$.

	We consider two cases. 
	First, assume that $\vec z \res \xi \notin \mathcal X^*$. 
	Since $\vec z \res \xi \in \mathcal X$, it follows that 
	$\vec z$ is not in $\mathcal X^*$ either. 
	In this case, let $l_n = 0$, $X_{n+1} = X_n$, $A_{n+1} = A_n$, 
	$\delta_{n+1} = \delta_{n}+1$, $v_{n+1} = \bar{v}_n$, 
	$r^{m,l}_{n+1} = \bar{r}^{m,l}_{n}$ and 
	$h^{m,l}_{n+1,\tau} = h^{m,l}_{n,\tau}$ 
	for all $m < n$, $l < l_m$, and $\tau \in A_n$.

	Secondly, assume that $\vec z \res \xi \in \mathcal X^*$. 
	Fix $t^0$, $t^1$, and $w$ in $N$ satisfying the five statements listed 
	in the definition of $\mathcal X^*$. 
	Let $l_n = 3$. 
	Apply 
	Lemma \ref{Generalized Key Property} (Generalized Key Property) to find 
	tuples $\vec{c}^0$ and $\vec{c}^1$ above $\vec b$ with height $\xi$ satisfying:
	\begin{itemize}
	\item for all $\tau \in A$ 
	and $j < 2$, $\vec b$ and $\vec{c}^j$ are $t^j(\tau)$-consistent;
	\item $\vec{c}^0 \res (\alpha+1)$, $\vec{c}^1 \res (\alpha+1)$, and 
	$X_n \res (\alpha+1)$ are pairwise disjoint;
	\item for all $x \in X_n \res \xi$, $\tau \in A_n \cap \theta$, 
	and $m \in \{ -1, 1 \}$, 
	$w(\tau)^m(x)$ is not in $\vec{c}^0$ or in $\vec{c}^1$;
	\item for all $\tau \in A_n \cap \theta$, $m \in \{ -1, 1 \}$, and $j < 2$, if 
	$x$ is in $\vec c^j$ then $w(\tau)^m(x)$ is not in $\vec c^{1 - j}$.
	\end{itemize}

	Pick arbitrary tuples $\vec d^0$ and $\vec d^1$ 
	above $\vec{c}^0$ and $\vec{c}^1$ respectively of height $\xi + 1$. 
	Let $D^0$ be the set of elements in $\vec d^0$ and let $D^1$ the set 
	of elements in $\vec d^1$. 
	Apply Lemma \ref{very technical lemma} in $N$ to 
	find conditions $r^{n,2}_{n+1} \in N \cap \p$ 
	and $v_{n+1} \in N \cap \p_\theta$, both with top level $\xi+1$, satisfying:
	\begin{itemize}
	\item $r^{n,2}_{n+1} \le p$, $v_{n+1} \le_\theta w$, 
	$r^{n,2}_{n+1} \res \theta = v_{n+1}$, and $A_{n} \subseteq \dom(r^{n,2}_{n+1})$;
	\item for all $\tau \in A$, $\vec b$ and $\vec d^0$ are 
	$r^{n,2}_{n+1}(\tau)$-consistent 
	and $\vec b$ and $\vec d^1$ are $r^{n,2}_{n+1}(\tau)$-consistent;
	\item for all $\tau \in A_{n} \cap \theta$, 
	$(X_{n} \res \xi) \cup (D^0 \res \xi) \cup (D^1 \res \xi)$ and 
	$(X_n \res (\xi+1)) \cup D^0 \cup D^1$ are $v_{n+1}(\tau)$-consistent;
	\item $\{ r^{n,2}_{n+1}(\tau) : \tau \in A_n \}$ 
	is separated on $(X_n \res (\xi+1)) \cup D^0 \cup D^1$.
	\end{itemize}

	Apply Lemma \ref{Generalized Extension} (Generalized Extension) to 
	find $r^{n,0}_{n+1} \le t^0$ and 
	$r^{n,1}_{n+1} \le t^1$ with top level $\xi+1$ such that for each $j < 2$, 
	$r^{n,j}_{n+1} \res \theta = v_{n+1}$ and 
	for all $\tau \in A_n$, 
	$(X_{n} \res \xi) \cup (D^0 \res \xi) \cup (D^1 \res \xi)$ and 
	$(X_n \res (\xi+1)) \cup D^0 \cup D^1$ are $r^{n,j}_{n+1}(\tau)$-consistent. 
	It follows that for $j < 2$, for all $\tau \in A$, 
	$\vec b$ and $\vec d^j$ are $r^{n,j}_{n+1}(\tau)$-consistent. 
	For each $j < 2$, since $\delta_n > \alpha$, 
	the set $(X_n \res \xi) \cup (D^0 \res \xi) \cup (D^1 \res \xi)$ 
	has unique drop-downs to $\delta_n$, 
	so $\{ t^j(\tau) : \tau \in A_n \}$ is separated on it; 
	by Lemma \ref{Persistence for Sets 2} (Persistence for Sets), 
	$\{ r^{n,j}_{n+1}(\tau) : \tau \in A_n \}$ is separated 
	on $(X_n \res (\xi+1)) \cup D^0 \cup D^1$. 

	Apply Lemma \ref{Generalized Extension} (Generalized Extension) to find a family 
	$\{ r^{m,l}_{n+1} : m < n, \ l < l_m \}$ of conditions with top level $\xi+1$ 
	such that for all $m < n$ and $l < l_m$, 
	$r^{m,l}_{n+1} \le \bar{r}^{m,l}_n$, 
	$r^{m,l}_{n+1} \res \theta = v_{n+1}$, and 
	for all $\tau \in A_{n}$, $X_n \res (\delta_n + 1)$ and $X_n \res (\xi+1)$ 
	are $r^{m,l}_{n+1}(\tau)$-consistent. 
	Since each $\bar{r}^{m,l}_n$ is separated and $(X_n \res (\xi+1)) \cup D^0 \cup D^1$ 
	has unique drop-downs to $\delta_n + 1$, 
	by Lemma \ref{Persistence for Sets 2} (Persistence for Sets), 
	$\{ r^{m,l}_{n+1}(\tau) : \tau \in A_n \}$ 
	is separated on $(X_n \res (\xi+1)) \cup D^0 \cup D^1$.
	
	Define $A_{n+1} = A_n$ and $\delta_{n+1} = \xi+1$. 
	Fix arbitrary tuples $\vec e^{n,0}$ and $\vec e^{n,1}$ above 
	$\vec d^0$ and $\vec d^1$ respectively with height $\delta$. 	
	Define $X_{n+1}$ by adding to $X_n$ the elements of $\vec e^{n,0}$ and $\vec e^{n,1}$. 
	Finally, define $h^{m,l}_{n+1,\tau}$ for each $m < n$, $l < l_m$, and $\tau \in A_{n+1}$ 
	as described in inductive hypothesis (7).
	It is straightforward to check that the required properties are satisfied.
	
	To make the verification of (3) below easier to check, let us highlight the following 
	facts which we have proven:
	\begin{itemize}
	\item for all $\tau \in A$, $\vec b$ and $\vec d^0$ are 
	$r^{n,2}_{n+1}(\tau)$-consistent and $r^{n,0}_{n+1}(\tau)$-consistent;
	\item for all $\tau \in A$, $\vec b$ and $\vec d^1$ are 
	$r^{n,2}_{n+1}(\tau)$-consistent and $r^{n,1}_{n+1}(\tau)$-consistent;
	\item $\vec d^0 < \vec e^0$, $\vec d^1 < \vec e^1$, and the elements of 
	$\vec e^0$ and $\vec e^1$ are in $X_{n+1}$.
	\end{itemize}
	
	This completes the construction. 
	For all $m < \omega$ and $l < l_m$, define $r^{m,l} \in \p$ with domain 
	$N \cap \ka$ so that for all $\tau \in N \cap \ka$, 
	$$
	r^{m,l}(\tau) = \bigcup \{ r^{m,l}_n(\tau) : m < n < \omega, \ \tau \in A_n \} 
	\cup \bigcup \{ h^{m,l}_{n,\tau} : m < n < \omega, \ \tau \in A_n \}.
	$$
	Also, define $v = r^{m,l} \res \theta$ for some (any) any $m < \omega$ and $l < l_m$. 
	By Lemma \ref{Constructing Total Master Conditions} (Constructing Total Master Conditions), 
	each $r^{m,l}$ is a total master condition over $N$ which is a lower bound of 
	the sequence $\langle r^{m,l}_n : m < n < \omega \rangle$, 
	and for all $n < \omega$ and 
	for all $\tau \in A_n$, $X_n \res \delta_n$ and $X_n$ are $r^{m,l}(\tau)$-consistent. 
	By Lemma \ref{master implies separated}, each $r^{m,l}$ is separated.

	This completes the construction. 
	Note that by what we did at stage $0$, $v \le q \res \theta$. 
	So it remains to prove that $v$ is $N$-nice for $p$, $\vec b$, and $A$. 
	We verify property (1)-(3) of Definition \ref{Nice Conditions} (Nice Conditions).
	
	(1) Clearly, $v \le p \res \theta$ and $v$ has top level $\delta$. 
	Since, for example, $r^{0,0}$ is a total master condition for $\p$ over $N$, 
	by a standard argument it follows that $v = r^{0,0} \res \theta$ is 
	a total master condition for $\p_\theta$ over $N$. 
	So $v$ decides $\dot U \res \delta$.

	(2) Let $s \in N \cap \p$ have top level $\gamma$ such that 
	$s \le p$ and $v \le_\theta s \res \theta$. 
	It suffices to show that for some $n$, $r^{n,0} \le s$. 
	Fix $n_1$ such that $s = s_{n_1}$. 
	Pick $n'$ large enough so that 
	$\dom(s) \cap \theta \subseteq \dom(v_{n'})$ 
	and $\delta_{n'} > \gamma$. 
	Note that for any $n \ge n'$, $v_{n} \le_\theta s \res \theta$. 
	Now find $n \ge n'$ such that for some $\sigma \in N \cap \ka$, 
	$g(n) = (0,n_1,\sigma)$. 
	By inductive hypothesis (4), $r^{n,0} \le s$.
	
	(3) Suppose that $\vec a^0$ and $\vec a^1$ are distinct tuples above $\vec b$ 
	with height $\delta$, and $q_0, q_1 \le p$ have top level $\delta$ and satisfy: 
	$q_0 \res \theta = q_1 \res \theta = v$, 
	$N \cap \ka \subseteq \dom(q_0) \cap \dom(q_1)$, 
	$q_0$ and $q_1$ are separated, $q_0$ and $q_1$ 
	decide $\dot b \cap \delta$, and for all $\tau \in A$ and $j < 2$, 
	$\vec b$ and $\vec a^{j}$ are $q_j(\tau)$-consistent. 
	In particular, for all $\tau \in A \cap \theta$ and $j < 2$, 
	$\vec b$ and $\vec a^j$ are $v(\tau)$-consistent. 
	Fix $n'$ large enough so that the elements of $\vec a^0$ and $\vec a^1$ are in $X_{n'}$. 
	Fix $n_1$ so that $(\vec a^{n_1,0},\vec a^{n_1,1}) = (\vec a^0,\vec a^1)$. 
	Find $n \ge n'$ such that for some $\sigma \in N \cap \ka$, 
	$g(n) = (3,n_1,\sigma)$. 
	Observe that for all $\tau \in A \cap \theta$ and $j < 2$, 
	the fact that $\vec b$ and $\vec a^j$ are $v(\tau)$-consistent implies that 
	$\vec b$ and $\vec a^j \res \delta_n$ are $v_n(\tau)$-consistent.

	So the requirements described in the first paragraph of Case d are met. 
	Letting $\vec z$ be the enumeration of $X_n$ given in Case d, 
	clearly $\vec z \in \mathcal X^*$ as witnessed by 
	$q_0$, $q_1$, and $v$. 
	Using the information which we highlighted at the end of Case d and the fact that 
	for all $j < 3$ and for all $\tau \in A_{n+1}$, 
	$X_{n+1} \res \delta_{n+1}$ and $X_{n+1}$ are 
	$r^{n,j}(\tau)$-consistent, 
	it is routine to check that $r^{n,0}$, $r^{n,1}$, $r^{n,2}$, 
	$\vec e^{n,0}$, and $\vec e^{n,1}$ 
	satisfy the properties 
	described of $q_0^*$, $q_1^*$, $r$, $\vec e^{0}$, and $\vec e^{1}$ 
	in (3) in Definition \ref{Nice Conditions} (Nice Conditions).
\end{proof}

\section{The Automorphism Forcing Adds No New Cofinal Branches} \label{The Automorphism Forcing Adds No New Cofinal Branches}

In this section, we complete the 
proof of Theorem \ref{No New Cofinal Branches} (No New Cofinal Branches).

\begin{proposition} \label{completing no cofinal branches 1}
	Suppose that $\bar p \in \p$ has top level $\beta$ and 
	$\bar p$ forces that $\dot b$ is a cofinal branch of $\dot U$ 
	which is not in $V^{\p_\theta}$. 
	Assume that $A \subseteq \dom(\bar p)$ is finite, 
	$\vec x = (x_0,\ldots,x_{n-1})$ consists of distinct elements of $T_\beta$, and 
	$\{ \bar p(\tau) : \tau \in A \}$ is separated on $\vec x$. 
	Define $\mathcal{X}$ as the set of all tuples $\vec y = (y_0,\ldots,y_{n-1})$ 
	in the derived tree $T_{\vec x}$ 
	for which there exist $q_0, q_1 \le \bar p$ with top level equal to the height $\gamma$ 
	of $\vec y$ such that:
	\begin{enumerate}
		\item $q_0 \res \theta = q_1 \res \theta$;
		\item for all $\tau \in A$ and $j < 2$, $\vec x$ and $\vec y$ 
		are $q_j(\tau)$-consistent;
		\item there exists some $\zeta < \gamma$ such that 
		$q_0 \Vdash_\p \zeta \in \dot b$ 
		and $q_1 \Vdash_\p \zeta \notin \dot b$.
	\end{enumerate}
	Then $\mathcal{X}$ is dense open in $T_{\vec x}$.
\end{proposition}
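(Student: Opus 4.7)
\emph{Proof plan.} The proposition is the direct analogue of Proposition 4.25, with the extra clause $q_0 \res \theta = q_1 \res \theta$ reflecting the shift from ``no new branches over $V$'' to ``no new branches over $V^{\p_\theta}$''. Accordingly, I will follow that template, using nice conditions (Proposition 5.54) wherever Proposition 4.25 used total master conditions, and using Lemma 5.45 (Generalized Extension) and Lemma 5.46 (Generalized Consistent Extensions Into Dense Sets) wherever Proposition 4.25 used their simpler Section~4 counterparts. Openness is straightforward: given $\vec b \in \mathcal X$ of height $\gamma$ witnessed by $q_0, q_1, \zeta$ and $\vec c > \vec b$ of height $\xi$, apply Lemma 5.45 to the family $\{q_0, q_1\}$ (which already shares a restriction to $\p_\theta$) to extend both simultaneously to top level $\xi$, retaining the shared $\p_\theta$-restriction and making $\vec x, \vec c$ be $r_j(\tau)$-consistent for $j < 2$ and $\tau \in A$; the decisions at $\zeta$ are inherited.

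For density, I argue by contradiction. Assume some $\vec b \in T_{\vec x}$ of height $\alpha$ has no extension in $\mathcal X$. Via Lemma 5.24, extend $\bar p$ to $p$ with top level $\alpha$ and $\vec x, \vec b$ being $p(\tau)$-consistent for $\tau \in A$; by Lemma 5.21, $\{p(\tau) : \tau \in A\}$ is separated on $\vec b$. Fix a continuous $\in$-chain $\langle N_\gamma : \gamma < \omega_1 \rangle$ of suitable models with $p \in N_0$, and set $\delta_\gamma = N_\gamma \cap \omega_1$. My goal is to construct in $V$ a $\p_\theta$-name $\dot c$ for a cofinal branch of $\dot U$ satisfying $p \Vdash_\p \dot b = \dot c$, contradicting $\bar p \Vdash_\p \dot b \notin V^{\p_\theta}$. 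The key step is to show that for each nice condition $v$ for $p, \vec b, A$ over $N_\gamma$, there is a unique branch segment $F(v) \subseteq \delta_\gamma$ such that every $r \le p$ of top level $\delta_\gamma$ satisfying $r \res \theta = v$, $N_\gamma \cap \ka \subseteq \dom(r)$, $r$ separated, and $\vec b, \vec a$ being $r(\tau)$-consistent for some $\vec a > \vec b$ of height $\delta_\gamma$ and all $\tau \in A$, forces $\dot b \cap \delta_\gamma = F(v)$. Existence of at least one such $r$ (for a given $\vec a$) comes from property~(2) of Definition 5.51 applied with $q = p$, combined with Proposition 5.25 (Key Property).

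The main obstacle, and where property~(3) of Definition 5.51 is essential, is the uniqueness claim. Given two witnesses $(r_0, \vec a^0)$ and $(r_1, \vec a^1)$, there are two cases. If $\vec a^0 = \vec a^1$, then any disagreement of $r_0, r_1$ on $\dot b \cap \delta_\gamma$ would directly realize $\vec a^0 \in \mathcal X$, contradicting the dead-end assumption. If $\vec a^0 \ne \vec a^1$, I apply property~(3) to obtain $q_0^*, q_1^*, r^*, \vec e^0, \vec e^1$. The pair $(r_0, q_0^*)$ shares top level, $\p_\theta$-restriction, and the tuple $\vec a^0$; the dead-end assumption forces them to agree on $\dot b \cap \delta_\gamma$. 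Symmetrically $(r_1, q_1^*)$ agree via $\vec a^1$, $(r^*, q_0^*)$ agree via $\vec e^0$, and $(r^*, q_1^*)$ agree via $\vec e^1$. Chaining these agreements, $r_0$ and $r_1$ decide $\dot b \cap \delta_\gamma$ identically, establishing well-definedness of $F(v)$. Coherence $F(v') \cap \delta_\gamma = F(v)$ for nice $v' \le v$ over $N_{\gamma'}$ with $\gamma' > \gamma$ follows by extending an $F(v)$-witness to an $F(v')$-witness via Lemma 5.44 (Simple Generalized Extension) and invoking well-definedness of $F(v')$. Density of nice conditions inside each $N_{\gamma+1}$ (Proposition 5.54) then assembles the $F$-values into a $\p_\theta$-name $\dot c$ via genericity, and a final standard density argument, mirroring claim~5 in the proof of Proposition 4.25, shows $p \Vdash_\p \dot b = \dot c$, yielding the desired contradiction.
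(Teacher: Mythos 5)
Your proposal follows essentially the same route as the paper's proof: openness via Generalized Extension, density by contradiction via the function $F$ computed from nice conditions, well-definedness of $F$ via Definition 5.51(3) chaining through $q_0^*$, $q_1^*$, $r^*$, and coherence plus assembly into a $\p_\theta$-name $\dot c$ forced to equal $\dot b$. Minor cosmetic differences (you carry the tuple $\vec a$ as part of the witness and split on $\vec a^0 = \vec a^1$, whereas the paper constructs disjoint tuples on the spot via the Key Property; and for coherence you should invoke Definition 5.51(2) rather than just Lemma 5.44) do not change the substance of the argument.
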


\begin{proof}
	To prove that $\mathcal{X}$ is open, 
	assume that $\vec y \in \mathcal X$ has height $\gamma$ 
	and $\vec z > \vec y$ has height $\xi$. 
	Fix $q_0, q_1 \in \p$ with top level $\gamma$ which witness that $\vec y \in \mathcal X$, 
	and let $v = q_0 \res \theta = q_1 \res \theta$. 
	By Lemma \ref{Generalized Extension} (Generalized Extension), extend 
	$q_0$ and $q_1$ to $r_0$ and $r_1$ respectively 
	with top level $\xi$ such that $r_0 \res \theta = r_1 \res \theta$ 
	and for all $\tau \in A$ and $j < 2$, 
	$\vec y$ and $\vec z$ are $r_j(\tau)$-consistent. 
	Then $r_0$ and $r_1$ witness that $\vec z \in \mathcal X$.

	Now we prove that $\mathcal{X}$ is dense. 
	Suppose for a contradiction that $\vec y \in T_{\vec x}$ 
	and for all 
	$\vec z \ge \vec y$, $\vec z \notin \mathcal{X}$. 
	Let $\alpha$ be the height of $\vec y$. 
	Apply Proposition \ref{Extension 2} (Extension) to find some $p \le \bar p$ with top level $\alpha$ such that 
	for all $\tau \in A$, $\vec x$ and $\vec y$ are $p(\tau)$-consistent. 
	Since $\{ \bar p(\tau) : \tau \in A \}$ is separated on $\vec x$, 
	$\{ p(\tau) : \tau \in A \}$ is separated on $\vec y$.

	Let $\mathcal C$ be the collection of all suitable models. 
	Fix a $\in$-increasing and continuous chain $\langle N_i : i < \omega_1 \rangle$ 
	of suitable models which are elementary substructures of 
	$(H(\lambda),\in,\unlhd,\mathcal C)$ such that $p$ is in $N_0$. 
	Let $\delta_\gamma = N_\gamma \cap \omega_1$ for all $\gamma < \omega_1$. 
	Observe that for all $\gamma_1 < \gamma_2 < \omega_1$, 
	the property of a condition being $N_{\gamma_1}$-nice for 
	$p$, $\vec y$, and $A$ is definable in $N_{\gamma_2}$.

	We define a function $F$ which takes as inputs any pair $(v,r)$ satisfying 
	that for some $\gamma < \omega_1$:
	\begin{enumerate}
	\item $v$ is $N_\gamma$-nice for 
	$p$, $\vec y$, and $A$;
	\item $r \le p$, $r$ has top level $\delta_\gamma$, 
	$r \res \theta = v$, $N_\gamma \cap \ka \subseteq \dom(r)$, 
	$r$ is separated, and $r$ decides $\dot b \cap \delta_\gamma$.
	\end{enumerate}
	Let $F(v,r)$ be the unique set $b_r$ such that 
	$r \Vdash_\p \dot b \cap \delta_\gamma = b_r$.

\bigskip

	\textbf{Claim 1:} For any $\gamma < \omega_1$ and any $v$ which is $N_\gamma$-nice 
	for $p$, $\vec y$, and $A$, there exists some $r$ such that 
	$(v,r) \in \dom(F)$. 

    \emph{Proof:} This follows from (2) of Definition \ref{Nice Conditions} (Nice Conditions).

\bigskip

	\textbf{Claim 2:} For any $\gamma < \omega_1$ and any $v$ which is $N_\gamma$-nice 
	for $p$, $\vec y$, and $A$, if $(v,q_0)$ and $(v,q_1)$ are both in 
	the domain of $F$, then $F(v,q_0) = F(v,q_1)$.
    
	\emph{Proof:} Using Lemma \ref{Key Property 2} (Key Property) twice, fix disjoint 
	tuples $\vec a^0$ and $\vec a^1$ above $\vec y$ with height $\delta_\gamma$ 
	such that for all $\tau \in A$ and $j < 2$, 
	$\vec y$ and $\vec a^j$ are $q_j(\tau)$-consistent. 

	Note that $q_0$, $q_1$, $\vec a^0$, and $\vec a^1$ 
	satisfy the properties listed in (3) 
	of Definition \ref{Nice Conditions} (Nice Conditions). 
	Since $v$ is $N_\gamma$-nice, there exist 
	$q_0^*$ and $q_1^*$ satisfying the same properties of $q_0$ and $q_1$ 
	which are listed in (3) 
	of Definition \ref{Nice Conditions} (Nice Conditions), 
	there exists $r \le p$ with top level $\delta_\gamma$, 
	and there exist disjoint tuples $\vec e^0$ and $\vec e^1$ 
	above $\vec y$ with height $\delta_\gamma$ satisfying:
	$r \res \theta = v$, $N_\gamma \cap \ka \subseteq \dom(r)$, 
	$r$ is separated, $r$ decides $\dot b \cap \delta_\gamma$, 
	and for all $\tau \in A$ and $j < 2$, 
	$\vec y$ and $\vec e^j$ are $q_j^*(\tau)$-consistent 
	and $r(\tau)$-consistent.

	Observe that $(v,q_0^*)$, $(v,q_1^*)$, and $(v,r)$ are in the domain of $F$. 
	Recall that for all $\vec z \ge \vec y$, $\vec z \notin \mathcal X$. 
	In particular, $\vec e^0$, $\vec e^1$, $\vec a^0$, and $\vec a^1$ are not in $\mathcal X$.  
	If $F(v,q_0^*) \ne F(v,r)$, then $q_0^*$ and $r$ would witness that 
	$\vec e^0 \in \mathcal X$. 
	Hence, $F(v,q_0^*) = F(v,r)$. 
	If $F(v,r) \ne F(v,q_1^*)$, then $\vec e^1$ would be in $\mathcal X$. 
	So $F(v,r) = F(v,q_1^*)$. 
	Similarly, the fact that $\vec a^0$ and $\vec a^1$ are not in $\mathcal X$ 
	implies that $F(v,q_0) = F(v,q_0^*)$ and $F(v,q_1) = F(v,q_1^*)$. 
	So $F(v,q_0) = F(v,q_0^*) = F(v,r) = F(v,q_1^*) = F(v,q_1)$. 
	This completes the proof of Claim 2.

\bigskip

	Define $F(v)$ to be equal to $F(v,r)$ for any $r$ such that $(v,r)$ is in the domain of $F$. 
	By Claim 1, $F(v)$ is defined for any $v$ which is $N_\gamma$-nice for 
	$p$, $\vec y$, and $A$ for some $\gamma < \omega_1$. 
	By Claim 2, $F(v)$ is well-defined.

\bigskip

	\textbf{Claim 3:} Suppose that $\gamma_1 < \gamma_2$, $v_1 \in N_{\gamma_2}$ 
	is $N_{\gamma_1}$-nice for $p$, $\vec y$ and $A$, 
	$v_2$ is $N_{\gamma_2}$-nice for $p$, $\vec y$, and $A$, 
	and $v_2 \le_\theta v_1$. 
	Then $F(v_1) = F(v_2) \cap \delta_{\gamma_1}$.
    
	\emph{Proof:} By (2) of Definition \ref{Nice Conditions} (Nice Conditions) and elementarity, 
	fix some $r_1 \le p$ in $N_{\gamma_2}$ with top level $\delta_{\gamma_1}$ 
	such that $r_1 \res \theta = v_1$, $N_{\gamma_1} \cap \ka \subseteq \dom(r_1)$, 
	$r_1$ is separated, and $r_1$ decides $\dot b \cap \delta_{\gamma_1}$ 
	as some set $b_1$. 
	Then $F(v_1) = b_1$. 
	Since $v_2 \le_\theta v_1 = r_1 \res \theta$ and 
	$v_2$ is $N_{\gamma_2}$-nice for $p$, $\vec y$, and $A$, 
	by (2) of Definition \ref{Nice Conditions} (Nice Conditions) 
	we can find some $r_2 \le r_1$ with top level $\delta_{\gamma_2}$ 
	such that $r_2 \res \theta = v_2$, $N_{\gamma_2} \cap \ka \subseteq \dom(r_2)$, 
	$r_2$ is separated, 
	and $r_2$ decides $\dot b \cap \delta_{\gamma_2}$ as some set $b_2$. 
	Then $F(v_2) = b_2$. 
	Since $r_2 \le r_1$, $b_1 = b_2 \cap \delta_{\gamma_1}$, 
	which completes the proof of Claim 3.

\bigskip

	\textbf{Claim 4:} If $\gamma \le \xi < \omega_1$, $v$ is $N_\gamma$-nice 
	for $p$, $\vec y$, and $A$, 
	$w$ is $N_\xi$-nice for $p$, $\vec y$, 
	and $A$, and $v$ and $w$ are compatible in $\p_\theta$, 
	then $F(v) = F(w) \cap \delta_\gamma$. 
	In particular, if $\gamma = \xi$ then $F(v) = F(w)$. 
    
	\emph{Proof:} If not, then since $N_\xi \in N_{\xi+1}$, by elementarity we can find 
	counter-examples $v$ and $w$ in $N_{\xi+1}$. 
	Now find $z \le_\theta v, w$ in $N_{\xi+1}$. 
	Apply Lemma \ref{Simple Generalized Extension} (Simple Generalized Extension) 
	to find $q \le p$ in $N_{\xi+1}$ such that $q \res \theta = z$. 
	By Proposition \ref{Existence of Nice Conditions prop} (Existence of Nice Conditions), 
	fix $v_2 \le_\theta q \res \theta = z$ 
	such that $v_2$ is $N_{\xi+1}$-nice for $p$, $\vec y$, and $A$. 
	Then by Claim 3, $F(v) = F(v_2) \cap \delta_\gamma$ and 
	$F(w) = F(v_2) \cap \delta_\xi$, so 
	$F(v) = (F(v_2) \cap \delta_\xi) \cap \delta_\gamma = F(w) \cap \delta_\gamma$, 
	which is a contradiction. 
	This completes the proof of Claim 4.

\bigskip

	For each $\gamma < \omega_1$, let $\dot c_\gamma$ be a 
	$\p_\theta$-name for the unique set which is equal to $F(v)$, where 
	$v \in \dot G_\theta$ is $N_\gamma$-nice for 
	$p$, $\vec y$, and $A$, and is equal to the emptyset if there is no such $v$. 
	Note that by Claim 4, $\dot c_\gamma$ is well-defined. 
	By elementarity, we can choose the name $\dot c_\gamma$ to be in $N_{\gamma+1}$. 
	Now let $\dot c$ be a $\p_\theta$-name for 
	the union $\bigcup \{ \dot c_{\gamma} : \gamma < \omega_1 \}$. 
	Note that the names $\dot c_\gamma$ and the name $\dot c$ are also $\p$-names 
	since $\p_\theta$ is a regular suborder of $\p$.

\bigskip

	\textbf{Claim 5:} $p$ forces in $\p$ that $\dot c$ is a chain in $\dot U$.
    
	\emph{Proof:} Clearly, for all $\gamma < \omega_1$, $p$ 
	forces that $\dot c_\gamma$ is a chain in $\dot U$. 
	So it suffices to show that $p$ forces that 
	for all $\gamma < \xi$ such that $\dot c_\gamma$ and 
	$\dot c_\xi$ are non-empty, 
	$\dot c_\gamma = \dot c_\xi \cap \delta_\gamma$. 
	So let $G$ be a generic filter on $\p$ which contains $p$ 
	and let $G_\theta = G \cap \p_\theta$. 
	Let $c_\gamma = \dot c_\gamma^{G_\theta}$ 
	and $c_\xi = \dot c_\xi^{G_\theta}$, and assume that 
	$c_\gamma$ and $c_\xi$ are non-empty. 
	Fix $v \in G_\theta$ which is $N_\gamma$-nice for 
	$p$, $\vec y$, and $A$, and fix $w \in G_\theta$ which is 
	$N_\xi$-nice for $p$, $\vec y$, and $A$, which exist because 
	$c_\gamma$ and $c_\xi$ are non-empty.  
	Then $c_\gamma = F(v)$ and $c_\xi = F(w)$. 
	Since $v$ and $w$ are in $G_\theta$, they are compatible in $\p_\theta$, 
	so by Claim 4, $c_\gamma = F(v) = F(w) \cap \delta_\gamma = c_\xi$.
	This completes the proof of Claim 5.

\bigskip

	\textbf{Claim 6:} $p$ forces in $\p$ 
	that $\dot c$ is equal to $\dot b$, 
	and hence that $\dot b \in V^{\p_\theta}$. 
	Since $p \le \bar p$, this claim 
    contradicts our initial assumptions.
    
	\emph{Proof:} Since $p$ forces that 
	$\dot c$ is a chain in $\dot U$ and $\dot b$ is an uncountable branch of $\dot U$, 
	it suffices to show that $p$ forces that 
	$\dot b \subseteq \dot c$.

	Suppose for a contradiction that there exists some $q \le p$ 
	and some $\zeta < \omega_1$ 
	such that $q \Vdash_{\p} \zeta \in \dot b \setminus \dot c$. 
	Fix a $\in$-increasing and continuous sequence 
	$\langle M_\gamma : \gamma < \omega_1 \rangle$ of suitable models  
	such that $M_0$ contains as members the objects 
	$p$, $q$, $\zeta$, 
	$\langle N_i : i < \omega_1 \rangle$, and 
	$\langle \dot c_i : i < \omega_1 \rangle$. 
	Let $D \subseteq \omega_1$ be a club such that for all $\gamma \in D$, 
	$N_\gamma \cap \omega_1 = M_\gamma \cap \omega_1$.

	Fix $\gamma \in D$. 
	Apply Proposition \ref{Existence of Nice Conditions prop} (Existence of Nice Conditions) to find some $v$ which 
	is $M_\gamma$-nice for $p$, $\vec y$, and $A$ 
	such that $v \le_\theta q \res \theta$. 
	By (2) of Definition \ref{Nice Conditions} (Nice Conditions), 
	fix $r \le q$ with top level $\delta_\gamma$ 
	such that $r \res \theta = v$, $N \cap \ka \subseteq \dom(r)$, 
	$r$ is separated, and $r$ decides $\dot b \cap \delta_\gamma$ 
	as some set $b_r$. 
	By Lemma \ref{niceness persistent}, $v$ is also $N_\gamma$-nice. 
	Hence, $F(v) = b_r$. 
	Since $q \Vdash_{\p} \zeta \in \dot b$, $r \le q$, and $\zeta < \delta_\gamma$, 
	$r$ forces that $\zeta \in \dot b \cap \delta_\gamma = b_r$. 
	Hence, $\zeta \in F(v)$. 
	Now $r$ forces that $v \in \dot G_\theta$, 
	so $r$ forces that $\dot c_\gamma = F(v)$, 
	and hence $r$ forces that $\zeta \in \dot c_\gamma$. 
	But $p$ forces that $\dot c_\gamma$ is a subset of $\dot c$. 
	So $r \le q$ and $r \Vdash_\p \zeta \in \dot c$, which is a contradiction. 
    This completes the proof of Claim 6, and with it the proof of the proposition.
\end{proof}

\begin{lemma} \label{completing no cofinal branches 2}
	Suppose that $T$ is a free Suslin tree. 
	Let $N$ be a suitable model and let $\delta = N \cap \omega_1$. 
	Suppose that $p_0,\ldots,p_{l-1}$ are in $N \cap \p$ and 
	$v \in N \cap \p_\theta$, all of which have top level $\beta$. 
	Assume that for all $k < l$, $p_k \res \theta = v$ and $p_k$ forces 
	that $\dot b$ is a cofinal branch of $\dot U$ which is not in $V^{\p_\theta}$. 
	Let $X \subseteq T_\delta$ be finite with unique drop-downs to $\beta$, 
	$A \subseteq \bigcap_{k < l} \dom(p_k)$ is finite, and suppose that 
	for all $k < l$, $\{ p_k(\tau) : \tau \in A \}$ is separated on $X \res \beta$.

	Then there exist $\gamma < \delta$, $w \in N \cap \p_\theta$, 
	and for all $k < l$ conditions $q_{k,0}, q_{k,1}$ in $N \cap \p$, 
	all with top level $\gamma$, satisfying:
	\begin{enumerate}
	\item for each $j < 2$, $q_{k,j} \le p_k$ and $q_{k,j} \res \theta = w$;
	\item for each $j < 2$ and $\tau \in A$, 
	$X \res \beta$ and $X \res \gamma$ are $q_{k,j}(\tau)$-consistent;
	\item there exists some $\zeta < \gamma$ 
	such that $q_{k,0} \Vdash_\p \zeta \in \dot b$ and 
	$q_{k,1} \Vdash_\p \zeta \notin \dot b$.
	\end{enumerate}
\end{lemma}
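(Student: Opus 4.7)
\textit{Plan.} I would prove Lemma 5.56 by induction on $l$, mirroring the blueprint of Lemma 4.26 from the subtree section, but using the Generalized Extension machinery of Subsection 5.7 to preserve a common restriction to $\p_\theta$ throughout. The two main ingredients are Proposition 5.55 (which supplies a dense open subset of an appropriate derived tree recording the possibility of two conditions with a common $\p_\theta$-restriction that decide some ordinal's membership in $\dot b$ oppositely) and the freeness of $T$, which makes every such derived tree Suslin so that a single level absorbs the dense open set.

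For the base case $l=1$, fix an injective enumeration $\vec a$ of $X$ witnessing that $\{p_0(\tau) : \tau \in A\}$ is separated on $X \res \beta$, and set $\vec x = \vec a \res \beta$. Apply Proposition 5.55 to $\bar p = p_0$ to obtain a dense open subset $\mathcal X$ of $T_{\vec x}$; this set lies in $N$ by elementarity. Since $T$ is free, $T_{\vec x}$ is Suslin, so some full level of $T_{\vec x}$ is contained in $\mathcal X$; by elementarity I may choose such a level $\gamma > \beta$ with $\gamma \in N \cap \omega_1 = \delta$. Then $\vec a \res \gamma \in \mathcal X$ and elementarity yields $q_{0,0}, q_{0,1} \in N$ witnessing this, whose common restriction to $\p_\theta$ serves as $w$.

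For the inductive step from $l$ to $l+1$, apply the inductive hypothesis to $p_0,\ldots,p_{l-1}$ to obtain $\gamma$, $w$, and $q_{k,j}$ for $k < l$, $j < 2$. Bring $p_l$ up to height $\gamma$ with restriction $w$ by invoking Lemma 5.44 (Simple Generalized Extension) inside $N$, producing $q \le p_l$ in $N$ with $q \res \theta = w$ and $X \res \beta$, $X \res \gamma$ being $q(\tau)$-consistent for all $\tau \in A$. The condition $q$ still forces that $\dot b \notin V^{\p_\theta}$, and by Lemma 5.22 (Persistence for Sets) the separation of $\{q(\tau) : \tau \in A\}$ on $X \res \gamma$ is inherited. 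Choose a suitable enumeration $\vec a'$ of $X$ and set $\vec x' = \vec a' \res \gamma$. Apply Proposition 5.55 to $\bar p = q$ to get a dense open $\mathcal X' \subseteq T_{\vec x'}$ in $N$, and use freeness together with elementarity to select $\xi \in N \cap \delta$ with $\xi > \gamma$ such that the $\xi$th level of $T_{\vec x'}$ lies in $\mathcal X'$. Then $\vec a' \res \xi \in \mathcal X'$ is witnessed in $N$ by $q_{l,0}, q_{l,1}$ with a common restriction $w' \le_\theta w$.

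To close, I must promote each previously chosen $q_{k,j}$ with $k < l$ to height $\xi$ with the new restriction $w'$. This is precisely the content of the moreover clause of Lemma 5.45 (Generalized Extension) applied inside $N$ to the family $\{q_{k,j} : k < l,\ j < 2\}$ sharing the restriction $w$: it produces extensions $\bar q_{k,j}$ at height $\xi$ with $\bar q_{k,j} \res \theta = w'$ and the desired consistency between $X \res \gamma$ and $X \res \xi$. Together with $q_{l,0}, q_{l,1}$ these form the required family, with common restriction $w'$ and branching ordinals preserved because $\le$ is compatible with the dominance witnessed by Proposition 5.55. The main obstacle is the passage from $w$ to $w'$: Proposition 5.55 chooses the new common $\p_\theta$-restriction freely from the witnesses associated to the single condition $q$, so one must reconcile this freshly manufactured $w'$ with lower bounds already fixed for the earlier $p_k$'s, and it is exactly this reconciliation that the moreover clause of Lemma 5.45 is engineered to accomplish.
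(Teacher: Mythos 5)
Your proposal is correct and follows essentially the same approach as the paper's proof: induction on $l$, with Proposition 5.55 supplying the dense open subset of the derived tree, freeness giving Suslinness so a single level $\gamma$ (resp.\ $\xi$) absorbs it, Lemma 5.44 to bring $p_l$ up to the level produced by the inductive hypothesis, and the moreover clause of Lemma 5.45 to re-root the earlier $q_{k,j}$'s onto the new common restriction $w'$. The paper carries out exactly the reconciliation you identify at the end, taking $w'$ to be the restriction to $\p_\theta$ of one of the two new witnesses for $p_l$ and then extending the $l$ earlier pairs so as to share that restriction.
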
	

\begin{proof}
	The proof is by induction on $l$. 
	Let $N$ and $\delta$ be as above and let $X \subseteq T_\delta$ be finite. 
	
	\underline{Base case:} Suppose that $p \in N \cap \p$ and $v \in N \cap \p_\theta$ 
	have top level $\beta$, $p \res \theta = v$, and $p$ forces that $\dot b$ is a cofinal 
	branch of $\dot U$ which is not in $V^{\p_\theta}$. 
	Assume that $X$ has unique drop-downs to $\beta$, $A \subseteq \dom(p)$ is finite, 
	and $\{ p(\tau) : \tau \in A \}$ is separated on $X \res \beta$. 
	Fix an injective tuple $\vec a = (a_0,\ldots,a_{n-1})$ 
	which enumerates $X$ so that $\{ p(\tau) : \tau \in A \}$ 
	is separated on $\vec a \res \beta$. 
	Let $\vec x = \vec a \res \beta$.

	Define $\mathcal{X}$ as the set of all tuples $\vec b = (b_0,\ldots,b_{n-1})$ 
	in the derived tree $T_{\vec x}$ 
	for which there exist $q_0, q_1 \le p$ with top level equal to 
	the height $\rho$ of $\vec b$ such that:
	\begin{itemize}
		\item $q_0 \res \theta = q_1 \res \theta$;
		\item for all $\tau \in A$ and $j < 2$, $\vec x$ and $\vec b$ 
		are $q_j(\tau)$-consistent;
		\item there exists some $\zeta < \rho$ such that 
		$q_0 \Vdash_\p \zeta \in \dot b$ 
		and $q_1 \Vdash_\p \zeta \notin \dot b$.
	\end{itemize}
	By Proposition \ref{completing no cofinal branches 1}, 
	$\mathcal{X}$ is dense open in $T_{\vec x}$, and 
	$\mathcal{X} \in N$ by elementarity. 
	Since $T$ is a free Suslin tree, $T_{\vec x}$ is Suslin. 
	So by elementarity we can find some $\gamma < \delta$ greater than $\beta$ 
	such that every member of $T_{\vec x}$ with height at least $\gamma$ is in $\mathcal{X}$. 
	In particular, $\vec a \res \gamma \in \mathcal{X}$. 
	Fix $q_0,q_1 \le p$ which witness that $\vec a \res \gamma \in \mathcal X$. 
	Then $\gamma$, $q_0 \res \theta$, 
	$q_0$, and $q_1$ satisfy conclusions (1)-(3).
	
	\underline{Inductive Step:} 
	Let $l > 0$ be given and assume that the statement is true for $l$. 
	We prove that it is true for $l + 1$. 
	Suppose that $p_0,\ldots,p_{l}$ are in $N \cap \p$ and 
	$v \in N \cap \p_\theta$, all of which have top level $\beta$. 
	Assume that for all $k \le l$, $p_k \res \theta = v$ and $p_k$ forces 
	that $\dot b$ is a cofinal branch of $\dot U$ which is not in $V^{\p_\theta}$. 
	Suppose that $X$ has unique drop-downs to $\beta$, 
	$A \subseteq \bigcap_{k \le l} \dom(p_k)$ is finite, and 
	for all $k \le l$, $\{ p_k(\tau) : \tau \in A \}$ is separated on $X \res \beta$.

	By the inductive hypothesis, we can fix 
	$\gamma < \delta$, $w \in N \cap \p_\theta$, 
	and conditions $q_{k,0}, q_{k,1} \le p_k$ in $N \cap \p$ 
	for all $k < l$ satisfying conclusions (1)-(3). 
	By Lemma \ref{Simple Generalized Extension} (Simple Generalized Extension), 
	find $q \le p_{l}$ 
	with top level $\gamma$ such that $q \res \theta = w$ 
	and for all $\tau \in A$, 
	$X \res \beta$ and $X \res \gamma$ are $q(\tau)$-consistent.

	Since $\{ p_{l}(\tau) : \tau \in A \}$ is separated on $X \res \beta$, 
	$\{ q(\tau) : \tau \in A \}$ is separated on $X \res \gamma$ 
	by Lemma \ref{Persistence 2} (Persistence). 
	Fix an injective tuple $\vec a = (a_0,\ldots,a_{n-1})$ 
	which enumerates $X$ so that $\{ q(\tau) : \tau \in A \}$ 
	is separated on $\vec a \res \gamma$. 
	Let $\vec x = \vec a \res \gamma$.

	Define $\mathcal{X}$ as the set of all tuples $\vec b = (b_0,\ldots,b_{n-1})$ 
	in the derived tree $T_{\vec x}$ 
	for which there exist $q_0, q_1 \le q$ with top level equal to 
	the height $\rho$ of $\vec b$ such that:
	\begin{enumerate}
		\item $q_0 \res \theta = q_1 \res \theta$;
		\item for all $\tau \in A$ and $j < 2$, $\vec x$ and $\vec b$ 
		are $q_j(\tau)$-consistent;
		\item there exists some $\zeta < \rho$ such that 
		$q_0 \Vdash_\p \zeta \in \dot b$ 
		and $q_1 \Vdash_\p \zeta \notin \dot b$.
	\end{enumerate}
	By Proposition \ref{completing no cofinal branches 1}, $\mathcal{X}$ is dense open in $T_{\vec x}$, and 
	$\mathcal{X} \in N$ by elementarity. 
	Since $T$ is a free Suslin tree, $T_{\vec x}$ is Suslin. 
	So by elementarity we can find some $\xi < \delta$ greater than $\gamma$ 
	such that every member of $T_{\vec x}$ with height at least $\xi$ is in $\mathcal{X}$. 
	In particular, $\vec a \res \xi \in \mathcal{X}$.

	Fix $\bar{q}_{l,0}, \bar{q}_{l,1} \le q$ 
	which witness that $\vec a \res \xi \in \mathcal X$. 
	Let $z = \bar{q}_{l,0} \res \theta$. 
	Now apply Lemma \ref{Generalized Extension} (Generalized Extension) in $N$ to find, 
	for each $k < l$ and $j < 2$, a condition $\bar{q}_{k,j} \le q_{k,j}$ 
	in $N$ with top level $\xi$ such that $\bar{q}_{k,j} \res \theta = z$ 
	and for all $\tau \in A$, $X \res \gamma$ and $X \res \xi$ 
	are $\bar{q}_{k,j}(\tau)$-consistent. 
	Then $\xi$, $z$, and $\bar{q}_{k,j}$ for all $k < l$ and $j < 2$ are 
	as required.
\end{proof}

\begin{proof}[Proof of Theorem \ref{No New Cofinal Branches} (No New Cofinal Branches)]
	Suppose for a contradiction 
	that there exists a condition $p \in \p$ which 
	forces in $\p$ that $\dot b$ is a cofinal branch of $\dot U$ 
	which is not in $V^{\p_\theta}$. 
	We find some $v \le_\theta p \res \theta$ which forces in $\p_\theta$ that 
	$\dot U$ has an uncountable level, 
	which contradicts that $\dot U$ is a $\p_\theta$-name for an $\omega_1$-tree. 
	Let $\alpha$ be the top level of $p$.

	Fix a suitable model $N$ such that $p \in N$ and let $\delta = N \cap \omega_1$. 
	Fix an increasing sequence $\langle \gamma_n : n < \omega \rangle$ 
	of ordinals cofinal in $\delta$ with $\gamma_0 = \alpha$, and 
	fix an enumeration $\langle D_n : n < \omega \rangle$ of all dense open 
	subsets of $\p$ which lie in $N$. 
	Let $g : \omega \to 2 \times T_\delta \times (N \cap \ka)$ be a surjection 
	such that every element of the codomain has an infinite preimage. 

	We define by induction the following objects in $\omega$-many steps:
	\begin{itemize}

	\item a subset-increasing sequence $\langle X_n : n < \omega \rangle$ 
	of finite subsets of $T_\delta$ with union equal to $T_\delta$;

	\item a subset-increasing sequence $\langle A_n : n < \omega \rangle$ 
	of finite subsets of $N \cap \ka$ with union equal to $N \cap \ka$;

	\item an increasing sequence $\langle \delta_n : n < \omega \rangle$ 
	of ordinals cofinal in $\delta$;

	\item a decreasing sequence $\langle v_n : n < \omega \rangle$ of conditions 
	in $N \cap \p_\theta$;

	\item a family of conditions $\{ r^s : s \in {}^{< \omega} 2 \} \subseteq N \cap \p$ 
	and ordinals $\{ \zeta_s : s \in {}^{< \omega} 2 \} \subseteq \delta$;

	\item for all $n < \omega$, $s \in {}^{n} 2$, and $\tau \in A_{n}$, 
	an injective partial function $h^{s}_\tau$ from $X_{n}$ to $X_{n}$.
	
	\end{itemize}
	
	We maintain the following inductive hypotheses for all $n < \omega$ 
	and $s \in {}^n 2$:

	\begin{enumerate}

	\item $X_n$ has unique drop-downs to $\delta_n$;

	\item $\delta_n$ is the top level of $r^s$;
		
	\item $r^s \le p$, and if $m > n$, $t \in {}^m 2$, and $s \subseteq t$, 
	then $r^t \le r^s$;
	
	\item $r^s \res \theta = v_{n}$;
	
	\item $A_n \subseteq \dom(r^s)$;
	
	\item $\zeta_s < \delta_{n+1}$, 
	$r^{s^\frown 0} \Vdash \zeta_s \in \dot b$, and 
	$r^{s^\frown 1} \Vdash \zeta_s \notin \dot b$;

	\item for all $\tau \in A_{n}$ and $j < 2$, $X_{n} \res \delta_{n}$ 
	and $X_{n} \res \delta_{n+1}$ are $r^{s^\frown j}(\tau)$-consistent;
	
	\item $r^{s^\frown 0}$ and $r^{s^\frown 1}$ are in $D_n$;
	
	\item $\{ r^{s}(\tau) : \tau \in A_n \}$ is separated on $X_n \res \delta_n$;
	
	\item for all $\tau \in A_n$ and $x, y \in X_n$, 
	$$
	h^s_{\tau}(x) = y \ \Longleftrightarrow \ r^s(\tau)(x \res \delta_n) = y \res \delta_n.
	$$

	\end{enumerate}

\underline{Stage 0:} Let $X_0 = \emptyset$, $A_0 = \emptyset$, $\delta_0 = \alpha$, 
$v_0 = p \res \theta$, $r^{\emptyset} = p$.

\underline{Stage $n+1$:} Let $n < \omega$ and assume that we have completed stage $n$. 
In particular, we have defined the following objects which satisfy the 
required properties: 
$X_n$, $A_n$, $\delta_n$, $v_n$, $r^s$, $\zeta_s$, and $h^s_\tau$ for all 
$s \in {}^n 2$ and $\tau \in A_n$. 
Let $g(n) = (n_0,z,\sigma)$.

Fix $\rho < \delta$ larger than $\delta_n$ and $\gamma_{n+1}$ 
such that $X_n \cup \{ z \}$ has unique drop-downs to $\rho$. 
Let $D$ be the set of conditions $r$ in $D_n$ which have some top level $\xi \ge \rho$ 
such that $A_n \cup \{ \sigma \} \subseteq \dom(r)$. 
Then $D \in N$ and $D$ is dense open in $\p$.

Apply Lemma \ref{Generalized Consistent Extensions Into Dense Sets} (Generalized 
Consistent Extensions Into Dense Sets) to find a 
family $\{ \bar{r}^s : s \in {}^n 2 \}$ of conditions in $N \cap D$, a 
condition $\bar{v}_n \in N \cap \p_\theta$, and $\gamma < \delta$ 
so that for each $s \in {}^n 2$:
\begin{enumerate}
\item[(a)] $\bar{r}^s \le r^s$, $\bar{r}^s$ has top level $\gamma$, 
and $\bar{r}^s \res \theta = \bar{v}_n$;
\item[(b)] for all $\tau \in A_n$, $X_n \res \delta_n$ and $X_n \res \gamma$ 
are $\bar{r}^s(\tau)$-consistent.
\end{enumerate}

Apply Lemma \ref{Generalized Augmentation} (Generalized Augmentation) to find a family 
$\{ \hat{r}^s : s \in {}^n 2 \}$ of conditions in $N \cap \p$ and 
a condition $\hat{v}_n \in N \cap \p_\theta$, 
all with top level $\gamma+1$, 
and a finite set $Y \subseteq T_\delta$ 
such that $X_n \cup \{ z \} \subseteq Y$ and $Y$ has unique 
drop-downs to $\gamma+1$, satisfying that for all $s \in {}^n 2$:
	\begin{enumerate}
	\item[(c)] $\hat{r}^s \le \bar{r}^s$ and $\hat{r}^s \res \theta = \hat{v}_n$;
	\item[(d)] for all $\tau \in A_n$, 
	$X_n \res \gamma$ and $X_n \res (\gamma+1)$ are $\hat{r}^s(\tau)$-consistent;
	\item[(e)] $\{ \hat{r}^s(\tau) : \tau \in A_n \cup \{ \sigma \} \}$ 
	is separated on $Y \res (\gamma+1)$;
	\item[(f)] let $h_{s,\sigma}^+$ be the partial injective function 
	from $Y$ to $Y$ defined by letting, 
	for all $x, y \in Y$, $h_{s,\sigma}^+(x) = y$ iff 
	$\hat{r}^s(\sigma)(x \res (\gamma+1)) = y \res (\gamma+1)$; 
	then $z$ is in the domain and range of $h_{s,\sigma}^+$.
	\end{enumerate}
Define $X_{n+1} = Y$ and $A_{n+1} = A_n \cup \{ \sigma \}$. 
So for all $s \in {}^n 2$, $\{ \hat{r}^s(\tau) : \tau \in A_{n+1} \}$ 
is separated on $X_{n+1} \res (\gamma+1)$.

Now apply Lemma \ref{completing no cofinal branches 2} to 
find $\delta_{n+1} < \delta$, $v_{n+1} \in N \cap \p_\theta$, 
and for all $s \in {}^n 2$, conditions $r^{s^\frown 0}, r^{s^\frown 1} \le \hat{r}^s$ 
in $N \cap \p$ satisfying that for all $s \in {}^n 2$:
	\begin{enumerate}
	\item[(g)] for each $j < 2$, $r^{s^\frown j}$ has top level $\delta_{n+1}$ 
	and $r^{s^\frown j} \res \theta = v_{n+1}$;
	\item[(h)] for each $j < 2$ and $\tau \in A_{n+1}$, 
	$X_{n+1} \res (\gamma+1)$ and $X_{n+1} \res \delta_{n+1}$ 
	are $r^{s^\frown j}(\tau)$-consistent;
	\item[(i)] there exists some $\zeta_s < \delta_{n+1}$ 
	such that $r^{s^\frown 0} \Vdash_\p \zeta_s \in \dot b$ and 
	$r^{s^\frown 1} \Vdash_\p \zeta \notin \dot b$.
	\end{enumerate}
For each $s \in {}^n 2$, $j < 2$, and $\tau \in A_{n+1}$, define 
a partial injective function 
$h^{s^\frown j}_\tau$ from $X_{n+1}$ to $X_{n+1}$ 
by letting, for all $x, y \in X_{n+1}$, $h^{s^\frown j}_\tau(x) = y$ iff 
$r^{s^\frown j}(\tau)(x \res \delta_{n+1}) = y \res \delta_{n+1}$. 
Observe that by (f) and (h), we have:
\begin{enumerate}
\item[(j)] for all $s \in {}^n 2$ and $j < 2$, 
$h_{s,\sigma}^+ = h^{s^\frown j}_\sigma$, and hence  
$z$ is in the domain and range of $h^{s^\frown j}_\sigma$.
\end{enumerate}
This completes stage $n+1$. 
It is easy to check that the inductive hypotheses are satisfied.

This completes the construction. 
For each $f \in {}^{\omega} 2$, define a condition $r_f$ with domain 
equal to $N \cap \ka$ as follows. 
For any $\tau \in N \cap \ka$, define 
$$
r_f(\tau) = 
\bigcup \{ r^{f \res n}(\tau) : n < \omega, \ \tau \in A_n \} \cup 
\bigcup \{ h^{f \res n}_\tau : n < \omega, \ n \in A_n \}.
$$
By Lemma \ref{Constructing Total Master Conditions} (Constructing Total Master Conditions), 
each $r_f$ is a total master condition for $\p$ over $N$. 
Define $v = r_f \res \theta$ for some (any) $f \in {}^{\omega} \omega$.

For each $f \in {}^\omega 2$, let $b_f$ be such that 
$r_f \Vdash_\p \dot b \cap \delta = b_f$. 
Due to our assumption about the levels of $\dot U$, it is easy to argue that $r_f$ forces 
that $b_f$ is a cofinal branch of $\dot U \res \delta$. 
Suppose that $f \ne g$. 
Let $n$ be least such that $f(n) \ne g(n)$, and assume without loss of generality that 
$f(n) = 0$ and $g(n) = 1$. 
Let $s = f \res n = g \res n$. 
Then $r_f \le r^{s^\frown 0}$ and $r_g \le r^{s^\frown 1}$. 
So $r_f \Vdash_\p \zeta_s \in \dot b \res \delta$ and 
$r_g \Vdash_\p \zeta_s \notin \dot b \res \delta$. 
Hence, $b_f \ne b_g$.

Now we are ready to get a contradiction. 
Let $G_\theta$ be a generic filter for $\p_\theta$ such that $v \in G_\theta$. 
In $V[G_\theta]$, let $\p / G_\theta$ be the 
suborder of $\p$ consisting of all $q \in \p$ 
such that $q \res \theta \in G_\theta$. 
Let $U = \dot U^{G_\theta}$. 
In $V[G_\theta]$, each $b_f$ is a cofinal branch of $U \res \delta$, and 
$r_f$ forces in $\p / G_\theta$ that $b_f = \dot b \cap \delta$. 
Hence $r_f$ forces that $\dot b(\delta)$ is an upper bound of $b_f$. 
Since having an upper bound in $U$ is absolute between $V[G_\theta]$ and any 
generic extension by $\p / G_\theta$, $b_f$ does in fact 
have an upper bound in $U_\delta$, 
which we denote by $x_f$. 
By construction, if $f \ne g$ then $b_f \ne b_g$, so $x_f \ne x_g$. 
Hence, $U_\delta$ is uncountable, which contradicts that $U$ is 
an $\omega_1$-tree in $V[G_\theta]$.
\end{proof}

\section{The Main Result} \label{The Main Result}

We are now prepared to prove the main result of the article.

\begin{thm}[Main Theorem] \label{main theorem}
	Suppose that there exists an inaccessible cardinal $\ka$ and an 
	infinitely splitting normal free Suslin tree $T$. 
	Then there exists a forcing poset $\p$ satisfying that the product forcing 
	$\col(\omega_1,< \! \kappa) \times \p$ forces:
	\begin{enumerate}
	\item $\ka = \omega_2$;
	\item \textsf{GCH} holds;
	\item $T$ is a Suslin tree;
	\item there exists an almost disjoint 
	family $\{ f_\tau : \tau < \omega_2 \}$ of automorphisms of $T$;
	\item there does not exist a Kurepa tree.
	\end{enumerate}
\end{thm}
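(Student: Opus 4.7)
The plan is to let $\p$ be the automorphism forcing from Section 5 applied with parameter $\ka$, which adds an almost disjoint $\ka$-sequence of automorphisms of $T$. We may assume without loss of generality that $V$ satisfies $\textsf{GCH}$ below $\ka$, since this can be arranged by a preliminary forcing preserving the inaccessibility of $\ka$ and the freeness of $T$. Factor the product as ``L\'evy collapse first, then $\p$''. In the intermediate model $V[G]$ obtained from $\col(\omega_1,<\!\ka)$, the tree $T$ remains a free Suslin tree because $\col(\omega_1,<\!\ka)$ is $\omega_1$-closed (hence preserves Suslinness of $T$ and of each of its derived trees), $\textsf{CH}$ continues to hold, $\ka=\omega_2$, and the poset $\p$ has the same conditions as in $V$ since $\omega_1$-closed forcing adds no new countable sequences.

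Conclusions (1)--(4) should then follow fairly directly from the results of Section 5 applied in $V[G]$. Under $\textsf{CH}$ in $V[G]$ the forcing $\p$ is $\omega_2$-c.c.\ by Proposition 5.36 and totally proper by Theorem 5.34, so $\omega_2=\ka$ is preserved in $V[G\times H]$, $\textsf{CH}$ persists (no reals are added), and a standard counting of nice names using $|\p|\le\ka$ and the $\omega_2$-c.c.\ establishes $2^{\omega_1}=\omega_2$ and $\textsf{GCH}$ at larger cardinals. Theorem 5.34 gives that $T$ is still Suslin in $V[G\times H]$, and Proposition 5.37 supplies the almost disjoint family of $\ka=\omega_2$ automorphisms.

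The hard part will be (5): no Kurepa tree exists in $V[G\times H]$. Suppose for contradiction that $U$ is such a tree. Since $\p$ is $\omega_2$-c.c.\ in $V[G]$, a nice $\p$-name for $U$ uses only $\omega_1$-many conditions, whose countable domains together cover a set $S\subseteq\ka$ of size $\omega_1$ in $V[G]$. Using an isomorphism argument like that in Lemma 5.50 to reindex, I may assume $U\in V[G][H\cap\p_\theta]$ for some ordinal $\theta<\ka$ with $|\theta|\le\omega_1$ in $V[G]$. Applying Theorem 5.49 (No New Cofinal Branches) in $V[G]$---where $T$ is a free Suslin tree and $\textsf{CH}$ holds---every cofinal branch of $U$ in $V[G][H]$ already lies in $V[G][H\cap\p_\theta]$, so $U$ has at least $\omega_2$ cofinal branches in the intermediate model $V[G][H_\theta]$.

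To close the argument, I would commute the product and view $V[G][H_\theta]$ as $V[H_\theta][G]$. Back in $V$, the poset $\p_\theta$ has cardinality at most $|\theta|^\omega\cdot\omega_1^{<\omega_1}<\ka$, using that $\ka$ is strongly inaccessible. Since $\p_\theta$ is totally proper (hence adds no countable sequences of ordinals) and of size less than $\ka$, it preserves the inaccessibility of $\ka$ and leaves the poset $\col(\omega_1,<\!\ka)$ literally unchanged. Silver's theorem applied inside $V[H_\theta]$ then yields that $V[H_\theta][G]$ contains no Kurepa tree, contradicting the existence of $U$. The main obstacle in all of this is that the argument crucially depends on the no-new-branches result Theorem 5.49, whose proof occupies Subsections 5.6--5.9; the Main Theorem is essentially the payoff of that technical machinery.
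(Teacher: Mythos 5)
Your proof is correct and follows the same overall strategy as the paper's: let $\p$ be the automorphism forcing of Section 5, observe that in $V[G]$ ($G$ being the collapse generic) the tree $T$ remains a free Suslin tree with $\textsf{CH}$, use the results of Section 5 for conclusions (1)--(4), and for (5) reduce the nice name for a putative Kurepa tree $U$ to a name over a ``small'' piece of the forcing and invoke Theorem 5.49 to trap all cofinal branches of $U$ in an intermediate model in which $\ka$ is still inaccessible. The final step differs slightly in how the intermediate model is organized. The paper factors \emph{both} forcings, writing $V[G][H_\theta]=V[G_\theta][H_\theta][G^\theta]$, and then uses that $G^\theta$ is generic for an $\omega_1$-closed poset (hence adds no branches of $\omega_1$-trees) together with the inaccessibility of $\ka$ over $V[G_\theta][H_\theta]$ to count directly that $U$ has fewer than $\ka$ cofinal branches. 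You instead keep $G$ whole, commute to $V[H_\theta][G]$, note that $\p_\theta$ has size $<\ka$ in $V$ and is totally proper so leaves $\col(\omega_1,<\!\ka)$ literally unchanged and preserves inaccessibility of $\ka$, and then quote Silver's theorem over $V[H_\theta]$ as a black box. Both arrangements are correct; the paper's version is somewhat more self-contained since it essentially re-derives Silver's argument on the spot rather than citing it, while yours is marginally shorter by delegating to Silver. One small cosmetic point: the clause ``$|\theta|\le\omega_1$ in $V[G]$'' and the reindexing appeal are unnecessary for the cardinality estimate you actually use --- it suffices to take $\theta$ to be any bound below $\ka$ on the union of the (fewer than $\ka$) countable domains appearing in the nice name, since inaccessibility of $\ka$ already gives $|\p_\theta|<\ka$ in $V$; the paper proceeds this way and avoids reindexing entirely.
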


\begin{proof}
	Let $V$ be the ground model in which $T$ and $\ka$ are as above. 
	For simplicity in notation, let $\q = \col(\omega_1,< \! \ka)$. 
	Note that $\q \times \p$ has size $\ka$. 
	Let $\p$ be the forcing poset of Definition \ref{poset definition} in $V$.  
	Since $\omega_1$-closed forcings preserve Suslin trees, 
	$T$ is still free in $V^{\q}$. 
	Also, the definition of $\p$ is easily seen to be 
	absolute between $V$ and $V^{\q}$. 
	So $\q \times \p$ is forcing equivalent to the two-step 
	forcing iteration of $\q$ followed by the forcing of 
	Definition \ref{poset definition} (with $\ka = \omega_2$). 
	Since \textsf{CH} holds in $V^{\q}$, in $V^{\q}$ we have that $\p$ 
	is $\omega_2$-c.c. 
	Consequently, $\q \times \p$ is $\ka$-c.c. 
	As $\q$ is $\omega_1$-closed and hence totally proper, and $\q$ 
	forces that $\p$ is totally proper and preserves the fact that $T$ is Suslin, 
	$\q \times \p$ is totally proper and forces that $T$ is Suslin. 
	Statements (1)-(4) are now clear.

	For (5), let $\dot U$ be a nice $(\q \times \p)$-name for an $\omega_1$-tree 
	(with underlying set $\omega_1$). 
	By the $\ka$-c.c.\ property and the fact that conditions in $\q \times \p$ 
	have countable domain, there exists some $\theta < \ka$ such that 
	$\dot U$ is a $(\col(\omega_1,< \! \theta) \times \p_\theta)$-name. 
	Let $G \times H$ be a generic filter on $\q \times \p$. 
	By the usual factor analysis for product forcings, we can write 
	$V[G] = V[G_\theta][G^\theta][H_\theta][H]$, where 
	$G_\theta = G \cap \, \col(\omega_1,< \! \theta)$, 
	$G^\theta = G \cap \, \col(\omega_1,[\theta,\ka))$, 
	$H_\theta = H \cap \, \p_\theta$, and we consider $H$ to be a 
	$V[G][H_\theta]$-generic filter on the quotient forcing $\p / H_\theta$.
	
	Let $U = \dot U^{G_\theta \times H_\theta}$.  
	Then $U$ is in $V[G_\theta][H_\theta]$, and hence 
	$U$ is in $V[G][H_\theta]$. 
	Consider a cofinal branch $b$ of $U$ in $V[G][H]$. 
	Applying Theorem \ref{No New Cofinal Branches} in $V[G]$, 
	$b$ is in the model $V[G][H_\theta]$, 
	which by the product lemma is equal to the model $V[G_\theta][H_\theta][G^\theta]$. 
	Since $\col(\omega_1,[\theta,\ka))$ is $\omega_1$-closed in $V[G_\theta][H_\theta]$ 
	and $\omega_1$-closed forcings do not add new cofinal branches of $\omega_1$-trees, 
	$b$ is in $V[G_\theta][H_\theta]$. 
	As $\ka$ is inaccessible and $\theta < \ka$, $\ka$ is also inaccessible 
	in $V[G_\theta][H_\theta]$. 
	Because every cofinal branch of $U$ in $V[G][H]$ is in $V[G_\theta][H_\theta]$, there 
	are fewer than $\ka$ many cofinal branches of $U$ in $V[G][H]$. 
	So $U$ is not a Kurepa tree in $V[G][H]$.
\end{proof}

It remains to verify the following claim from Section \ref{Introduction}.

\begin{proposition}
Let $\ka$ be an inaccessible cardinal, 
let $\q$ be Jech's forcing for adding a Suslin tree, and let 
$\dot \p$ be a $\q$-name for the forcing of 
Definition \ref{poset definition} using the generic Suslin tree. 
Then $\q * \dot \p$ is forcing equivalent to some $\omega_1$-closed forcing.
\end{proposition}

\begin{proof}
In $V$ define a forcing poset $\mathbb A$ as follows. 
A condition in $\mathbb A$ is a pair $(t,f)$ satisfying:
\begin{itemize}
\item $t \in \q$;
\item $f$ is a function whose domain is a countable subset of $\ka$ and whose 
range is a set of automorphisms of $t$.
\end{itemize}
Let $(u,g) \le (t,f)$ if $u \le_{\q} t$, $\dom(f) \subseteq \dom(g)$, and for all 
$\alpha \in \dom(f)$, $f(\alpha) \subseteq g(\alpha)$. 
It is easy to check that $\mathbb A$ is $\omega_1$-closed. 
(The forcing $\mathbb A$ is similar to \cite[Theorem 3]{jech72}, but we are not 
requiring that $f$ be injective nor that the range of $f$ be a group).

Let $\mathbb A'$ be the suborder of $\mathbb A$ consisting of all conditions $(t,f)$ 
such that $t$ has successor height. 
We claim that $\mathbb A'$ is dense in $\mathbb A$. 
So let $(t,f) \in \mathbb A$ 
be such that $t$ has height a limit ordinal $\delta$. 
Enumerate $t$ as $\langle x_n : n < \omega \rangle$ and $\dom(f)$ as 
$\langle \gamma_n : n < \omega \rangle$. 
For each $n < \omega$ fix a cofinal branch $b_n$ of $t$ with $x_n \in b_n$.
For purposes of bookkeeping, fix a surjection 
$$
h : \omega \to 2 \times \{ -1, 1 \} \times \omega \times \omega
$$ 
such that: for all $n < \omega$, letting $h(n) = (i,m,k,l)$, if $n = 0$ then $i = 0$, 
and if $n > 0$ then $k < n$.

We build a sequence $\langle c_n : n < \omega \rangle$ of cofinal branches 
of $t$ in $\omega$-many stages as follows. 
Assuming that $n < \omega$ and we have completed stages before $n$, let 
$h(n) = (i,m,k,l)$. 
If $i = 0$, let $c_n = f(\gamma_l)^{m}[b_k]$. 
If $i = 1$, let $c_n = f(\gamma_l)^{m}[c_k]$. 
This completes the construction. 
Now let $u = t \cup \{ b_n : n < \omega \} \cup \{ c_n : n < \omega \}$. 
Define $g$ with domain equal to $\dom(f)$ so that for all $\tau \in \dom(f)$, 
$g(\tau) \res t = f(\tau)$ and for all $b$ in $u \setminus t$, 
$g(\tau)(b) = f(\tau)[b]$. 
By our bookkeeping, 
it is straightforward to check that $(u,g) \in \mathbb A'$ and $(u,g) \le (t,f)$.

Now the map from $\mathbb A'$ to $\q * \dot \p$ given by $(t,f) \mapsto (t,\check f)$ 
is clearly an embedding. 
To see that the range is dense, consider $(t,\dot f) \in \q * \dot \p$. 
Extend $t$ to $u$ in $\q$ 
which decides $\dot f$ as $f$ and has successor height greater than the 
height of $t$. 
Now applying Proposition \ref{general extending} (replacing $T \res (\alpha+1)$ with $u$, 
letting $X = \emptyset$, and ignoring (3)), there exists some $g$ such that 
$\dom(f) = \dom(g)$ and for all $\tau \in \dom(f)$, $g(\tau)$ is an automorphism of $u$ 
such that $f(\tau) \subseteq g(\tau)$. 
Then $(u,g) \in \mathbb A'$ and $(u,\check g) \le (t,\dot f)$ in $\q * \dot \p$.
\end{proof}

By the arguments given in Section \ref{Introduction}, we have the following consequences of 
Theorem \ref{main theorem} (Main Theorem).

\begin{corollary}[$\text{Almost Kurepa Suslin Tree} + \neg \textsf{KH}$]
	Assume that there exists an inaccessible cardinal $\ka$. 
	Then there exists a generic extension in which $\ka$ equals $\omega_2$, \textsf{CH} holds, 
	there exists an almost Kurepa Suslin tree, and there does not exist a Kurepa tree.
\end{corollary}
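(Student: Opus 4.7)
The plan is to deduce the corollary directly from the Main Theorem, with only a small amount of preparatory forcing to produce an infinitely splitting normal free Suslin tree while preserving the inaccessibility of $\ka$.

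First I would arrange $\textsf{CH}$ (indeed $\textsf{GCH}$ below $\ka$) in the ground model while keeping $\ka$ inaccessible, which can be done via a standard Easton iteration or by passing to a suitable inner model; call this ground model $V$. Let $\q$ denote Jech's forcing for adding a Suslin tree, whose conditions are countable infinitely splitting normal subtrees of $({}^{<\omega_1}\omega,\subset)$ ordered by end-extension. Under $\textsf{CH}$, $\q$ is $\omega_1$-closed, $\omega_2$-c.c., and of cardinality $\omega_2 < \ka$, so it preserves cardinals and cofinalities and keeps $\ka$ inaccessible. Its generic object $T$ is by construction infinitely splitting and normal, and by Jensen's theorem (cited in the introduction) it is free. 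Thus $V^\q$ is a model in which $\ka$ is inaccessible and $T$ is an infinitely splitting normal free Suslin tree.

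Next I would apply the Main Theorem inside $V^\q$ to this $T$ and $\ka$. This produces a forcing poset $\p \in V^\q$ such that the generic extension $W$ of $V^\q$ by $\col(\omega_1, <\!\ka) \times \p$ satisfies $\ka = \omega_2$, $\textsf{GCH}$ (hence $\textsf{CH}$), $T$ remains a Suslin tree, there is an almost disjoint family $\{f_\tau : \tau < \omega_2\}$ of automorphisms of $T$, and there is no Kurepa tree. I would then invoke the observation recorded in the preliminaries: since $T$ is a Suslin tree admitting an almost disjoint family of $\omega_2$-many automorphisms, $T$ is automatically an almost Kurepa Suslin tree, because forcing a generic branch $b$ of $T$ over $W$ yields the $\omega_2$-many pairwise distinct cofinal branches $\{f_\tau[b] : \tau < \omega_2\}$, distinctness following from the fact that any two of the $f_\tau$'s disagree on a tail of levels of $T$ so their images on $b$ diverge cofinally. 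Combined with $\textsf{CH}$ and $\neg\textsf{KH}$, this gives all four required conclusions in $W$.

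The main---indeed the only---nontrivial point beyond quoting the Main Theorem is the verification that Jech's forcing preserves the inaccessibility of $\ka$ and produces a tree meeting the precise hypotheses of the Main Theorem; once $\textsf{CH}$ is arranged in the ground model so that $|\q| = \omega_2 < \ka$, both follow from well-known properties of $\q$ and of the generic tree it adds.
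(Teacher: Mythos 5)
Your proposal is correct and follows essentially the same route the paper itself sketches in Section 1: arrange a ground model with an inaccessible $\ka$ and an infinitely splitting normal free Suslin tree (via Jech's poset, which indeed generically adds such a tree), apply the Main Theorem to get a model with $\ka = \omega_2$, \textsf{GCH}, the tree Suslin with $\omega_2$ almost disjoint automorphisms, and no Kurepa tree, and then use the observation recorded in the preliminaries that an almost disjoint $\omega_2$-family of automorphisms makes a Suslin tree almost Kurepa. You are slightly more careful than the paper's one-sentence derivation in noting that one should first arrange $\textsf{CH}$ (or just $2^{\omega_1} < \ka$) so that Jech's forcing is small and hence preserves the inaccessibility of $\ka$; that is a genuine, if minor, gap in the paper's presentation that you correctly fill.
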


\begin{corollary}[$\text{$T$ is Suslin} + \sigma(T) = \omega_2 + \Diamond + \neg \textsf{KH}$]
	Assume that there exists an inaccessible cardinal $\ka$. 
	Then there exists a generic extension in which $\ka$ equals $\omega_2$, $\Diamond$ holds, 
	there exists a normal Suslin tree with $\omega_2$-many automorphisms, 
	and there does not exist a Kurepa tree.
\end{corollary}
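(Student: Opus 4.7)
The plan is to combine the Main Theorem with the standard preservation of $\Diamond$ under $\omega_1$-closed forcing, using the factorization of $(\q * \dot\p) \times \col(\omega_1, <\!\ka)$ already set up in Section 1 and at the end of Section 6. First I would start with a ground model $V$ in which $\ka$ is inaccessible and $\Diamond$ holds. Such a $V$ can be obtained by taking $V = L$, or by beginning with any model having an inaccessible $\ka$ and first forcing $\Diamond$ with the standard $\omega_1$-closed $\Diamond$-adding poset of countable approximations, which preserves cardinals and the inaccessibility of $\ka$.

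Working in $V$, let $\q$ be Jech's forcing for adding a free Suslin tree $\dot T$, and let $\dot\p$ be a $\q$-name for the forcing poset of Definition 5.17 applied in $V^{\q}$ with the generic tree $\dot T$ and cardinal parameter $\ka$. Since $\q$ is $\omega_1$-closed, $\ka$ remains inaccessible in $V^{\q}$, $\dot T$ is forced to be an infinitely splitting normal free Suslin tree, and the hypotheses of Theorem 6.1 (Main Theorem) hold in $V^{\q}$. Applying that theorem, the forcing $\col(\omega_1, <\!\ka)^{V^{\q}} \times \dot\p$ forces, over $V^{\q}$, that $\ka = \omega_2$, $T$ is Suslin, there is an almost disjoint family $\{f_\tau : \tau < \omega_2\}$ of automorphisms of $T$, and there is no Kurepa tree. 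Because $\q$ is $\omega_1$-closed and hence commutes with $\col(\omega_1,<\!\ka)$ in the usual sense, the iterated forcing $\q * (\col(\omega_1,<\!\ka)^{V^{\q}} \times \dot\p)$ is forcing equivalent to the product $(\q * \dot\p) \times \col(\omega_1,<\!\ka)$ computed in $V$.

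Now I invoke the computation given at the very end of Section 6: the map $(t,f) \mapsto t * \check f$ is a dense embedding from the dense set $D \subseteq \mathbb{A}$ (where $\mathbb A$ is Jech's $\omega_1$-closed poset from \cite{jech72}) into $\q * \dot\p$. Consequently $\q * \dot\p$ is forcing equivalent to an $\omega_1$-closed poset, and therefore $(\q * \dot\p) \times \col(\omega_1, <\!\ka)$, being the product of two $\omega_1$-closed forcings, is itself $\omega_1$-closed. Since $\omega_1$-closed forcings add no new $\omega$-sequences of ordinals, any $\Diamond$-sequence in $V$ remains a $\Diamond$-sequence in the extension, so $\Diamond$ continues to hold there. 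Combining this with the conclusions transferred from the Main Theorem via the equivalence of the two iterations gives the desired generic extension in which $\ka = \omega_2$, $\Diamond$ holds, $T$ is a normal Suslin tree with at least $\omega_2$-many automorphisms, and there is no Kurepa tree.

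The only non-routine ingredient is the $\omega_1$-closed reformulation of $\q * \dot\p$, and that has already been established in the paragraph preceding the corollary; everything else is a direct application of the Main Theorem, the standard product-forcing commutativity for $\omega_1$-closed posets, and the folklore fact that $\omega_1$-closed forcing preserves $\Diamond$. I therefore expect no substantive obstacle beyond carefully citing these pieces in the correct order.
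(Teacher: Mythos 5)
Your argument follows the paper's own route exactly: start with a ground model where $\ka$ is inaccessible and $\Diamond$ holds, force with Jech's poset to obtain a free Suslin tree, apply the Main Theorem, rewrite $\q * (\col(\omega_1,<\!\ka)^{V^{\q}} \times \dot\p)$ as $(\q * \dot\p) \times \col(\omega_1,<\!\ka)$, and use the dense embedding into the $\omega_1$-closed poset $\mathbb{A}$ to conclude the whole iteration is forcing equivalent to an $\omega_1$-closed forcing, which preserves $\Diamond$. One caveat on a side remark: your parenthetical justification --- ``since $\omega_1$-closed forcings add no new $\omega$-sequences of ordinals, any $\Diamond$-sequence in $V$ remains a $\Diamond$-sequence'' --- is not a valid deduction (not adding $\omega$-sequences says nothing about whether a new subset of $\omega_1$ is guessed stationarily often by the old sequence); fortunately you also invoke the correct folklore fact that $\omega_1$-closed forcing preserves $\Diamond$, which is what the paper relies on, so the overall proof stands.
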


\begin{corollary}[$\text{Non-Saturated Aronszajn Tree} + \neg \textsf{KH}$]
	Assume that there exists an inaccessible cardinal $\ka$. 
	Then there exists a generic extension in which $\ka$ equals $\omega_2$, 
	there exists a non-saturated Aronszajn tree, and there does not exist a Kurepa tree.
\end{corollary}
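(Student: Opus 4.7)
The plan is to let $\p$ be the automorphism forcing from Definition 5.17 applied to the given $T$ and $\ka$, and then to push all the work already done in Sections 5 and 6 through the product with the L\'{e}vy collapse $\q = \col(\omega_1, <\!\ka)$. First I will observe that $\q$ is $\omega_1$-closed, so it preserves the fact that $T$ is an infinitely splitting normal free Suslin tree, and the definition of $\p$ from $T$ and $\ka$ is absolute between $V$ and $V^{\q}$. Hence $\q \times \p$ is forcing equivalent to the two-step iteration $\q * \dot \p$. In $V^{\q}$, \textsf{CH} holds and $\ka = \omega_2$, so Theorem 5.34, Lemma 5.27, Proposition 5.36, and Proposition 5.37 apply: $\p$ is totally proper, $\omega_2$-c.c., preserves the Suslinness of $T$, and adds an almost disjoint $\ka$-sequence of automorphisms of $T$. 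Combined with the total properness and $\ka$-c.c.\ of $\q$ itself, this immediately yields (1), (3), and (4). For (2), \textsf{CH} is preserved by $\p$ since under \textsf{CH} the poset $\p$ has size $\omega_1$, and $|\q \times \p| = \ka$ takes care of $2^{\omega_1} = \omega_2$.

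The substantive content is (5), and the plan is to reduce it to Theorem 5.49 (No New Cofinal Branches) by a standard factor analysis. Let $\dot U$ be a nice $(\q \times \p)$-name for an $\omega_1$-tree on $\omega_1$. By the $\ka$-c.c.\ of $\q \times \p$ and the fact that conditions have countable support in both factors, $\dot U$ is in fact a $(\col(\omega_1,<\!\theta) \times \p_\theta)$-name for some $\theta < \ka$. Given a generic $G \times H$ on $\q \times \p$, write $G_\theta = G \cap \col(\omega_1,<\!\theta)$, $G^\theta = G \cap \col(\omega_1,[\theta,\ka))$, and $H_\theta = H \cap \p_\theta$; by the product lemma, $V[G][H] = V[G_\theta][H_\theta][G^\theta][H]$, where $H$ is interpreted as a generic on the quotient $\p/H_\theta$ over the intermediate model $V[G][H_\theta]$. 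In particular $U := \dot U^{G_\theta \times H_\theta}$ lies in $V[G][H_\theta]$, so $\dot U$ may be viewed in $V[G]$ as a $\p_\theta$-name for an $\omega_1$-tree.

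Let $b$ be an arbitrary cofinal branch of $U$ in $V[G][H]$. Working in $V[G]$, where $T$ is still a free Suslin tree and \textsf{CH} holds, Theorem 5.49 applies and forces every cofinal branch of $U$ in the $\p$-extension to lie in the $\p_\theta$-extension. Hence $b \in V[G][H_\theta] = V[G_\theta][H_\theta][G^\theta]$. Since $\col(\omega_1,[\theta,\ka))$ is $\omega_1$-closed in $V[G_\theta][H_\theta]$ and $\omega_1$-closed forcings add no new cofinal branches of $\omega_1$-trees, we conclude $b \in V[G_\theta][H_\theta]$. Because $\ka$ is inaccessible and $|\col(\omega_1,<\!\theta) \times \p_\theta| < \ka$, the model $V[G_\theta][H_\theta]$ still sees $\ka$ as inaccessible, so $U$ has fewer than $\ka = \omega_2$ cofinal branches in $V[G][H]$ and thus is not a Kurepa tree.

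The only point requiring genuine care is invoking Theorem 5.49 in the correct intermediate universe: one must check that the hypotheses ``$T$ is a free Suslin tree'' and ``\textsf{CH}'' used in that theorem are verified in $V[G]$ (immediate from $\omega_1$-closure of $\q$ and the fact that $\q$ forces \textsf{CH}), and that the regular-suborder statement of Theorem 5.49 genuinely controls branches not only in $V[G]^\p$ but after the further quotient by $H_\theta$, which it does by the definition of regular suborder and the equivalence $V[G][H] = V[G][H_\theta][H/H_\theta]$. Once this verification is made, the five conclusions follow with no further calculation; the $\omega_1$-closed L\'{e}vy factor merely supplies an inaccessible gap between $|\col(\omega_1,<\!\theta) \times \p_\theta|$ and $\omega_2$ into which only fewer-than-$\omega_2$ branches can fit.
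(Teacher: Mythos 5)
Your proposal is a careful and essentially correct re-derivation of Theorem 6.1 (the Main Theorem), matching the paper's own proof of that theorem almost step for step. But the statement you were asked to prove is the corollary, which has a weaker hypothesis (only an inaccessible cardinal, no Suslin tree) and a different conclusion (a non-saturated Aronszajn tree, not the Main Theorem's five-item list). Two genuine gaps remain.

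First, the corollary does not hand you a free Suslin tree. Your opening phrase ``applied to the given $T$'' presupposes an infinitely splitting normal free Suslin tree, which is not part of the corollary's hypothesis. One must first force to add such a $T$ --- for instance with Jech's forcing, which is $\omega_1$-closed, preserves the inaccessibility of $\ka$, and adds a free Suslin tree --- or otherwise arrange the intermediate model where the Main Theorem applies. The paper does exactly this (``Starting with a model with an inaccessible cardinal and forcing the existence of an infinitely splitting normal free Suslin tree...''). Without this step, your argument proves a conditional theorem, not the corollary.

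Second, and more substantively, you never produce a non-saturated Aronszajn tree. You stop after establishing (1)--(5) of the Main Theorem (that $T$ is Suslin, there is an almost disjoint family $\{f_\tau : \tau < \omega_2\}$ of automorphisms of $T$, no Kurepa tree, etc.), remarking that ``the five conclusions follow with no further calculation.'' But the corollary requires you to exhibit a non-saturated Aronszajn tree. The missing observation is the one the paper makes in the introduction: in the final extension, for each $\tau < \omega_2$ set $U_\tau = \{(x,f_\tau(x)) : x \in T\}$. Since $T$ is Suslin and hence Aronszajn, $T \otimes T$ is an Aronszajn tree; each $U_\tau$ is an uncountable downwards closed subtree of $T \otimes T$; and for $\tau_0 \ne \tau_1$, $U_{\tau_0} \cap U_{\tau_1} = \{(x,y) : f_{\tau_0}(x) = f_{\tau_1}(x) = y\}$ is countable by almost disjointness of the $f_\tau$. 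This gives an antichain of $\omega_2$-many subtrees of $T \otimes T$, witnessing that $T \otimes T$ is non-saturated. Without this $T \otimes T$ construction, the conclusion of the corollary is not actually derived.
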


\section*{Acknowledgements}

Some of the work for this article was completed while the first 
author was visiting the second author at the Department of Logic at Charles University 
during August and October of 2023. 
The first author thanks the second author and Radek Honz\'{i}k for their 
hospitality during his visits. 
The first author thanks Justin Moore for pointing out that an almost 
Kurepa Suslin tree implies the existence of a non-saturated Aronszajn tree. 
The second author thanks Assaf Rinot for bringing Problem 3 to her attention 
while she was a postdoc at Bar-Ilan University in 2022. 
The first author acknowledges support from the Simons Foundation 
under the Travel Support for Mathematicians gift 631279. 
The second author acknowledges support from the Czech Science Foundation (GAČR), Grant no.\ 24-12141S.
Finally, we thank the referee for helpful feedback.


\end{document}